\documentclass[a4paper]{amsart}
\usepackage{amsrefs}
\usepackage{amssymb,mathrsfs}
\usepackage[all]{xy}
\newdir{ >}{{}*!/-5pt/@{>}} 
\SelectTips{cm}{} 

\title{On the motivic Segal conjecture}
\author{Thomas Gregersen}
\address{Department of Technology Systems, University of Oslo, Norway}
\email{thomas.gregersen@its.uio.no}
\author{John Rognes}
\address{Department of Mathematics, University of Oslo, Norway}
\email{rognes@math.uio.no}
\subjclass[2010]{
	14F42, 
	55P42, 
	55Q45, 
	55S10, 
	55T15  
	}

\date{August 8th 2023}

\newtheorem{theorem}{Theorem}[section]
\newtheorem{proposition}[theorem]{Proposition}
\newtheorem{lemma}[theorem]{Lemma}
\newtheorem{corollary}[theorem]{Corollary}
\theoremstyle{definition}
\newtheorem{definition}[theorem]{Definition}
\newtheorem{notation}[theorem]{Notation}
\theoremstyle{remark}
\newtheorem{example}[theorem]{Example}
\newtheorem{remark}[theorem]{Remark}

\DeclareMathOperator{\cl}{cl}
\DeclareMathOperator{\gr}{gr}
\DeclareMathOperator{\del}{del}
\DeclareMathOperator{\loc}{loc}
\DeclareMathOperator{\res}{res}
\DeclareMathOperator{\sh}{sh}
\DeclareMathOperator{\wt}{wt}

\DeclareMathOperator*{\colim}{colim}
\DeclareMathOperator*{\hocolim}{hocolim}

\DeclareMathOperator{\Ext}{Ext}
\DeclareMathOperator*{\holim}{holim}
\DeclareMathOperator{\Hom}{Hom}
\DeclareMathOperator*{\limone}{lim^1}
\DeclareMathOperator{\Spec}{Spec}
\DeclareMathOperator{\Tor}{Tor}
\DeclareMathOperator{\id}{id}
\newcommand{\bA}{\mathbb{A}}
\newcommand{\bC}{\mathbb{C}}
\newcommand{\bF}{\mathbb{F}}
\newcommand{\bG}{\mathbb{G}}
\newcommand{\bP}{\mathbb{P}}
\newcommand{\bQ}{\mathbb{Q}}
\newcommand{\bR}{\mathbb{R}}
\newcommand{\bS}{\mathbb{S}}
\newcommand{\bZ}{\mathbb{Z}}
\newcommand{\hatotimes}{\mathbin{\widehat{\otimes}}}
\newcommand{\cotensor}{\mathbin{\square}}
\newcommand{\longto}{\longrightarrow}
\newcommand{\longfrom}{\longleftarrow}
\newcommand{\onto}{\twoheadrightarrow}
\newcommand{\qqandqq}{\qquad \text{and} \qquad}
\newcommand{\sA}{\mathscr{A}}
\newcommand{\sO}{\mathscr{O}}
\renewcommand{\:}{\colon}
\newcommand{\<}{\langle}
\renewcommand{\>}{\rangle}
\newcommand{\ds}{\displaystyle} 

\numberwithin{equation}{section}

\begin{document}

\begin{abstract}
We establish motivic versions of the theorems of Lin and Gunawardena,
thereby confirming the motivic Segal conjecture for
the algebraic group~$\mu_\ell$ of $\ell$-th roots of unity,
where $\ell$ is any prime.  To achieve
this we develop motivic Singer constructions associated to the symmetric
group~$S_\ell$ and to~$\mu_\ell$, and introduce a delayed limit Adams
spectral sequence.
\end{abstract}

\maketitle

\tableofcontents

\section{Introduction}

Let $\gamma^1 \downarrow \bR P^\infty \simeq BC_2$ be the tautological
line bundle over infinite-dimensional real projective space, let $\bR
P_{-m}^\infty = Th(-m \gamma^1)$ be the Thom spectrum of the negative
of $m$ times $\gamma^1$, and let $\bR P_{-\infty}^\infty = \holim_m \bR
P_{-m}^\infty$.  Mahowald conjectured that there is a $2$-adic equivalence
$\bR P_{-\infty}^\infty \simeq S^{-1}$, see Adams~\cite{Ada74b}*{p.~5}.
More generally, Segal conjectured for finite groups~$G$ that there is an
$I(G)$-adic equivalence $(S_G)^G \simeq (S_G)^{hG}$ from the fixed points
to the homotopy fixed points of the $G$-equivariant sphere spectrum.
Here $I(G)$ denotes the augmentation ideal in the Burnside ring of~$G$.

Mahowald's conjecture, which is equivalent to Segal's Burnside ring
conjecture for $C_2$, was proved by Lin in~\cite{Lin80}*{Thm.~1.2}.
For odd primes~$\ell$, Gunawardena~\cite{Gun81} proved Segal's
conjecture for $C_\ell$, obtaining an $\ell$-adic equivalence
$L_{-\infty}^\infty \simeq S^{-1}$.  Here $L_{-\infty}^\infty$
denotes a homotopy limit of Thom spectra over the infinite-dimensional
lens space $L^\infty \simeq BC_\ell$.  Segal's conjecture was later
affirmed for all finite groups by Carlsson~\cite{Car84}, building on
May-McClure~\cite{MM82}, Adams--Gunawardena--Miller~\cite{AGM85} and
Caruso--May--Priddy~\cite{CMP87}.

In this paper we promote the classical theorems of Lin and Gunawardena
to the motivic setting, obtaining $\pi_{*,*}$-isomorphisms $\bS
\simeq \Sigma^{1,0} L_{-\infty}^\infty$, after $(\ell,\eta)$-adic
completion, for all primes~$\ell$.  Here $\bS$ denotes the motivic
sphere spectrum, $\eta \in \pi_{1,1}(\bS)$ is the Hopf fibration,
and now $L_{-\infty}^\infty = \holim_m L_{-2m}^\infty$,
where $L_{-2m}^\infty$ is the Thom spectrum of a virtual algebraic vector
bundle over the geometric classifying space $L^\infty = B\mu_\ell$ of the
algebraic group~$\mu_\ell$ of $\ell$-th roots of unity.  More precisely,
$L_{-2m}^\infty = \hocolim_n L_{-2m}^{2n-2m-1}$, where $L_{-2m}^{2n-2m-1}
= Th(-m \gamma_n^* \downarrow L^{2n-1})$ and $\gamma_n^*$ is the dual of
the tautological algebraic line bundle over $L^{2n-1} = (\bA^n \setminus
\{0\})/\mu_\ell$.

\begin{theorem} \label{thm:main}
Let $S$ be a finite dimensional Noetherian scheme, essentially smooth
over a field or Dedekind domain containing $1/\ell$.  There is a
$\pi_{*,*}$-isomorphism
$$
e^\wedge_{\ell,\eta} \: \bS^\wedge_{\ell,\eta}
        \longto (\Sigma^{1,0} L_{-\infty}^\infty)^\wedge_{\ell,\eta}
$$
in the stable motivic homotopy category $SH(S)$.  If $S = \Spec k$
for $k$ a field, then $e^\wedge_{\ell,\eta}$ is a motivic equivalence.
\end{theorem}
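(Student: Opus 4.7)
The plan is to adapt the classical Lin--Gunawardena argument to the motivic setting, using a motivic mod-$\ell$ Adams spectral sequence as the bridge between an algebraic $\Ext$-isomorphism and a completion equivalence of spectra. First, one constructs the map $e$ via a motivic transfer associated to the cover $E\mu_\ell \to B\mu_\ell$, or equivalently through an $S_\ell$-equivariant construction (which is why the abstract develops Singer constructions for both $S_\ell$ and $\mu_\ell$). One then identifies the bigraded motivic cohomology $H^{*,*}(L_{-\infty}^\infty; \bF_\ell)$, as a module over the motivic Steenrod algebra $\sA^{*,*}$, with a \emph{motivic Singer construction} $R_+(H^{*,*}\bS)$, arranged so that the cohomology of $e$ corresponds, up to the shift $\Sigma^{1,0}$, to a natural epimorphism $R_+(H^{*,*}\bS) \onto H^{*,*}\bS$.

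Second, the algebraic heart of the proof is to establish the motivic analog of Lin--Gunawardena: this epimorphism should induce an isomorphism
$$
\Ext_{\sA^{*,*}}^{*,*,*}(H^{*,*}\bS, H^{*,*}\bS) \longto
\Ext_{\sA^{*,*}}^{*,*,*}(R_+(H^{*,*}\bS), H^{*,*}\bS).
$$
Classically this is achieved by applying $R_+$ to an $\sA$-free resolution of $H^*\bS$ and running a filtration spectral sequence to show the reduced Singer construction kills $\Ext$. Motivically one carries out the same plan while tracking the weight grading and the contributions of the extra generators (such as $\tau$ and $\rho$) of $\sA^{*,*}$. \textbf{This is where the main obstacle lies}: verifying that the motivic Steenrod action on $R_+(H^{*,*}\bS)$ matches the formulas predicted by the geometry of $B\mu_\ell$, and doing so uniformly enough across the bigrading that the classical vanishing arguments survive, demands careful bookkeeping that has no analog in the singly-graded topological case.

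Third, one feeds the $\Ext$-isomorphism into a motivic Adams spectral sequence. Since $L_{-\infty}^\infty = \holim_m L_{-2m}^\infty$ is built as a homotopy limit, a naive mod-$\ell$ Adams spectral sequence would be contaminated by $\limone$-terms; this is precisely what the \emph{delayed limit Adams spectral sequence} of the abstract is designed to handle, producing a conditionally convergent spectral sequence abutting to the $(\ell,\eta)$-completed homotopy groups of both sides. Comparing the two spectral sequences via the $\Ext$-isomorphism promotes $e$ to a $\pi_{*,*}$-isomorphism after $(\ell,\eta)$-completion, establishing the first statement. When $S = \Spec k$ is a field, the sharper conclusion that $e^\wedge_{\ell,\eta}$ is a motivic equivalence should follow from the stronger convergence properties of the motivic Adams spectral sequence available over a field, together with the fact that the cofibre of $e$ has trivial mod-$\ell$ cohomology; I expect the convergence analysis for the delayed limit spectral sequence over a general base $S$ to be the subtlest remaining technical point.
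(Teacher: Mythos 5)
Your overall strategy is the one the paper follows: construct $e$ from a Pontryagin--Thom collapse (Section~\ref{sec:tower-Thom}), identify $H_c^{*,*}(L_{-\infty}^\infty)$ with a shifted localization $\Sigma H^{*,*}(B\mu_\ell)_{\loc}$ which in turn realizes a motivic Singer construction (Corollary~\ref{cor:RmuH-SigmaHBmulloc} and Proposition~\ref{prop:HcLinfty-vs-HBmuloc}), prove the evaluation map is an $\Ext$-equivalence (Theorem~\ref{thm:epsilon-Ext-eqs}), and then run the comparison through a delayed limit Adams spectral sequence (Propositions~\ref{prop:dellimE2}, \ref{prop:del-lim-cond-conv}, \ref{prop:comparison}). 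Two points of your outline deviate from what the paper actually does, and the second is a genuine misconception.

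First, a route difference rather than an error: you describe the $\Ext$-isomorphism as coming from applying the Singer functor to a free resolution and filtering to show the reduced construction kills $\Ext$. That is the classical Tor-equivalence argument of Lin--Davis--Mahowald--Adams and of \cite{AGM85}*{\S2}. The paper explicitly cannot use it, because the motivic dual Steenrod algebra is a Hopf algebroid (not a Hopf algebra) and $H^{*,*}$ has no canonical right $\sA$-module structure, so the $\bF_\ell\otimes_\sA(-)$-machinery is unavailable. Instead the paper works directly with $\Hom_{\sA}(-,H^{*,*})$: it shows $\Hom(\epsilon,\id)$ is an isomorphism on free modules (Proposition~\ref{prop:RSigma-RC-Afree}) using the free $A(n)$-bases of $B(n)$ and a bidegree finiteness argument, and then promotes this to an $\Ext$-equivalence with a $\lim$-$\limone$ argument over the finite subalgebras $A(n)$. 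This is a substantive technical shift, motivated precisely by the two-sidedness you wave at when you mention ``extra generators such as $\tau$ and $\rho$.''

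Second, your explanation for why the field case upgrades to a motivic equivalence is wrong. You invoke ``stronger convergence properties over a field'' and ``trivial mod-$\ell$ cohomology of the cofibre.'' But the paper explicitly cites Kylling--Wilson~\cite{KW19} to the effect that strong convergence for the motivic Adams spectral sequence fails already over number fields, and the comparison Proposition~\ref{prop:comparison} is engineered via Boardman's theorem to avoid needing strong convergence. The actual issue in passing from a $\pi_{*,*}$-isomorphism to a motivic equivalence is not convergence at all: a $\pi_{*,*}$-isomorphism in $SH(S)$ is a statement about homotopy groups over $S$ itself, while a motivic equivalence requires agreement over every smooth $S$-scheme. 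Over $S=\Spec k$ the paper bridges this gap with Morel's stable $\bA^1$-connectivity theorem and the fact that the homotopy sheaves of $Ce^\wedge_{\ell,\eta}$ are unramified: vanishing of $\pi_{*,*}$ over the function field $\Spec k(U)$ of any irreducible smooth $U$ forces vanishing over $U$, hence $\underline{\pi}_{*,*}(Ce^\wedge_{\ell,\eta})=0$. This is a Nisnevich-local sheaf-theoretic argument, not a convergence argument, and it is exactly why the equivalence statement is restricted to the field case.
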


In other words, we prove the motivic Segal conjecture in its
non-equivariant form, in the case of the algebraic group~$\mu_\ell$, for
any prime~$\ell$.  For $\ell=2$ this is the motivic version of Mahowald's
conjecture and Lin's theorem.  For $\ell$ odd it is the motivic version
of Gunawardena's theorem.

Already for $S = \Spec k$ in the algebraically closed case $k = \bC$,
the additional information about motivic weight has proved to be a
valuable new tool for calculational purposes, cf.~Isaksen~\cite{Isa19}
and Isaksen--Wang--Xu~\cite{IWX20}.  In the real case $k = \bR$, many
new phenomena arise, cf.~Hill~\cite{Hil11}, Dugger--Isaksen~\cite{DI17}
and~Belmont--Isaksen~\cite{BI22}.  Our results are valid even in the
arithmetically most substantial cases of (rings of $\ell$-integers in)
number fields.  In particular, we have made an effort to not have
to assume that the mod~$\ell$ motivic cohomology groups $H^{*,*}
= H^{*,*}(S; \bZ/\ell)$ are finite in each bidegree.  Our results
enable an analysis of $\pi_{*,*}(\bS)$ by comparison with the homotopy
spectral sequence associated to the tower $\{L_{-2m}^\infty\}_m$, i.e.,
the motivic Mahowald root invariants, refining~Mahowald~\cite{Mah85}
and Mahowald--Ravenel~\cite{MR93}.  Such applications have already
appeared in Quigley's papers~\cite{Qui19}, \cite{Qui21a}, \cite{Qui21b}.
We expect the interplay between the motivic cohomology of number fields
and the Mahowald root invariants to be very rich.

In Section~\ref{sec:motivic-Steenrod} we review from Voevodsky's
article~\cite{Voe03a} the Hopf algebroid structure of the motivic
dual Steenrod algebra $\sA_{*,*}$, and of its quotients $A(n)_{*,*} =
\sA_{*,*}/I(n)$.  In Section~\ref{sec:bicomodule-algebras} we generalize
the approach of Adams--Gunawardena--Miller from~\cite{AGM85}*{\S2},
and introduce the $A(n)_{*,*}$-$A(n-1)_{*,*}$ bicomodule algebras
$C(n)_{*,*} = \sA_{*,*}/J(n)$ and their localizations $B(n)_{*,*}$
away from~$\xi_1$.  In Section~\ref{sec:dual-bimodules} we
dualize these constructions, following Boardman~\cite{Boa82}*{\S3},
obtaining the motivic Steenrod algebra $\sA$, its finite subalgebras
$A(n)$, and the $A(n)$-$A(n-1)$-bimodules $C(n)$ and $B(n)$.
In Section~\ref{sec:small-Singer} we generalize the (small) Singer
construction of Singer~\cite{Sin81} and Li--Singer~\cite{LS82}, obtaining
an $\sA$-module $R_S(M) = \colim_n B(n) \otimes_{A(n-1)} M$ and a natural
homomorphism $\epsilon \: R_S(M) \to M$ for each $\sA$-module~$M$.

We prove in Theorem~\ref{thm:RSigmaH-SigmaHBSigmalloc} that $R_S(H^{*,*})
\cong \Sigma^{1,0} H^{*,*}(BS_\ell)_{\loc}$ is a shifted localization of
the motivic cohomology of the geometric classifying space of the symmetric
group $S_\ell$ on $\ell$ letters.  In Section~\ref{sec:large-Singer}
we recast Adams--Gunawardena--Miller~\cite{AGM85}*{\S5} and
construct a (large) Singer construction $R_\mu(M)$ and a natural
$\sA$-module homomorphism $\epsilon \: R_\mu(M) \to M$.  We show in
Corollary~\ref{cor:RmuH-SigmaHBmulloc} that $R_\mu(H^{*,*}) \cong
\Sigma^{1,0} H^{*,*}(B\mu_\ell)_{\loc}$ is a shifted localization
of the motivic cohomology of the infinite lens space $B\mu_\ell$.
In Section~\ref{sec:evaluations} we prove that the evaluation
homomorphisms~$\epsilon$ are $\Ext$-equivalences.  Here we deviate
from the $\Tor$-equivalence approach of~\cite{AGM85}*{\S2}, due to the
two-sided nature of Hopf algebroids.

In Section~\ref{sec:tower-Thom} we construct the tower
$\{L_{-2m}^\infty\}_m$ of motivic spectra, and the map $e \: \bS \to
\Sigma^{1,0} L_{-\infty}^\infty$ to the suspension of their homotopy
limit.  We show in~Proposition~\ref{prop:HcLinfty-vs-HBmuloc}
that the continuous cohomology $H_c^{*,*}(L_{-\infty}^\infty)
= \colim_m H^{*,*}(L_{-2m}^\infty)$ is isomorphic as an
$\sA$-module to $H^{*,*}(B\mu_\ell)_{\loc}$, and that $e$ induces
the $\Ext$-equivalence~$\epsilon$, via the identifications above.
The plan is now to compare the motivic mod~$\ell$ Adams spectral
sequence for $\bS$ with the tower of Adams spectral sequences
associated to the $L_{-2m}^\infty$.  This works fine in the presence of
sufficient finiteness to ensure that the algebraic limit of these Adams
spectral sequences is again a spectral sequence, as is the case in the
classical setting of Caruso--May--Priddy~\cite{CMP87}.  However, for
base schemes~$S$ such that $H^{*,*}$ is not finite in each bidegree,
this approach can fail.  Instead, we form a modified Adams spectral
sequence, called the delayed limit Adams spectral sequence, where any
$\limone$-classes arising from non-exactness are shifted up into the
next filtration degree.

In Section~\ref{sec:generalized-EM} we prepare for this
construction by introducing some terminology for motivic generalized
Eilenberg--MacLane spectra, and formulate a finiteness condition,
called bifinite type, which lets us identify the $E_1$- and
$E_2$-terms of motivic Adams spectral sequences in algebraic terms.
In Section~\ref{sec:delayed-Adams} we introduce the delayed limit Adams
spectral sequence in Definition~\ref{def:del-lim-adams-sp-seq},
and identify its $E_2$-term as $\Ext$ for a continuous
cohomology $\sA$-module in Proposition~\ref{prop:dellimE2}.
In Proposition~\ref{prop:del-lim-cond-conv} we show that the
delayed limit Adams spectral sequence converges conditionally, and
in Proposition~\ref{prop:comparison} we adapt a comparison theorem
from Boardman~\cite{Boa99} for morphisms of conditionally convergent
spectral sequences.  In Section~\ref{sec:motivic-lin-gunawardena}
the threads are brought together.  See Theorem~\ref{thm:mot-lin-gun}
for the proof of Theorem~\ref{thm:main}.

This article is based on the first author's PhD thesis~\cite{Gre13},
guided by the second author.

\section{The motivic Steenrod algebra and its dual}
\label{sec:motivic-Steenrod}

Let $S$ be a Noetherian (separated) scheme of finite (Krull)
dimension~$d$, essentially smooth over a field or a Dedekind domain,
and let $\ell$ be a prime that is invertible on~$S$.

Let $SH(S)$ be Voevodsky's motivic stable homotopy
category~\cite{Voe98}*{Def.~5.7}, \cite{Jar00} associated to smooth
schemes over~$S$.  It is triangulated, and has a compatible closed
symmetric monoidal structure given by the motivic sphere spectrum~$\bS =
\Sigma^\infty S_+$, the smash product pairing $- \wedge -$, the twist
isomorphism $\gamma$ and the function spectrum $F(-, -)$.
Let $H = H\bZ/\ell$ be the motivic Eilenberg--MacLane spectrum
representing motivic cohomology with coefficients in~$\bZ/\ell$.
It is a commutative ring spectrum, with unit map $\eta \: \bS \to H$
and product $\mu \: H \wedge H \to H$.  Moreover, $H$ is known to be
cellular~\cite{Hoy15}*{Prop.~8.1}, \cite{Spi18}*{Cor.~10.4}, i.e.,
an iterated homotopy colimit of stable motivic spheres.

Let $H_{*,*} = \pi_{*,*}(H) = H^{-*,-*}$ denote the motivic homology
and cohomology groups of the base scheme~$S$.  Then $H^{p,q} = 0$
unless $0 \le p \le \min\{q+d, 2q\}$, cf.~\cite{Gei04}*{Cor.~4.4},
\cite{Hoy15}*{Cor.~4.26}.  For $x \in \pi_{t,u}(X)$, where $X$ is any
motivic spectrum, we refer to~$t$ and~$u$ as the topological degree and
weight of~$x$, respectively.  We write $|x| = \deg(x) = t$, $\wt(x) = u$
and $\|x\| = (t,u)$.  The cup product induced by~$\mu$ gives $H^{*,*} =
H_{-*,-*}$ the structure of a bigraded commutative $\bZ/\ell$-algebra.
Only the parity of the topological degree plays a role in bigraded
commutativity.

Let $\sA = H^{*,*}(H) = \pi_{-*,-*} F(H, H)$ denote the motivic Steenrod
algebra, and let $\sA_{*,*} = H_{*,*}(H) = \pi_{*,*} (H \wedge H)$
denote its dual.  Then $\sA_{*,*}$ is free as a left $H_{*,*}$-module,
cf.~Lemma~\ref{lem:left-H-basis-A}, so the pair $(H_{*,*},
\sA_{*,*})$ admits the structure of a bigraded Hopf algebroid
\cite{Ada69}*{Lec.~3}, \cite{MR77}*{\S1}, \cite{Rav86}*{Def.~A1.1.1}.
Its structure maps are the following $\bZ/\ell$-algebra homomorphisms:
\begin{enumerate}
\item
the left unit $\eta_L \: H_{*,*} \to \sA_{*,*}$ induced by
$1 \wedge \eta \: H = H \wedge \bS \to H \wedge H$;
\item
the right unit $\eta_R \: H_{*,*} \to \sA_{*,*}$ induced by
$\eta \wedge 1 \: H = \bS \wedge H \to H \wedge H$;
\item
the product $\phi \: \sA_{*,*} \otimes \sA_{*,*} \to \sA_{*,*}$
induced by $(\mu \wedge \mu)(1 \wedge \gamma \wedge 1) \: H \wedge H
\wedge H \wedge H \to H \wedge H$;
\item
the counit $\epsilon \: \sA_{*,*} \to H_{*,*}$ induced
by $\mu \: H \wedge H \to H$;
\item
the coproduct $\psi \: \sA_{*,*} \to \sA_{*,*} \otimes_{H_{*,*}}
\sA_{*,*}$ induced by $1 \wedge \eta \wedge 1 \: H \wedge H
= H \wedge \bS \wedge H \to H \wedge H \wedge H \cong
(H \wedge H) \wedge_H (H \wedge H)$;
\item
the conjugation $\chi \: \sA_{*,*} \to \sA_{*,*}$ induced by $\gamma \:
H \wedge H \to H \wedge H$.
\end{enumerate}
We use the left and right units to view $\sA_{*,*}$ as an
$H_{*,*}$-$H_{*,*}$-bimodule, and $- \otimes_{H_{*,*}} -$ in~(5)
denotes the bimodule tensor product.

More explicitly,
$$
\sA_{*,*} = H_{*,*} [ \tau_0, \tau_1, \dots, \xi_1, \xi_2, \dots ] /
	(\tau_i^2 - T_i \mid i\ge0)
$$
is a bigraded commutative $H_{*,*}$-algebra generated by classes
$\tau_i$ in bidegree $\|\tau_i\| = (2\ell^i-1, \ell^i-1)$ and
$\xi_i$ in bidegree $\|\xi_i\| = (2\ell^i-2, \ell^i-1)$, where
$$
T_i = \begin{cases}
\tau \xi_{i+1} + \rho \tau_{i+1} + \rho \tau_0 \xi_{i+1}
	& \text{for $\ell=2$,} \\
0 & \text{for $\ell$ odd.}
\end{cases}
$$
Here the elements $\rho \in H^{1,1} = H_{-1,-1}$ and $\tau \in H^{0,1}
= H_{0,-1}$ are specified for $\ell=2$ in~\cite{Voe03a}*{Thm.~6.10}.
They shall be interpreted to be zero for $\ell$ odd.  In these terms,
\begin{enumerate}
\item
the algebra unit is $\eta_L$;
\item
$\eta_R = \chi \eta_L$ satisfies $\eta_R(\rho) = \rho$ and
$\eta_R(\tau) = \tau + \rho \tau_0$;
\item
the algebra product is $\phi$;
\item
the counit $\epsilon$ maps each $\tau_i$ and $\xi_i$ to $0$;
\item
the coproduct $\psi$ satisfies
$$
\psi(\tau_k) = \tau_k \otimes 1
	+ \sum_{i+j=k} \xi_i^{\ell^j} \otimes \tau_j
\qqandqq
\psi(\xi_k) = \sum_{i+j=k} \xi_i^{\ell^j} \otimes \xi_j \,,
$$
where $\xi_0 = 1$;
\item
the conjugation $\chi$ satisfies
$$
\tau_k + \sum_{i+j=k} \xi_i^{\ell^j} \chi(\tau_j) = 0
\qqandqq
\sum_{i+j=k} \xi_i^{\ell^j} \chi(\xi_j) = 0 \,,
$$
and $\chi^2 = 1$.
\end{enumerate}
See \cite{Voe03a}*{Thm.~12.6, Lem.~12.11, Rem.~12.12},
\cite{Voe10}*{Thm.~3.49}, \cite{Rio12}*{Thm.~5.2.13},
\cite{HKO17}*{Thm.~5.6} and~\cite{Spi18}*{Thm.~10.26} for proofs.

\begin{lemma} \label{lem:left-H-basis-A}
The monomials
$$
\tau^E \xi^R = \tau_0^{e_0} \tau_1^{e_1} \dotsm
	\xi_1^{r_1} \xi_2^{r_2} \dotsm \,,
$$
where $E = (e_0, e_1, \dots)$ and $R = (r_1, r_2, \dots)$ range through
the finite length integer sequences with $e_s \in \{0,1\}$ and $r_s\ge0$,
form a basis for
$$
\sA_{*,*} = \frac{
        H_{*,*}[\tau_0, \tau_1, \dots, \xi_1, \xi_2, \dots]
        }{ (\tau_i^2 - T_i \mid i\ge0) }
$$
as a free left $H_{*,*}$-module.
\end{lemma}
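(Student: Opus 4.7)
The plan is to split by parity of $\ell$: handle the odd case formally, use a rewriting argument for spanning when $\ell = 2$, and appeal to the topological computations cited just after the presentation for linear independence. The odd case is immediate, since then $T_i = 0$ and the quotient algebra is the tensor product over $H_{*,*}$ of the exterior algebra on the $\tau_i$ and the polynomial algebra on the $\xi_j$. Both factors are manifestly free left $H_{*,*}$-modules on their standard monomial bases, and the tensor product of free modules over a commutative ring is free on the tensor product basis, giving exactly the claimed basis with no topological input.

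For $\ell = 2$, I would first establish spanning by iterated rewriting. Assign to each monomial $\tau_0^{e_0} \tau_1^{e_1} \dotsm \xi_1^{r_1} \xi_2^{r_2} \dotsm$ the weight $w = \sum_s e_s$. The three summands of $T_i = \tau \xi_{i+1} + \rho \tau_{i+1} + \rho \tau_0 \xi_{i+1}$ have respective $\tau$-weights $0$, $1$, $1$, so replacing a factor $\tau_i^{e_i}$ (with $e_i \ge 2$) by $\tau_i^{e_i-2} T_i$ strictly decreases $w$. Iterating, every element is expressed after finitely many steps as an $H_{*,*}$-linear combination of monomials with all $e_s \in \{0, 1\}$, proving that the claimed family spans.

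The main obstacle is linear independence, which is \emph{not} implied by the abstract ring presentation: a priori, the given relations could cut the quotient down further (or equivalently, hidden relations could appear). For this step I would invoke the topological computations of $H_{*,*}(H) = \pi_{*,*}(H \wedge H)$ referenced right after the presentation --- \cite{Voe03a}*{Thm.~12.6, Lem.~12.11, Rem.~12.12}, \cite{Voe10}*{Thm.~3.49}, \cite{Rio12}*{Thm.~5.2.13}, \cite{HKO17}*{Thm.~5.6}, and \cite{Spi18}*{Thm.~10.26} (the last being crucial for the Dedekind case). These works identify $\sA_{*,*}$ with the stated quotient algebra and, by means of a cellular decomposition of $H$ or a motivic Künneth computation, exhibit the monomials $\tau^E \xi^R$ as a free left $H_{*,*}$-module basis. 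The present lemma is then a repackaging of those known results in a form convenient for the Hopf algebroid manipulations in the rest of the paper.
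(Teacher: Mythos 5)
Your proposal is correct, but it routes the key step through topology where the paper argues purely algebraically from the presentation, so the two proofs are genuinely different. For $\ell$ odd you agree with the paper. For $\ell=2$, your spanning argument (by descending induction on the $\tau$-weight $\sum_s e_s$) is fine and is morally the same rewriting the paper has in mind. The divergence is at linear independence. You claim this ``is not implied by the abstract ring presentation'' and fall back on citing the computations of $\sA_{*,*}=H_{*,*}(H)$. That citation is valid, but it is a bit circular in spirit: those references essentially prove the lemma directly, so you are deferring to them rather than deriving the statement from the presentation. The paper instead observes that every monomial appearing in $T_i = \tau\xi_{i+1} + \rho\tau_{i+1} + \rho\tau_0\xi_{i+1}$, namely $\xi_{i+1}$, $\tau_{i+1}$ and $\tau_0\xi_{i+1}$, has strictly higher motivic weight than $\tau_i^2$. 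Filtering the polynomial ring $H_{*,*}[\tau_0,\tau_1,\dots,\xi_1,\xi_2,\dots]$ by monomial weight, the relations have leading terms exactly $\tau_i^2$; since the $\tau_i^2$ are pairwise coprime monomials, these relations form a Gr\"obner basis, so the initial ideal is $(\tau_i^2 \mid i\ge 0)$ and the associated graded of $\sA_{*,*}$ is $H_{*,*}[\tau_i,\xi_j]/(\tau_i^2)$, which is manifestly free on the claimed monomials; freeness lifts back through the filtration because it is exhaustive and bounded in each bidegree. So your worry about hidden relations is a reasonable one to raise in general, but here it is handled by the coprimality of the leading terms, and the paper's proof is self-contained modulo the presentation itself. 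In short: your argument is not wrong, but the paper establishes the lemma without invoking the structure theorems for $\sA_{*,*}$ a second time.
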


\begin{proof}
For $\ell$ odd this is clear.  The claim for $\ell=2$ follows from the
form of the relations $\tau_i^2 = T_i$, since $\xi_{i+1}$, $\tau_{i+1}$
and $\tau_0 \xi_{i+1}$ have higher weight than~$\tau_i^2$.
\end{proof}

\begin{lemma} \label{lem:right-H-basis-A}
The same monomials $\tau^E \xi^R$ as in
Lemma~\ref{lem:left-H-basis-A} form a basis for $\sA_{*,*}$
as a free right $H_{*,*}$-module.
\end{lemma}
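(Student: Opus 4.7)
The cleanest approach is to apply the conjugation $\chi$ to Lemma~\ref{lem:left-H-basis-A}, obtaining a free right $H_{*,*}$-basis, and then to identify it with $\{\tau^E\xi^R\}$ via a triangular change of coordinates. Since $\sA_{*,*}$ is bigraded commutative, $\chi$ is a $\bZ/\ell$-algebra involution satisfying $\chi \eta_L = \eta_R$. Applying $\chi$ to the unique expansion $\chi(y) = \sum_{E,R} \eta_L(h_{E,R}) \tau^E\xi^R$ provided by Lemma~\ref{lem:left-H-basis-A}, and using $\chi^2 = \id$ with commutativity, yields
$$
y \;=\; \sum_{E,R} \chi(\tau^E \xi^R)\,\eta_R(h_{E,R}),
$$
exhibiting $\{\chi(\tau^E\xi^R)\}$ as a free right $H_{*,*}$-basis for $\sA_{*,*}$.

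To replace $\chi(\tau^E\xi^R)$ by $\tau^E\xi^R$, I exploit the triangularity of $\chi$. Solving the defining identities
$$
\sum_{i+j=k}\xi_i^{\ell^j}\chi(\xi_j) = 0 \qqandqq \tau_k + \sum_{i+j=k}\xi_i^{\ell^j}\chi(\tau_j) = 0
$$
recursively gives $\chi(\xi_k) = -\xi_k + P_k$, with $P_k$ a $\bZ/\ell$-polynomial in $\xi_1,\dots,\xi_{k-1}$, and $\chi(\tau_k) = -\tau_k + Q_k$, with $Q_k$ a polynomial in $\xi_1,\dots,\xi_k$ and $\tau_0,\dots,\tau_{k-1}$ (for $\ell=2$ one uses the relations $\tau_i^2 = T_i$ to clear any squared $\tau_i$'s that appear). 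Filter $\sA_{*,*}$ by the maximum generator index occurring in a monomial, refined within each level by placing $\xi_k$-factors before $\tau_k$-factors. Then $\chi(\tau^E\xi^R) \equiv (-1)^{|E|+|R|}\tau^E\xi^R$ modulo strictly lower-filtration monomials, so the change-of-basis matrix from $\{\chi(\tau^E\xi^R)\}$ to $\{\tau^E\xi^R\}$ is triangular over $H_{*,*}$ with $\pm1$ diagonal entries and is therefore invertible; this identifies $\{\tau^E\xi^R\}$ as a free right $H_{*,*}$-basis as well.

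The main obstacle I anticipate is setting up the filtration carefully enough, particularly for $\ell=2$, where $\chi(\tau_k)$ contains $\xi_k$-monomials sharing the same maximum generator index as $\tau_k$ itself. Since each bidegree of $\sA_{*,*}$ supports only finitely many basis monomials $\tau^E\xi^R$, the triangularity reduces to a finite matrix computation within each bidegree, and the $\pm 1$ diagonal then guarantees invertibility of the transition matrix over $H_{*,*}$. A more hands-on alternative is to recast the quadratic relation as $\tau_i^2 = \xi_{i+1}\eta_R(\tau) + \tau_{i+1}\eta_R(\rho)$ by using commutativity together with $\eta_R(\tau) = \tau + \rho\tau_0$ and $\eta_R(\rho) = \rho$, and then rerun the weight-induction of Lemma~\ref{lem:left-H-basis-A} directly in the right-module setting: the monomials $\xi_{i+1}$ and $\tau_{i+1}$ appearing on the right still have weight strictly greater than~$\tau_i^2$, so the same PBW reduction applies.
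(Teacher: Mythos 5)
Your opening step — applying $\chi$ to the left $H_{*,*}$-basis from Lemma~\ref{lem:left-H-basis-A} and using $\chi\eta_L=\eta_R$, $\chi^2=\id$ and commutativity to conclude that $\{\chi(\tau^E\xi^R)\}$ is a free right $H_{*,*}$-basis — is correct. The gap lies in the claimed triangularity. With the filtration you describe (maximum generator index, $\xi_k$ placed before $\tau_k$ within a level), the assertion that $\chi(\tau^E\xi^R)\equiv\pm\tau^E\xi^R$ modulo strictly lower filtration already fails for $\ell=2$ on the monomial $\tau_0\tau_1$: since $\chi(\tau_0)=\tau_0$ and $\chi(\tau_1)=\tau_1+\xi_1\tau_0$, one has
$$
\chi(\tau_0\tau_1)=\tau_0\tau_1+\xi_1\tau_0^2
  =\tau_0\tau_1+\tau\,\xi_1^2+\rho\,\tau_1\xi_1+\rho\,\tau_0\xi_1^2\,,
$$
and the basis monomial $\tau_1\xi_1$ appearing here has the same maximum generator index as $\tau_0\tau_1$ and also contains $\tau_1$, so it is not at a strictly lower level. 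This is exactly the obstacle you flagged, but your proposed refinement (``$\xi_k$ before $\tau_k$'') does not resolve it: clearing a squared $\tau_i$ via $\tau_i^2=T_i$ introduces $H_{*,*}$-coefficients ($\tau$ and $\rho$) multiplying monomials that do not drop the maximum index. One can instead filter by topological degree, which moves $\tau_1\xi_1$ into strictly higher filtration; but then $\chi$ on the associated graded is no longer $\pm\mathrm{id}$ (in $\gr^4$ one gets $\chi(\tau_0\tau_1)\equiv\tau_0\tau_1+\tau\,\xi_1^2$), and to see invertibility one ends up either invoking that $\chi$ is an involution on $\gr^t$ — which requires knowing $\chi$ preserves $F^t$, i.e.\ that $F^t$ is a right submodule — or abandoning $\chi$ altogether. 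Both routes reduce to the same two facts: $\eta_L\equiv\eta_R\pmod{F^1=\ker\epsilon}$, and $F^t\cdot F^1\subset F^{t+1}$ because $\tau_i^2=T_i$ does not lower topological degree.

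Those two facts are precisely the content of the paper's proof, which makes no use of $\chi$: it filters $\sA_{*,*}$ by topological degree of the basis monomials, uses the two observations above to show $\{F^t\}$ is a filtration by $H_{*,*}$-$H_{*,*}$-bimodules on whose associated graded the left and right actions coincide, and then passes up the filtration using that it is bounded in each bidegree. Your ``hands-on alternative'' — rewriting $\tau_i^2=\xi_{i+1}\eta_R(\tau)+\tau_{i+1}\eta_R(\rho)$ and rerunning the weight induction of Lemma~\ref{lem:left-H-basis-A} on the right — is the concrete, coordinate-level manifestation of the same two observations, and is sound (to be fully rigorous one should still note why the monomials generate $\sA_{*,*}$ as a right $H_{*,*}$-module, which again comes from $\eta_L\equiv\eta_R$ mod the augmentation ideal plus induction up the filtration). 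In short: the primary $\chi$-plus-triangularity plan has a genuine gap, while your alternative is correct and is essentially the paper's argument.
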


\begin{proof}
For $t\ge0$ let
$$
F^t \sA_{*,*} = \< \tau^E \xi^R \mid \deg(\tau^E \xi^R) \ge t \>
	\subset \sA_{*,*}
$$
be the left $H_{*,*}$-submodule generated by the monomials from
Lemma~\ref{lem:left-H-basis-A} of topological degree $\ge t$.
These are also right $H_{*,*}$-submodules, since $\epsilon \eta_L =
\id = \epsilon \eta_R$ implies $\eta_L \equiv \eta_R \mod F^1 \sA_{*,*}
= \ker(\epsilon)$, and $F^t \sA_{*,*} \cdot F^1 \sA_{*,*} \subset
F^{t+1} \sA_{*,*} \subset F^t \sA_{*,*}$.  (The first inclusion uses
that $\tau_i^2 = T_i$ has topological degree less than or equal to that
of $\xi_{i+1}$, $\tau_{i+1}$ and~$\tau_0 \xi_{i+1}$.)  This defines a
decreasing filtration of $\sA_{*,*}$ by $H_{*,*}$-$H_{*,*}$-bimodules,
such that the left and right $H_{*,*}$-module actions agree on each
filtration quotient
$$
\gr^t \sA_{*,*} = \frac{ F^t \sA_{*,*} }{ F^{t+1} \sA_{*,*} } \,.
$$
The (cosets of the) degree~$= t$ monomials $\tau^E \xi^R$ from
Lemma~\ref{lem:left-H-basis-A} freely generate this quotient
as a left $H_{*,*}$-module, hence also as a right $H_{*,*}$-module.
It follows that the degree $\ge0$ monomials $\tau^E \xi^R$ freely
generate $\sA_{*,*}$ as a right $H_{*,*}$-module, since in any given
bidegree $F^t \sA_{*,*} = 0$ for all sufficiently large~$t$.
\end{proof}


The classical definitions of~\cite{Ste62}*{\S II.3, \S VI.4} generalize
to the motivic setting.

\begin{definition} \label{def:In-An}
For $n\ge-1$, let $I(n) \subset \sA_{*,*}$ be the ideal
$$
I(n) = (\tau_{n+1}, \tau_{n+2}, \dots,
	\xi_1^{\ell^n}, \xi_2^{\ell^{n-1}}, \dots,
	\xi_n^\ell, \xi_{n+1}, \xi_{n+2}, \dots)
$$
generated by $\tau_k$ for $k\ge n+1$ and by
$\xi_i^{\ell^j}$ for $i\ge1$, $j\ge0$ and $i+j\ge n+1$.
Note that $T_i \in I(n)$ for $i \ge n$.
Let
$$
A(n)_{*,*} = \sA_{*,*}/I(n)
= \frac{ H_{*,*}[ \tau_0, \dots, \tau_n, \xi_1, \xi_2, \dots, \xi_n] }
	{ (\tau_0^2 - T_0, \dots, \tau_{n-1}^2 - T_{n-1}, \tau_n^2,
	\xi_1^{\ell^n}, \xi_2^{\ell^{n-1}}, \dots, \xi_n^\ell) }
$$
be the quotient algebra.
\end{definition}

\begin{example}
\begin{align*}
I(-1) &= (\tau_0, \tau_1, \dots, \xi_1, \xi_2, \dots) \\
I(0) &= (\tau_1, \tau_2, \dots, \xi_1, \xi_2, \dots) \\
I(1) &= (\tau_2, \tau_3, \dots, \xi_1^\ell, \xi_2, \xi_3, \dots)
\end{align*}
so
\begin{align*}
A(-1)_{*,*} &= H_{*,*} \\
A(0)_{*,*} &= H_{*,*} [\tau_0] / (\tau_0^2) \\
A(1)_{*,*} &= H_{*,*} [\tau_0, \tau_1, \xi_1] /
	(\tau_0^2 - T_0, \tau_1^2, \xi_1^\ell) \,.
\end{align*}
\end{example}

\begin{lemma}
There is a unique Hopf algebroid structure on $(H_{*,*}, A(n)_{*,*})$
making the canonical projection $\pi_n \: \sA_{*,*} \to \sA_{*,*}/I(n)
= A(n)_{*,*}$ a Hopf algebroid homomorphism.
\end{lemma}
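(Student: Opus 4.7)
The plan is to show that $I(n)$ is a Hopf ideal in $(H_{*,*}, \sA_{*,*})$, that is, that
$I(n) \subset \ker(\epsilon)$, $\psi(I(n)) \subset I(n) \otimes_{H_{*,*}} \sA_{*,*} + \sA_{*,*} \otimes_{H_{*,*}} I(n)$, and $\chi(I(n)) \subset I(n)$. Given these three conditions, each of the structure maps $\eta_L, \eta_R, \phi, \epsilon, \psi, \chi$ of $(H_{*,*}, \sA_{*,*})$ descends uniquely through the surjective $\bZ/\ell$-algebra homomorphism $\pi_n$ to a corresponding structure map on $A(n)_{*,*}$; the Hopf algebroid axioms are inherited; and $\pi_n$ becomes a Hopf algebroid homomorphism by construction. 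Uniqueness of the resulting structure is automatic from surjectivity of~$\pi_n$.

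The counit case is immediate: every generator $\tau_k$ with $k \geq n+1$ and $\xi_i^{\ell^j}$ with $i \geq 1$, $i+j \geq n+1$ lies in $\ker(\epsilon)$ by formula~(4). For the coproduct, multiplicativity of~$\psi$ reduces the check to these generators. For $\psi(\tau_k) = \tau_k \otimes 1 + \sum_{i+j=k} \xi_i^{\ell^j} \otimes \tau_j$ with $k \geq n+1$, the leading term satisfies $\tau_k \in I(n)$; in the sum, the $i = 0$ contribution is $1 \otimes \tau_k$ with $\tau_k \in I(n)$, while for $i \geq 1$ one has $\xi_i^{\ell^j} \in I(n)$ because $i+j = k \geq n+1$. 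For $\xi_i^{\ell^j}$ with $i \geq 1$ and $i+j \geq n+1$, the Frobenius, applicable in characteristic $\ell$ because each tensorand has even topological degree, gives $\psi(\xi_i^{\ell^j}) = \psi(\xi_i)^{\ell^j} = \sum_{a+b=i} \xi_a^{\ell^{b+j}} \otimes \xi_b^{\ell^j}$; the $a = 0$ term equals $1 \otimes \xi_i^{\ell^j}$, the $b = 0$ term equals $\xi_i^{\ell^j} \otimes 1$, both with $\xi_i^{\ell^j} \in I(n)$, and for $a, b \geq 1$ one has $a + (b+j) = i+j \geq n+1$, so $\xi_a^{\ell^{b+j}} \in I(n)$.

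For the conjugation I would argue by induction on~$k$, using the defining recursions in~(6) to express $\chi(\tau_k)$ and $\chi(\xi_k)$ as polynomial expressions in the $\xi_i$'s and in $\chi(\tau_j)$, $\chi(\xi_j)$ for $j < k$. Combined with $\chi(\xi_i^{\ell^j}) = \chi(\xi_i)^{\ell^j}$ and the fact that $I(n)$ is an ideal, this yields $\chi(\tau_k) \in I(n)$ for $k \geq n+1$ and $\chi(\xi_i^{\ell^j}) \in I(n)$ whenever $i \geq 1$ and $i+j \geq n+1$. I expect the only mild subtlety to be the bookkeeping of indices in this induction, which is routine rather than a real obstacle; the $\ell=2$ complication involving $\tau_i^2 = T_i$ causes no extra trouble, since $T_i \in I(n)$ automatically for $i \geq n$, as already noted in Definition~\ref{def:In-An}.
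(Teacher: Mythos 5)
Your proof is correct and follows essentially the same line of reasoning as the paper's: identify $I(n)$ as a Hopf ideal (checking that it is sent into $\ker(\epsilon)$, into the kernel of $\pi_n\otimes\pi_n$ by $\psi$, and into itself by $\chi$), then let the structure maps descend and observe that the Hopf algebroid axioms for $A(n)_{*,*}$ are inherited formally from those for $\sA_{*,*}$, with uniqueness forced by surjectivity of $\pi_n$. You give somewhat more detail than the paper on the coproduct computation (splitting out the $a=0$, $b=0$, and $a,b\ge1$ cases) and slightly less on the conjugation, where in fact a single expansion of the recursion already exhibits an $I(n)$-factor in every term without needing a genuine induction; but both are the same argument.
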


\begin{proof}
The Hopf algebroid structure maps of $(H_{*,*}, A(n)_{*,*})$ are
$\bZ/\ell$-algebra homomorphisms, determined as follows:
\begin{enumerate}
\item
The left unit $\eta_{L,n} \: H_{*,*} \to A(n)_{*,*}$ is the
composite $\pi_n \circ \eta_L$.
\item
The right unit $\eta_{R,n} \: H_{*,*} \to A(n)_{*,*}$ is the
composite $\pi_n \circ \eta_R$.
\item
The algebra product $\phi_n \: A(n)_{*,*} \otimes A(n)_{*,*} \to
A(n)_{*,*}$ is characterized by $\phi_n \circ (\pi_n \otimes \pi_n)
= \pi_n \circ \phi$, and exists because $I(n) \subset \sA_{*,*}$
is an ideal.
\item
The counit $\epsilon_n \: A(n)_{*,*} \to H_{*,*}$ is characterized by
$\epsilon_n \circ \pi_n = \epsilon$, and exists because
$\epsilon(x) = 0$ for each generator $x$ of $I(n)$.
\item
The coproduct $\psi_n \: A(n)_{*,*} \to A(n)_{*,*} \otimes_{H_{*,*}}
A(n)_{*,*}$ is characterized by $\psi_n \circ \pi_n = (\pi_n \otimes
\pi_n) \psi$, and exists because $(\pi_n \otimes \pi_n) \psi(x) = 0$
for each generator $x$ of $I(n)$.
\item
The conjugation $\chi_n \: A(n)_{*,*} \to A(n)_{*,*}$ is characterized
by $\chi_n \circ \pi_n = \pi_n \circ \chi$, and exists because
$\chi(x) \in I(n)$ for each generator $x$ of $I(n)$.
\end{enumerate}
In more detail, the explicit formulas for the coproduct show that
$\psi(\tau_k)$, for all $k \ge n+1$, and $\psi(\xi_i^{\ell^j}) =
\psi(\xi_i)^{\ell^j}$, for all $i\ge1$, $j\ge0$ with $i+j \ge n+1$,
are in the image of
$$
I(n) \otimes_{H_{*,*}} \sA_{*,*} \ \oplus \ \sA_{*,*} \otimes_{H_{*,*}} I(n)
	\longto \sA_{*,*} \otimes_{H_{*,*}} \sA_{*,*} \,,
$$
so that $\psi(I(n))$ is contained in this image.  Likewise, the
recursive formulas for the conjugation show that $\chi(\tau_k)$ and
$\chi(\xi_i^{\ell^j}) = \chi(\xi_i)^{\ell^j}$ are in $I(n)$, for the
same~$k$, $i$ and~$j$, so that $\chi(I(n)) = I(n)$.  The verification
that these structure maps make $(H_{*,*}, A(n)_{*,*})$ a Hopf algebroid,
with $(\id, \pi_n)$ a Hopf algebroid homomorphism, follows formally from
the fact that $(H_{*,*}, \sA_{*,*})$ is a Hopf algebroid.
\end{proof}

\begin{lemma} \label{lem:left-H-basis-An}
The monomials $\tau^E \xi^R$, where $E = (e_0, \dots, e_n)$ and $R =
(r_1, \dots, r_n)$ range through the integer sequences with $e_s \in
\{0,1\}$ and $0 \le r_s < \ell^{n+1-s}$, form a basis for $A(n)_{*,*}$
as a finitely generated free left $H_{*,*}$-module.
\end{lemma}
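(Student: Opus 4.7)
The plan is to use Lemma~\ref{lem:left-H-basis-A} to split $\sA_{*,*}$ as a direct sum of left $H_{*,*}$-submodules $V \oplus V^c$, where $V$ is spanned by the restricted monomials of the statement and $V^c$ is spanned by the remaining basis monomials of Lemma~\ref{lem:left-H-basis-A}---those with some $e_s = 1$ for $s \ge n+1$, some $r_s > 0$ for $s \ge n+1$, or some $r_s \ge \ell^{n+1-s}$ for some $1 \le s \le n$. The lemma then follows from the identification $I(n) = V^c$.

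One direction, $V^c \subset I(n)$, is immediate: every non-restricted basis monomial visibly contains one of the ideal generators from Definition~\ref{def:In-An}---either $\tau_s$ for some $s \ge n+1$, $\xi_s$ for some $s \ge n+1$ (as $\xi_i^{\ell^j}$ with $i = s$, $j = 0$, $i+j \ge n+1$), or $\xi_s^{\ell^{n+1-s}}$ for some $1 \le s \le n$ (with $i+j = n+1$)---as a factor. This already shows that $V \hookrightarrow \sA_{*,*} \twoheadrightarrow A(n)_{*,*}$ is surjective, so the restricted monomials span $A(n)_{*,*}$ over $H_{*,*}$, and the spanning set is visibly finite.

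The main obstacle is the reverse inclusion $I(n) \subset V^c$, equivalently $V \cap I(n) = 0$. For this I would use the left $H_{*,*}$-linear projection $\alpha \: \sA_{*,*} \to V$ that kills $V^c$ (well defined by Lemma~\ref{lem:left-H-basis-A}) and argue that $\alpha(I(n)) = 0$. Since $I(n)$ is generated as a left ideal by the monomials listed above, this reduces, after expanding an arbitrary left coefficient in the basis of Lemma~\ref{lem:left-H-basis-A}, to verifying $\alpha(m \cdot g) = 0$ for every basis monomial $m = \tau^E \xi^R$ and every generator $g$; equivalently, the basis expansion of $m \cdot g$ involves only non-restricted monomials. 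When $g = \xi_i^{\ell^j}$ the product is a single basis monomial whose $\xi_i$-exponent crosses the $V^c$ threshold directly; when $g = \tau_k$ with $e_k = 0$ the product is a single basis monomial with $e_k' = 1$ and $k \ge n+1$; and when $g = \tau_k$ with $e_k = 1$ and $\ell$ odd, $\tau_k^2 = 0$ so $m \cdot g = 0$.

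The delicate case is $g = \tau_k$, $e_k = 1$, $\ell = 2$, where I must substitute $\tau_k^2 = T_k = \tau \xi_{k+1} + \rho \tau_{k+1} + \rho \tau_0 \xi_{k+1}$ and reduce back to the basis by possibly iterated applications of $\tau_j^2 = T_j$. The key observation is that the $\xi_{k+1}$-factor introduced by the first and third summands of $T_k$ persists through all further reductions, since the remaining relations touch only $\tau$'s and can only introduce additional $\xi$-factors; thus those terms yield only non-restricted basis monomials. The second summand contributes a $\tau_{k+1}$-factor which either stays---giving a non-restricted basis monomial with $e_{k+1}' = 1$, $k+1 \ge n+1$---or triggers a new application of $\tau_{k+1}^2 = T_{k+1}$, shifting the index up by one. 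Since $E$ has only finitely many nonzero entries, this cascade terminates at the least $s > k$ with $e_s = 0$, producing a non-restricted basis monomial; any auxiliary cascades through $\tau_0^2 = T_0$ set off by the third summand only alter low-index factors and leave the already-established high-index $\xi$'s or $\tau$'s undisturbed, so non-restrictedness is preserved throughout. Combining these verifications gives $I(n) = V^c$, whence $A(n)_{*,*} \cong V$ as left $H_{*,*}$-modules with the claimed finite basis.
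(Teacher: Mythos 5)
Your proof is correct and follows essentially the same route as the paper: identify the ideal $I(n)$ with the left $H_{*,*}$-span of those basis monomials from Lemma~\ref{lem:left-H-basis-A} that violate the restrictions, and then read off the claim for $A(n)_{*,*} = \sA_{*,*}/I(n)$. The paper simply asserts this identification of $I(n)$ and moves on, whereas you supply the verification in detail, including the cascade analysis of iterated $\tau_j^2 = T_j$ substitutions in the $\ell=2$ case.
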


\begin{proof}
The ideal~$I(n)$ equals the free left $H_{*,*}$-submodule
of $\sA_{*,*}$ generated by the monomials $\tau^E \xi^R$ from
Lemma~\ref{lem:left-H-basis-A} for which $e_s = 1$ for some $s\ge n+1$
or $r_s \ge \ell^{n+1-s}$ for some $s\ge1$.  This implies the claim.
\end{proof}

\begin{lemma} \label{lem:right-H-basis-An}
The same monomials $\tau^E \xi^R$ as in
Lemma~\ref{lem:left-H-basis-An} form a basis for $A(n)_{*,*}$
as a free right $H_{*,*}$-module.
\end{lemma}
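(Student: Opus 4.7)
The plan is to mimic the proof of Lemma~\ref{lem:right-H-basis-A}, transporting the topological degree filtration through the Hopf algebroid projection $\pi_n \: \sA_{*,*} \to A(n)_{*,*}$. Concretely, I would set
$$
F^t A(n)_{*,*} = \pi_n\bigl(F^t \sA_{*,*}\bigr)
$$
for $t \ge 0$, or equivalently, the left $H_{*,*}$-submodule generated by those basis monomials of Lemma~\ref{lem:left-H-basis-An} of topological degree $\ge t$. This is a decreasing filtration with $F^0 A(n)_{*,*} = A(n)_{*,*}$, which vanishes in any fixed bidegree once $t$ is large enough, since Lemma~\ref{lem:left-H-basis-An} provides only finitely many left $H_{*,*}$-generators of $A(n)_{*,*}$, of bounded topological degree.

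Next I would verify that each $F^t A(n)_{*,*}$ is a sub-bimodule and that the left and right $H_{*,*}$-actions agree on the associated graded. The two ingredients needed are: (a)~$F^t A(n)_{*,*} \cdot F^1 A(n)_{*,*} \subseteq F^{t+1} A(n)_{*,*}$, which is the image under~$\pi_n$ of the analogous containment in $\sA_{*,*}$ used in the proof of Lemma~\ref{lem:right-H-basis-A}; and (b)~$\eta_{R,n} \equiv \eta_{L,n} \bmod F^1 A(n)_{*,*}$, which follows from the identity $F^1 A(n)_{*,*} = \ker(\epsilon_n)$ together with $\epsilon_n \circ \eta_{L,n} = \id = \epsilon_n \circ \eta_{R,n}$. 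Combined, these two facts force left and right multiplication by $H_{*,*}$ to coincide on each quotient
$$
\gr^t A(n)_{*,*} = F^t A(n)_{*,*} / F^{t+1} A(n)_{*,*} \,.
$$

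From here the argument closes exactly as in Lemma~\ref{lem:right-H-basis-A}. By Lemma~\ref{lem:left-H-basis-An} the cosets of the monomials $\tau^E \xi^R$ of topological degree exactly~$t$ form a free left $H_{*,*}$-basis of $\gr^t A(n)_{*,*}$, hence also a free right basis, because the two actions now agree. A descending induction, starting from those filtration steps in the given bidegree where $F^t A(n)_{*,*}$ already vanishes, then lifts this to a free right $H_{*,*}$-basis of all of $A(n)_{*,*}$, proving the claim.

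The step that requires the most care is arguably~(a): one has to know that $I(n)$ interacts cleanly enough with the filtration on $\sA_{*,*}$ that the multiplicative containment survives the passage to $A(n)_{*,*}$. However, $I(n)$ is generated by elements of strictly positive topological degree and is spanned by basis monomials of Lemma~\ref{lem:left-H-basis-A}, so the quotient filtration $\pi_n(F^t \sA_{*,*})$ coincides with the naive filtration described via surviving basis monomials. Beyond this bookkeeping, no genuinely new difficulty arises compared to the absolute case.
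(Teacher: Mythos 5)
Your proposal is correct and follows essentially the same route as the paper, whose proof simply instructs the reader to replace $\sA_{*,*}$ and Lemma~\ref{lem:left-H-basis-A} with $A(n)_{*,*}$ and Lemma~\ref{lem:left-H-basis-An} in the proof of Lemma~\ref{lem:right-H-basis-A}. Your additional observation that $\pi_n(F^t \sA_{*,*})$ agrees with the filtration defined by surviving basis monomials because $I(n)$ is a monomial ideal supplies exactly the bookkeeping the paper leaves implicit.
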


\begin{proof}
Replace $\sA_{*,*}$ and Lemma~\ref{lem:left-H-basis-A} in the proof
of Lemma~\ref{lem:right-H-basis-A}
by $A(n)_{*,*}$ and Lemma~\ref{lem:left-H-basis-An}.
\end{proof}

The inclusions $I(n) \subset I(n-1)$ induce a
tower of surjective Hopf algebroid homomorphisms
\begin{equation} \label{eq:dualAntower}
\sA_{*,*} \longto \dots \longto A(n)_{*,*} \longto
A(n-1)_{*,*} \longto \dots \longto H_{*,*} \,.
\end{equation}
The composites
$$
\lambda \: \sA_{*,*} \overset{\psi}\longto
\sA_{*,*} \otimes_{H_{*,*}} \sA_{*,*}
\overset{\pi_n \otimes \id}\longto
A(n)_{*,*} \otimes_{H_{*,*}} \sA_{*,*}
$$
and
$$
\rho \: \sA_{*,*} \overset{\psi}\longto
\sA_{*,*} \otimes_{H_{*,*}} \sA_{*,*}
\overset{\id \otimes \pi_{n-1}}\longto
\sA_{*,*} \otimes_{H_{*,*}} A(n-1)_{*,*}
$$
are $\bZ/\ell$-algebra homomorphisms giving $\sA_{*,*}$ the structure of
an $A(n)_{*,*}$-$A(n-1)_{*,*}$-bicomodule algebra, and the projection
$\pi_n \: \sA_{*,*} \to A(n)_{*,*}$ is a morphism in the category of
such bicomodule algebras.

\begin{definition} \label{def:Xn}
Let
$$
X(n)_{*,*} = H_{*,*} \{ \tau^E \xi^R \mid
	e_0 = \dots = e_n = 0, \ell^n \mid r_1, \dots, \ell \mid r_n \}
$$
be the free left $H_{*,*}$-module generated by the monomials $\tau^E
\xi^R$ with $E = (e_0, e_1, \dots)$ and $R = (r_1, r_2, \dots)$
satisfying $e_s = 0$ for $0 \le s \le n$ and $\ell^{n+1-s} \mid r_s$ for
$1 \le s \le n$.  Let $\alpha_n \: \sA_{*,*} \to X(n)_{*,*}$ be the left
$H_{*,*}$-module homomorphism mapping $\tau^E \xi^R$ to the same monomial
if $e_0 = \dots = e_n = 0$ and $\ell^n \mid r_1, \dots, \ell \mid r_n$,
and to $0$ otherwise.
\end{definition}

\begin{lemma} \label{lem:left-An-comod-iso-A}
The composite
$$
\sA_{*,*} \overset{\lambda}\longto A(n)_{*,*} \otimes_{H_{*,*}} \sA_{*,*}
	\overset{\id \otimes \alpha_n}\longto
	A(n)_{*,*} \otimes_{H_{*,*}} X(n)_{*,*}
$$
is a left $A(n)_{*,*}$-comodule isomorphism.
\end{lemma}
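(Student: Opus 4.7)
The plan is to verify separately that $\lambda' := (\id \otimes \alpha_n)\lambda$ is (i) a left $A(n)_{*,*}$-comodule homomorphism and (ii) an $H_{*,*}$-module bijection. For (i), I first observe that $\lambda = (\pi_n \otimes \id)\psi$ is already a left $A(n)_{*,*}$-comodule map from $(\sA_{*,*}, \lambda)$ to $(A(n)_{*,*} \otimes_{H_{*,*}} \sA_{*,*}, \psi_n \otimes \id)$: by coassociativity of $\psi$ together with the identity $\psi_n \pi_n = (\pi_n \otimes \pi_n)\psi$ (which expresses that $\pi_n$ is a Hopf algebroid homomorphism), both $(\psi_n \otimes \id)\lambda$ and $(\id \otimes \lambda)\lambda$ simplify to $(\pi_n \otimes \pi_n \otimes \id)(\id \otimes \psi)\psi$. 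Postcomposition with $\id \otimes \alpha_n$ alters only the right tensor factor and, since $\alpha_n$ is $H_{*,*}$-linear, preserves the $A(n)_{*,*}$-comodule structure.

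For (ii), I work bidegree by bidegree using the explicit free $H_{*,*}$-bases from Lemma~\ref{lem:left-H-basis-A}, Lemma~\ref{lem:left-H-basis-An}, and Definition~\ref{def:Xn}. To each source basis monomial $m = \tau^E \xi^R$ I assign its \emph{split} $\sigma(m) = \tau^{E^{(A)}} \xi^{R^{(A)}} \otimes \tau^{E^{(X)}} \xi^{R^{(X)}}$, obtained by writing $r_s = a_s + \ell^{n+1-s} b_s$ with $0 \le a_s < \ell^{n+1-s}$ for $1 \le s \le n$ and placing $\tau_s^{e_s}$ and $\xi_s^{a_s}$ (for $s \le n$) into the left factor, and $\tau_s^{e_s}$ (for $s > n$), $\xi_s^{\ell^{n+1-s} b_s}$ (for $s \le n$), $\xi_s^{r_s}$ (for $s > n$) into the right. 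The map $m \mapsto \sigma(m)$ is a bidegree-preserving bijection between the two bases, so it suffices to show that in each bidegree the matrix of $\lambda'$ relative to these bases, ordered by the bidegree $w(m)$ of the right factor of $\sigma(m)$, is upper triangular with ones on the diagonal. Concretely, I will prove
$$
\lambda'(m) = \sigma(m) + \sum_{m':\ w(m') > w(m)} c_{m,m'}\, \sigma(m'),
$$
for coefficients $c_{m,m'} \in H_{*,*}$, the sum running over basis monomials $m'$ in the same total bidegree as $m$.

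The diagonal identity is checked on the generating factors: $\lambda'(\tau_s) = \tau_s \otimes 1$ and $\lambda'(\xi_s^{a_s}) = \xi_s^{a_s} \otimes 1$ for $s \le n$ and $a_s < \ell^{n+1-s}$ (every other term in the expansion of $\lambda(\xi_s)^{a_s}$ has right factor in $\xi_1,\dots,\xi_s$ with exponents too small to survive the divisibility test of $\alpha_n$), while $\lambda'(\tau_s) = 1 \otimes \tau_s$ for $s > n$ and $\lambda'(\xi_s^{\ell^{n+1-s} b_s}) = 1 \otimes \xi_s^{\ell^{n+1-s} b_s}$ for $s \le n$ follow from Freshman's dream, which collapses $\lambda(\xi_s)^{\ell^{n+1-s}}$ to $1 \otimes \xi_s^{\ell^{n+1-s}}$ in $A(n)_{*,*} \otimes_{H_{*,*}} \sA_{*,*}$. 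Multiplicativity of $\lambda$ then delivers the $\sigma(m)$ summand. The main obstacle is controlling the off-diagonal contributions: the coproducts $\psi(\xi_k) = \sum_{i+j=k} \xi_i^{\ell^j} \otimes \xi_j$ contribute nontrivial $\xi_j$'s on the right of each $\lambda(\xi_s)^{a_s}$-factor, and these can combine multiplicatively so that the total right exponent $M_j = \ell^{n+1-j} b_j + \sum_{s=j}^n c_j^{(s)}$ of $\xi_j$ becomes divisible by $\ell^{n+1-j}$ and thus survives $\alpha_n$. The key combinatorial observation is that any such nontrivial combination has some $c_j^{(s)} > 0$ with $1 \le j \le s \le n$, which strictly increases $M_j$ beyond $\ell^{n+1-j} b_j$ while leaving every other right-factor exponent unchanged; this forces $w$ to strictly increase. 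Triangularity with unit diagonal then yields the bijection, completing (ii).
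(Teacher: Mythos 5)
Your proposal is correct and takes essentially the same approach as the paper: both factor each basis monomial $\tau^E\xi^R$ uniquely as a product of an $A(n)_{*,*}$-basis monomial and an $X(n)_{*,*}$-basis monomial (your $\sigma(m)$ is the paper's $\tau^{E'}\xi^{R'}\otimes\tau^{E''}\xi^{R''}$), filter by the topological degree of the $X(n)_{*,*}$-factor, and show that $(\id\otimes\alpha_n)\lambda$ preserves the filtration with identity associated graded. The paper streamlines the off-diagonal control by noting that the $X(n)$-generators are left $A(n)_{*,*}$-comodule primitive — so $\lambda(\tau^{E''}\xi^{R''})=1\otimes\tau^{E''}\xi^{R''}$ on the nose and $\lambda(\tau^{E'}\xi^{R'})\equiv\tau^{E'}\xi^{R'}\otimes1$ modulo $A(n)_{*,*}\otimes F^1\sA_{*,*}$ — then invokes multiplicativity of $\lambda$ once, avoiding the explicit bookkeeping of your exponents $M_j$, $c_j^{(s)}$ (whose treatment is sound, though the phrase ``leaving every other right-factor exponent unchanged'' should read ``not decreasing,'' and one should note that the $\tau_j$'s with $j\le n$ appearing on the right are already killed by $\alpha_n$).
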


\begin{proof}
Both $\lambda$ and $\id \otimes \alpha_n$ respect the left
$A(n)_{*,*}$-coactions, so it suffices to show that their composite is
a left $H_{*,*}$-module isomorphism.  Each monomial in the basis from
Lemma~\ref{lem:left-H-basis-A} for $\sA_{*,*}$ factors uniquely as
$\tau^E \xi^R = \tau^{E'} \xi^{R'} \cdot \tau^{E''} \xi^{R''}$ with
$$
\begin{cases}
E' = (e_0, \dots, e_n, 0, \dots) \\
R' = (r_1, \dots, r_n, 0, \dots)
	& \text{where $r_s < \ell^{n+1-s}$ for $1 \le s \le n$,} \\
E'' = (0, \dots, 0, e_{n+1}, \dots) \\
R'' = (r_1, \dots, r_n, r_{n+1}, \dots)
	& \text{where $\ell^{n+1-s} \mid r_s$ for $1 \le s \le n$,}
\end{cases}
$$
and $E = E' + E''$, $R = R' + R''$.
Hence the restricted multiplication
\begin{align*}
A(n)_{*,*} \otimes_{H_{*,*}} X(n)_{*,*}
	&\overset{\phi}\longto \sA_{*,*} \\
\tau^{E'} \xi^{R'} \otimes \tau^{E''} \xi^{R''}
	&\longmapsto \tau^{E'} \xi^{R'} \cdot \tau^{E''} \xi^{R''}
\end{align*}
defines a left $H_{*,*}$-module isomorphism.  We show that the composite
$$
(\id \otimes \alpha_n) \lambda \phi \: A(n)_{*,*} \otimes_{H_{*,*}} X(n)_{*,*}
	\longto A(n)_{*,*} \otimes_{H_{*,*}} X(n)_{*,*}
$$
is bijective.  For $t\ge0$ let $F^t X(n)_{*,*}$ be the
free left $H_{*,*}$-submodule generated by the monomials from
Definition~\ref{def:Xn} that have topological degree $\ge t$.  These
define a decreasing filtration of $X(n)_{*,*}$, with associated graded
modules $\gr^t X(n)_{*,*} = F^t X(n)_{*,*} / F^{t+1} X(n)_{*,*}$.
Direct calculation of $\lambda = (\pi_n \otimes \id)\psi$ shows that
$$
\lambda(\tau^{E'} \xi^{R'}) \equiv \tau^{E'} \xi^{R'} \otimes 1
	\mod A(n)_{*,*} \otimes_{H_{*,*}} F^1 \sA_{*,*} \,,
$$
where $F^1 \sA_{*,*} = \ker(\epsilon)$ as before, and
$$
\lambda(\tau^{E''} \xi^{R''}) = 1 \otimes \tau^{E''} \xi^{R''} \,,
$$
since each $\tau_k$ for $k \ge n+1$ and each $\xi_i^{\ell^j}$ for $i+j
\ge n+1$ is left $A(n)_{*,*}$-comodule primitive.
It follows that for $\tau^{E'} \xi^{R'} \in A(n)_{*,*}$
and $\tau^{E''} \xi^{R''} \in F^t X(n)_{*,*}$ we have
$$
(\id \otimes \alpha_n) \lambda (\tau^{E'} \xi^{R'} \cdot \tau^{E''} \xi^{R''})
	\equiv \tau^{E'} \xi^{R'} \otimes \tau^{E''} \xi^{R''}
	\mod A(n)_{*,*} \otimes_{H_{*,*}} F^{t+1} X(n)_{*,*} \,.
$$
Hence $(\id \otimes \alpha_n) \lambda \phi$ maps
$A(n)_{*,*} \otimes_{H_{*,*}} F^t X(n)_{*,*}$ to itself, for each
$t\ge0$, and the induced homomorphism
$$
A(n)_{*,*} \otimes_{H_{*,*}} \gr^t X(n)_{*,*}
	\longto A(n)_{*,*} \otimes_{H_{*,*}} \gr^t X(n)_{*,*}
$$
is the identity.  The lemma follows, since $A(n)_{*,*} \otimes_{H_{*,*}}
F^t X(n)_{*,*}$ is eventually zero in any given bidegree.
\end{proof}

\section{Some bicomodule algebras}
\label{sec:bicomodule-algebras}

The classical definitions of~\cite{AGM85}*{\S2} also generalize to the
motivic setting.

\begin{definition} \label{def:Jn-Cn-Bn}
For $n\ge0$, let $J(n) \subset \sA_{*,*}$ be the ideal
$$
J(n) = (\tau_{n+1}, \tau_{n+2}, \dots,
	\xi_2^{\ell^{n-1}}, \xi_3^{\ell^{n-2}}, \dots,
	\xi_n^\ell, \xi_{n+1}, \xi_{n+2}, \dots)
$$
generated by $\tau_k$ for $k\ge n+1$ and by
$\xi_i^{\ell^j}$ for $i\ge2$, $j\ge0$ and $i+j\ge n+1$.
Note that $I(n) = J(n) + (\xi_1^{\ell^n})$.
Let
$$
C(n)_{*,*} = \sA_{*,*}/J(n)
= \frac{ H_{*,*}[ \tau_0, \dots, \tau_n, \xi_1, \xi_2, \dots, \xi_n] }
	{ (\tau_0^2 - T_0, \dots, \tau_n^2 - T_n,
	\xi_2^{\ell^{n-1}}, \dots, \xi_n^\ell) }
$$
be the quotient algebra.  Let
$$
B(n)_{*,*} = C(n)_{*,*} [1/\xi_1]
= \frac{ H_{*,*}[ \tau_0, \dots, \tau_n, \xi_1^{\pm1}, \xi_2, \dots, \xi_n] }
	{ (\tau_0^2 - T_0, \dots, \tau_n^2 - T_n,
	\xi_2^{\ell^{n-1}}, \dots, \xi_n^\ell) }
$$
be the localization of $C(n)_{*,*}$ away from $\xi_1$.
\end{definition}

\begin{example}
\begin{align*}
J(0) &= (\tau_1, \tau_2, \dots, \xi_2, \xi_3, \dots) \\
J(1) &= (\tau_2, \tau_3, \dots, \xi_2, \xi_3, \dots) \\
J(2) &= (\tau_3, \tau_4, \dots, \xi_2^\ell, \xi_3, \dots)
\end{align*}
so
\begin{align*}
C(0)_{*,*} &= H_{*,*} [ \tau_0, \xi_1 ] / (\tau_0^2 - T_0) \\
C(1)_{*,*} &= H_{*,*} [ \tau_0, \tau_1, \xi_1 ] / (\tau_0^2 - T_0, \tau_1^2) \\
C(2)_{*,*} &= H_{*,*} [ \tau_0, \tau_1, \tau_2, \xi_1, \xi_2 ] /
	(\tau_0^2 - T_0, \tau_1^2 - T_1, \tau_2^2, \xi_2^\ell)
\end{align*}
and
\begin{align*}
B(0)_{*,*} &= H_{*,*} [ \tau_0, \xi_1^{\pm1} ] / (\tau_0^2 - T_0) \\
B(1)_{*,*} &= H_{*,*} [ \tau_0, \tau_1, \xi_1^{\pm1} ] /
	(\tau_0^2 - T_0, \tau_1^2) \\
B(2)_{*,*} &= H_{*,*} [ \tau_0, \tau_1, \tau_2, \xi_1^{\pm1}, \xi_2 ] /
	(\tau_0^2 - T_0, \tau_1^2 - T_1, \tau_2^2, \xi_2^\ell) \,.
\end{align*}
\end{example}

\begin{lemma} \label{left-H-basis-Cn-Bn}
(a)
The monomials $\tau^E \xi^R$, where $E = (e_0, \dots, e_n)$ and $R =
(r_1, \dots, r_n)$ range through all sequences with $e_s \in \{0,1\}$
for $0 \le s \le n$, $r_1 \ge0$ and $0 \le r_s < \ell^{n+1-s}$ for $2 \le
s \le n$, form a basis for $C(n)_{*,*}$ as a free left $H_{*,*}$-module.

(b)
The monomials
$$
\tau^E \xi^R = \tau_0^{e_0} \dotsm \tau_n^{e_n}
	\xi_1^{r_1} \xi_2^{r_2} \dotsm \xi_n^{r_n}
$$
with $(E,R)$ as in~(a), except that $r_1$ can now be any integer,
form a basis for $B(n)_{*,*}$ as a free left $H_{*,*}$-module.
\end{lemma}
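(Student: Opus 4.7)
The plan is to parallel the proof of Lemma~\ref{lem:left-H-basis-An}, now applied to the ideal $J(n)$ in place of $I(n)$. For part~(a), I will identify $J(n)$ with the left $H_{*,*}$-submodule $L$ of $\sA_{*,*}$ generated by those monomials $\tau^E \xi^R$ from the basis of Lemma~\ref{lem:left-H-basis-A} which are \emph{bad} in the sense that either $e_s = 1$ for some $s \geq n+1$, or $r_s \geq \ell^{n+1-s}$ for some $s \geq 2$ (reading this bound as $r_s \geq 1$ whenever $s > n$). Every bad monomial is divisible by some generator of $J(n)$ (by $\tau_s$ with $s \geq n+1$, by $\xi_s^{\ell^{n+1-s}}$ for $2 \leq s \leq n+1$, or by $\xi_s$ for $s \geq n+2$), so $L \subseteq J(n)$. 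Conversely, each generator of $J(n)$ is itself a bad monomial; consequently $J(n) \subseteq L$ once one checks that $L$ is a two-sided ideal.

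Closure of $L$ under multiplication by elements of $H_{*,*}$ and by any $\xi_j$ is immediate, since these operations only augment exponents and preserve any existing bad feature. For multiplication by $\tau_i$ the relation $\tau_i^2 = T_i$ must be invoked whenever the factor being multiplied already has $e_i = 1$. For $\ell$ odd this yields $0$; for $\ell = 2$ one writes $T_i = \tau \xi_{i+1} + \rho \tau_{i+1} + \rho \tau_0 \xi_{i+1}$ and verifies term-by-term that each summand remains in $L$. The case analysis is: either the bad feature of the original monomial was unrelated to the factor $\tau_i$ (in which case the residual bad feature of the other factors persists in each term), or the bad feature was precisely $e_i = 1$ with $i \geq n+1$, in which case $i+1 \geq n+2$ and the factors $\xi_{i+1}$ and $\tau_{i+1}$ introduced by $T_i$ are themselves bad. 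With $L = J(n)$ in hand, the complementary ``good'' monomials manifestly constitute the claimed free left $H_{*,*}$-basis for $C(n)_{*,*} = \sA_{*,*}/J(n)$.

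For part~(b), multiplication by $\xi_1$ on $C(n)_{*,*}$ sends $\tau^E \xi^R$ to $\tau^E \xi_1 \xi^R$, thereby bijecting the basis from~(a) onto its sub-basis with $r_1 \geq 1$. Hence $\xi_1$ is a non-zero-divisor on $C(n)_{*,*}$, and
$$
B(n)_{*,*} = C(n)_{*,*}[1/\xi_1] = \colim_k \xi_1^{-k} \cdot C(n)_{*,*}
$$
inherits a free left $H_{*,*}$-basis obtained from that of~(a) by relaxing the constraint $r_1 \geq 0$ to $r_1 \in \bZ$.

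The main obstacle is the two-sided closure of $L$ for $\ell = 2$, where $\tau_i^2 = T_i$ is not a monomial relation. The subtlety is that an application of this relation can in principle rearrange which bad condition is satisfied by a given summand, and one must ensure that some bad condition always survives the substitution — either because the original monomial carried a bad feature unrelated to $\tau_i$, or because the terms of $T_i$ themselves contribute factors $\xi_{i+1}$ or $\tau_{i+1}$ of sufficiently high subscript to be bad in their own right.
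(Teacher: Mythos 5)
Your proposal is correct and takes essentially the same approach as the paper: the paper's proof simply asserts (as a direct analogue of the proof of Lemma~\ref{lem:left-H-basis-An}) that $J(n)$ equals the free left $H_{*,*}$-submodule spanned by the bad monomials, then deduces (a), and obtains (b) by inverting $\xi_1$. You fill in the verification of this span identity — in particular the two-sided-ideal check for $\ell = 2$ that the paper leaves implicit — but the key decomposition and the localization step for part~(b) are identical.
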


\begin{proof}
The ideal~$J(n)$ equals the free left $H_{*,*}$-submodule
of $\sA_{*,*}$ generated by the monomials $\tau^E \xi^R$ from
Lemma~\ref{lem:left-H-basis-A} for which $e_s = 1$ for some $s\ge
n+1$ or $r_s \ge \ell^{n+1-s}$ for some $s\ge2$.  This implies part~(a).
Part~(b) follows by inverting~$\xi_1$.
\end{proof}

\begin{lemma} \label{lem:right-H-basis-Cn-Bn}
(a)
The same monomials $\tau^E \xi^R$ as in
Lemma~\ref{left-H-basis-Cn-Bn}(a) form a basis for $C(n)_{*,*}$
as a free right $H_{*,*}$-module.

(b)
The same monomials $\tau^E \xi^R$ as in
Lemma~\ref{left-H-basis-Cn-Bn}(b) form a basis for $B(n)_{*,*}$
as a free right $H_{*,*}$-module.
\end{lemma}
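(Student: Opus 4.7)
My plan is to adapt the filtration argument from Lemma~\ref{lem:right-H-basis-A} and Lemma~\ref{lem:right-H-basis-An}, now applied to the quotient $C(n)_{*,*}$ for part~(a) and to the localization $B(n)_{*,*}$ for part~(b). In each case I filter by topological degree, check that the filtration consists of $H_{*,*}$-$H_{*,*}$-bisubmodules, and observe that on the associated graded the left and right $H_{*,*}$-actions agree, so that the left $H_{*,*}$-basis of Lemma~\ref{left-H-basis-Cn-Bn} automatically becomes a right $H_{*,*}$-basis.

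For part~(a), I would define, for $t\ge 0$,
$$
F^t C(n)_{*,*} = \< \tau^E \xi^R \mid \deg(\tau^E \xi^R) \ge t \>,
$$
the left $H_{*,*}$-submodule generated by those monomials from Lemma~\ref{left-H-basis-Cn-Bn}(a) of topological degree at least $t$. Since the counit $\epsilon$ of $\sA_{*,*}$ annihilates every $\tau_i$ and $\xi_i$, it annihilates $J(n)$ and descends to $\bar\epsilon \: C(n)_{*,*} \to H_{*,*}$ satisfying $\bar\epsilon \eta_L = \id = \bar\epsilon \eta_R$; equivalently $F^1 C(n)_{*,*} = \ker(\bar\epsilon)$, forcing $\eta_L \equiv \eta_R \pmod{F^1 C(n)_{*,*}}$. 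The multiplicative inclusion $F^t C(n)_{*,*} \cdot F^1 C(n)_{*,*} \subset F^{t+1} C(n)_{*,*}$ follows just as in the proof of Lemma~\ref{lem:right-H-basis-A}, using that each summand of $T_i$ has topological degree at least $\deg(\tau_i^2)$. Each $F^t C(n)_{*,*}$ is therefore a bisubmodule, the left and right $H_{*,*}$-actions coincide on $\gr^t C(n)_{*,*}$, and the degree-$t$ monomials of Lemma~\ref{left-H-basis-Cn-Bn}(a) freely generate this graded piece on either side. Since $F^t C(n)_{*,*}$ vanishes in each fixed bidegree for $t$ large, induction on filtration yields the right $H_{*,*}$-basis.

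For part~(b) the same argument applies to $B(n)_{*,*}$, with two small modifications. Because $\xi_1^{-1}$ introduces monomials of arbitrarily negative topological degree, I would index the filtration over all $t\in\bZ$; this costs nothing, since in each fixed bidegree the topological degree is pinned down and only one filtration level contributes. Because $\epsilon$ does not extend through the localization (we have $\epsilon(\xi_1) = 0$), I would obtain the key congruence $\eta_L \equiv \eta_R \pmod{F^1 B(n)_{*,*}}$ by lifting the analogous congruence from part~(a) along the filtration-preserving map $C(n)_{*,*} \to B(n)_{*,*}$. I expect this last step to be the only real point of care; once it is in place, the multiplicativity $F^s \cdot F^t \subset F^{s+t}$ is immediate from topological-degree additivity, and the rest of the argument goes through verbatim.
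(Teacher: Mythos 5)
Your proposal is correct and follows the same filtration argument that the paper carries over by reference from Lemma~\ref{lem:right-H-basis-A}. The point of care you flag for part~(b) — that the counit does not survive inverting $\xi_1$, so the congruence $\eta_L \equiv \eta_R \pmod{F^1 B(n)_{*,*}}$ must instead be pushed forward along the filtration-preserving map $C(n)_{*,*} \to B(n)_{*,*}$ — is indeed the small gap in the paper's terse ``replace'' instruction, and your fix is exactly right.
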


\begin{proof}
For part~(a), replace $\sA_{*,*}$ and Lemma~\ref{lem:left-H-basis-A}
in the proof of Lemma~\ref{lem:right-H-basis-A} by $C(n)_{*,*}$ and
Lemma~\ref{left-H-basis-Cn-Bn}(a).

For part~(b), instead replace these by $B(n)_{*,*}$ and
Lemma~\ref{left-H-basis-Cn-Bn}(b), and allow the filtration index~$t$
in the proof of Lemma~\ref{lem:right-H-basis-A} to run over all integers,
noting that in any given bidegree $F^t B(n)_{*,*} = B(n)_{*,*}$ for all
sufficiently negative~$t$.
(Alternatively, part~(b) can be deduced from part~(a) by inverting
$\xi_1$, but the given proof also ensures that the left and right
$H_{*,*}$-actions on $\gr^t B(n)_{*,*}$ agree, which will be needed
in Lemma~\ref{lem:right-H-basis-Cn-Bn-cohom}(b).)
\end{proof}

\begin{example} \label{ex:left-right-H-bases-C0-B0}
(a)
The monomials
$$
\{ \tau_0^e \xi_1^r \mid e \in \{0,1\}, r\ge0 \}
$$
form a basis for $C(0)_{*,*}$, both as a left
$H_{*,*}$-module and as a right $H_{*,*}$-module.

(b)
The monomials
$$
\{ \tau_0^e \xi_1^r \mid e \in \{0,1\}, r \in \bZ \}
$$
form a basis for $B(0)_{*,*}$, both as a left $H_{*,*}$-module and as a
right $H_{*,*}$-module.  The homological bidegree of $\tau_0^e \xi_1^r$
is $(e + (2\ell-2)r, (\ell-1)r)$.
\end{example}

The inclusions $J(n) \subset I(n)$ and the localization homomorphisms
yield a commutative diagram of $\bZ/\ell$-algebras and algebra
homomorphisms
\begin{equation} \label{eq:ACB}
\xymatrix{
\sA_{*,*} \ar@{->>}[r]^-{\pi_n}
        & A(n)_{*,*} \ar@{->>}[r]
        & A(n-1)_{*,*} \ar@{->>}[r]
        & A(0)_{*,*} \ar@{->>}[r] & H_{*,*} \\
\sA_{*,*} \ar@{->>}[r]^-{\pi'_n} \ar@{=}[u] \ar@{ >->}[d]
        & C(n)_{*,*} \ar@{->>}[rr]^-{\gamma_n} \ar@{->>}[u] \ar@{ >->}[d]
        & 
        & C(0)_{*,*} \ar@{->>}[u] \ar@{ >->}[d] \\
\sA_{*,*}[1/\xi_1] \ar@{->>}[r]
        & B(n)_{*,*} \ar@{->>}[rr]^-{\beta_n}
        & 
        & B(0)_{*,*} & \,.
}
\end{equation}

\begin{lemma}
There is a unique $A(n)_{*,*}$-$A(n-1)_{*,*}$-bicomodule
algebra structure on $C(n)_{*,*}$ making the canonical
projection $\pi'_n \: \sA_{*,*} \to \sA_{*,*}/J(n) = C(n)_{*,*}$ an
$A(n)_{*,*}$-$A(n-1)_{*,*}$-bicomodule algebra homomorphism.
\end{lemma}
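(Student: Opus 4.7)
The plan is to define the $A(n)_{*,*}$-$A(n-1)_{*,*}$-bicomodule algebra structure on $C(n)_{*,*}$ by descending the corresponding structure on $\sA_{*,*}$ through the surjection $\pi'_n$. Concretely, I force
$$
\lambda'_n \circ \pi'_n = (\id \otimes \pi'_n) \circ \lambda
\qqandqq
\rho'_n \circ \pi'_n = (\pi'_n \otimes \id) \circ \rho \,,
$$
which characterizes the left coaction $\lambda'_n \: C(n)_{*,*} \to A(n)_{*,*} \otimes_{H_{*,*}} C(n)_{*,*}$ and the right coaction $\rho'_n \: C(n)_{*,*} \to C(n)_{*,*} \otimes_{H_{*,*}} A(n-1)_{*,*}$ uniquely, since $\pi'_n$ is surjective. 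The algebra structure on $C(n)_{*,*}$ is inherited from $\sA_{*,*}$ (because $J(n)$ is an ideal), so uniqueness of the whole bicomodule algebra structure making $\pi'_n$ a morphism follows immediately.

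Existence of $\lambda'_n$ and $\rho'_n$ reduces to the two containments $\lambda(J(n)) \subset A(n)_{*,*} \otimes_{H_{*,*}} J(n)$ and $\rho(J(n)) \subset J(n) \otimes_{H_{*,*}} A(n-1)_{*,*}$. Since $\lambda$ and $\rho$ are $\bZ/\ell$-algebra homomorphisms and $J(n)$ is an ideal, it suffices to verify these on the generators $\tau_k$ ($k\ge n+1$) and $\xi_i^{\ell^j}$ ($i\ge2$, $i+j\ge n+1$) of $J(n)$. The verification is a direct calculation using the formulas $\psi(\tau_k) = \tau_k \otimes 1 + \sum_{i+j=k} \xi_i^{\ell^j} \otimes \tau_j$ and $\psi(\xi_i^{\ell^j}) = \sum_{a+b=i} \xi_a^{\ell^{b+j}} \otimes \xi_b^{\ell^j}$, together with the vanishing criteria $\pi_n(\xi_i^{\ell^j}) = 0$ for $i\ge1$, $i+j\ge n+1$ and $\pi_{n-1}(\xi_b^{\ell^j}) = 0$ for $b\ge1$, $b+j\ge n$. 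In each summand, one factor either vanishes under the relevant projection or lies in $J(n)$. The only delicate situation is the term $\xi_1^{\ell^{1+j}} \otimes \xi_{i-1}^{\ell^j}$ (with $a=1$, $b=i-1$) in $\rho(\xi_i^{\ell^j})$, where $\xi_1^{\ell^{1+j}} \notin J(n)$; but here $b+j = i+j-1 \ge n$ forces $\pi_{n-1}(\xi_{i-1}^{\ell^j}) = 0$, so the term vanishes.

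The remaining task is to check that $(C(n)_{*,*}, \lambda'_n, \rho'_n)$ is indeed a bicomodule algebra. Multiplicativity, counitality, coassociativity for both coactions, and the left--right compatibility each follow formally by precomposing the corresponding identity for $\sA_{*,*}$ with $\pi'_n$ (or an appropriate tensor power) and invoking surjectivity of $\pi'_n$; this parallels the argument used earlier for the Hopf algebroid $(H_{*,*}, A(n)_{*,*})$. The main technical obstacle is the containment bookkeeping in the preceding paragraph, but the asymmetries between $I(n)$ (which contains all $\xi_i^{\ell^j}$ with $i\ge1$) and $J(n)$ (which omits the powers of $\xi_1$), and between $\pi_n$ on the left and $\pi_{n-1}$ on the right, are precisely calibrated so that both containments hold.
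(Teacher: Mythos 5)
Your proof is correct and follows essentially the same route as the paper: descend the coactions via the characterizing equations, reduce existence to containments of $\psi(J(n))$ on the algebra generators of the ideal, and observe that the bicomodule algebra axioms pass formally along the surjection $\pi'_n$. (Minor typo: with $a=1$, $b=i-1$ the left factor is $\xi_1^{\ell^{(i-1)+j}}$, not $\xi_1^{\ell^{1+j}}$, but your subsequent reasoning via $\pi_{n-1}(\xi_{i-1}^{\ell^j})=0$ is unaffected.)
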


\begin{proof}
The bicomodule structure maps are $\bZ/\ell$-algebra homomorphisms,
determined as follows:
\begin{enumerate}
\item
The left coaction $\lambda_n \: C(n)_{*,*} \to A(n)_{*,*}
\otimes_{H_{*,*}} C(n)_{*,*}$ is characterized by $\lambda_n \circ \pi'_n
= (\id \otimes \pi'_n) \circ \lambda$, and exists because $(\pi_n \otimes
\pi'_n) \psi(x) = 0$ for each generator $x$ of $J(n)$.
\item
The right coaction $\rho_n \: C(n)_{*,*} \to C(n)_{*,*} \otimes_{H_{*,*}}
A(n-1)_{*,*}$ is characterized by $\rho_n \circ \pi'_n = (\pi'_n \otimes
\id) \circ \rho$, and exists because $(\pi'_n \otimes \pi_{n-1}) \psi(x)
= 0$ for each generator $x$ of $J(n)$.
\end{enumerate}
More explicitly, $\psi(\tau_k)$ and $\psi(\xi_i^{\ell^j})$
are in the image of both
$$
I(n) \otimes_{H_{*,*}} \sA_{*,*} \ \oplus \ \sA_{*,*} \otimes_{H_{*,*}}
	J(n) \longto \sA_{*,*} \otimes_{H_{*,*}} \sA_{*,*}
$$
and
$$
J(n) \otimes_{H_{*,*}} \sA_{*,*} \ \oplus \ \sA_{*,*} \otimes_{H_{*,*}}
	I(n-1) \longto \sA_{*,*} \otimes_{H_{*,*}} \sA_{*,*}
$$
for each $k \ge n+1$ and each $i\ge2$, $j\ge0$ and $i+j \ge n+1$,
respectively.
The verification that the algebra homomorphisms $\lambda_n$ and
$\rho_n$ define coactions, and that they commute, follows formally from
the fact that $\sA_{*,*}$ is an $A(n)_{*,*}$-$A(n-1)_{*,*}$-bicomodule.
\end{proof}

\begin{lemma} \label{lem:CCA-SES}
Let $\|\xi_1^{\ell^n}\| = ((2\ell-2)\ell^n, (\ell-1)\ell^n)$ denote the
bidegree of $\xi_1^{\ell^n}$.  There is a short exact sequence
$$
0 \to \Sigma^{\|\xi_1^{\ell^n}\|} C(n)_{*,*}
	\overset{\cdot \xi_1^{\ell^n}}\longto C(n)_{*,*}
	\longto A(n)_{*,*} \to 0
$$
of $A(n)_{*,*}$-$A(n-1)_{*,*}$-bicomodules, where $\cdot \xi_1^{\ell^n}$
denotes $x \mapsto x \cdot \xi_1^{\ell^n}$.
\end{lemma}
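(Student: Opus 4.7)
The plan is to realise the surjection as the canonical projection induced by the inclusion $J(n) \subset I(n)$, verify injectivity of $\cdot\,\xi_1^{\ell^n}$ by a direct comparison of left $H_{*,*}$-module bases, and then check bicomodule compatibility by exhibiting $\xi_1^{\ell^n}$ as appropriately primitive modulo the relevant quotients.

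First, combining Definitions~\ref{def:In-An} and~\ref{def:Jn-Cn-Bn} gives $I(n) = J(n) + (\xi_1^{\ell^n})$, so the canonical projection $C(n)_{*,*} \onto A(n)_{*,*}$ has kernel $I(n)/J(n)$, which as an ideal of $C(n)_{*,*}$ is the principal ideal $\xi_1^{\ell^n} \cdot C(n)_{*,*}$, precisely the image of right multiplication by $\xi_1^{\ell^n}$. Exactness in the middle is therefore immediate once injectivity of $\cdot\,\xi_1^{\ell^n}$ is established. For this, I would appeal to the $H_{*,*}$-module bases of Lemma~\ref{left-H-basis-Cn-Bn}(a) and Lemma~\ref{lem:left-H-basis-An}: both consist of monomials $\tau^E \xi^R$ subject to identical constraints on $(E, r_2, \dots, r_n)$, differing only in that $r_1 \ge 0$ is unrestricted for $C(n)_{*,*}$ while $0 \le r_1 < \ell^n$ for $A(n)_{*,*}$. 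Right multiplication by $\xi_1^{\ell^n}$ shifts $r_1 \mapsto r_1 + \ell^n$, mapping the $C(n)_{*,*}$-basis bijectively onto the sub-$H_{*,*}$-module spanned by monomials with $r_1 \ge \ell^n$, which is exactly the kernel of the projection to $A(n)_{*,*}$. This establishes injectivity and pinpoints the image as the kernel in one stroke; the bidegree shift on the left is recorded by $\|\xi_1^{\ell^n}\| = ((2\ell-2)\ell^n,(\ell-1)\ell^n)$.

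For the bicomodule compatibility, the central computation is the identity $\psi(\xi_1^{\ell^n}) = \xi_1^{\ell^n} \otimes 1 + 1 \otimes \xi_1^{\ell^n}$ in $\sA_{*,*} \otimes_{H_{*,*}} \sA_{*,*}$, which I would derive from $\psi(\xi_1) = \xi_1 \otimes 1 + 1 \otimes \xi_1$ together with the Frobenius identity $(a+b)^{\ell^n} = a^{\ell^n} + b^{\ell^n}$ available in any commutative $\bZ/\ell$-algebra. Since $\xi_1^{\ell^n} \in I(n)$ and $\xi_1^{\ell^n} = (\xi_1^{\ell^{n-1}})^\ell \in I(n-1)$, reducing this identity via $\pi_n$ on the left tensor factor and $\pi_{n-1}$ on the right tensor factor yields $\lambda_n(\xi_1^{\ell^n}) = 1 \otimes \xi_1^{\ell^n}$ in $A(n)_{*,*} \otimes_{H_{*,*}} C(n)_{*,*}$ and $\rho_n(\xi_1^{\ell^n}) = \xi_1^{\ell^n} \otimes 1$ in $C(n)_{*,*} \otimes_{H_{*,*}} A(n-1)_{*,*}$. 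Because $\lambda_n$ and $\rho_n$ are algebra homomorphisms, right multiplication by $\xi_1^{\ell^n}$ therefore commutes with both coactions, so $\cdot\,\xi_1^{\ell^n}$ is an $A(n)_{*,*}$-$A(n-1)_{*,*}$-bicomodule map. That the projection $C(n)_{*,*} \onto A(n)_{*,*}$ is a bicomodule map follows from a short diagram chase based on the factorisation of the Hopf algebroid projection $\pi_n$ through $\pi'_n$. The only genuinely non-formal step is the Frobenius calculation for $\psi(\xi_1^{\ell^n})$; everything else is bookkeeping with the basis lemmas already in place.
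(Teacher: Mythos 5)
Your proof is correct and follows essentially the same route as the paper: the basis-comparison argument via Lemmas~\ref{left-H-basis-Cn-Bn}(a) and~\ref{lem:left-H-basis-An} for exactness/injectivity, and the primitivity of $\xi_1^{\ell^n}$ modulo $I(n)$ (on the left) and $I(n-1)$ (on the right) for bicomodule-linearity. Your Frobenius computation $\psi(\xi_1^{\ell^n}) = \xi_1^{\ell^n} \otimes 1 + 1 \otimes \xi_1^{\ell^n}$ is a slightly sharper statement than the paper's two congruences, but it is the same underlying observation and the remaining steps coincide.
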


\begin{proof}
From the definition of $I(n)$ and $J(n)$ it is clear that multiplication
by $\xi_1^{\ell^n}$ acts injectively on $C(n)_{*,*}$ with cokernel
$A(n)_{*,*}$.  It remains to verify that $\cdot \xi_1^{\ell^n}$ is an
$A(n)_{*,*}$-$A(n-1)_{*,*}$-bicomodule homomorphism, i.e., that it commutes
with the left $A(n)_{*,*}$-coaction and the right $A(n-1)_{*,*}$-coaction.
This is equivalent to $\xi_1^{\ell^n}$ being left $A(n)_{*,*}$-comodule
primitive and right $A(n-1)_{*,*}$-comodule primitive, which follows
from the observations that
$$
\psi(\xi_1^{\ell^n}) \equiv 1 \otimes \xi_1^{\ell^n}
	\mod I(n) \otimes_{H_{*,*}} \sA_{*,*}
$$
and
$$
\psi(\xi_1^{\ell^n}) \equiv \xi_1^{\ell^n} \otimes 1
	\mod \sA_{*,*} \otimes_{H_{*,*}} I(n-1) \,.
$$
\end{proof}

\begin{definition} \label{def:Bn-An-An-1-bicomod}
We assign to
$$
B(n)_{*,*} = C(n)_{*,*}[1/\xi_1^{\ell^n}]
= \colim_j \bigl( \Sigma^{-j \|\xi_1^{\ell^n}\|}
	C(n)_{*,*} \bigr)
$$
the $A(n)_{*,*}$-$A(n-1)_{*,*}$-bicomodule structure given by the
colimit of the diagram
$$
C(n)_{*,*}
    \overset{\cdot \xi_1^{\ell^n}}\longto
	\Sigma^{-\|\xi_1^{\ell^n}\|} C(n)_{*,*}
    \overset{\cdot \xi_1^{\ell^n}}\longto
	\Sigma^{-2\|\xi_1^{\ell^n}\|} C(n)_{*,*}
    \overset{\cdot \xi_1^{\ell^n}}\longto
	\dots \,.
$$
\end{definition}

\begin{lemma}
$B(n)_{*,*}$ is an $A(n)_{*,*}$-$A(n-1)_{*,*}$-bicomodule algebra, and
the canonical morphism
$C(n)_{*,*} \to B(n)_{*,*}$ is an $A(n)_{*,*}$-$A(n-1)_{*,*}$-bicomodule
algebra homomorphism.
\end{lemma}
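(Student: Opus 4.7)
My plan is to leverage Lemma~\ref{lem:CCA-SES}, which tells us that multiplication by $\xi_1^{\ell^n}$ is an $A(n)_{*,*}$-$A(n-1)_{*,*}$-bicomodule homomorphism $\Sigma^{\|\xi_1^{\ell^n}\|} C(n)_{*,*} \to C(n)_{*,*}$. Consequently every transition map in the colimit diagram of Definition~\ref{def:Bn-An-An-1-bicomod} is a bicomodule morphism, and since the tensor products $A(n)_{*,*} \otimes_{H_{*,*}} -$ and $- \otimes_{H_{*,*}} A(n-1)_{*,*}$ preserve filtered colimits, passage to the colimit yields well-defined coactions $\lambda_n \: B(n)_{*,*} \to A(n)_{*,*} \otimes_{H_{*,*}} B(n)_{*,*}$ and $\rho_n \: B(n)_{*,*} \to B(n)_{*,*} \otimes_{H_{*,*}} A(n-1)_{*,*}$. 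The coaction axioms (coassociativity, counitality, and left-right compatibility) hold stage-wise on $C(n)_{*,*}$ and therefore pass to the colimit.

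The substantive step is to verify that $\lambda_n$ and $\rho_n$ are algebra homomorphisms, so that $B(n)_{*,*}$ is a bicomodule \emph{algebra} and not merely a bicomodule. I would invoke the universal property of the localization $B(n)_{*,*} = C(n)_{*,*}[1/\xi_1]$: because the coactions on $C(n)_{*,*}$ are already algebra homomorphisms, each extends uniquely to an algebra homomorphism on $B(n)_{*,*}$ with the prescribed target, provided the image of $\xi_1$ under the coaction in question is a unit. From $\psi(\xi_1) = \xi_1 \otimes 1 + 1 \otimes \xi_1$ I compute $\lambda_n(\xi_1) = \xi_1 \otimes 1 + 1 \otimes \xi_1$ in $A(n)_{*,*} \otimes_{H_{*,*}} B(n)_{*,*}$; here $1 \otimes \xi_1$ has inverse $1 \otimes \xi_1^{-1}$, while $\xi_1 \otimes 1$ is nilpotent because $\xi_1^{\ell^n} = 0$ in $A(n)_{*,*}$, so $\lambda_n(\xi_1)$ is a unit plus a nilpotent, hence a unit. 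An analogous computation, using $\xi_1^{\ell^{n-1}} = 0$ in $A(n-1)_{*,*}$, shows $\rho_n(\xi_1)$ is a unit in $B(n)_{*,*} \otimes_{H_{*,*}} A(n-1)_{*,*}$. By uniqueness, the algebra-homomorphism extensions obtained this way coincide with the coactions produced by the colimit construction.

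Finally, the canonical map $C(n)_{*,*} \to B(n)_{*,*}$ is an algebra homomorphism by the definition of localization and is a bicomodule morphism because it is the structure map at $j=0$ into the colimit defining the bicomodule structure on $B(n)_{*,*}$; the two together give a bicomodule algebra homomorphism. The expected main obstacle is precisely the algebra-homomorphism check for the coactions, which reduces to the unit-plus-nilpotent decomposition sketched above; the rest is routine bookkeeping given the colimit and localization descriptions.
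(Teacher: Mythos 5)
Your proposal is correct and takes essentially the same route as the paper: identify the coaction on $B(n)_{*,*}$ as the localization of the coaction on $C(n)_{*,*}$, and then note that localizing an algebra homomorphism gives an algebra homomorphism. The one place you add explicit detail that the paper leaves implicit is the observation that $\lambda_n(\xi_1) = \xi_1 \otimes 1 + 1 \otimes \xi_1$ (and similarly $\rho_n(\xi_1)$) is a unit in the target, being the commuting sum of a nilpotent element and a unit; the paper instead phrases the step as "inverting (a positive power of) $\xi_1$," tacitly using that $\lambda_n(\xi_1^{\ell^n}) = 1 \otimes \xi_1^{\ell^n}$ since $\xi_1^{\ell^n}$ is already killed in $A(n)_{*,*}$. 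Your closing remark — that the universal-property extension must coincide with the coaction obtained from the colimit of Definition~\ref{def:Bn-An-An-1-bicomod} — is a worthwhile compatibility check that the paper leaves unstated; it follows by uniqueness since both restrict to the given coaction on $C(n)_{*,*}$.
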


\begin{proof}
The left $A(n)_{*,*}$-coaction
$$
\lambda_n \: B(n)_{*,*} \longto A(n)_{*,*} \otimes_{H_{*,*}} B(n)_{*,*}
$$
is obtained from the left coaction
$$
\lambda_n \: C(n)_{*,*} \longto A(n)_{*,*} \otimes_{H_{*,*}} C(n)_{*,*}
$$
by inverting (a positive power of) $\xi_1$.  Since the latter coaction
is an algebra homomorphism, so is the former.  The case of right
$A(n-1)_{*,*}$-coactions is entirely similar.
\end{proof}

\begin{definition}
Let $\gamma_n \: C(n)_{*,*} \to C(0)_{*,*}$ and
$\beta_n \: B(n)_{*,*} \to B(0)_{*,*}$ be the $\bZ/\ell$-algebra
homomorphisms shown in~\eqref{eq:ACB}.
Let $\gamma'_n \: C(n)_{*,*} \to H_{*,*}[\xi_1^{\ell^n}]$
be the composite of $\gamma_n$
and the left $H_{*,*}$-module homomorphism
$$
C(0)_{*,*} = H_{*,*}[\tau_0, \xi_1] / (\tau_0^2-T_0)
	\longto H_{*,*}[\xi_1^{\ell^n}]
$$
given for $e \in \{0,1\}$ and $r\ge0$ by
$$
\tau_0^e \xi_1^r \longmapsto \begin{cases}
\xi_1^r & \text{if $e = 0$ and $\ell^n \mid r$,} \\
0 & \text{otherwise,}
\end{cases}
$$
and let $\beta'_n \: B(n)_{*,*} \to H_{*,*}[\xi_1^{\pm \ell^n}]$ be
its localization.
\end{definition}

Note that $\tau_0 \cdot \eta_R(\tau)$ in $C(0)_{*,*}$ maps by $\gamma'_0$
to $\rho \tau \xi_1$.  Hence $\gamma'_n$ is sometimes not right
$H_{*,*}$-linear.

\begin{proposition} \label{prop:left-An-comod-isos-Cn-Bn}
The composites
$$
C(n)_{*,*}
	\overset{\lambda_n}\longto
	A(n)_{*,*} \otimes_{H_{*,*}} C(n)_{*,*}
	\overset{\id \otimes \gamma'_n}\longto 
	A(n)_{*,*} \otimes_{H_{*,*}} H_{*,*}[\xi_1^{\ell^n}]
$$
and
$$
B(n)_{*,*}
	\overset{\lambda_n}\longto
	A(n)_{*,*} \otimes_{H_{*,*}} B(n)_{*,*}
	\overset{\id \otimes \beta'_n}\longto 
	A(n)_{*,*} \otimes_{H_{*,*}} H_{*,*}[\xi_1^{\pm \ell^n}]
$$
are left $A(n)_{*,*}$-comodule isomorphisms.
\end{proposition}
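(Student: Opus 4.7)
The plan is to mirror the proof of Lemma \ref{lem:left-An-comod-iso-A}. The map $\lambda_n$ is a left $A(n)_{*,*}$-comodule homomorphism by definition, and $\id \otimes \gamma'_n$ (respectively $\id \otimes \beta'_n$) is one because the $A(n)_{*,*}$-coaction on $A(n)_{*,*} \otimes_{H_{*,*}} (-)$ acts only through the first factor. It therefore suffices to show that each composite is a left $H_{*,*}$-module isomorphism.

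I construct an auxiliary left $H_{*,*}$-module isomorphism in the opposite direction. By Lemma \ref{left-H-basis-Cn-Bn}(a), every basis monomial of $C(n)_{*,*}$ factors uniquely as $\tau^E \xi_1^{r'_1} \xi_2^{r_2} \cdots \xi_n^{r_n} \cdot \xi_1^{\ell^n q}$ with $0 \le r'_1 < \ell^n$ and $q \ge 0$, the first factor being a basis monomial of $A(n)_{*,*}$. Hence the multiplication map
\[
\phi \: A(n)_{*,*} \otimes_{H_{*,*}} H_{*,*}[\xi_1^{\ell^n}] \longto C(n)_{*,*},
\qquad a \otimes b \longmapsto a \cdot b,
\]
is a left $H_{*,*}$-module isomorphism. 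By Lemma \ref{left-H-basis-Cn-Bn}(b), allowing $q \in \bZ$ gives the analogous isomorphism $\phi \: A(n)_{*,*} \otimes_{H_{*,*}} H_{*,*}[\xi_1^{\pm \ell^n}] \to B(n)_{*,*}$. It remains to prove that $(\id \otimes \gamma'_n) \lambda_n \circ \phi$ and its $B(n)_{*,*}$-analogue are bijective.

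For this I filter $C(n)_{*,*}$ (respectively $B(n)_{*,*}$) by $F^t$, the left $H_{*,*}$-submodule generated by basis monomials of topological degree $\ge t$; this is a well-defined decreasing filtration because the relations $\tau_i^2 = T_i$ do not lower topological degree, exactly as in the proof of Lemma \ref{lem:right-H-basis-A}. Two observations drive the argument. First, $\xi_1^{\ell^n}$ is left $A(n)_{*,*}$-comodule primitive in $C(n)_{*,*}$, since $\pi_n(\xi_1^{\ell^n}) = 0$ and $\psi(\xi_1^{\ell^n}) = \xi_1^{\ell^n} \otimes 1 + 1 \otimes \xi_1^{\ell^n}$, so $\lambda_n(b) = 1 \otimes b$ for every $b \in H_{*,*}[\xi_1^{\ell^n}]$. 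Second, direct calculation of $\lambda_n = (\pi_n \otimes \pi'_n)\psi$ shows that for any basis monomial $a$ of $A(n)_{*,*}$,
\[
\lambda_n(a) \equiv a \otimes 1
\mod A(n)_{*,*} \otimes_{H_{*,*}} F^1 C(n)_{*,*}.
\]
Combined with the algebra-map property of $\lambda_n$, for $b \in F^t H_{*,*}[\xi_1^{\ell^n}]$ this yields
\[
\lambda_n(ab) = \lambda_n(a)(1 \otimes b) \equiv a \otimes b
\mod A(n)_{*,*} \otimes_{H_{*,*}} F^{t+1} C(n)_{*,*}.
\]
Since $\gamma'_n$ preserves topological degree and restricts to the identity on $H_{*,*}[\xi_1^{\ell^n}] \subset C(n)_{*,*}$, applying $\id \otimes \gamma'_n$ shows that $(\id \otimes \gamma'_n) \lambda_n \phi$ acts as the identity on each associated graded quotient. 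In any fixed bidegree the filtration on $A(n)_{*,*} \otimes_{H_{*,*}} H_{*,*}[\xi_1^{\ell^n}]$ is bounded above (and trivially below), so the composite is bijective.

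The $B(n)_{*,*}$ statement is proved by the same argument with $t$ running over all integers. I expect the main subtlety to be verifying that in each fixed bidegree the filtration on $A(n)_{*,*} \otimes_{H_{*,*}} H_{*,*}[\xi_1^{\pm \ell^n}]$ is bounded both above and below, so that the associated graded argument still concludes bijectivity. This follows from the vanishing $H^{p,q} = 0$ outside $0 \le p \le \min\{q+d, 2q\}$, which pins the exponent of $\xi_1^{\pm \ell^n}$ to a finite interval in any given bidegree. The warning after the definition of $\gamma'_n$ that it need not be right $H_{*,*}$-linear is harmless here, since only left $H_{*,*}$-linearity is invoked throughout.
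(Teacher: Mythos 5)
Your proof is correct, but it takes a different route from the paper's. The paper does not build a candidate inverse~$\phi$; instead it observes that $(\id\otimes\gamma_n)\lambda_n$ is a $\bZ/\ell$-algebra homomorphism, computes its values on the generators $\tau_0,\dots,\tau_n,\xi_1,\dots,\xi_n$ of $C(n)_{*,*}$, and reads off directly that the composite respects the $(\xi_1^{\ell^n})$-adic filtrations $F^m C(n)_{*,*}=C(n)_{*,*}\cdot(\xi_1^{\ell^n})^m$ and $F^m H_{*,*}[\xi_1^{\ell^n}]=H_{*,*}[\xi_1^{\ell^n}]\cdot(\xi_1^{\ell^n})^m$, with associated graded map the evident isomorphism. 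You instead transpose the proof of Lemma~\ref{lem:left-An-comod-iso-A}, filtering by topological degree and using only formal properties: the algebra-map property of $\lambda_n$, the primitivity of $\xi_1^{\ell^n}$, and the congruence $\lambda_n(a)\equiv a\otimes 1$ modulo terms of positive degree. Both filtrations amount to tracking the exponent of $\xi_1^{\ell^n}$ on the target, and your bidegree-boundedness check for the $B(n)_{*,*}$ case is the right one; it is essentially a restatement of Lemma~\ref{lem:An-bidegs}. The explicit approach shows exactly which cross terms arise, while yours exhibits the structural parallel with Lemma~\ref{lem:left-An-comod-iso-A}. One small caution: the assertion that $\lambda_n(b)=1\otimes b$ for every $b\in H_{*,*}[\xi_1^{\ell^n}]$ should be read on the left $H_{*,*}$-basis $\{\xi_1^{\ell^n q}\}$ and then extended by left $H_{*,*}$-linearity; for $b=h\,\xi_1^{\ell^n q}$ with $h\in H_{*,*}$ one has $\lambda_n(b)=\eta_L(h)\otimes\xi_1^{\ell^n q}$, whereas $1\otimes b=\eta_R(h)\otimes\xi_1^{\ell^n q}$, and these differ when $\ell=2$. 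Since you only ever apply the formula on monomial basis elements and close up by $H_{*,*}$-linearity (which preserves the filtration), this is harmless, but the literal statement is false.
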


\begin{proof}
The $\bZ/\ell$-algebra homomorphism
$$
(\id \otimes \gamma_n) \lambda_n \: C(n)_{*,*}
	\longto A(n)_{*,*} \otimes_{H_{*,*}} C(0)_{*,*}
$$
is left $H_{*,*}$-linear and maps the remaining algebra generators by
$$
\begin{cases}
\tau_0 \longmapsto \tau_0 \otimes 1 + 1 \otimes \tau_0 \\
\tau_k \longmapsto \tau_k \otimes 1 + \xi_k \otimes \tau_0
& \text{for $1 \le k \le n$,} \\
\xi_1 \longmapsto \xi_1 \otimes 1 + 1 \otimes \xi_1 \\
\xi_k \longmapsto \xi_k \otimes 1 + \xi_{k-1}^\ell \otimes \xi_1
&\text{for $2 \le k \le n$.}
\end{cases}
$$
In particular, it and $(\id \otimes \gamma'_n) \lambda_n$ respect the
decreasing $(\xi_1^{\ell^n})$-adic filtrations defined (internally
to this proof) for $m\ge0$ by
\begin{align*}
F^m C(n)_{*,*} &= C(n)_{*,*} \cdot (\xi_1^{\ell^n})^m \\
F^m C(0)_{*,*} &= C(0)_{*,*} \cdot (\xi_1^{\ell^n})^m \\
F^m H_{*,*}[\xi_1^{\ell^n}]
	&= H_{*,*}[\xi_1^{\ell^n}] \cdot (\xi_1^{\ell^n})^m \,.
\end{align*}
The induced homomorphism
$$
\frac{F^m C(n)_{*,*}}{F^{m+1} C(n)_{*,*}}
	\longto A(n)_{*,*} \otimes_{H_{*,*}}
\frac{F^m H_{*,*}[\xi_1^{\ell^n}]}{F^{m+1} H_{*,*}[\xi_1^{\ell^n}]}
$$
of associated graded left $A(n)_{*,*}$-comodules is the isomorphism
given by
$$
\tau^E \xi^R \cdot (\xi_1^{\ell^n})^m
	\longmapsto \tau^E \xi^R \otimes (\xi_1^{\ell^n})^m \,,
$$
where $e_s \in \{0,1\}$ for $0 \le s \le n$ and $0 \le r_s < \ell^{n+1-s}$
for $1 \le s \le n$.  Each filtration is eventually zero in each
bidegree, so this implies that $(\id \otimes \gamma'_n) \lambda_n$
is an isomorphism.  Inverting $\xi_1^{\ell^n}$, it follows that
$$
(\id \otimes \beta'_n) \lambda_n \:
B(n)_{*,*} \overset{\cong}\longto
        A(n)_{*,*} \otimes_{H_{*,*}} H_{*,*}[\xi_1^{\pm \ell^n}]
$$
is also an isomorphism.
\end{proof}

\begin{proposition} \label{prop-right-An-1-comod-iso-Bn}
The composite
$$
B(n)_{*,*}
	\overset{\rho_n}\longto
	B(n)_{*,*} \otimes_{H_{*,*}} A(n-1)_{*,*}
	\overset{\beta_n \otimes \id}\longto 
	B(0)_{*,*} \otimes_{H_{*,*}} A(n-1)_{*,*}
$$
is a right $A(n-1)_{*,*}$-comodule algebra isomorphism.
\end{proposition}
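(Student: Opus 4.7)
The plan is to mimic the proof of Proposition~\ref{prop:left-An-comod-isos-Cn-Bn}, now on the right-coaction side. The composite is a right $A(n-1)_{*,*}$-comodule algebra homomorphism because $\rho_n$ is such by its definition as the right coaction, and $\beta_n \otimes \id$ inherits this property from $\beta_n$ being a bicomodule algebra map. Only bijectivity remains.

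I would first compute the images of the algebra generators, using the coproduct formulas for $\psi$, projecting through $\pi'_n \otimes \pi_{n-1}$, and applying $\beta_n \otimes \id$ (which kills $\tau_i$ for $i \ge 1$ and $\xi_i$ for $i \ge 2$ in the first factor). The expected result is
\begin{align*}
\tau_0 &\longmapsto \tau_0 \otimes 1 + 1 \otimes \tau_0, \\
\tau_k &\longmapsto \xi_1^{\ell^{k-1}} \otimes \tau_{k-1} + 1 \otimes \tau_k, \\
\xi_k &\longmapsto \xi_1^{\ell^{k-1}} \otimes \xi_{k-1} + 1 \otimes \xi_k,
\end{align*}
for $1 \le k \le n$, under the conventions $\xi_0 = 1$ and $\tau_n, \xi_n$ equal to $0$ in $A(n-1)_{*,*}$.

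For bijectivity, I would emulate the filtration argument of Proposition~\ref{prop:left-An-comod-isos-Cn-Bn}. Define a decreasing filtration $F^t$ on $B(n)_{*,*}$ as the left $H_{*,*}$-span of basis monomials from Lemma~\ref{left-H-basis-Cn-Bn}(b) whose excess $e_1 + \cdots + e_n + r_2 + \cdots + r_n$ is at least $t$, and filter $B(0)_{*,*} \otimes_{H_{*,*}} A(n-1)_{*,*}$ analogously by excess in the $A(n-1)_{*,*}$-basis of the second tensor factor. The generator image formulas show the composite respects these filtrations. Introduce the bidegree-preserving bijection of $H_{*,*}$-bases
$$
\phi \: \tau_0^{e_0}\tau_1^{e_1}\cdots\tau_n^{e_n}\xi_1^{r_1}\xi_2^{r_2}\cdots\xi_n^{r_n} \longmapsto \tau_0^{e_0}\xi_1^R \otimes \tau_0^{e_1}\tau_1^{e_2}\cdots\tau_{n-1}^{e_n}\xi_1^{r_2}\xi_2^{r_3}\cdots\xi_{n-1}^{r_n},
$$
with $R = r_1 + \sum_{k=1}^n e_k \ell^{k-1} + \sum_{k=2}^n r_k \ell^{k-1}$ determined by bidegree matching. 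Then $\phi^{-1} \circ (\beta_n \otimes \id)\rho_n$ sends each generator $g_k$ (with $g = \tau$ or $\xi$, for indices in the appropriate range) to itself plus a term involving $g_{k+1}$. Ordering basis monomials by decreasing highest index, this gives an upper-triangular map on each $\gr^t$ with identity diagonal, hence invertible. Consequently $\gr^t (\beta_n \otimes \id)\rho_n$ is an isomorphism, and since $F^t = 0$ eventually in each bidegree, the map is an isomorphism.

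The main obstacle I anticipate is the bookkeeping around the relations $\tau_k^2 = T_k$ for $\ell = 2$ and the $H_{*,*}$-bimodule subtleties from $\eta_L \neq \eta_R$ (especially when simplifying products like $\xi_1 \otimes \tau \xi_1 = \tau \xi_1 \otimes \xi_1 + \rho \tau_0 \xi_1 \otimes \xi_1$). These make the excess filtration non-strictly-multiplicative, but the failure is parallel on both sides, leaving the upper-triangular analysis on $\gr^\bullet$ intact.
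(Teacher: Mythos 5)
Your computation of the generator images is correct, and your bijection $\phi$ essentially rediscovers the paper's change of generators (the paper writes $\check\tau_k = \tau_k\xi_1^{-\ell^{k-1}}$, $\check\xi_k = \xi_k\xi_1^{-\ell^{k-1}}$, and then $\phi$ is just $\tau_0^{e_0}\check\tau_1^{e_1}\cdots\check\xi_n^{r_n} \mapsto \tau_0^{e_0}\xi_1^{R}\otimes\tau_0^{e_1}\cdots\xi_{n-1}^{r_n}$ with $R$ the $\check$-basis $\xi_1$-power). However, your choice of filtration has a genuine gap at $\ell=2$, and your acknowledgment of this at the end does not repair it.

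The excess filtration you propose is \emph{not} respected by $(\beta_n\otimes\id)\rho_n$ when $\ell = 2$. Take $\tau_1\tau_2 \in B(n)_{*,*}$ for $n \ge 4$, which has excess~$2$. Expanding,
$$
(\beta_n\otimes\id)\rho_n(\tau_1\tau_2)
 = \xi_1^3\otimes\tau_0\tau_1 + \xi_1\otimes\tau_0\tau_2
  + \xi_1^2\otimes\tau_1^2 + 1\otimes\tau_1\tau_2 \,,
$$
and in $A(n-1)_{*,*}$ the middle term reduces via $\tau_1^2 = T_1 = \tau\xi_2 + \rho\tau_2 + \rho\tau_0\xi_2$. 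After moving the coefficient $\tau$ to the left factor via $\eta_R(\tau) = \tau + \rho\tau_0$, this contributes $\tau\cdot\xi_1^2\otimes\xi_2$ and $\rho\cdot\xi_1^2\otimes\tau_2$, whose second factors have excess~$1 < 2$. So the image of $F^2$ is not contained in $F^2$, the associated graded map is undefined, and the triangularity argument collapses. Your closing remark that ``the failure is parallel on both sides'' is not correct: the source $B(n)_{*,*}$ is free on the monomial basis, so no relation is applied there, while the target genuinely reduces excess.

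The paper avoids this by filtering by $e_0 + 2r_1$ \emph{in the $\check$-basis} (equivalently, by $e_0 + 2R$ in your notation), which tracks the first tensor factor rather than the second. Because each $\check$-generator image has only nonpositive $\xi_1$-powers in the first factor, and because $\eta_R(\tau) = \tau + \rho\tau_0$ can raise $e_0$ by at most~$1$ while the accompanying $\xi_1^{-\ell^{k-1}}$ lowers $2r_1$ by at least $2$, the filtration is preserved even through the $\ell = 2$ relations. This choice is the essential point that makes the filtration argument go through, and it is what distinguishes the paper's proof from yours.
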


\begin{proof}
The $\bZ/\ell$-algebra homomorphism $(\beta_n \otimes \id) \rho_n$
is left $H_{*,*}$-linear and maps the remaining algebra generators by
$$
\begin{cases}
\tau_0 \longmapsto \tau_0 \otimes 1 + 1 \otimes \tau_0 \\
\tau_k \longmapsto \xi_1^{\ell^{k-1}} \otimes \tau_{k-1} + 1 \otimes \tau_k 
&\text{for $1 \le k \le n$,} \\
\xi_1 \longmapsto \xi_1 \otimes 1 + 1 \otimes \xi_1 \\
\xi_k \longmapsto \xi_1^{\ell^{k-1}} \otimes \xi_{k-1} + 1 \otimes \xi_k
&\text{for $2 \le k \le n$,} \\
\xi_1^{-\ell^n} \longmapsto \xi_1^{-\ell^n} \otimes 1 \,.
\end{cases}
$$
Letting
$$
\begin{cases}
\check\tau_k = \tau_k \cdot \xi_1^{-\ell^{k-1}}
&\text{for $1 \le k \le n$,} \\
\check\xi_k = \xi_k \cdot \xi_1^{-\ell^{k-1}}
&\text{for $2 \le k \le n$,}
\end{cases}
$$
we can rewrite the presentation in Definition~\ref{def:Jn-Cn-Bn} as
$$
B(n)_{*,*} = \frac{ H_{*,*}[\tau_0, \check\tau_1, \dots, \check\tau_n,
	\xi_1^{\pm1}, \check\xi_2, \dots, \check\xi_n] }
{( \tau_0^2 - T_0, \check\tau_1^2 -  \check T_1, \dots,
	\check\tau_n^2 - \check T_n, \check\xi_2^{\ell^{n-1}}, \dots,
	\check\xi_n^\ell) } \,,
$$
where for $1 \le i \le n$ we use the notation
$$
\check T_i = \begin{cases}
\tau\check\xi_{i+1} + \rho\check\tau_{i+1} + \rho\tau_0\check \xi_{i+1}
	&\text{for $\ell=2$,} \\
0 & \text{for $\ell$ odd.}
\end{cases}
$$
Note that $\check\tau_k = \tau_k \cdot \xi_1^{\ell^n - \ell^{k-1}}
\cdot \xi_1^{-\ell^n}$ and $\check\xi_k = \xi_k \cdot \xi_1^{\ell^n -
\ell^{k-1}} \cdot \xi_1^{-\ell^n}$.  Hence $(\beta_n \otimes \id)
\rho_n$ satisfies
\begin{multline*}
\check\tau_k \longmapsto 1 \otimes \tau_{k-1}
  + \xi_1^{-\ell^{k-1}} \otimes \tau_k
  + \dots \\
  + \xi_1^{\ell^{k-1} - \ell^n} \otimes \tau_{k-1} \xi_1^{\ell^n - \ell^{k-1}}
  + \xi_1^{-\ell^n} \otimes \tau_k \xi_1^{\ell^n - \ell^{k-1}}
\end{multline*}
for $1 \le k \le n$, and
\begin{multline*}
\check\xi_k \longmapsto 1 \otimes \xi_{k-1}
  + \xi_1^{-\ell^{k-1}} \otimes \xi_k
  + \dots \\
  + \xi_1^{\ell^{k-1} - \ell^n} \otimes \xi_1^{\ell^n - \ell^{k-1}} \xi_{k-1}
  + \xi_1^{-\ell^n} \otimes \xi_1^{\ell^n - \ell^{k-1}} \xi_k
\end{multline*}
for $2 \le k \le n$.  The omitted summands involve binomial
coefficients, and each summand after the first has a negative power of
$\xi_1$ as its left hand tensor factor.  Hence $(\beta_n \otimes \id)
\rho_n$ respects the increasing filtrations defined (internally to
this proof) for $m \in \bZ$ by
\begin{align*}
F_m B(n)_{*,*} &= H_{*,*} \{
	\tau_0^{e_0} \check\tau_1^{e_1} \dotsm \check\tau_n^{e_n}
	\xi_1^{r_1} \check\xi_2^{r_2} \dotsm \check\xi_n^{r_n}
	\mid e_0 + 2r_1 \le m \} \\
F_m B(0)_{*,*} &= H_{*,*} \{ \tau_0^{e_0} \xi_1^{r_1}
	\mid e_0 + 2r_1 \le m \} \,,
\end{align*}
where $e_s \in \{0,1\}$ for $0 \le s \le n$, $r_1 \in \bZ$
and $0 \le r_s < \ell^{n+1-s}$ for $2 \le s \le n$ as in
Lemma~\ref{left-H-basis-Cn-Bn}(b).  The induced homomorphism
$$
\frac{F_m B(n)_{*,*}}{F_{m-1} B(n)_{*,*}}
	\longto \frac{F_m B(0)_{*,*}}{F_{m-1} B(0)_{*,*}}
		\otimes_{H_{*,*}} A(n-1)_{*,*}
$$
of associated graded right $A(n-1)_{*,*}$-comodules is the left
$H_{*,*}$-module isomorphism given by
$$
\tau_0^{e_0} \check\tau_1^{e_1} \dotsm \check\tau_n^{e_n}
	\xi_1^{r_1} \check\xi_2^{r_2} \dotsm \check\xi_n^{r_n}
\longmapsto
\tau_0^{e_0} \xi_1^{r_1} \otimes \tau_0^{e_1} \dotsm \tau_{n-1}^{e_n}
	\xi_1^{r_2} \dotsm \xi_{n-1}^{r_n}
$$
for $e_0 + 2 r_1 = m$.  In particular, $\check\tau_k \mapsto \tau_{k-1}$
and $\check\xi_k \mapsto \xi_{k-1}$.  Each filtration is exhaustive and
eventually zero in each bidegree, so this implies that $(\beta_n \otimes
\id) \rho_n$ is an isomorphism.
\end{proof}

\section{$\dots$ and their dual bimodules}
\label{sec:dual-bimodules}

We now dualize the results of the previous section, following \cite{Boa82}.

\begin{definition}[\cite{Boa82}*{Def.~3.2}]
Given a left $H_{*,*}$-module $M$ we define the \emph{dual}
left $H_{*,*}$-module to be
$$
M^\vee = \Hom_{H_{*,*}}(M, H_{*,*}) \,.
$$
The left action of $h \in H_{*,*}$ on $f \in M^\vee$ is given by
$$
(hf)(m) = h \, f(m) = (-1)^{|h| |f|} f(hm)
$$
for $m \in M$, where $|h|$ and $|f|$ are the topological
degrees of~$h$ and $f$, respectively.  If~$M$ is an
$H_{*,*}$-$H_{*,*}$-bimodule then $M^\vee$ is also a bimodule, with
right action defined by
$$
(fh)(m) = (-1)^{|h| |m|} f(mh) \,.
$$
\end{definition}

\begin{example}
The canonical isomorphism $H_{*,*}^\vee \cong H_{*,*} = H^{-*, -*}$,
taking $f$ to $f(1)$, is $H_{*,*}$-$H_{*,*}$-bilinear.
\end{example}

\begin{lemma}[\cite{Boa82}*{Lem.~3.3}] \label{lem:Boa82Lem33}
Let $M$ be an $H_{*,*}$-$H_{*,*}$-bimodule and let $N$ be a left
$H_{*,*}$-module.

(a)
There is a natural homomorphism
$\theta \: M^\vee \otimes_{H_{*,*}} N^\vee
	\longto (M \otimes_{H_{*,*}} N)^\vee$
of left $H_{*,*}$-modules (or of $H_{*,*}$-$H_{*,*}$-bimodules, if $N$
is a bimodule), given by
$$
\theta(f \otimes g)(m \otimes n) = (-1)^{|g| |m|} f(m \, g(n))
$$
for $f \in M^\vee$, $g \in N^\vee$, $m \in M$ and~$n \in N$.

(b)
If $L$ is another bimodule, the diagram
$$
\xymatrix{
L^\vee \otimes_{H_{*,*}} M^\vee \otimes_{H_{*,*}} N^\vee
	\ar[r]^-{\theta \otimes \id}
	\ar[d]_-{\id \otimes \theta}
& (L \otimes_{H_{*,*}} M)^\vee \otimes_{H_{*,*}} N^\vee
	\ar[d]^-{\theta} \\
L^\vee \otimes_{H_{*,*}} (M \otimes_{H_{*,*}} N)^\vee
	\ar[r]^-{\theta}
& (L \otimes_{H_{*,*}} M \otimes_{H_{*,*}} N)^\vee
}
$$
commutes.

(c)
Both composites
$M^\vee \cong M^\vee \otimes_{H_{*,*}} H_{*,*}^\vee
\overset{\theta}\longto
(M \otimes_{H_{*,*}} H_{*,*})^\vee = M^\vee$
and
$M^\vee \cong H_{*,*}^\vee \otimes_{H_{*,*}} M^\vee
\overset{\theta}\longto
(H_{*,*} \otimes_{H_{*,*}} M)^\vee = M^\vee$
are the identity homomorphism.
\end{lemma}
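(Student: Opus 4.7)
My plan is to verify each of the three parts by a direct computation; no deeper structural input is required beyond the graded commutativity of $H_{*,*}$ on topological degrees and the sign rules $(hf)(m) = h\,f(m) = (-1)^{|h||f|} f(hm)$ and $(fh)(m) = (-1)^{|h||m|} f(mh)$ given in the preceding definition.

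For part~(a), I would proceed in four short steps. First, for fixed $f \in M^\vee$ and $g \in N^\vee$, I would show that $(m, n) \mapsto (-1)^{|g||m|} f(m\,g(n))$ descends to a well-defined homomorphism on $M \otimes_{H_{*,*}} N$ by verifying $H_{*,*}$-balance: the substitutions $m \mapsto mh$ and $n \mapsto hn$ produce $f(mg(n)h)$ and $(-1)^{|h||g|} f(mhg(n))$ respectively, which match because $g(n) \in H_{*,*}$ has topological degree $|g|+|n|$ and therefore graded-commutes past $h$. Second, I would verify that $(f, g) \mapsto \theta(f \otimes g)$ is itself $H_{*,*}$-balanced, so that $\theta$ descends to a map out of $M^\vee \otimes_{H_{*,*}} N^\vee$; once again a single sign rearrangement using graded commutativity of $H_{*,*}$ suffices. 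Third and fourth, I would check left $H_{*,*}$-linearity in general, and compatibility with the right action when $N$ is a bimodule; both are immediate from the defining formula.

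For part~(b), I would evaluate both composites on a pure tensor $e \otimes f \otimes g \in L^\vee \otimes M^\vee \otimes N^\vee$ applied to $x \otimes m \otimes n \in L \otimes M \otimes N$. The route via $\theta \otimes \id$ produces $(-1)^{|g|(|x|+|m|) + |f||x|} e(x \cdot f(m \cdot g(n)))$, while the route via $\id \otimes \theta$ produces $(-1)^{(|f|+|g|)|x| + |g||m|} e(x \cdot f(m \cdot g(n)))$, and the two exponents are visibly equal. For part~(c), I would unravel the unit isomorphisms via the canonical identification $H_{*,*}^\vee \cong H_{*,*}$. On the right, $\theta(f \otimes \mathrm{id})(m \otimes h) = f(mh)$, which is $f$ evaluated on the image of $m \otimes h$ under $M \otimes_{H_{*,*}} H_{*,*} \cong M$. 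On the left, $\theta(\mathrm{id} \otimes f)(h \otimes m) = (-1)^{|f||h|} h\,f(m)$, and the sign rule $h\,f(m) = (-1)^{|f||h|} f(hm)$ collapses this to $f(hm)$, which is $f$ on the image of $h \otimes m$ in $M$.

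The only real obstacle is the sign bookkeeping, which I would organise by always pushing $g(n)$ past any intervening element of $H_{*,*}$ before applying $f$. With that convention, each identity reduces to a single use of graded commutativity of $H_{*,*}$ together with the linearity axioms for the dual elements.
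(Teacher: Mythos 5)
The paper does not prove this lemma---it is cited directly to Boardman~\cite{Boa82}*{Lem.~3.3}---so there is no proof of record to compare against; your proposal supplies a complete direct verification that the paper delegates to the reference. The four-step plan for part~(a) (well-definedness on $M\otimes_{H_{*,*}}N$, balancedness on $M^\vee\otimes N^\vee$, left linearity, right linearity when $N$ is a bimodule) is the right decomposition, and parts~(b) and~(c) reduce, as you say, to evaluating on pure tensors and comparing exponents. I checked the sign arithmetic: the exponents $|g|(|x|+|m|)+|f||x|$ and $(|f|+|g|)|x|+|g||m|$ in~(b) do agree, and the cancellation $(-1)^{|f||h|}(-1)^{|h||f|}=1$ in the left-unit case of~(c) is exactly the right move. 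The only loose point is in Step~1 of~(a): the substitution $m\mapsto mh$ directly produces $(-1)^{|g|(|m|+|h|)}f(mh\,g(n))$, and the substitution $n\mapsto hn$ produces, after using $g(hn)=(-1)^{|h||g|}h\,g(n)$, the expression $(-1)^{|g||m|+|h||g|}f(mh\,g(n))$; these match immediately since $|g||h|=|h||g|$, without any need to move $h$ past $g(n)$ inside $M$. Your narrative instead pushes $h$ to the far right to get $f(m\,g(n)\,h)$, which introduces an extra commutation and a sign you don't record, though the degree count $|g(n)|=|g|+|n|$ that justifies it is indeed correct under the paper's convention (forced by $|hf(m)|=|f(hm)|$). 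The computation you describe is not wrong, just a longer route; the final conclusion is sound.

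Your organizing principle---``always push $g(n)$ past any intervening element of $H_{*,*}$ before applying $f$''---is a good bookkeeping convention, but be aware it is not quite free: each such push uses graded commutativity and should carry a $(-1)^{|g(n)||h|}$, so you must either record those signs or, more economically, avoid the pushes and compare the two sides of each balance identity in the form the definitions hand you, as in the computation sketched above. With that caveat, the proposal is correct.
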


\begin{lemma}[\cite{Boa82}*{Lem.~3.4}]
(a)
Let $(H_{*,*}, \Gamma)$ be a Hopf algebroid.  The dual
$\Gamma^\vee$ is a bigraded $\bZ/\ell$-algebra,
containing $H_{*,*}^\vee$ as a subalgebra.

(b)
Let $M$ be a left $\Gamma$-comodule.  The dual $M^\vee$ is
a left $\Gamma^\vee$-module.

(c)
Let $(H_{*,*}, \Sigma)$ be a second Hopf algebroid, and
let $N$ be a $\Gamma$-$\Sigma$-bicomodule.  The dual $N^\vee$ is
a $\Gamma^\vee$-$\Sigma^\vee$-bimodule.
\end{lemma}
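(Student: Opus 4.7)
The overall strategy is formal dualization through the natural pairing $\theta$ of Lemma~\ref{lem:Boa82Lem33}: every coalgebra-type structure map on the primal side is to be converted to an algebra-type structure map on the dual side by applying $(-)^\vee$ and precomposing with~$\theta$. The coherence statements Lemma~\ref{lem:Boa82Lem33}(b) and~(c) are tailored precisely to turn coassociativity into associativity and counit axioms into unit axioms, so the whole argument will reduce to invoking them at the appropriate spots.

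For part~(a), I would define the product $\Gamma^\vee \otimes_{H_{*,*}} \Gamma^\vee \to \Gamma^\vee$ as the composite $\psi^\vee \circ \theta$, where $\psi^\vee$ denotes precomposition with the coproduct~$\psi$. Explicitly, writing $\psi(x) = \sum x' \otimes x''$, this reads $(fg)(x) = \sum (-1)^{|g||x'|} f(x' \cdot g(x''))$. Associativity is the outer boundary of the commutative square in Lemma~\ref{lem:Boa82Lem33}(b), applied to $\Gamma^\vee \otimes_{H_{*,*}} \Gamma^\vee \otimes_{H_{*,*}} \Gamma^\vee$: both triple products factor as a common map $\theta^{(2)}$ into $(\Gamma \otimes_{H_{*,*}} \Gamma \otimes_{H_{*,*}} \Gamma)^\vee$ followed by the dual of a triple coproduct, and the two triple coproducts $(\psi \otimes \id)\psi$ and $(\id \otimes \psi)\psi$ agree by coassociativity. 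Unitality follows from Lemma~\ref{lem:Boa82Lem33}(c) together with the counit axioms $(\epsilon \otimes \id)\psi = \id = (\id \otimes \epsilon)\psi$, with the two-sided unit of $\Gamma^\vee$ being $\epsilon^\vee(1)$. Dualizing $\epsilon \eta_L = \id = \epsilon \eta_R$ shows that $\epsilon^\vee$ is split by both $\eta_L^\vee$ and $\eta_R^\vee$, hence an injective algebra homomorphism exhibiting $H_{*,*}^\vee$ as a subalgebra of $\Gamma^\vee$.

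For part~(b), I would transport the left coaction $\nu \: M \to \Gamma \otimes_{H_{*,*}} M$ to the left action $\nu^\vee \circ \theta \: \Gamma^\vee \otimes_{H_{*,*}} M^\vee \to M^\vee$. Compatibility with the product on $\Gamma^\vee$ is another application of Lemma~\ref{lem:Boa82Lem33}(b) together with coassociativity of~$\nu$, while unitality is Lemma~\ref{lem:Boa82Lem33}(c) together with the counit axiom $(\epsilon \otimes \id)\nu = \id$. Part~(c) is the same dualization performed once for the $\Gamma$-coaction and once for the $\Sigma$-coaction: the resulting left $\Gamma^\vee$-action and right $\Sigma^\vee$-action on $N^\vee$ commute because the bicomodule compatibility between the two coactions on~$N$ dualizes, via yet another instance of Lemma~\ref{lem:Boa82Lem33}(b), to precisely the bimodule axiom.

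None of the individual steps is conceptually hard; the work is entirely formal diagram chasing in categories of bigraded modules. The only thing that genuinely requires care is the Koszul sign $(-1)^{|g||m|}$ built into the definition of~$\theta$, which must be carried through consistently. Those signs have already been absorbed into the statements of Lemma~\ref{lem:Boa82Lem33}(b) and~(c), so once the structure maps are defined by dualization through~$\theta$ the required diagrams commute strictly, and no further sign bookkeeping is needed beyond invoking those lemmas.
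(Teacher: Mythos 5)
Your proof takes the same route as the paper: all structure maps on the dual side are defined as $\theta$ followed by the dual of the corresponding primal structure map, associativity/unitality/compatibility are read off from Lemma~\ref{lem:Boa82Lem33}(b),(c) and the coalgebra axioms, and $H_{*,*}^\vee$ is exhibited as a subalgebra via the splitting of $\epsilon^\vee$ by $\eta_L^\vee$. The paper's proof simply records these definitions and defers the verifications to Boardman, whereas you spell them out, but the argument is the same (including the minor presentational difference that the paper packages the bimodule action on $N^\vee$ as a single three-fold composite, which makes the left and right actions commute automatically, while you define the two actions separately and then check the bimodule axiom from Lemma~\ref{lem:Boa82Lem33}(b)).
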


\begin{proof}
Let $\psi \: \Gamma \to \Gamma \otimes_{H_{*,*}} \Gamma$ be
the coproduct, and let $\lambda \: M \to \Gamma \otimes_{H_{*,*}} M$
be the left coaction.
Boardman uses Lemma~\ref{lem:Boa82Lem33} to define the multiplication
on $\Gamma^\vee$ as the composite
$$
\Gamma^\vee \otimes \Gamma^\vee
\longto
\Gamma^\vee \otimes_{H_{*,*}} \Gamma^\vee
\overset{\theta}\longto
(\Gamma \otimes_{H_{*,*}} \Gamma)^\vee
\overset{\psi^\vee}\longto
\Gamma^\vee
$$
and to define the left action on $M^\vee$ as the composite
$$
\Gamma^\vee \otimes M^\vee
\longto
\Gamma^\vee \otimes_{H_{*,*}} M^\vee
\overset{\theta}\longto
(\Gamma \otimes_{H_{*,*}} M)^\vee
\overset{\lambda^\vee}\longto
M^\vee \,.
$$
Likewise, we define the bimodule action on $N^\vee$ as the now
evident composite
$$
\Gamma^\vee \otimes N^\vee \otimes \Sigma^\vee
\longto
\Gamma^\vee \otimes_{H_{*,*}} N^\vee \otimes_{H_{*,*}} \Sigma^\vee
\longto
(\Gamma \otimes_{H_{*,*}} N \otimes_{H_{*,*}} \Sigma)^\vee
\longto
N^\vee \,.
$$
The dual $\epsilon^\vee \: H_{*,*}^\vee \to \Gamma^\vee$ of the Hopf
algebroid counit is split by $\eta_L^\vee$ (and by $\eta_R^\vee$),
and exhibits $H_{*,*}^\vee$ as a subalgebra of $\Gamma^\vee$.
\end{proof}

The dual $\bZ/\ell$-algebra $\Gamma^\vee$ is usually
non-commutative.  Switching to cohomological grading, we now
refer to the duals of (left or right) $H_{*,*}$-module actions
as (left or right) $H^{*,*}$-module actions.

\begin{notation}
The motivic Steenrod algebra $\sA = \sA_{*,*}^\vee$ is the dual of
the Hopf algebroid $(H_{*,*}, \sA_{*,*})$, cf.~\cite{Voe03a}*{\S13},
and contains $H^{*,*}$ as a subalgebra.  It is freely generated as
a left $H^{*,*}$-module by the Milnor basis $\{\rho(E,R)\}_{E,R}$,
defined to be dual to the monomial basis $\{\tau^E \xi^R\}_{E,R}$ of
Lemma~\ref{lem:left-H-basis-A}.  The cohomological bidegree of~$\rho(E,R)$
is equal to the homological bidegree of $\tau^E \xi^R$.  In particular,
the Steenrod operation $\beta^e P^r$ is dual to $\tau_0^e \xi_1^r$,
for $e \in \{0,1\}$ and $r\ge0$, cf.~\cite{Voe03a}*{Lem.~13.1,
Lem.~13.5}.  By~\cite{Voe03a}*{Lem.~11.1, Cor.~12.5} and the Adem
relations~\cite{Voe03a}*{Thm.~10.3}, \cite{Rio12}*{Thm.~4.5.1} the
operations $\beta, P^1, P^\ell, P^{\ell^2}, \dots$, together with
the elements of $H^{*,*}$, generate $\sA$ as a $\bZ/\ell$-algebra.
When $\ell=2$ we write $Sq^{2r}$ for $P^r$ in cohomological bidegree~$(2r,
r)$ and $Sq^{2r+1}$ for $\beta P^r$ in cohomological bidegree~$(2r+1,r)$.
\end{notation}

\begin{lemma} \label{lem:right-H-basis-A-cohom}
The operations $\rho(E,R)$, for $(E,R)$ as in
Lemma~\ref{lem:left-H-basis-A}, also form a basis for~$\sA$ as a right
$H^{*,*}$-module.
\end{lemma}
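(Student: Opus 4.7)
The plan is to dualize the filtration argument used in the proof of Lemma~\ref{lem:right-H-basis-A}. Recall the decreasing filtration $\{F^t \sA_{*,*}\}_{t \ge 0}$ by $H_{*,*}$-$H_{*,*}$-sub-bimodules, with $F^t \sA_{*,*}$ spanned as a left $H_{*,*}$-module by the monomials $\tau^E \xi^R$ of topological degree $\ge t$, and whose associated graded $\gr^t \sA_{*,*}$ is freely generated as a left (equivalently, right) $H_{*,*}$-module by the monomials of topological degree equal to~$t$. In any given bidegree $F^t \sA_{*,*} = 0$ for sufficiently large~$t$, so each quotient $\sA_{*,*}/F^{t+1} \sA_{*,*}$ is free as a left $H_{*,*}$-module on the finitely many monomials of topological degree~$\le t$.

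First I would define $F_t \sA \subset \sA$ to be the sub-bimodule of left $H_{*,*}$-linear maps $\sA_{*,*} \to H_{*,*}$ that vanish on $F^{t+1} \sA_{*,*}$, equivalently $F_t \sA = \Hom_{H_{*,*}}(\sA_{*,*}/F^{t+1} \sA_{*,*}, H_{*,*})$. This gives an ascending filtration of $\sA$ by $H^{*,*}$-$H^{*,*}$-sub-bimodules that is exhaustive in each bidegree. Next I would identify the filtration quotients as
$$
F_t \sA / F_{t-1} \sA \cong (\gr^t \sA_{*,*})^\vee
$$
by applying $\Hom_{H_{*,*}}(-, H_{*,*})$ to the short exact sequence $0 \to \gr^t \sA_{*,*} \to \sA_{*,*}/F^{t+1} \sA_{*,*} \to \sA_{*,*}/F^t \sA_{*,*} \to 0$ and using that the quotients $\sA_{*,*}/F^t \sA_{*,*}$ are left $H_{*,*}$-free, hence projective, so that the resulting sequence of duals remains short exact.

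Since the proof of Lemma~\ref{lem:right-H-basis-A} established that the left and right $H_{*,*}$-actions on $\gr^t \sA_{*,*}$ agree, dualization gives that the left and right $H^{*,*}$-actions on $(\gr^t \sA_{*,*})^\vee$ likewise agree. The Milnor operations $\rho(E,R)$ with $\deg(\tau^E \xi^R) = t$ are by definition dual to $\{\tau^E \xi^R : \deg(\tau^E \xi^R) = t\}$, and so descend to a left $H^{*,*}$-basis of $F_t \sA / F_{t-1} \sA$; by the preceding sentence they therefore also form a right $H^{*,*}$-basis of this quotient. An induction on~$t$ within each fixed bidegree, using exhaustivity, then upgrades this to the desired statement that $\{\rho(E,R)\}_{E,R}$ is a right $H^{*,*}$-basis of all of~$\sA$.

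The main obstacle I expect is the bimodule identification $F_t \sA / F_{t-1} \sA \cong (\gr^t \sA_{*,*})^\vee$ and the compatibility of all the left/right actions with dualization; once these are in hand, the argument is a mechanical transcription of the filtration argument from Lemma~\ref{lem:right-H-basis-A}, the dualization merely converting a decreasing bimodule filtration with coincident actions on graded pieces into an increasing bimodule filtration with the same property.
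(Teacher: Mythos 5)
Your proposal is correct and follows essentially the same route as the paper: dualize the decreasing bimodule filtration $\{F^t \sA_{*,*}\}_t$ to an increasing bimodule filtration $\{F_t \sA\}_t$ of $\sA$, identify the filtration quotients with $(\gr^t \sA_{*,*})^\vee$, and deduce the right $H^{*,*}$-basis from the fact that left and right actions agree on the graded pieces. The only cosmetic difference is that you define $F_t \sA$ directly as the dual of a free quotient and deduce its left basis, while the paper defines $F_{t-1}\sA$ as the left span of the $\rho(E,R)$ of degree $< t$ and then invokes the same short exact sequence to see that it is a right submodule.
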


\begin{proof}
Recall the decreasing $H_{*,*}$-$H_{*,*}$-bimodule
filtration $F^t \sA_{*,*}$ of $\sA_{*,*}$ from the proof of
Lemma~\ref{lem:right-H-basis-A}.  For $t\ge0$ let
$$
F_{t-1} \sA = \< \rho(E,R) \mid \deg(\tau^E \xi^R) < t \> \subset \sA
$$
be the left $H^{*,*}$-submodule generated by the operations $\rho(E,R)$
of cohomological topological degree $< t$.  This is also a right
$H^{*,*}$-submodule, in view of the short exact sequence
$$
0 \to F_{t-1} \sA \longto \sA_{*,*}^\vee \longto (F^t \sA_{*,*})^\vee
	\to 0 \,.
$$
Hence $\{F_t \sA\}_t$ is an increasing filtration of $\sA$ by
$H^{*,*}$-$H^{*,*}$-bimodules, with filtration quotients
$$
\gr_t \sA = \frac{F_t \sA}{F_{t-1} \sA} \cong (\gr^t \sA_{*,*})^\vee \,.
$$
Since the left and right $H_{*,*}$-module actions agree on $\gr^t
\sA_{*,*}$, the dual left and right $H^{*,*}$-module actions on $\gr_t
\sA$ are also equal.  Hence the (cosets of the) operations $\rho(E,R)$
of degree~$= t$ freely generate $\gr_t \sA$ as a right $H^{*,*}$-module.
Since the filtration is exhaustive, the set of degree $\ge0$ operations
is a right $H^{*,*}$-module basis for~$\sA$.
\end{proof}

\begin{definition}
For $n\ge-1$ let the $\bZ/\ell$-algebra $A(n) = A(n)_{*,*}^\vee \subset
\sA$ be the dual of the Hopf algebroid $(H_{*,*}, A(n)_{*,*})$.
\end{definition}

\begin{lemma} \label{lem:left-H-basis-An-cohom}
The operations $\rho(E,R)$, for $(E,R)$ as in
Lemma~\ref{lem:left-H-basis-An}, form a basis for $A(n)$ as a finitely
generated free left $H^{*,*}$-module.  In particular, there is an
exhaustive sequence of $\bZ/\ell$-algebra homomorphisms
$$
H^{*,*} \subset \dots \subset A(n-1) \subset A(n) \subset \dots
	\subset \sA \,.
$$
\end{lemma}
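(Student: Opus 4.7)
The plan is to dualize Lemma~\ref{lem:left-H-basis-An} and the tower~\eqref{eq:dualAntower}; since the combinatorial work has already been carried out homologically, nothing substantial should remain.

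For the basis claim, I would first recall from the proof of Lemma~\ref{lem:left-H-basis-An} that $I(n) \subset \sA_{*,*}$ is the free left $H_{*,*}$-submodule spanned by those monomials $\tau^E \xi^R$ from Lemma~\ref{lem:left-H-basis-A} that fail the constraints of Lemma~\ref{lem:left-H-basis-An}, while the complementary $H_{*,*}$-span descends isomorphically onto $A(n)_{*,*}$. Dualizing the resulting split short exact sequence of left $H_{*,*}$-modules produces a split short exact sequence of left $H^{*,*}$-modules in which the middle term is $\sA$ and the summand dual to $I(n)$ sits alongside $A(n) = A(n)_{*,*}^\vee$. Because the Milnor basis $\{\rho(E,R)\}$ is by construction dual to the monomial basis of Lemma~\ref{lem:left-H-basis-A}, the summand $A(n)$ is precisely the left $H^{*,*}$-span of those $\rho(E,R)$ whose dual monomial $\tau^E \xi^R$ survives in $A(n)_{*,*}$, i.e., of those $(E,R)$ as listed in Lemma~\ref{lem:left-H-basis-An}. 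Finite generation is immediate since that list is finite.

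For the ascending sequence, I would dualize the tower~\eqref{eq:dualAntower} using Lemma~\cite{Boa82}*{Lem.~3.4}: duality sends each surjective Hopf algebroid homomorphism in the tower to an injective $\bZ/\ell$-algebra homomorphism, yielding $H^{*,*} \subset A(0) \subset \dots \subset A(n-1) \subset A(n) \subset \dots \subset \sA$ compatibly with the left $H^{*,*}$-submodule inclusions obtained in the previous paragraph. To establish exhaustiveness I would argue on the Milnor basis: any given $\rho(E,R) \in \sA$ has $E$ and $R$ of finite length, so once $n$ is large enough that $e_s = 0$ for $s > n$ and $r_s < \ell^{n+1-s}$ for $1 \le s \le n$, the pair $(E,R)$ satisfies the constraints of Lemma~\ref{lem:left-H-basis-An} and hence $\rho(E,R) \in A(n)$. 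Since these basis elements left $H^{*,*}$-generate $\sA$, every element of $\sA$ eventually lies in some $A(n)$.

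The only point requiring genuine care is confirming that the inclusions $A(n-1) \subset A(n)$ are \emph{algebra} inclusions, not merely left $H^{*,*}$-module inclusions. This is formal: by Lemma~\cite{Boa82}*{Lem.~3.4}(a) the $\bZ/\ell$-algebra structure on the dual of a Hopf algebroid is built from the coproduct, and any Hopf algebroid morphism preserves the coproduct, so the surjective Hopf algebroid map $A(n)_{*,*} \to A(n-1)_{*,*}$ dualizes to a $\bZ/\ell$-algebra map $A(n-1) \to A(n)$, which is injective because the source of the original map surjects freely onto the target in the sense of Lemma~\ref{lem:left-H-basis-An}.
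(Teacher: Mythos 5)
Your proposal is correct and takes essentially the same route as the paper: the paper's proof is a one-liner invoking the fact that $I(n)$ is a monomial ideal (so the short exact sequence of left $H_{*,*}$-modules splits along a sub-basis) together with dualizing the tower~\eqref{eq:dualAntower}, and your write-up simply spells out the details of that argument — the split short exact sequence, the identification of the dual basis with the Milnor elements $\rho(E,R)$, the injectivity of the dualized Hopf algebroid surjections, and the exhaustiveness via finite support of $(E,R)$.
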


\begin{proof}
This follows from (the proof of) Lemma~\ref{lem:left-H-basis-An}, since
$I(n) \subset \sA_{*,*}$ is a monomial ideal.  The sequence is dual to the
tower~\eqref{eq:dualAntower}.
\end{proof}

\begin{lemma}
The operations $\rho(E,R)$, for $(E,R)$ as in
Lemma~\ref{lem:left-H-basis-An}, also form a basis for $A(n)$ as a free
right $H^{*,*}$-module.
\end{lemma}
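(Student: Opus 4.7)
The plan is to mimic the proof of Lemma~\ref{lem:right-H-basis-A-cohom} with $\sA$ replaced by~$A(n)$, using the bimodule filtration produced in the proof of Lemma~\ref{lem:right-H-basis-An} as input. First, I would recall from that proof the decreasing filtration $\{F^t A(n)_{*,*}\}_{t\ge0}$ by $H_{*,*}$-$H_{*,*}$-sub-bimodules, generated as a left $H_{*,*}$-module by the monomials $\tau^E \xi^R$ from Lemma~\ref{lem:left-H-basis-An} of topological degree $\ge t$. The crucial fact inherited from there is that the left and right $H_{*,*}$-actions on each quotient $\gr^t A(n)_{*,*}$ coincide, and the degree~$=t$ monomials freely generate $\gr^t A(n)_{*,*}$ as an $H_{*,*}$-module on either side.

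Next, I would dualize. Set
$$
F_{t-1} A(n) = \< \rho(E,R) \mid \deg(\tau^E \xi^R) < t \> \subset A(n)
$$
and use the short exact sequence
$$
0 \to F_{t-1} A(n) \longto A(n)_{*,*}^\vee \longto (F^t A(n)_{*,*})^\vee \to 0
$$
to see that $F_{t-1} A(n)$ is stable under both the left and right $H^{*,*}$-actions. This defines an increasing filtration of $A(n)$ by $H^{*,*}$-$H^{*,*}$-sub-bimodules, with filtration quotients $\gr_t A(n) \cong (\gr^t A(n)_{*,*})^\vee$. Since the left and right $H_{*,*}$-actions agree on $\gr^t A(n)_{*,*}$, they likewise agree on $\gr_t A(n)$, and therefore the (cosets of the) operations $\rho(E,R)$ with $\deg(\tau^E \xi^R) = t$ freely generate $\gr_t A(n)$ as a right $H^{*,*}$-module.

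Finally, to conclude that these operations span $A(n)$ as a free right $H^{*,*}$-module, I would observe that the filtration $\{F_t A(n)\}_t$ is finite: by Lemma~\ref{lem:left-H-basis-An-cohom}, $A(n)$ is finitely generated as a left $H^{*,*}$-module, so all basis elements $\rho(E,R)$ have bounded topological degree, and $F_t A(n) = A(n)$ for all sufficiently large~$t$. An assembly argument over $t$ (inducting up the finite filtration and splitting the resulting short exact sequences, which split since each $\gr_t A(n)$ is free) then yields the right $H^{*,*}$-module basis. I do not anticipate a serious obstacle here: the proof is essentially a mechanical transcription of Lemma~\ref{lem:right-H-basis-A-cohom}, the only mild point being that one uses finite generation of $A(n)$ to dispense with the convergence argument needed in the unbounded case of~$\sA$.
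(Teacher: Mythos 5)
Your proof is correct and follows the same approach as the paper, which simply directs the reader to repeat the proof of Lemma~\ref{lem:right-H-basis-A-cohom} with $\sA_{*,*}$ and Lemma~\ref{lem:right-H-basis-A} replaced by $A(n)_{*,*}$ and Lemma~\ref{lem:right-H-basis-An}. Your observation that the filtration on $A(n)$ is finite is accurate but not essential; the exhaustiveness argument used for $\sA$ already works verbatim.
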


\begin{proof}
Replace $\sA_{*,*}$ and Lemma~\ref{lem:right-H-basis-A} in the proof
of Lemma~\ref{lem:right-H-basis-A-cohom}
by $A(n)_{*,*}$ and Lemma~\ref{lem:right-H-basis-An}.
\end{proof}

\begin{example}
(a)
$A(0) = H^{*,*} \< \beta \> / (\beta^2)$ with $[\beta, x] = \beta(x)$
for $x \in H^{*,*}$, where $[\beta, x] = \beta x - (-1)^{|x|} x \beta$
denotes the graded commutator.

(b)
For $\ell=2$,
$$
A(1) = \frac{\ds H^{*,*} \< \beta, P^1 \>}{\ds (\beta^2, P^1 P^1 = \tau \beta P^1 \beta, (\beta P^1)^2 = (P^1 \beta)^2)}
$$
with $[\beta, x] = \beta(x)$ and $[P^1, x] = P^1(x)$ for $x \in H^{*,*}$.
In the figure below, each bullet represents a copy of $H^{*,*}$, the
operations $\beta = Sq^1$ and $P^1 = Sq^2$ map one and two columns to
the right, respectively, and the dashed arrow indicates that $P^1 P^1 =
Sq^2 Sq^2$ is $\tau$ times the generator $\beta P^1 \beta = Sq^3 Sq^1$.
$$
\xymatrix@-1pc{
& & & \bullet \ar@(r,ul)[rrd] \\
\underset{1}\bullet \ar[r] \ar@(ur,ul)[rr]
& \bullet \ar@(dr,l)[rrd]
& \bullet \ar[ur] \ar@{-->}[rr]
& & \bullet \ar@(dr,dl)[rr]
& \bullet \ar[r]
& \bullet \\
& & & \bullet \ar[ur]
}
$$
\end{example}

The following property is sometimes taken as the definition of~$A(n)$.

\begin{lemma} \label{lem:alggen-for-An}
For $n\ge0$ the operations $\beta, P^1, P^\ell, \dots, P^{\ell^{n-1}}$,
together with the elements of $H^{*,*}$, generate $A(n)$ as a
$\bZ/\ell$-algebra.
\end{lemma}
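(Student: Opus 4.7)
Let $B(n) \subset \sA$ denote the $\bZ/\ell$-subalgebra generated by $H^{*,*}$ together with $\beta$ and $P^{\ell^i}$ for $0 \le i \le n-1$. Under Milnor duality these generators are dual to $\tau_0$ and to $\xi_1^{\ell^i}$ for $0 \le i \le n-1$, each of which is nonzero in $A(n)_{*,*} = \sA_{*,*}/I(n)$ by Definition~\ref{def:In-An}; and since $A(n) \subset \sA$ is a $\bZ/\ell$-subalgebra by Lemma~\ref{lem:left-H-basis-An-cohom}, this shows $B(n) \subseteq A(n)$. The task is thus to establish the reverse inclusion, for which it suffices to exhibit each left $H^{*,*}$-module basis element $\rho(E,R)$ from Lemma~\ref{lem:left-H-basis-An-cohom} as a sum of products of the listed generators.

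My plan has two main steps. First, I would show that the Milnor primitives $Q_i \in \sA$ (dual to $\tau_i$) lie in $B(n)$ for all $0 \le i \le n$. One has $Q_0 = \beta$ by definition, and for $1 \le i \le n$ the motivic coproduct formula
$$
\psi(\tau_i) = \tau_i \otimes 1 + 1 \otimes \tau_i + \xi_1^{\ell^{i-1}} \otimes \tau_{i-1} + (\text{terms involving some } \xi_k,\ k \ge 2),
$$
combined with a direct dual-pairing computation, yields that the graded commutator $[P^{\ell^{i-1}}, Q_{i-1}]$ equals $Q_i$ modulo Milnor basis elements of strictly smaller topological degree; these lie in $B(n)$ by downward induction on topological degree coupled with upward induction on $i$, so $Q_i \in B(n)$. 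Second, by the Milnor product formula an iterated composition $P^{\ell^{k_1}} P^{\ell^{k_2}} \cdots P^{\ell^{k_m}}$ expands in the Milnor basis as a sum of terms $\rho(0, R')$, and a careful induction on the largest index $s$ with $r_s \ne 0$ (and on $r_s$ itself) realizes each target basis element $\rho(0, R)$ with $0 \le r_s < \ell^{n+1-s}$ as such a composition, with subleading terms supplied by the inductive hypothesis.

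With these two steps in place, a general Milnor basis element $\rho(E,R)$ of $A(n)$ appears as the leading term of the product $Q_0^{e_0} Q_1^{e_1} \cdots Q_n^{e_n} \cdot \rho(0, R)$, with the lower topological degree corrections again absorbed by downward induction on topological degree within the finitely generated free left $H^{*,*}$-module $A(n)$ (Lemma~\ref{lem:left-H-basis-An-cohom}). This closes the argument.

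The main obstacle is the combinatorial control of the Milnor product formula in the motivic setting, particularly at $\ell = 2$ where the relation $\tau_i^2 = T_i = \tau \xi_{i+1} + \rho \tau_{i+1} + \rho \tau_0 \xi_{i+1}$ introduces $\tau$- and $\rho$-coefficients into both the coproduct and the product expansions. One must verify that these corrections preserve the leading Milnor coefficient equal to~$1$, so that the desired basis element $\rho(E,R)$ is actually produced rather than merely an $H^{*,*}$-multiple of it, and that all correction terms land in lower topological filtration where the inductive hypothesis applies.
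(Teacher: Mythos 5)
Your route differs from the paper's, and as written it is incomplete. The paper's proof for $\ell$ odd reduces to the classical case in one step: the motivic Adem relations carry no $\tau,\rho$-corrections when $\ell$ is odd, so the subalgebra generated by $\beta, P^1, \dots, P^{\ell^{n-1}}$ alone is isomorphic to the classical $A(n)^{\cl}$, and Milnor's Proposition~2 identifies its $\bZ/\ell$-basis with the $H^{*,*}$-module basis of $A(n)$ from Lemma~\ref{lem:left-H-basis-An-cohom}. For $\ell=2$ the paper instead runs an induction on $n$ through the chain $A(n-1)\langle P_1^{n-1}\rangle \subset A(n-1)\langle P_1^{n-1}, P_2^{n-2}\rangle \subset \dots \subset A(n-1)\langle P_1^{n-1},\dots,P_n^0,Q_n\rangle \subset A(n)$, verifying each inclusion to be an equality via the commutators $[P_{k-1}^{n+1-k}, P_1^{n-k}] \equiv P_k^{n-k} \bmod A(n-1)$ (which produce the higher $P_i^j$, not the $Q_i$), then $[P_n^0, Q_0] = Q_n$ (cited from Voevodsky), and finally a filtration-by-excess argument cited from Margolis. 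Your plan rebuilds Milnor's classical recursion $Q_i = [P^{\ell^{i-1}}, Q_{i-1}]$ together with a direct realization of each $\rho(0,R)$ by iterated compositions of the $P^{\ell^k}$ --- a strategy the paper deliberately avoids, because it forces you to control the full motivic Milnor product formula, which is exactly where the $\ell=2$ difficulty lives.

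That control is the missing content. Your steps --- (i) $[P^{\ell^{i-1}}, Q_{i-1}] \equiv Q_i$ with unit leading coefficient modulo $\rho(E,R)$ of strictly smaller topological degree, (ii) iterated compositions of $P^{\ell^k}$ hitting each $\rho(0,R)$ with unit leading coefficient, and (iii) $Q^E\cdot\rho(0,R)$ hitting $\rho(E,R)$ with unit leading coefficient --- are each asserted without verification. At $\ell=2$ the relation $\tau_i^2 = T_i$ threads $\tau$- and $\rho$-coefficients through both the coproduct and the product, and correction terms can in principle appear with coefficients in $H^{0,q}$ for $q>0$, i.e., supported on $\rho(E,R)$ of the \emph{same} topological degree but smaller weight, which would break the well-foundedness of the downward degree induction you are leaning on. You correctly flag this verification as ``the main obstacle,'' but it is the substance of the lemma, so the proposal is a plan rather than a proof. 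Either carry out the motivic Milnor computations explicitly (tracking all $\tau,\rho$-corrections), or adopt the paper's strategy, which is built to circumvent them.
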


\begin{proof}
For $\ell$ odd, the Adem relations \cite{Voe03a}*{Thm.~10.3} show
that the subalgebra of $A(n)$ generated by $\beta, P^1, P^\ell,
\dots, P^{\ell^{n-1}}$ is isomorphic to the classical finite subalgebra
$A(n)^{\cl}$ of the classical Steenrod algebra.  By~\cite{Mil58}*{Prop.~2}
it has $\bZ/\ell$-module basis equal to the $H^{*,*}$-module basis for
$A(n)$ of Lemma~\ref{lem:left-H-basis-An-cohom}.

For $\ell=2$, the $\tau$- and $\rho$-coefficients in the Adem
relations~\cite{Rio12}*{Thm.~4.5.1} (correcting~\cite{Voe03a}*{Thm.~10.2})
mean that Milnor's product formula~\cite{Mil58}*{Thm.~4b} requires
adjustment in the motivic setting.  For $i\ge0$ let $Q_i$ be the Milnor
basis element dual to~$\tau_i$, and for $i\ge1$ and $j\ge0$ let $P_i^j$
be the Milnor basis element dual to~$\xi_i^{\ell^j}$.  In particular,
$Q_0 = \beta$ and $P_1^j = P^{\ell^j}$.  The arrays
$$
\xymatrix@=0pt{
\xi_1^{\ell^{n-1}} \\
\xi_1^{\ell^{n-2}} & \xi_2^{\ell^{n-2}} \\
\vdots & \vdots & \ddots \\
\xi_1 & \xi_2 & \dots & \xi_n \\
\tau_0 & \tau_1 & \dots & \tau_{n-1} & \tau_n \\
}
\qquad\text{and}\qquad
\xymatrix@=0pt{
P_1^{n-1} \\
P_1^{n-2} & P_2^{n-2} \\
\vdots & \vdots & \ddots \\
P_1^0 & P_2^0 & \dots & P_n^0 \\
Q_0 & Q_1 & \dots & Q_{n-1} & Q_n \\
}
$$
may be helpful, cf.~\cite{Mar83}*{p.~232}.  Let $n\ge1$ and suppose, by
induction, that the lemma holds for $A(n-1)$.  We show that the inclusions
\begin{align*}
A(n-1) \< P_1^{n-1} \> &\subset A(n-1) \< P_1^{n-1}, P_2^{n-2} \>
	\subset \dots \\
	&\subset A(n-1) \< P_1^{n-1}, \dots, P_n^0, Q_n \>
	\subset A(n)
\end{align*}
are all equalities.  Here we write $A(n-1) \< P_1^{n-1}, \dots, P_k^{n-k}
\>$ to denote the subalgebra of $A(n)$ generated by $A(n-1)$ and the
$P_t^{n-t}$ with $1 \le t \le k$, and similarly in the case with~$Q_n$.
This will complete the inductive step, since $A(n-1) \< P_1^{n-1} \>$
is generated by $\beta, P^1, \dots, P^{\ell^{n-2}}, P^{\ell^{n-1}}$
and the elements of $H^{*,*}$.  Consider $2 \le k \le n$.  We claim that
\begin{equation} \label{eq:commutator}
[P_{k-1}^{n+1-k}, P_1^{n-k}] = P_k^{n-k} \mod A(n-1) \,.
\end{equation}
The left hand commutator is an $H^{*,*}$-linear combination
of Milnor basis elements $\rho(E, R)$ in $A(n)$, as in
Lemma~\ref{lem:left-H-basis-An-cohom}.  The $H^{*,*}$-coefficient of
$\rho(E, R)$ is the sum of the $H_{*,*}$-coefficients of
\begin{equation} \label{eq:commutator-duals}
\xi_{k-1}^{\ell^{n+1-k}} \otimes \xi_1^{\ell^{n-k}}
\qquad\text{and}\qquad
\xi_1^{\ell^{n-k}} \otimes \xi_{k-1}^{\ell^{n+1-k}}
\end{equation}
in $\psi(\tau^E \xi^R)$, where we can ignore signs since $\ell=2$.
The basis element~$P_k^{n-k}$ appears with coefficient~$1$, due to
the term $\xi_{k-1}^{\ell^{n+1-k}} \otimes \xi_1^{\ell^{n-k}}$ in
$\psi(\xi_k^{\ell^{n-k}})$.

For other $\rho(E, R)$ not in $A(n-1)$, degree considerations show that
exactly one of $\xi_1^{\ell^{n-1}}, \dots, \xi_{k-1}^{\ell^{n+1-k}}$ must
divide~$\tau^E \xi^R$.  When $1 \le t \le k-2$, no term of the coproduct
$$
\psi(\xi_t^{\ell^{n-t}})
= \sum_{i+j=t} \xi_i^{\ell^{n-i}} \otimes \xi_j^{\ell^{n-t}}
$$
divides either one of the tensor products in~\eqref{eq:commutator-duals}.
Hence the $\rho(E, R)$ with these $\xi_t^{\ell^{n-t}}$
dividing $\tau^E \xi^R$ do not contribute to the commutator
in~\eqref{eq:commutator}.  In the one remaining case, $t = k-1$,
the coproduct $\psi(\xi_{k-1}^{\ell^{n+1-k}})$ contains two terms
dividing those in~\eqref{eq:commutator-duals}, namely $1 \otimes
\xi_{k-1}^{\ell^{n+1-k}}$ and $\xi_{k-1}^{\ell^{n+1-k}} \otimes 1$.
The complementary factors $\xi_1^{\ell^{n-k}} \otimes 1$ and $1 \otimes
\xi_1^{\ell^{n-k}}$ only appear in
$$
\psi(\xi_1^{\ell^{n-k}}) = 1 \otimes \xi_1^{\ell^{n-k}}
	+ \xi_1^{\ell^{n-k}} \otimes 1 \,,
$$
so the last possible contribution to~\eqref{eq:commutator} is
$\rho(E, R)$ dual to $\xi_1^{\ell^{n-k}} \cdot \xi_{k-1}^{\ell^{n+1-k}}$,
with
$H^{*,*}$-coefficient the sum of the $H_{*,*}$-coefficients in
$$
\psi(\xi_1^{\ell^{n-k}} \cdot \xi_{k-1}^{\ell^{n+1-k}})
= \psi(\xi_1^{\ell^{n-k}}) \cdot \psi(\xi_{k-1}^{\ell^{n+1-k}}) \,.
$$
Since each of $\xi_{k-1}^{\ell^{n+1-k}} \otimes \xi_1^{\ell^{n-k}}$
and $\xi_1^{\ell^{n-k}} \otimes \xi_{k-1}^{\ell^{n+1-k}}$ occurs twice
in this product, this last contribution is~$0 \mod \ell$.   This
establishes claim~\eqref{eq:commutator}.  The analogous formula
\begin{equation}
[P_n^0, Q_0] = Q_n
\end{equation}
holds strictly in $A(n)$, and was already proved
in~\cite{Voe03a}*{Prop.~13.6}.  It follows by induction on~$k$
that
$$
A(n-1) \< P_1^{n-1} \> = A(n-1) \< P_1^{n-1}, \dots, P_n^0 \>
	= A(n-1) \< P_1^{n-1}, \dots, P_n^0 , Q_n \> \,.
$$
Finally, the identity
$$
A(n-1) \< P_1^{n-1}, \dots, P_n^0 , Q_n \> = A(n)
$$
follows by classical filtration-by-excess considerations,
as in~\cite{Mar83}*{Prop.~15.8}, where the excess of $\rho(E, R)$
is defined to be $\sum_s e_s + 2 \sum_s r_s$.
\end{proof}

\begin{lemma} \label{lem:left-An-basis-A}
The operations $\rho(E,R)$ for $(E,R)$ as in Definition~\ref{def:Xn}
form a basis for $\sA$ as a free left $A(n)$-module.
\end{lemma}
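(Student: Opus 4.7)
The plan is to derive this lemma by dualizing the left $A(n)_{*,*}$-comodule isomorphism
$$\sA_{*,*} \overset{\cong}{\longto} A(n)_{*,*} \otimes_{H_{*,*}} X(n)_{*,*}$$
from Lemma~\ref{lem:left-An-comod-iso-A}, whose right-hand side carries the extended left $A(n)_{*,*}$-coaction on its first factor.

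First, I would apply Lemma~\ref{lem:Boa82Lem33}(a) to the pair $(A(n)_{*,*}, X(n)_{*,*})$. Since $A(n)_{*,*}$ is finitely generated and free as both a left and right $H_{*,*}$-module (Lemmas~\ref{lem:left-H-basis-An} and~\ref{lem:right-H-basis-An}), the natural transformation
$$\theta \: A(n)_{*,*}^\vee \otimes_{H_{*,*}} X(n)_{*,*}^\vee \longto \bigl( A(n)_{*,*} \otimes_{H_{*,*}} X(n)_{*,*} \bigr)^\vee$$
is an isomorphism. Duality converts left $A(n)_{*,*}$-comodules into left $A(n)$-modules, with the extended coaction dualizing to left multiplication on the first factor, so dualizing Lemma~\ref{lem:left-An-comod-iso-A} through~$\theta$ yields a left $A(n)$-module isomorphism
$$\Phi \: A(n) \otimes_{H^{*,*}} X(n)_{*,*}^\vee \overset{\cong}{\longto} \sA.$$

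Second, I would identify $X(n)_{*,*}^\vee$ explicitly inside~$\sA$. The split left $H_{*,*}$-linear surjection $\alpha_n \: \sA_{*,*} \twoheadrightarrow X(n)_{*,*}$ from Definition~\ref{def:Xn} sends each Milnor monomial $\tau^E \xi^R$ either to itself (when $(E,R)$ satisfies the constraints of Definition~\ref{def:Xn}) or to zero. Hence its dual $\alpha_n^\vee \: X(n)_{*,*}^\vee \hookrightarrow \sA$ is the injection carrying the dual basis to the corresponding Milnor basis elements $\rho(E,R) \in \sA$. A bidegree-wise count---using $\deg(\tau^E \xi^R) \ge 2 \wt(\tau^E \xi^R) \ge 0$ for every Milnor monomial together with the vanishing range $H^{p,q} = 0$ outside $0 \le p \le \min\{q+d, 2q\}$---shows that only finitely many such monomials contribute in each fixed bidegree, so $X(n)_{*,*}^\vee$ coincides with the free $H^{*,*}$-submodule of $\sA$ on the basis $\{\rho(E,R) \mid (E,R)\ \text{as in Definition~\ref{def:Xn}}\}$.

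Finally, by left $A(n)$-linearity, $\Phi$ is determined by its restriction to $1 \otimes X(n)_{*,*}^\vee$, which factors through $\alpha_n^\vee$; thus $\Phi(a \otimes \rho(E,R)) = a \cdot \rho(E,R)$, and its bijectivity is precisely the statement of the lemma. The main obstacle is verifying that $\theta$ is an isomorphism in the present bigraded setting, which reduces to the finite $H_{*,*}$-generation of $A(n)_{*,*}$ from Lemma~\ref{lem:left-H-basis-An}; a secondary technical point is the bidegree-wise identification of $X(n)_{*,*}^\vee$ with the stated span of Milnor basis elements, relying on the vanishing range of motivic cohomology.
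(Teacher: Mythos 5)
Your proposal is correct and follows essentially the same route as the paper, whose proof of this lemma consists of the single sentence ``This follows by dualization from Lemma~\ref{lem:left-An-comod-iso-A}''; you have correctly supplied the details that the paper leaves implicit, namely the application of the natural map $\theta$ from Lemma~\ref{lem:Boa82Lem33} together with a finiteness check, in the same spirit as the argument the paper spells out later in Proposition~\ref{prop:left-An-mod-basis-Cn-Bn}. One minor point worth noting: your justification of $\theta$ being an isomorphism via the finite $H_{*,*}$-generation of $A(n)_{*,*}$ on both sides is cleaner than the bidegree-counting argument (via Lemma~\ref{lem:An-bidegs}) that the paper uses for the analogous step in Proposition~\ref{prop:left-An-mod-basis-Cn-Bn} -- the bidegree argument is genuinely needed there because $H_{*,*}[\xi_1^{\pm\ell^n}]$ is unbounded in both directions, whereas here the finite generation of $A(n)_{*,*}$ as a right $H_{*,*}$-module already shows $(A(n)_{*,*}\otimes_{H_{*,*}}X(n)_{*,*})^\vee$ is a \emph{finite} product of copies of $X(n)_{*,*}^\vee$ and hence a direct sum; your separate bidegree check is still required, exactly as you say, to identify $X(n)_{*,*}^\vee$ with the free $H^{*,*}$-span of $\{\rho(E,R)\}$ inside $\sA$.
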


\begin{proof}
This follows by dualization from Lemma~\ref{lem:left-An-comod-iso-A}.
\end{proof}

\begin{lemma} \label{lem:An-bidegs}
Let $E_n = (1, \dots, 1)$ and $R_n = (\ell^n-1, \dots, \ell-1)$, so
that $t_n = \deg(\tau^{E_n} \xi^{R_n})$ is the highest topological
degree of a monomial in $A(n)_{*,*}$.  Then $A(n)^{p,q} = 0$ unless $0
\le p \le q + d + t_n$.  Hence the subset
$$
\{(e,r) \mid A(n)^{p-e-(2\ell-2)r,q-(\ell-1)r} \ne 0\}
	\subset \{0,1\} \times \bZ
$$
is finite, for each given cohomological bidegree~$(p,q)$.
\end{lemma}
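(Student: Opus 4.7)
The argument reduces to a degree count using the known module structure of $A(n)$. First I would decompose
$$
A(n)^{p,q} \;\cong\; \bigoplus_{(E,R)} H^{p-t,\,q-u}
$$
using the finite free left $H^{*,*}$-module basis $\{\rho(E,R)\}$ of $A(n)$ from Lemma~\ref{lem:left-H-basis-An-cohom}, where $(t,u) = \|\tau^E \xi^R\| = (\deg \tau^E \xi^R, \wt \tau^E \xi^R)$. Since each generator $\tau_i$ has bidegree $(2\ell^i - 1, \ell^i - 1)$ and each $\xi_i$ has bidegree $(2\ell^i - 2, \ell^i - 1)$, each basis element satisfies $t \ge 0$ and $u \ge 0$, while by the choice of $(E_n, R_n)$ we have $t \le t_n$ throughout.

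Next I would apply the motivic cohomology vanishing recalled in Section~\ref{sec:motivic-Steenrod}: $H^{p',q'} = 0$ unless $0 \le p' \le q' + d$. A summand $H^{p-t,\,q-u}$ can be nonzero only if $0 \le p - t$ and $p - t \le (q - u) + d$, which force $p \ge t \ge 0$ and $p \le q + d + (t - u)$. Using $u \ge 0$, so $t - u \le t \le t_n$, both inequalities combine to the asserted bound $0 \le p \le q + d + t_n$.

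For the finiteness of $\{(e,r) \mid A(n)^{p-e-(2\ell-2)r,\, q-(\ell-1)r} \ne 0\}$, I would substitute the shifted bidegree into the bound just proved. Non-vanishing demands
$$
(2\ell-2)\,r \;\le\; p - e \qquad\text{and}\qquad (\ell-1)\,r \;\ge\; p - e - q - d - t_n,
$$
which, since $\ell \ge 2$, yield an upper and a lower bound on $r$ respectively. Together with $e \in \{0,1\}$, this pins $(e,r)$ down to a finite set.

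The argument is essentially numerical and presents no real obstacle. The one step worth being careful about is the translation between the homological bidegree of $\tau^E \xi^R$ and the cohomological bidegree of its dual $\rho(E,R)$, which is what allows the motivic vanishing line for $H^{*,*}$ to be read off directly as a vanishing line for $A(n)^{*,*}$.
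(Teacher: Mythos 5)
Your argument is correct and matches the paper's proof in both halves: the vanishing region for $A(n)^{p,q}$ is obtained by combining the bounded bidegrees $0 \le t \le t_n$, $u \ge 0$ of the free $H^{*,*}$-module generators with the vanishing line $0 \le p' \le q'+d$ for $H^{*,*}$, and the finiteness follows by observing that the shift $(e,r) \mapsto (p - e - (2\ell-2)r,\; q - (\ell-1)r)$ traces a line of slope $1/2$ whose intersection with the triangular region is bounded (which your explicit inequalities on $r$ spell out). The only cosmetic difference is that you make the inequalities explicit where the paper states the geometric fact about lines of slope $1/2$.
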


\begin{proof}
The $H_{*,*}$-module generators of $A(n)_{*,*}$ lie in homological
bidegrees $(t,u)$ with $0 \le t \le t_n$ and $u\ge0$.  Hence the
$H^{*,*}$-module generators of $A(n)$ lie in cohomological bidegrees
$(p,q)$ with $0 \le p \le t_n$ and $q\ge0$.  Since $H^{*,*}$ is
concentrated in bidegrees with $0 \le p \le q+d$, it follows that $A(n)$
is concentrated in the infinite triangular region where $0 \le p \le q +
d + t_n$.  Each line of slope $1/2$ in the $(p,q)$-plane intersects this
triangular region in a bounded interval, which implies the finiteness
assertion.
\end{proof}

\begin{definition}
For $n\ge0$ let the $A(n)$-$A(n-1)$-bimodules $C(n) = C(n)_{*,*}^\vee
\subset \sA$ and $B(n) = B(n)_{*,*}^\vee$ be the duals of the
$A(n)_{*,*}$-$A(n-1)_{*,*}$-bicomodules $C(n)_{*,*}$ and $B(n)_{*,*}$,
respectively.  Let the symbol
$$
\rho(E,R) \in B(n)
$$
be dual to $\tau^E \xi^R$ in the monomial left $H_{*,*}$-module basis
for $B(n)_{*,*}$.  The dual of the localization monomorphism $C(n)_{*,*}
\to B(n)_{*,*}$ is a canonical $A(n)$-$A(n-1)$-bimodule epimorphism $B(n)
\to C(n)$.
\end{definition}

\begin{lemma}
(a)
The operations $\rho(E,R)$, for $(E,R)$ as in
Lemma~\ref{left-H-basis-Cn-Bn}(a), form a basis for $C(n)$ as a free
left $H^{*,*}$-module.

(b)
The symbols $\rho(E,R)$, for $(E,R)$ as in
Lemma~\ref{left-H-basis-Cn-Bn}(b), form a basis for $B(n)$ as a free
left $H^{*,*}$-module.

(c)
The canonical epimorphism $B(n) \to C(n)$ satisfies
$$
\rho(E,R) \longmapsto \begin{cases}
\rho(E,R) & \text{for $r_1\ge0$,} \\
0 & \text{for $r_1<0$.}
\end{cases}
$$
\end{lemma}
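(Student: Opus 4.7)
The plan is to dualize Lemma~\ref{left-H-basis-Cn-Bn} and the canonical inclusion $C(n)_{*,*} \hookrightarrow B(n)_{*,*}$.

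For parts~(a) and~(b), I would first establish a local finiteness principle: if $M_{*,*}$ is a free left $H_{*,*}$-module on a bigraded basis $\{e_\alpha\}$ of bidegrees $(s_\alpha, t_\alpha)$ such that, for each cohomological bidegree $(p,q)$, only finitely many $\alpha$ satisfy $H^{p-s_\alpha, q-t_\alpha} \ne 0$, then the dual $M_{*,*}^\vee$ is a free left $H^{*,*}$-module with basis the dual $\{e_\alpha^\vee\}$. The point is that the infinite product decomposition $\Hom_{H_{*,*}}(\bigoplus_\alpha H_{*,*} e_\alpha, H_{*,*}) = \prod_\alpha H_{*,*}$ (suitably shifted) collapses, in each bidegree, to the finite direct sum matching the bidegree-$(p,q)$ component of $\bigoplus_\alpha H^{*,*} e_\alpha^\vee$.

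The finiteness input comes from the motivic vanishing range $H^{a,b} = 0$ unless $0 \le a \le \min\{b+d, 2b\}$, recalled at the start of Section~\ref{sec:motivic-Steenrod}. With $(a,b) = (p-s_\alpha, q-t_\alpha)$ this gives an upper bound $s_\alpha \le p$ and a lower bound $s_\alpha - t_\alpha \ge p - q - d$. For part~(a), the exponents in the basis of Lemma~\ref{left-H-basis-Cn-Bn}(a) other than $r_1$ are a priori bounded, and $s_\alpha \le p$ confines $r_1 \ge 0$ to a finite range. For part~(b), where $r_1 \in \bZ$, decreasing $r_1$ by~$1$ decreases $s_\alpha$ by $2\ell-2$ and $t_\alpha$ by $\ell-1$, so $s_\alpha - t_\alpha$ decreases by $\ell - 1 > 0$; hence the lower bound on $s_\alpha - t_\alpha$ caps $r_1$ from below while $s_\alpha \le p$ caps it from above, and finiteness holds.

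For part~(c), the canonical epimorphism $B(n) \to C(n)$ is by construction the dual of the inclusion $C(n)_{*,*} \hookrightarrow B(n)_{*,*}$, which identifies each basis monomial of $C(n)_{*,*}$ with the same-named monomial of $B(n)_{*,*}$, namely those with $r_1 \ge 0$. Restriction of functionals therefore sends $\rho(E,R) \in B(n)$ to $\rho(E,R) \in C(n)$ when $r_1 \ge 0$, and to $0$ when $r_1 < 0$, since in the latter case $\tau^E \xi^R \notin C(n)_{*,*}$. The main obstacle is the finiteness input in part~(b); it is resolved by the strict inequality $2\ell - 2 > \ell - 1$ comparing the contributions of $r_1$ to topological degree and to weight.
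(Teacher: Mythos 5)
Your proof is correct, and it spells out a point that the paper's own proof leaves almost entirely implicit. The paper argues that parts~(a) and~(b) "follow from (the proof of) Lemma~\ref{left-H-basis-Cn-Bn}, since $J(n)$ is a monomial ideal," relying on the earlier assertion (in the Notation after Lemma~\ref{lem:right-H-basis-A-cohom}) that $\sA$ is free on the Milnor basis. For part~(a) this is clean: since $J(n)$ is a direct $H_{*,*}$-summand of $\sA_{*,*}$ spanned by monomials, $C(n)$ sits inside $\sA$ as the $H^{*,*}$-span of a subset of the Milnor basis, which is automatically free. For part~(b), however, $B(n)_{*,*}$ is neither a sub nor a quotient of $\sA_{*,*}$ (it is a genuine $\xi_1$-localization), so the "likewise" hides the fact that a separate local-finiteness check is needed, and this is exactly what you supply. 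Your observation that decreasing $r_1$ drops the topological degree by $2\ell-2$ but the weight by only $\ell-1$, so the difference degree minus weight strictly decreases and the motivic vanishing range bounds $r_1$ from below, is the genuine content here. (Note that $B(n)_{*,*}$ is not of bifinite type in the paper's later sense of Definition~\ref{def:bifinite-type} — only finitely many generators in the intersection of the two constraints, not in each one separately — so it is really the combined bound that matters, as you use.) Your argument for part~(c), dualizing the inclusion $C(n)_{*,*}\hookrightarrow B(n)_{*,*}$ and restricting functionals, matches the paper's one-sentence justification. The comparison: the paper's proof is terser and pattern-matches against the $A(n)$ case, where finite generation makes dualization trivial; your version makes the necessary finiteness explicit and correctly identifies where the two cases (a) and (b) genuinely differ, which is a real improvement in rigor over what is written in the source.
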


\begin{proof}
Part~(a) follows from (the proof of) Lemma~\ref{left-H-basis-Cn-Bn}(a),
since $J(n) \subset \sA_{*,*}$ is a monomial ideal.  Part~(b) likewise
follows from Lemma~\ref{left-H-basis-Cn-Bn}(b).  The restriction of
$\rho(E,R)$ to $C(n)_{*,*}$ is then dual to $\tau^E \xi^R$ if $r_1\ge0$,
and zero otherwise, proving~(c).
\end{proof}

\begin{lemma} \label{lem:right-H-basis-Cn-Bn-cohom}
(a)
The operations $\rho(E,R)$, for $(E,R)$ as in
Lemma~\ref{left-H-basis-Cn-Bn}(a), also form a basis for $C(n)$ as a
free right $H^{*,*}$-module.

(b)
The symbols $\rho(E,R)$, for $(E,R)$ as in
Lemma~\ref{left-H-basis-Cn-Bn}(b), also form a basis for $B(n)$ as a
free right $H^{*,*}$-module.
\end{lemma}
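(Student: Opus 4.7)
The plan is to dualize the right $H_{*,*}$-module bases of $C(n)_{*,*}$ and $B(n)_{*,*}$ given by Lemma~\ref{lem:right-H-basis-Cn-Bn}, applying exactly the filtration argument used in the proof of Lemma~\ref{lem:right-H-basis-A-cohom}. The key fact we can reuse is that the proof of Lemma~\ref{lem:right-H-basis-Cn-Bn} does more than just identify a right module basis: it constructs an $H_{*,*}$-$H_{*,*}$-bimodule filtration on which the left and right $H_{*,*}$-actions agree on each associated graded piece.

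For part~(a), I would recall from the proof of Lemma~\ref{lem:right-H-basis-Cn-Bn}(a) the decreasing $H_{*,*}$-$H_{*,*}$-bimodule filtration $\{F^t C(n)_{*,*}\}_{t\ge 0}$, where $F^t C(n)_{*,*}$ is spanned by the left basis monomials $\tau^E \xi^R$ of topological degree $\ge t$, and in each bidegree $F^t C(n)_{*,*} = 0$ for all sufficiently large~$t$. Dually, define
$$
F_{t-1} C(n) = \bigl\< \rho(E,R) \mid \deg(\tau^E \xi^R) < t \bigr\> \subset C(n)
$$
as the left $H^{*,*}$-submodule generated by the operations of cohomological topological degree $< t$. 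The short exact sequence
$$
0 \to F_{t-1} C(n) \longto C(n)_{*,*}^\vee \longto (F^t C(n)_{*,*})^\vee \to 0
$$
shows that $F_{t-1} C(n)$ is also a right $H^{*,*}$-submodule, so $\{F_t C(n)\}_t$ is an increasing $H^{*,*}$-$H^{*,*}$-bimodule filtration with $\gr_t C(n) \cong (\gr^t C(n)_{*,*})^\vee$. Since the left and right $H_{*,*}$-actions coincide on $\gr^t C(n)_{*,*}$, the same holds for the dual left and right $H^{*,*}$-actions on $\gr_t C(n)$. The cosets of the operations $\rho(E,R)$ with $\deg(\tau^E \xi^R) = t$ thus freely generate $\gr_t C(n)$ as a right $H^{*,*}$-module, and since the filtration is exhaustive in each bidegree this lifts to the desired right $H^{*,*}$-module basis of $C(n)$.

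Part~(b) proceeds in exactly the same manner with $B(n)_{*,*}$ and $B(n)$ in place of $C(n)_{*,*}$ and $C(n)$. The only adjustment is that the filtration index $t$ now runs over all integers, since $r_1$ may be any integer; as noted in the proof of Lemma~\ref{lem:right-H-basis-Cn-Bn}(b), in each bidegree we have $F^t B(n)_{*,*} = B(n)_{*,*}$ for all sufficiently negative~$t$ and $F^t B(n)_{*,*} = 0$ for all sufficiently positive~$t$, so the dual increasing filtration $\{F_t B(n)\}_t$ is again bounded and exhaustive in each bidegree. The parenthetical remark in the proof of Lemma~\ref{lem:right-H-basis-Cn-Bn}(b) was included precisely to guarantee that the left and right $H_{*,*}$-actions agree on $\gr^t B(n)_{*,*}$, which is the one ingredient we need beyond the basis statement itself.

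There is no real obstacle here; the argument is a direct transcription of the proof of Lemma~\ref{lem:right-H-basis-A-cohom}, and the only point to double-check is that the bimodule filtration on $B(n)_{*,*}$ is both two-sided bounded and satisfies the graded-agreement property in each bidegree, both of which are already established in Section~\ref{sec:bicomodule-algebras}.
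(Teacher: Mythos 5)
Your proof is correct and takes essentially the same route as the paper's, which simply instructs the reader to substitute $C(n)_{*,*}$ and $B(n)_{*,*}$ (with the corresponding bases and filtration range) into the argument of Lemma~\ref{lem:right-H-basis-A-cohom}. You have correctly identified the two points that make the substitution work: the bimodule filtration with left/right agreement on associated graded pieces established in the proof of Lemma~\ref{lem:right-H-basis-Cn-Bn}, and the two-sided boundedness of the filtration in each bidegree for the $B(n)$ case.
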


\begin{proof}
For part~(a), replace $\sA_{*,*}$ and Lemma~\ref{lem:right-H-basis-A}
in the proof of Lemma~\ref{lem:right-H-basis-A-cohom} by $C(n)_{*,*}$
and Lemma~\ref{lem:right-H-basis-Cn-Bn}(a).

For part~(b), instead replace these by $B(n)_{*,*}$ and
Lemma~\ref{lem:right-H-basis-Cn-Bn}(b), and allow the filtration index~$t$
in the proof of Lemma~\ref{lem:right-H-basis-A-cohom} to run over all
integers, noting that in any given bidegree $F_t B(n) = 0$ for all
sufficiently negative~$t$.
\end{proof}

\begin{example} \label{ex:left-right-H-bases-C0-B0-cohom}
(a)
The Steenrod operations
$$
\{ \beta^e P^r \mid e \in \{0,1\}, r\ge0 \}
$$
form a basis for $C(0) \subset \sA$ as a left $H^{*,*}$-module, and as a
right $H^{*,*}$-module.  When $\ell=2$, these are the Steenrod operations
$Sq^k$ for $k\ge0$.

(b)
The symbols
$$
\{ \beta^e P^r \mid e \in \{0,1\}, r \in \bZ \} \,,
$$
with $\beta^e P^r$ dual to $\tau_0^e \xi_1^r$, form a basis for $B(0)$ as
a left $H^{*,*}$-module, and as a right $H^{*,*}$-module.  When $\ell=2$,
these are the symbols $Sq^k$ for $k \in \bZ$.  The homomorphism $B(0)
\to C(0)$ maps $\beta^e P^r$ to the corresponding Steenrod operation
for $r\ge0$, and to zero for $r < 0$.
\end{example}

\begin{lemma}
For each $n\ge0$ there is a commutative diagram of
$A(n)$-$A(n-1)$-bimodules
$$
\xymatrix{
A(n) \ar@{ >->}[r] \ar@{ >->}[d] & A(n+1) \ar@{ >->}[d] \\
C(n) \ar@{ >->}[r] & C(n+1) \\
B(n) \ar@{ >->}[r] \ar@{->>}[u] & B(n+1) \ar@{->>}[u] \,,
}
$$
where the bimodule structures on the right hand side are obtained by
restriction from the inherent $A(n+1)$-$A(n)$-bimodule structures.
\end{lemma}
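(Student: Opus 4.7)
The plan is to obtain the entire diagram by dualization from a commutative diagram of quotient maps and localization inclusions at the level of~$\sA_{*,*}$. The relevant ideal inclusions
$$
J(n+1) \subset J(n) \subset I(n)
\qqandqq
J(n+1) \subset I(n+1) \subset I(n)
$$
are immediate from Definitions~\ref{def:In-An} and~\ref{def:Jn-Cn-Bn}; for instance the identity $\xi_2^{\ell^n} = (\xi_2^{\ell^{n-1}})^\ell$ witnesses one of the key generator containments for $J(n+1) \subset J(n)$. Passing to quotients produces a commutative square of $\bZ/\ell$-algebra surjections with $C(n+1)_{*,*}$ at the top left, $A(n+1)_{*,*}$ at the top right, $C(n)_{*,*}$ at the bottom left and $A(n)_{*,*}$ at the bottom right, commuting because each composite $C(n+1)_{*,*} \to A(n)_{*,*}$ equals the tautological projection $\sA_{*,*} \to \sA_{*,*}/I(n)$.

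Next I would extend this to the localizations. Since the projection $C(n+1)_{*,*} \onto C(n)_{*,*}$ sends $\xi_1$ to $\xi_1$, which is invertible in $B(n)_{*,*}$, the universal property of localization yields an algebra surjection $B(n+1)_{*,*} \onto B(n)_{*,*}$ compatible with the localization inclusions $C(m)_{*,*} \to B(m)_{*,*}$ for $m \in \{n, n+1\}$, producing a second commutative square on the dual side. I would then verify that every map in these two squares is a bicomodule homomorphism: the Hopf algebroid projections $A(n+1)_{*,*} \onto A(n)_{*,*} \onto A(n-1)_{*,*}$ let each level $(n+1)$ object, naturally an $A(n+1)_{*,*}$-$A(n)_{*,*}$-bicomodule, be restricted to an $A(n)_{*,*}$-$A(n-1)_{*,*}$-bicomodule, and under this convention the bicomodule compatibility of each quotient and each localization follows routinely from the coproduct and coaction formulas in Sections~\ref{sec:motivic-Steenrod}--\ref{sec:bicomodule-algebras}.

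Finally I would apply the dualization functor $(-)^\vee$, which converts surjections of bicomodules to inclusions of bimodules and vice versa, assembling the two commutative squares into the diagram of the lemma. The step requiring the most care is not the commutativity of any individual square, which is automatic, but the bookkeeping needed to ensure that the dual $A(n)$-$A(n-1)$-bimodule structures on the level $(n+1)$ entries really are the restrictions of their inherent $A(n+1)$-$A(n)$-bimodule structures, as asserted in the statement.
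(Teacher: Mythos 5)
Your proposal is correct and takes essentially the same approach as the paper: the paper's proof simply says to compare diagram~\eqref{eq:ACB} with its analogue for $n+1$ and dualize, and your argument spells out exactly that comparison in terms of the ideal inclusions $J(n+1) \subset J(n) \subset I(n)$ and $J(n+1) \subset I(n+1) \subset I(n)$, the resulting quotient and localization maps, the bicomodule compatibility, and the final dualization step.
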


\begin{proof}
This is readily obtained by comparing diagram~\eqref{eq:ACB} to its
analogue with~$n$ replaced by~$n+1$, and dualizing.
\end{proof}

\begin{proposition} \label{prop:left-An-mod-basis-Cn-Bn}
(a)
The inclusion $H_{*,*} [\xi_1^{\ell^n}]^\vee \subset C(n)$ extends to
an isomorphism
$$
A(n) \otimes_{H^{*,*}} H_{*,*} [\xi_1^{\ell^n}]^\vee
	\overset{\cong}\longto C(n)
$$
of left $A(n)$-modules.  Hence the Steenrod operations
$$
\{P^r \mid \text{$r\ge0$ with $\ell^n \mid r$} \}
$$
form a basis for $C(n)$ as a free left $A(n)$-module.

(b)
The inclusion $H_{*,*} [\xi_1^{\pm \ell^n}]^\vee \subset B(n)$ extends to
an isomorphism
$$
A(n) \otimes_{H^{*,*}} H_{*,*} [\xi_1^{\pm \ell^n}]^\vee
	\overset{\cong}\longto B(n)
$$
of left $A(n)$-modules.  Hence the symbols
$$
\{P^r \mid \text{$r \in \bZ$ with $\ell^n \mid r$} \}
$$
form a basis for $B(n)$ as a free left $A(n)$-module.  The cohomological
bidegree of $P^r$ is $((2\ell-2)r, (\ell-1)r)$.
\end{proposition}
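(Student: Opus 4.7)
The plan is to dualize the left $A(n)_{*,*}$-comodule isomorphisms of Proposition~\ref{prop:left-An-comod-isos-Cn-Bn} and identify the result, via Lemma~\ref{lem:Boa82Lem33}, with $A(n) \otimes_{H^{*,*}} H_{*,*}[\xi_1^{\ell^n}]^\vee$ and its localization. Specifically, applying $(-)^\vee$ to
$$
C(n)_{*,*} \overset{\cong}\longto A(n)_{*,*} \otimes_{H_{*,*}} H_{*,*}[\xi_1^{\ell^n}]
$$
yields a left $A(n)$-module isomorphism from the right-hand dual to $C(n)$, and similarly for $B(n)$.

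The crucial step is that the natural comparison map
$$
\theta \: A(n) \otimes_{H^{*,*}} H_{*,*}[\xi_1^{\ell^n}]^\vee
	\longto \bigl( A(n)_{*,*} \otimes_{H_{*,*}} H_{*,*}[\xi_1^{\ell^n}] \bigr)^\vee
$$
of Lemma~\ref{lem:Boa82Lem33}(a) is an isomorphism in this setting. Here we exploit the finiteness of $A(n)_{*,*}$ as a left $H_{*,*}$-module provided by Lemma~\ref{lem:left-H-basis-An}: writing $A(n)_{*,*} \cong \bigoplus_\alpha \Sigma^{\|\alpha\|} H_{*,*}$ as a \emph{finite} direct sum, both sides of $\theta$ become the corresponding finite direct sum of (shifted) copies of $H_{*,*}[\xi_1^{\ell^n}]^\vee$, and $\theta$ is the evident componentwise identification. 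The compatibility of $\theta$ with the left $A(n)$-actions is formal: $\theta$ is natural in both variables, and it transports the dualized left $A(n)_{*,*}$-coaction on the tensor product (which acts via $\psi$ on the $A(n)_{*,*}$ factor) to the standard left $A(n)$-multiplication on the first factor of $A(n) \otimes_{H^{*,*}} H_{*,*}[\xi_1^{\ell^n}]^\vee$.

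For the explicit basis, the monomials $\{\xi_1^{\ell^n r}\}_{r\ge0}$ form a free left $H_{*,*}$-basis for $H_{*,*}[\xi_1^{\ell^n}]$, and their duals are precisely the Steenrod operations $P^{\ell^n r}$ viewed inside $C(n) \subset \sA$ through the injection $H_{*,*}[\xi_1^{\ell^n}]^\vee \hookrightarrow C(n)$ dual to $\gamma'_n$. Tensoring this with the tautological basis of $A(n)$ over itself produces the claimed free left $A(n)$-module basis, establishing part~(a). Part~(b) then follows by the identical argument applied to the second iso of Proposition~\ref{prop:left-An-comod-isos-Cn-Bn}, now allowing $r \in \bZ$ with $\ell^n \mid r$; the bidegree $\|P^r\| = ((2\ell-2)r, (\ell-1)r)$ is read off from $\|\xi_1^r\|$.

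The main obstacle is the verification that $\theta$ is an isomorphism in this mixed setting (finite free on one factor, infinitely generated on the other); once that is in place, the rest of the proof is bookkeeping of bases through the Boardman duality.
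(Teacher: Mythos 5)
Your overall plan is the right one and matches the paper's: dualize Proposition~\ref{prop:left-An-comod-isos-Cn-Bn} and reduce everything to showing that the Boardman comparison map
$$
\theta \: A(n) \otimes_{H^{*,*}} H_{*,*}[\xi_1^{\ell^n}]^\vee
	\longto \bigl( A(n)_{*,*} \otimes_{H_{*,*}} H_{*,*}[\xi_1^{\ell^n}] \bigr)^\vee
$$
is an isomorphism, after which the identification of the free basis is bookkeeping. But the argument you give for why $\theta$ is an isomorphism has a genuine gap.

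You decompose the \emph{first} tensor factor $M = A(n)_{*,*}$ as a finite direct sum of shifted copies of $H_{*,*}$, citing Lemma~\ref{lem:left-H-basis-An}. The trouble is that this is a decomposition of left $H_{*,*}$-modules, not of $H_{*,*}$-$H_{*,*}$-bimodules (the left and right units $\eta_L, \eta_R$ differ on $A(n)_{*,*}$ when $\ell=2$, e.g.\ $\eta_R(\tau) = \tau + \rho\tau_0$), and $\theta$ involves \emph{both} sides: the tensor product $A(n)_{*,*} \otimes_{H_{*,*}} N$ uses the right $H_{*,*}$-action on $A(n)_{*,*}$, while its dual $(-)^\vee$ is formed with respect to the induced left $H_{*,*}$-action. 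Consequently, neither the left-module nor the right-module basis for $A(n)_{*,*}$ splits the target $(A(n)_{*,*} \otimes_{H_{*,*}} N)^\vee$ into a direct sum of shifted copies of $N^\vee$ as left $H^{*,*}$-modules, and your assertion that ``both sides of $\theta$ become the corresponding finite direct sum of (shifted) copies of $H_{*,*}[\xi_1^{\ell^n}]^\vee$, and $\theta$ is the evident componentwise identification'' is not justified. One could repair this with a filtration argument along the lines of Lemma~\ref{lem:right-H-basis-A} (the associated graded pieces \emph{do} have matching left and right actions), but you do not do so.

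The paper sidesteps the bimodule issue by decomposing the \emph{second} tensor factor $N = H_{*,*}[\xi_1^{\ell^n}]$ (respectively $H_{*,*}[\xi_1^{\pm\ell^n}]$) into the free rank-one pieces $\Sigma^{\|r\|} H_{*,*}$ indexed by powers of $\xi_1^{\ell^n}$. Since $N$ is only required to be a left $H_{*,*}$-module in Lemma~\ref{lem:Boa82Lem33}, this decomposition is unambiguous, and Lemma~\ref{lem:Boa82Lem33}(c) applies componentwise. What remains is a comparison of a direct sum $\bigoplus_r \Sigma^{\|r\|} A(n)$ with the corresponding product $\prod_r \Sigma^{\|r\|} A(n)$, and the decisive input here is Lemma~\ref{lem:An-bidegs}, which you did not invoke: the bidegree support of $A(n)$ is confined to a triangular region, so in each fixed bidegree only finitely many of the shifted copies of $A(n)$ are nonzero, whence sum equals product. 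This finiteness along the index $r$ is the real point, rather than the finite rank of $A(n)_{*,*}$ over $H_{*,*}$.
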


\begin{proof}
The homomorphisms
$$
\theta \: A(n)_{*,*}^\vee \otimes_{H_{*,*}} H_{*,*} [\xi_1^{\ell^n}]^\vee
\longto (A(n)_{*,*} \otimes_{H_{*,*}} H_{*,*} [\xi_1^{\ell^n}])^\vee
$$
and
$$
\theta \: A(n)_{*,*}^\vee \otimes_{H_{*,*}} H_{*,*} [\xi_1^{\pm \ell^n}]^\vee
\longto (A(n)_{*,*} \otimes_{H_{*,*}} H_{*,*} [\xi_1^{\pm \ell^n}])^\vee
$$
are isomorphisms.  This follows from Lemmas~\ref{lem:Boa82Lem33}(c)
and~\ref{lem:An-bidegs}, since in each case the source of $\theta$ is
a direct sum of shifted copies of $A(n)$, the target of $\theta$ is the
corresponding product, and in each bidegree~$(p,q)$ only finitely many
of the factors in the product are nonzero.

The claims in~(a) and~(b) then follow by dualization from
Proposition~\ref{prop:left-An-comod-isos-Cn-Bn}.
\end{proof}

\begin{proposition} \label{prop-right-An-1-mod-basis-Bn}
The inclusion $B(0) \subset B(n)$ extends to an isomorphism
$$
B(0) \otimes_{H^{*,*}} A(n-1) \overset{\cong}\longto B(n)
$$
of right $A(n-1)$-modules.  Hence the symbols
$$
\{\beta^e P^r \mid e \in \{0,1\}, r \in \bZ \}
$$
form a basis for $B(n)$ as a free right $A(n-1)$-module.  The
cohomological bidegree of $\beta^e P^r$ is $(e + (2\ell-2)r, (\ell-1)r)$.
\end{proposition}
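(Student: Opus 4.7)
The plan is to dualize Proposition~\ref{prop-right-An-1-comod-iso-Bn}, which provided the right $A(n-1)_{*,*}$-comodule algebra isomorphism
$$
(\beta_n \otimes \id)\rho_n \: B(n)_{*,*} \overset{\cong}\longto B(0)_{*,*} \otimes_{H_{*,*}} A(n-1)_{*,*} \,,
$$
in close parallel with the derivation of Proposition~\ref{prop:left-An-mod-basis-Cn-Bn} from Proposition~\ref{prop:left-An-comod-isos-Cn-Bn}.

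First I would apply $(-)^\vee$ to the displayed isomorphism. The source dualizes to $B(n) = B(n)_{*,*}^\vee$, endowed with the intrinsic right $A(n-1)$-module structure that is the dual of the right $A(n-1)_{*,*}$-coaction. On the target side I would use the natural map
$$
\theta \: B(0) \otimes_{H^{*,*}} A(n-1)
	= B(0)_{*,*}^\vee \otimes_{H_{*,*}} A(n-1)_{*,*}^\vee
	\longto (B(0)_{*,*} \otimes_{H_{*,*}} A(n-1)_{*,*})^\vee
$$
from Lemma~\ref{lem:Boa82Lem33}(a) to reinterpret the dual of the tensor product. Composing these two isomorphisms produces the candidate right $A(n-1)$-module homomorphism $B(0) \otimes_{H^{*,*}} A(n-1) \to B(n)$.

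The crux of the argument is to verify that this $\theta$ is an isomorphism. Here I would invoke Lemmas~\ref{lem:left-H-basis-An} and~\ref{lem:right-H-basis-An}, which state that $A(n-1)_{*,*}$ is free of \emph{finite} rank as an $H_{*,*}$-module from either side. Iterating Lemmas~\ref{lem:Boa82Lem33}(b) and~\ref{lem:Boa82Lem33}(c) over the finite basis of $A(n-1)_{*,*}$ decomposes both source and target of $\theta$ into a finite direct sum (equivalently, finite product) of shifted copies of $B(0)$, on which $\theta$ restricts to the identity. Because the infinite factor $B(0)$ is now tensored with a finite one, no bidegree-finiteness input such as Lemma~\ref{lem:An-bidegs} is needed, in contrast to Proposition~\ref{prop:left-An-mod-basis-Cn-Bn}.

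Having the isomorphism $B(0) \otimes_{H^{*,*}} A(n-1) \overset{\cong}\longto B(n)$, I would check that it extends the inclusion $B(0) \subset B(n)$ by tracking the unit $1 \in A(n-1)$: the right counit axiom for $\rho_n$ forces $(\beta_n \otimes \id)\rho_n$ to carry $B(0)_{*,*} \subset B(n)_{*,*}$ onto the first factor modulo higher terms, and dualizing recovers the canonical inclusion $B(0) \hookrightarrow B(n)$. The basis assertion then follows by combining with the right $H^{*,*}$-module basis $\{\beta^e P^r \mid e \in \{0,1\}, r \in \bZ\}$ of $B(0)$ from Example~\ref{ex:left-right-H-bases-C0-B0-cohom}(b); the bidegree of $\beta^e P^r$ is read off from Example~\ref{ex:left-right-H-bases-C0-B0}(b). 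The main obstacle is really just the bookkeeping to ensure that $\theta$ intertwines the right $A(n-1)$-module structure on its source with the dual of the right $A(n-1)_{*,*}$-coaction on its target, which is a formal if notationally delicate application of Lemma~\ref{lem:Boa82Lem33}(b).
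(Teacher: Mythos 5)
Your proof follows the same overall strategy as the paper: dualize Proposition~\ref{prop-right-An-1-comod-iso-Bn}, use Boardman's map $\theta$ from Lemma~\ref{lem:Boa82Lem33}(a) to reinterpret the dual of the tensor product, and check that $\theta$ is an isomorphism. The one place you deviate is in that last check. The paper invokes Lemma~\ref{lem:An-bidegs} (together with Lemma~\ref{lem:Boa82Lem33}(c)), following the same template it used for Proposition~\ref{prop:left-An-mod-basis-Cn-Bn}, where the infinite factor $H_{*,*}[\xi_1^{\pm\ell^n}]$ made the bidegree-wise finiteness bound genuinely necessary to turn a product into a direct sum. You observe instead that here the indexing by $A(n-1)_{*,*}$ is already \emph{finite} (Lemmas~\ref{lem:left-H-basis-An} and~\ref{lem:right-H-basis-An}), so writing both sides of $\theta$ as a finite direct sum of shifted copies of $B(0)=B(0)_{*,*}^\vee$ sidesteps the bound of Lemma~\ref{lem:An-bidegs} altogether: a finite product of duals is automatically a finite direct sum. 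That is a correct and mildly sharper observation than the paper's citation; the trade-off is that it breaks the formal parallel with the proof of Proposition~\ref{prop:left-An-mod-basis-Cn-Bn}, where the Lemma~\ref{lem:An-bidegs} input is unavoidable. Your remaining steps — identifying the dualized map with the multiplication map extending the inclusion $B(0)\subset B(n)$ via the counit axiom, reading off the basis from Example~\ref{ex:left-right-H-bases-C0-B0-cohom}(b), and the bookkeeping that $\theta$ intertwines the right $A(n-1)$-actions — all match what the paper leaves implicit in the phrase ``the claim follows by dualization.''
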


\begin{proof}
The homomorphism
$$
\theta \: B(0)_{*,*}^\vee \otimes_{H_{*,*}} A(n-1)_{*,*}^\vee
\longto (B(0)_{*,*} \otimes_{H_{*,*}} A(n-1)_{*,*})^\vee
$$
is an isomorphism, by Lemmas~\ref{lem:Boa82Lem33}(c)
and~\ref{lem:An-bidegs}.  Thus the claim follows by dualization
from Proposition~\ref{prop-right-An-1-comod-iso-Bn} and
Example~\ref{ex:left-right-H-bases-C0-B0-cohom}(b).
\end{proof}

\section{The small motivic Singer construction}
\label{sec:small-Singer}

In this section and the next, we generalize the classical Singer
construction $R_+(M)$ of \cite{Sin81} and \cite{LS82} to the motivic
context, following the strategy of \cite{AGM85}.  We shall write
$R_S(M)$ for the (small) construction associated to the symmetric
group $S_\ell$, which is denoted $R_+(M)$ in~\cite{LS82} and $T''(M)$
in~\cite{AGM85}, and whose desuspension $\Sigma^{-1} R_S(M)$ is denoted
$R_+(M)$ in~\cite{Sin80} and~\cite{Sin81} and $T'(M)$ in~\cite{AGM85}.
We shall write $R_\mu(M)$ for the (large) construction associated to the
cyclic group $C_\ell$ and the algebraic group $\mu_\ell$ of $\ell$-th
roots of unity, which is denoted $T(M)$ in~\cite{AGM85} and $R_+(M)$
in~\cite{LNR12}.  For $\ell=2$ the two constructions agree.

\begin{lemma} \label{lem:Bn-over-An-1-M}
Let $n\ge0$.

(a)
For each left $A(n-1)$-module $M$, the tensor product
$B(n) \otimes_{A(n-1)} M$ is a left $A(n)$-module.
The inclusion $B(0) \subset B(n)$ induces an isomorphism
$$
B(0) \otimes_{H^{*,*}} M
	\overset{\cong}\longto B(n) \otimes_{A(n-1)} M \,.
$$

(b)
If $M$ is a left $A(n)$-module, then the inclusion $B(n) \subset B(n+1)$
induces an isomorphism
$$
B(n) \otimes_{A(n-1)} M
	\overset{\cong}\longto B(n+1) \otimes_{A(n)} M
$$
of left $A(n)$-modules.

(c)
If $M$ is a left $\sA$-module, then the composition
$B(n) \to C(n) \subset \sA$ induces a left $A(n)$-module
homomorphism
$$
\epsilon_n \: B(n) \otimes_{A(n-1)} M \longto \sA \otimes_{A(n-1)} M
	\longto M \,,
$$
and these are compatible for varying~$n$.
\end{lemma}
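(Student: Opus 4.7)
For part~(a), the left $A(n)$-module structure on $B(n) \otimes_{A(n-1)} M$ is automatic from the $A(n)$-$A(n-1)$-bimodule structure on $B(n)$ established in the previous section: the left $A(n)$-action acts on the first tensor factor and commutes with the right $A(n-1)$-action over which we are tensoring. The plan for the isomorphism is to apply Proposition~\ref{prop-right-An-1-mod-basis-Bn}, which furnishes an isomorphism $B(0) \otimes_{H^{*,*}} A(n-1) \xrightarrow{\cong} B(n)$ of right $A(n-1)$-modules. Tensoring the source and target on the right over $A(n-1)$ with $M$ and using the standard identification $(B(0) \otimes_{H^{*,*}} A(n-1)) \otimes_{A(n-1)} M \cong B(0) \otimes_{H^{*,*}} M$ produces the claimed isomorphism. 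I would verify at the level of the right $A(n-1)$-basis $\{\beta^e P^r\}$ that the inverse is induced by the inclusion $B(0) \hookrightarrow B(n)$ followed by $m \mapsto 1 \otimes m$, so the stated description of the isomorphism is correct.

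For part~(b), I would apply part~(a) twice, to the pairs $(B(n), A(n-1))$ and $(B(n+1), A(n))$, each giving a canonical identification with $B(0) \otimes_{H^{*,*}} M$. The key compatibility is that the diagram
$$
\xymatrix{
B(0) \otimes_{H^{*,*}} M \ar[r]^-{\cong} \ar@{=}[d]
        & B(n) \otimes_{A(n-1)} M \ar[d] \\
B(0) \otimes_{H^{*,*}} M \ar[r]^-{\cong}
        & B(n+1) \otimes_{A(n)} M
}
$$
commutes, where the right vertical map is induced by the inclusion $B(n) \subset B(n+1)$. This follows because $B(0) \subset B(n) \subset B(n+1)$ as bimodules and the maps in part~(a) come from these inclusions. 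The $A(n)$-linearity then follows from the $A(n)$-linearity in part~(a) applied with $B(n+1)$.

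For part~(c), the composition $B(n) \twoheadrightarrow C(n) \hookrightarrow \sA$ is an $A(n)$-$A(n-1)$-bimodule map (the first factor by the lemma preceding Proposition~\ref{prop:left-An-mod-basis-Cn-Bn}, the second by the restriction compatibility in the diagram above). Since $M$ is a left $\sA$-module, the action descends along $A(n-1) \subset \sA$ to a well defined left $A(n)$-module map $\sA \otimes_{A(n-1)} M \to M$, and I would define $\epsilon_n$ as the composite of $B(n) \otimes_{A(n-1)} M \to \sA \otimes_{A(n-1)} M$ with this action. Compatibility for varying~$n$ follows from the commutative square in the lemma preceding Proposition~\ref{prop:left-An-mod-basis-Cn-Bn}: the map $B(n) \to \sA$ factors through $B(n+1) \to \sA$, so after tensoring with $M$ the triangle relating $\epsilon_n$, $\epsilon_{n+1}$ and the transition map of part~(b) commutes on the nose.

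The only step that requires any thought is verifying that the right $A(n-1)$-module isomorphism of Proposition~\ref{prop-right-An-1-mod-basis-Bn} is compatible with the inclusions $B(n) \subset B(n+1)$ in the sense needed for part~(b); once this is checked on the generating symbols $\beta^e P^r \in B(0)$, which map to themselves under the inclusion, the rest is formal.
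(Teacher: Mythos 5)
Your proof is correct and takes essentially the same route as the paper's: part~(a) uses Proposition~\ref{prop-right-An-1-mod-basis-Bn} and the cancellation $(B(0)\otimes_{H^{*,*}}A(n-1))\otimes_{A(n-1)}M\cong B(0)\otimes_{H^{*,*}}M$, part~(b) follows by comparing the identifications of part~(a) for $n$ and $n+1$, and part~(c) uses that $B(n)\to C(n)\subset\sA$ and the transition maps are $A(n)$-$A(n-1)$-bimodule homomorphisms. The paper's proof is merely terser; you have usefully filled in the details without changing the argument.
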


\begin{proof}
(a)
This is clear from the $A(n)$-$A(n-1)$-bimodule structure of $B(n)$
and Proposition~\ref{prop-right-An-1-mod-basis-Bn}.

(b)
The morphism exists because $B(n) \subset B(n+1)$ is an
$A(n)$-$A(n-1)$-bimodule homomorphism, with respect to the
restricted bimodule structure on the target.  It is an isomorphism
by comparison with the isomorphisms of part~(a) for $n$ and $n+1$.

(c)
This follows because the inclusions $C(n) \subset C(n+1) \subset \sA$
are $A(n)$-$A(n-1)$-bimodule homomorphisms.  In each case the morphism
$\sA \otimes_{A(n-1)} M \longto M$ is induced by the left module action
$\sA \otimes M \to M$.
\end{proof}

\begin{definition}
Let $M$ be any left $\sA$-module.

(a)
Let the \emph{small motivic Singer construction}
$$
R_S(M) = \colim_n \, (B(n) \otimes_{A(n-1)} M)
$$
be the colimit of the sequence of isomorphisms
$$
B(0) \otimes_{H^{*,*}} M
	\overset{\cong}\longto \dots
	\overset{\cong}\longto B(n) \otimes_{A(n-1)} M
	\overset{\cong}\longto B(n+1) \otimes_{A(n)} M
	\overset{\cong}\longto \dots \,,
$$
equipped with the unique left $\sA$-module structure for which the
canonical map $B(n) \otimes_{A(n-1)} M \to R_S(M)$ is an isomorphism of
$A(n)$-modules, for each $n\ge0$.

(b)
Let the \emph{small evaluation homomorphism}
$$
\epsilon \: R_S(M) \longto M
$$
be the left $\sA$-module homomorphism such that its restriction to
$B(n) \otimes_{A(n-1)} M$ is equal to the $A(n)$-module homomorphism
$\epsilon_n$ of Lemma~\ref{lem:Bn-over-An-1-M}(c), for each $n\ge0$.
\end{definition}

Evidently, $R_S$ is an exact and colimit-preserving endofunctor
of left $\sA$-modules, and $\epsilon \: R_S \to \id$ is a natural
transformation.

\begin{lemma}
As a left $A(0)$-module, the small motivic Singer construction is
given by the tensor product
\begin{align*}
R_S(M) &\cong B(0) \otimes_{H^{*,*}} M \\
&= H^{*,*}\{\beta^e P^r \mid e \in \{0,1\}, r \in \bZ\}
	\otimes_{H^{*,*}} M \,,
\end{align*}
with the $A(0)$-action from $B(0)$.
Each element of $R_S(M)$ is thus a finite sum of terms $\beta^e
P^r \otimes m$, with $e \in \{0,1\}$, $r \in \bZ$ and $m \in M$, where
$\beta(P^r \otimes m) = \beta P^r \otimes m$ and $\beta (\beta P^r
\otimes m) = 0$.  The small evaluation homomorphism is given by
$$
\epsilon(\beta^e P^r \otimes m) =
\begin{cases}
\beta^e P^r(m) & \text{for $r\ge0$,} \\
0 & \text{for $r<0$.}
\end{cases}
$$
\end{lemma}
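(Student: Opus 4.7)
The plan is to observe that this lemma is a direct unpacking of Lemma~\ref{lem:Bn-over-An-1-M}, together with the explicit description of $B(0) \to C(0)$ in Example~\ref{ex:left-right-H-bases-C0-B0-cohom}(b).

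For the identification of the underlying module, I would first apply Lemma~\ref{lem:Bn-over-An-1-M}(a) with $n=0$ to see that the bottom term of the colimit system equals $B(0) \otimes_{A(-1)} M = B(0) \otimes_{H^{*,*}} M$, and then Lemma~\ref{lem:Bn-over-An-1-M}(b) to conclude that every subsequent transition map $B(n) \otimes_{A(n-1)} M \to B(n+1) \otimes_{A(n)} M$ is an isomorphism. Since a colimit of isomorphisms is (canonically isomorphic to) the common value, $R_S(M) \cong B(0) \otimes_{H^{*,*}} M$. By the defining property of the $\sA$-module structure on $R_S(M)$, the canonical map from $B(0) \otimes_{H^{*,*}} M$ is an isomorphism of $A(0)$-modules, where the $A(0)$-action on the source is the one coming from the $A(0)$-$H^{*,*}$-bimodule structure of $B(0)$. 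Combining this with the basis of Example~\ref{ex:left-right-H-bases-C0-B0-cohom}(b) and the fact that $\beta^2 = 0$ in $A(0) = H^{*,*}\<\beta\>/(\beta^2)$, one reads off $\beta \cdot (P^r \otimes m) = \beta P^r \otimes m$ and $\beta \cdot (\beta P^r \otimes m) = \beta^2 P^r \otimes m = 0$.

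For the explicit formula for $\epsilon$, I would specialise Lemma~\ref{lem:Bn-over-An-1-M}(c) to $n = 0$: the restriction of $\epsilon$ to the image of $B(0) \otimes_{H^{*,*}} M$ equals $\epsilon_0$, which is the composite
\[
B(0) \otimes_{H^{*,*}} M \longto C(0) \otimes_{H^{*,*}} M \hookrightarrow \sA \otimes_{H^{*,*}} M \longto M,
\]
where the last arrow is induced by the $\sA$-action on $M$. By Example~\ref{ex:left-right-H-bases-C0-B0-cohom}(b), the first arrow sends $\beta^e P^r \mapsto \beta^e P^r$ for $r \ge 0$ and $\mapsto 0$ for $r < 0$. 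Composing with the action on $M$ immediately yields $\epsilon(\beta^e P^r \otimes m) = \beta^e P^r(m)$ for $r \ge 0$ and $0$ otherwise.

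Both assertions are therefore purely formal consequences of results already established, and no serious obstacle arises. The only mild point worth verifying is that the colimit identification is an isomorphism of $A(0)$-modules (not merely of $H^{*,*}$-modules); this follows because the isomorphisms $B(0) \otimes_{H^{*,*}} M \cong B(n) \otimes_{A(n-1)} M$ in Lemma~\ref{lem:Bn-over-An-1-M}(a) are induced by the inclusion $B(0) \hookrightarrow B(n)$, which is compatible with the inclusion $A(0) \hookrightarrow A(n)$ and hence preserves the $A(0)$-action.
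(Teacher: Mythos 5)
Your proposal is correct, and it takes the same (indeed, the only reasonable) route: the paper's proof is simply the one-word ``Clear,'' and your argument fills in exactly the unpacking one is meant to do — namely, read off the underlying module and the $A(0)$-action from the colimit-of-isomorphisms definition of $R_S(M)$, together with Lemma~\ref{lem:Bn-over-An-1-M}(a)--(b), Proposition~\ref{prop:left-An-mod-basis-Cn-Bn}(b), and Example~\ref{ex:left-right-H-bases-C0-B0-cohom}(b), and read off $\epsilon$ from Lemma~\ref{lem:Bn-over-An-1-M}(c) and the description of $B(0)\to C(0)$. The only step you leave slightly implicit is that the element $\beta\cdot P^r$ (the $A(0)$-action applied to the basis element $P^r$) coincides with the Milnor symbol $\beta P^r$ dual to $\tau_0\xi_1^r$; this is forced by Proposition~\ref{prop:left-An-mod-basis-Cn-Bn}(b) together with a bidegree comparison, or can be checked directly by dualizing $\lambda_0(\tau_0\xi_1^r)=\tau_0\otimes\xi_1^r+1\otimes\tau_0\xi_1^r$, and is consistent with what you wrote.
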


\begin{proof}
Clear.
\end{proof}

The following formulas generalize the one of Singer~\cite{Sin80}*{(2.1)}
for $\ell=2$ and a rewriting of the those of Li--Singer~\cite{LS82}*{\S3}
for $\ell$ odd.  By $\tau^{j\bmod2}$ we mean $\tau^0 = 1$ for $j$ even
and~$\tau^1 = \tau$ for $j$ odd.

\begin{proposition} \label{prop:Sqa-Pa-action-on-RSigmaM}
For $\ell=2$ and $a\ge0$ even the action of $Sq^a$ on $R_S(M)$ is given by
$$
Sq^a(Sq^b \otimes m) = \sum_{j=0}^{[a/2]}
	\binom{b-1-j}{a-2j} \tau^{j\bmod2} \cdot Sq^{a+b-j} \otimes Sq^j(m)
$$
for $b \in \bZ$ even, and
\begin{align*}
Sq^a(Sq^b \otimes m) &= \sum_{j=0}^{[a/2]}
	\binom{b-1-j}{a-2j} Sq^{a+b-j} \otimes Sq^j(m) \\
	&\qquad + \sum_{\substack{j=1 \\ \text{odd}}}^{[a/2]}
	\binom{b-1-j}{a-2j} \rho \cdot Sq^{a+b-j-1} \otimes Sq^j(m)
\end{align*}
for $b \in \bZ$ odd.

For $\ell$ odd and $a\ge0$ the action of $P^a$ on $R_S(M)$ is given by
$$
P^a(P^b \otimes m) = \sum_{j=0}^{[a/\ell]} (-1)^{a+j}
    \binom{(\ell-1)(b-j)-1}{a-\ell j} P^{a+b-j} \otimes P^j(m) \\
$$
and
\begin{align*}
P^a(\beta P^b \otimes m) &= \sum_{j=0}^{[a/\ell]} (-1)^{a+j}
    \binom{(\ell-1)(b-j)}{a-\ell j} \beta P^{a+b-j} \otimes P^j(m) \\
    &\qquad + \sum_{j=0}^{[(a-1)/\ell]} (-1)^{a+j-1}
    \binom{(\ell-1)(b-j)-1}{a-\ell j-1} P^{a+b-j} \otimes \beta P^j(m)
\end{align*}
for all $b \in \bZ$.
\end{proposition}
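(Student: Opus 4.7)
The plan is to compute the left $A(n)$-module action on $B(n) \otimes_{A(n-1)} M$ for $n$ sufficiently large, and then transport the result to $R_S(M)$ via the canonical isomorphism of Lemma~\ref{lem:Bn-over-An-1-M}(a). Fix $n$ such that $P^a$ (respectively $Sq^a$) lies in $A(n)$ and every Steenrod operation $P^j$ with $0 \le j \le \lfloor a/\ell \rfloor$ lies in $A(n-1)$; the bound $a < \ell^n$ suffices, in view of Lemma~\ref{lem:left-H-basis-An-cohom}. Then Lemma~\ref{lem:Bn-over-An-1-M}(a)--(b) identifies $R_S(M)$ with $B(n) \otimes_{A(n-1)} M$ as left $A(n)$-modules, and the question reduces to computing the left $A(n)$-action on $B(n)$ explicitly.

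This action is obtained by dualizing the left $A(n)_{*,*}$-coaction $\lambda_n$ on $B(n)_{*,*}$. Since $\lambda_n$ is an $H_{*,*}$-algebra homomorphism whose values on the generators $\tau_0, \tau_k, \xi_1, \xi_k$ are recorded in the proof of Proposition~\ref{prop:left-An-comod-isos-Cn-Bn}, multiplicativity gives
\[
\lambda_n(\xi_1^b) \;=\; \sum_{i = 0}^{\ell^n - 1} \binom{b}{i}\, \xi_1^i \otimes \xi_1^{b - i} \qquad \text{for every } b \in \bZ,
\]
a \emph{finite} sum thanks to the relation $\xi_1^{\ell^n} = 0$ in $A(n)_{*,*}$; here $\binom{b}{i}$ is the standard generalized binomial coefficient reduced mod~$\ell$. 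For $\ell$ odd, combining this with $\lambda_n(\tau_0) = \tau_0 \otimes 1 + 1 \otimes \tau_0$ and the mod-$\ell$ identity $\binom{b}{a - \ell j} \equiv (-1)^{a + j} \binom{(\ell - 1)(b - j) - 1}{a - \ell j}$, after the reindexing $i = a - \ell j$, produces the Li--Singer coefficients appearing in the proposition.

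Dualizing, the coefficients of $P^a \cdot \beta^e P^b$ in the Milnor basis of $B(n)$ are exactly the coefficients just extracted. Each such basis element then decomposes under the right $A(n-1)$-module isomorphism $B(n) \cong B(0) \otimes_{H^{*,*}} A(n-1)$ of Proposition~\ref{prop-right-An-1-mod-basis-Bn} as a product (basis element of $B(0)$)$\cdot$(element of $A(n-1)$); the tensor relation in $B(n) \otimes_{A(n-1)} M$ then converts the latter factor into an action on~$m$, yielding terms $\beta^{e''} P^{a + b - j} \otimes P^j(m)$ or $\beta^{e''} P^{a + b - j} \otimes \beta P^j(m)$, matching the stated formulas.

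The main obstacle lies in the $\ell = 2$ case, where the motivic identity $\tau_0^2 = T_0 = \tau \xi_1 + \rho \tau_1 + \rho \tau_0 \xi_1$ is activated whenever the expansion of $\lambda_n$ produces a square of $\tau_0$; the summands $\tau \xi_1$ and $\rho \tau_0 \xi_1$ of $T_0$ are precisely what inject the $\tau$- and $\rho$-factors appearing in the $b$-even and $b$-odd branches of the formula, respectively. Tracking which binomial coefficient each motivic decoration attaches to requires a case split on the parity of~$b$, together with careful bookkeeping of the quadratic $\tau_0$ contributions; the resulting expressions reduce to Singer's classical formula~\cite{Sin80}*{(2.1)} upon setting $\tau = 1$ and $\rho = 0$, providing a useful consistency check.
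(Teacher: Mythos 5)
Your approach is genuinely different from the paper's, but it has real gaps that prevent it from being a complete proof.

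\emph{What you do differently.} The paper treats the Adem relations as a black box: for $a < \ell b$ (so $b > 0$), every admissible term $P^{a+b-j}P^j$ appearing on the right-hand side of the Adem relation for $P^a P^b$ has $a+b-j>0$ and $P^j\in A(n-1)$, so the relation identifies $P^a\cdot(P^b\otimes m)$ directly as an element of $B(n)\otimes_{A(n-1)}M$. It then deduces the formula for all $b\in\bZ$ by exploiting $\xi_1^{\ell^n}$-periodicity of the left $A(n)$-action on $B(n)$ (Definition~\ref{def:Bn-An-An-1-bicomod}) together with the $\ell^n$-periodicity of the binomial coefficients. Your plan is to re-derive the relevant coefficients from scratch by computing the coaction $\lambda_n$ by multiplicativity and dualizing. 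In principle this amounts to re-deriving the motivic Adem coefficients from Milnor's product formula, so the two routes are not logically equivalent even if they must ultimately agree.

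\emph{Where the gaps are.} First, the dualization of $\lambda_n(\xi_1^c)$ only yields the $j=0$ terms of the Proposition (those of the form $\beta^{e''}P^{a+b}\otimes m$). The $j>0$ terms $\beta^{e''}P^{a+b-j}\otimes\beta^{e'}P^j(m)$ come from the summands of $P^a\cdot\beta^e P^b$ that are dual to monomials $\tau^E\xi^R$ involving $\tau_k$ ($k\ge1$) or $\xi_k$ ($k\ge2$); to extract them you would have to compute $\lambda_n$ on \emph{all} such monomials, then re-express the answer in the right $A(n-1)$-module basis of Proposition~\ref{prop-right-An-1-mod-basis-Bn} (a nontrivial change of basis involving the elements $\check\tau_k=\tau_k\xi_1^{-\ell^{k-1}}$, $\check\xi_k=\xi_k\xi_1^{-\ell^{k-1}}$). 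This is precisely the hard part of the computation, and it is not carried out --- it is only asserted in the sentence beginning ``Each such basis element then decomposes\dots''. Second, your displayed combinatorial identity is mis-indexed: the coaction $\lambda_n(\xi_1^b)$ has coefficients $\binom{b}{i}$, but the quantity relevant to the action $P^a\cdot P^b$ is the coefficient of $\xi_1^a\otimes\xi_1^b$ in $\lambda_n(\xi_1^{a+b})$, namely $\binom{a+b}{a}$; the correct reduction is $\binom{a+b}{a}\equiv(-1)^a\binom{(\ell-1)b-1}{a}\pmod\ell$ (an instance of $\binom{-n-1}{k}=(-1)^k\binom{n+k}{k}$). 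As written, $\binom{b}{a-\ell j}\equiv(-1)^{a+j}\binom{(\ell-1)(b-j)-1}{a-\ell j}$ is false (e.g.\ $\ell=3$, $a=1$, $j=0$, $b=1$ gives $1\not\equiv-1$). Third, your treatment of the $\ell=2$ case is a description of what would have to be done, not an argument: the $\tau$- and $\rho$-corrections coming from $\tau_0^2=\tau\xi_1+\rho\tau_1+\rho\tau_0\xi_1$ are precisely the motivic content of the theorem, and ``careful bookkeeping'' with a ``case split on the parity of $b$'' is the part that would need to be supplied. You would do better to follow the paper and simply quote the corrected motivic Adem relations from Riou, handling the extension to negative $b$ by periodicity.
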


\begin{proof}
For $a=0$ the formulas confirm that $Sq^0$ and $P^0$ are the
identity operations.

For $\ell=2$ and $a>0$ even, choose $n$ so that $Sq^a \in A(n)$.
Then $Sq^j \in A(n-1)$ for all $0 \le j \le [a/2]$, and $Sq^i Sq^j
\otimes m = Sq^i \otimes Sq^j(m)$ in $B(n) \otimes_{A(n-1)} M$.
When $a < 2b$ the formulas for $Sq^a(Sq^b \otimes m)$ then follow
from the Adem relations~\cite{Voe03a}*{Thm.~10.2} for $Sq^a Sq^b$,
as corrected in~\cite{Rio12}*{Thm.~4.5.1}.

Similarly, for $\ell$ odd and $a>0$, choose $n$ so that $P^a \in A(n)$.
Then $P^j, \beta P^j \in A(n-1)$ for all $0 \le j \le [a/\ell]$, and
$P^i P^j \otimes m = P^i \otimes P^j(m)$ and $P^i \beta P^j \otimes m =
P^i \otimes \beta P^j(m)$ in $B(n) \otimes_{A(n-1)} M$.  When $a < \ell
b$ the formulas for $P^a(P^b \otimes m)$ and $P^a(\beta P^b \otimes m)$
then follow from the Adem relations~\cite{Voe03a}*{Thm.~10.3} for $P^a
P^b$ and $P^a \beta P^b$.  (The last Adem relation is valid for
$0 < a \le \ell b$, cf.~\cite{Rio12}*{Thm.~4.5.2}.)

For the rest of the argument, $\ell$ can be even or odd.  By
Definition~\ref{def:Bn-An-An-1-bicomod}, the left $A(n)_{*,*}$-coaction
on $B(n)_{*,*}$ commutes with multiplication by $\xi_1^{\ell^n}$, so the
left $A(n)$-action on $B(n)$ commutes with the operation $\beta^e P^r
\mapsto \beta^e P^{r+\ell^n}$.  All mod~$\ell$ binomial coefficients in
sight also repeat $\ell^n$-periodically in~$b$.  Hence the formulas for
$a \ge \ell b$ follow from those for $a < \ell b$.
\end{proof}

\begin{corollary} \label{cor:A-action-on-RSigmaH}
For $\ell=2$ and $a\ge0$ even the action of $Sq^a$ on $R_S(H^{*,*})
\cong B(0)$ is given by
$$
Sq^a(Sq^b) = \binom{b-1}{a} Sq^{a+b}
$$
for $b \in \bZ$.

For $\ell$ odd and $a\ge0$ the action of $P^a$ on $R_S(H^{*,*})
\cong B(0)$ is given by
$$
P^a(P^b) = (-1)^a \binom{(\ell-1)b-1}{a} P^{a+b}
$$
and
$$
P^a(\beta P^b) = (-1)^a \binom{(\ell-1)b}{a} \beta P^{a+b}
$$
for $b \in \bZ$.
\end{corollary}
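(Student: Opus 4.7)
The strategy is to deduce the corollary directly from Proposition~\ref{prop:Sqa-Pa-action-on-RSigmaM} by specializing to the left $\sA$-module $M = H^{*,*}$, where the action of the motivic Steenrod algebra on its base ring is essentially trivial. Under the identification $R_S(H^{*,*}) = B(0) \otimes_{H^{*,*}} H^{*,*} \cong B(0)$, the symbol $\beta^e P^r \in B(0)$ corresponds to $\beta^e P^r \otimes 1$, so the corollary's formulas are obtained by setting $m = 1 \in H^{0,0}$ in the formulas of the proposition.

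First, I would record the key input: every Steenrod operation of positive topological degree annihilates the unit $1 \in H^{*,*}$. Concretely, $Sq^j(1) = 0$ for $j \geq 1$ (with $Sq^0 = \id$), $P^j(1) = 0$ for $j \geq 1$ (with $P^0 = \id$), and $\beta P^j(1) = 0$ for all $j \geq 0$, since $\beta$ itself has positive topological degree and $P^0(1) = 1$ gives $\beta P^0(1) = \beta(1) = 0$. This is forced by the cohomological bidegree constraint $H^{p,q} = 0$ for $p < 0$, combined with $\deg(Sq^j) = j$, $\deg(P^j) = 2(\ell-1)j$, and $\deg(\beta) = 1$.

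With this in hand, I would substitute $m = 1$ into each formula from Proposition~\ref{prop:Sqa-Pa-action-on-RSigmaM} and discard all terms in which a positive-degree operation is applied to $1$. For $\ell$ odd, the single $j=0$ summand survives in the formula for $P^a(P^b \otimes 1)$, producing $(-1)^a \binom{(\ell-1)b-1}{a} P^{a+b}$; in the formula for $P^a(\beta P^b \otimes 1)$ the entire second sum drops (each term involves $\beta P^j(1) = 0$), and from the first sum only the $j=0$ term survives, yielding $(-1)^a \binom{(\ell-1)b}{a} \beta P^{a+b}$. For $\ell = 2$ and $b$ even, the $j=0$ contribution is $\tau^0 \cdot Sq^{a+b} \cdot Sq^0(1) = Sq^{a+b}$, giving the binomial coefficient $\binom{b-1}{a}$; for $b$ odd, the second ($\rho$) sum begins at $j=1$ and thus vanishes, while the first sum again collapses to its $j=0$ term, giving the same formula. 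Both parities of~$b$ therefore produce the single expression $Sq^a(Sq^b) = \binom{b-1}{a} Sq^{a+b}$.

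There is no serious obstacle here: once the proposition is granted, the argument is purely bookkeeping on which summands survive after applying positive-degree operations to $1$. The only point that deserves a line of care is checking that the $j=0$ binomial coefficients in the proposition match those claimed in the corollary verbatim (noting in particular that $\tau^{0 \bmod 2} = 1$ so no $\tau$ factor appears, and that the second, $\beta$-containing sum in the odd-prime formula contributes nothing).
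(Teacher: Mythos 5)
Your proof is correct and follows essentially the same route as the paper's: specialize Proposition~\ref{prop:Sqa-Pa-action-on-RSigmaM} to $M = H^{*,*}$ with $R_S(H^{*,*}) \cong B(0)$, set $m=1$, and use that $Sq^j(1)=0$ and $P^j(1)=0$ for $j>0$ (and $\beta(1)=0$) to reduce each sum to its $j=0$ term. The paper adds only a brief consistency remark that the $\ell=2$ formula matches the odd-$\ell$ formula under $Sq^{2a}=P^a$ via $\binom{b-1}{a}\equiv\binom{2b-1}{2a}$ and $\binom{b}{a}\equiv\binom{2b}{2a}\bmod 2$, which is not logically required for the corollary.
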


\begin{proof}
This is the special case $M = H^{*,*}$ of
Proposition~\ref{prop:Sqa-Pa-action-on-RSigmaM}, where we identify
$R_S(H^{*,*}) \cong B(0) \otimes_{H^{*,*}} H^{*,*} = B(0)$ and note
that $Sq^j(1) = 0$ and $P^j(1) = 0$ in $H^{*,*}$ for all $j>0$.
When $\ell=2$, the formulas for $P^a(P^b)$ and $P^a(\beta P^b)$ agree
with the given formulas for $Sq^{2a}(Sq^{2b})$ and $Sq^{2a}(Sq^{2b+1})$,
since $\binom{b-1}{a} \equiv \binom{2b-1}{2a}$ and $\binom{b}{a} \equiv
\binom{2b}{2a} \mod 2$.
\end{proof}

\begin{notation} \label{not:H-of-Bmu-Bsigma}
Let $B\mu_\ell$ and $BS_\ell$ be the geometric classifying spaces of
the linear algebraic groups~$\mu_\ell$ and $S_\ell$, respectively.
In particular, $B\mu_\ell \simeq \hocolim_n L^{2n-1}$ as
discussed in Section~\ref{sec:tower-Thom}.
Recall from~\cite{Voe03a}*{Thm.~6.10, Thm.~6.16} that
$$
H^{*,*}(B\mu_\ell) = H^{*,*} [u, v] / (u^2 = \tau v + \rho u)
$$
with $\beta(u) = v$, and
$$
H^{*,*}(BS_\ell) = H^{*,*} [c, d] / (c^2 = \tau d + \rho c)
$$
with $\beta(c) = d$, as graded commutative $\sA$-module
$H^{*,*}$-algebras.  The cohomological bidegrees of $u$, $v$, $c$
and~$d$ are $(1,1)$, $(2,1)$, $(2\ell-3,\ell-1)$ and~$(2\ell-2, \ell-1)$,
respectively.  The coefficients $\tau$ and $\rho$ are interpreted as $0$ when
$\ell$ is odd.  Any choice of a primitive $\ell$-th root of unity~$\zeta$
defines a map $p_\zeta \: B\mu_\ell \to BS_\ell$ inducing
\begin{align*}
p_\zeta^* \: c &\longmapsto - u v^{\ell-2} \\
p_\zeta^* \: d &\longmapsto - v^{\ell-1} \,.
\end{align*}
We suppress $p_\zeta^*$ from the notation, viewing $H^{*,*}(BS_\ell)$
as an $\sA$-module subalgebra of $H^{*,*}(B\mu_\ell)$.  The natural left
$\sA$-module structure on $H^{*,*}(B\mu_\ell)$ is determined by the cases
\begin{align*}
\beta^e P^r(u) &= \begin{cases}
u & \text{for $(e,r) = (0,0)$,} \\
v & \text{for $(e,r) = (1,0)$,} \\
0 & \text{otherwise,}
\end{cases} \\
\beta^e P^r(v) &= \begin{cases}
v & \text{for $(e,r) = (0,0)$,} \\
v^\ell & \text{for $(e,r) = (0,1)$,} \\
0 & \text{otherwise,}
\end{cases}
\end{align*}
and the Cartan formula~\cite{Voe03a}*{Prop.~9.7},
leading to the expressions
\begin{align*}
P^r(u v^k) &= \binom{k}{r} u v^{(\ell-1)r+k} \\
\beta P^r(u v^k) &= \binom{k}{r} v^{(\ell-1)r+1+k} \\
P^r(v^k) &= \binom{k}{r} v^{(\ell-1)r+k} \\
\beta P^r(v^k) &= 0 \,.
\end{align*}
The restricted $\sA$-module action on $H^{*,*}(BS_\ell)$ is
given by
\begin{align*}
P^r(c d^k) &= (-1)^r \binom{(\ell-1)(k+1)-1}{r} c d^{r+k} \\
\beta P^r(c d^k) &= (-1)^r \binom{(\ell-1)(k+1)-1}{r} d^{r+1+k} \\
P^r(d^k) &= (-1)^r \binom{{(\ell-1)k}}{r} d^{r+k} \\
\beta P^r(d^k) &= 0 \,,
\end{align*}
for $r\ge0$ and $k\ge0$, cf.~\cite{Rio12}*{Prop.~4.4.6}.
\end{notation}

In particular, $\beta(v^{\ell^n}) = 0$ and $P^r(v^{\ell^n}) = 0$
for all $0 < r < \ell^n$, so multiplication by~$v^{\ell^n}$
acts left $A(n)$-linearly on $H^{*,*}(B\mu_\ell)$,
cf.~Lemma~\ref{lem:alggen-for-An}.  Likewise, multiplication by
$d^{\ell^n}$ acts left $A(n)$-linearly on $H^{*,*}(BS_\ell)$.
Hence the following two localizations inherit compatible left
$A(n)$-module structures for all $n\ge0$.  These combine to well-defined
left $\sA$-module structures, such that the localization homomorphisms
are maps of $\sA$-module $H^{*,*}$-algebras.

\begin{definition} \label{def:H-Bmu-BSigma-loc}
Let
\begin{align*}
H^{*,*}(B\mu_\ell)_{\loc} = H^{*,*}(B\mu_\ell) [1/v]
	&= H^{*,*} [u, v^{\pm1}] / (u^2 = \tau v + \rho u) \\
	&= H^{*,*} \{u^i v^k \mid i \in \{0,1\}, k \in \bZ\}
\end{align*}
and
\begin{align*}
H^{*,*}(BS_\ell)_{\loc} = H^{*,*}(BS_\ell) [1/d]
	&= H^{*,*} [c, d^{\pm1}] / (c^2 = \tau d + \rho c) \\
	&= H^{*,*} \{c^i d^k \mid i \in \{0,1\}, k \in \bZ\}
\end{align*}
denote the localizations away from~$v$ and~$d$, respectively.
\end{definition}

\begin{theorem} \label{thm:RSigmaH-SigmaHBSigmalloc}
Let $\Sigma = \Sigma^{1,0}$.
There is a left $\sA$-module isomorphism
$$
R_S(H^{*,*}) \overset{\cong}\longto
	\Sigma H^{*,*}(BS_\ell)_{\loc}
$$
defined by
$$
P^k \longmapsto \Sigma c d^{k-1}
\qquad\text{and}\qquad
\beta P^k \longmapsto - \Sigma d^k
$$
for $k \in \bZ$.
The composite
$\Sigma H^{*,*}(BS_\ell)_{\loc} \cong R_S(H^{*,*})
	\overset{\epsilon}\longto H^{*,*}$
is the left $\sA$-linear homomorphism given by
$$
\Sigma cd^{-1} \longmapsto 1
\qquad\text{and}\qquad
\Sigma c^i d^k \longmapsto 0
$$
for $(i,k) \ne (1,-1)$, where $i \in \{0,1\}$ and $k \in \bZ$.
\end{theorem}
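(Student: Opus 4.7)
The plan is to define $\phi$ on the canonical $H^{*,*}$-module bases of both sides, then verify $\sA$-linearity and the evaluation formula. By the lemma preceding Proposition~\ref{prop:Sqa-Pa-action-on-RSigmaM}, $R_S(H^{*,*}) \cong B(0)$ has the free $H^{*,*}$-basis $\{\beta^e P^r \mid e \in \{0,1\},\ r \in \bZ\}$, while $\Sigma H^{*,*}(BS_\ell)_{\loc}$ has the free $H^{*,*}$-basis $\{\Sigma c^i d^k \mid i \in \{0,1\},\ k \in \bZ\}$ from Definition~\ref{def:H-Bmu-BSigma-loc}. The stated formulas set up a bidegree-preserving bijection between these bases, extending uniquely to an $H^{*,*}$-module isomorphism $\phi$; the remaining content is the compatibility of $\phi$ with the $\sA$-action and the computation of $\epsilon \circ \phi^{-1}$.

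For $\sA$-linearity, Lemma~\ref{lem:alggen-for-An} in the colimit as $n \to \infty$ reduces the verification to compatibility with $\beta$ and with each $P^a$, $a \ge 0$. For $P^a$, Corollary~\ref{cor:A-action-on-RSigmaH} computes the source action on $B(0)$, while Notation~\ref{not:H-of-Bmu-Bsigma} computes the target action on $H^{*,*}(BS_\ell)$, and this extends to the $d$-localization because multiplication by $d^{\ell^n}$ is $A(n)$-linear. Since $P^a$ commutes with the formal suspension, matching the two sides on $P^r$ reduces to the trivial identity $(\ell-1)r - 1 = (\ell-1)((r-1)+1) - 1$ of binomial indices, and the comparison on $\beta P^r$ works identically once the overall sign $(-1)^{a+1}$ is accounted for on both sides.

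For $\beta$, on the source $\beta(P^r) = \beta P^r$ and $\beta(\beta P^r) = 0$ by the $A(0)$-structure of $B(0)$; on the target $\beta$ acts as a graded derivation with $\beta(c) = d$ and $\beta(d) = 0$, and the suspension class has odd topological degree, forcing $\beta(\Sigma x) = -\Sigma \beta(x)$. These combine to yield $\beta(\Sigma c d^{r-1}) = -\Sigma d^r = \phi(\beta P^r)$ and $\beta(-\Sigma d^r) = 0 = \phi(\beta^2 P^r)$, confirming $\sA$-linearity. The evaluation formula then falls out of the lemma preceding the theorem, which states $\epsilon(\beta^e P^r) = \beta^e P^r(1)$ for $r \ge 0$ and zero otherwise: only $P^0$ acts non-trivially on the unit, so $\phi^{-1}(\Sigma c d^{-1}) = P^0$ is the unique basis element mapping to $1$, while all other basis elements $\Sigma c^i d^k$ with $(i,k) \ne (1,-1)$ map to~$0$.

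The main obstacle I foresee is the sign in $\phi(\beta P^r) = -\Sigma d^r$: it is dictated by the graded derivation property of $\beta$ together with the odd topological degree of the suspension class, and without it the map would fail to be $\sA$-linear. Tracking this sign together with the binomial bookkeeping uniformly in both parities of~$\ell$ (noting that the $\tau$- and $\rho$-correction terms appearing for $\ell=2$ do not interfere, by degree considerations, since $\beta P^r$ and $\Sigma d^r$ live in low enough weights relative to where those corrections could appear) is the only non-formal part of the argument.
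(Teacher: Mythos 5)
Your proposal is correct and follows essentially the same route as the paper: match the two $H^{*,*}$-module bases, invoke Corollary~\ref{cor:A-action-on-RSigmaH} and Notation~\ref{not:H-of-Bmu-Bsigma} together with the $A(n)$-linearity of multiplication by $d^{\ell^n}$ to compare the $P^a$-actions, verify the $\beta$-action separately, and read off the evaluation from $\beta^e P^r(1)=0$ for $(e,r)\neq(0,0)$. You make explicit two points the paper leaves implicit — the reduction of $\sA$-linearity to $\beta$ and the $P^a$ via Lemma~\ref{lem:alggen-for-An}, and the origin of the sign in $\beta(\Sigma x)=-\Sigma\beta(x)$ as a Koszul sign attached to the odd-topological-degree shift $\Sigma^{1,0}$ — and this is useful expository detail, though it is worth flagging that the latter is a convention on how $\sA$ acts on the formal shift rather than a theorem, and it must be the same convention that makes the Frobenius isomorphism of Definition~\ref{def:residues} $\sA$-linear. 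One small imprecision: the $\tau$- and $\rho$-corrections of Proposition~\ref{prop:Sqa-Pa-action-on-RSigmaM} for $\ell=2$ disappear in Corollary~\ref{cor:A-action-on-RSigmaH} not by a weight argument but simply because the only surviving summand is $j=0$ (as $Sq^j(1)=0$ for $j>0$), and similarly the formulas in Notation~\ref{not:H-of-Bmu-Bsigma} already carry no such corrections; your degree heuristic gives the right answer but is not the mechanism.
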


\begin{proof}
By Corollary~\ref{cor:A-action-on-RSigmaH},
Notation~\ref{not:H-of-Bmu-Bsigma} and
Definition~\ref{def:H-Bmu-BSigma-loc} the indicated $H^{*,*}$-module
isomorphism maps $P^r(P^k)$ and $P^r(\beta P^k)$ to $P^r(\Sigma c
d^{k-1})$ and $P^r(-\Sigma d^k)$, respectively, for all $r\ge0$ and $k
\in \bZ$.  Moreover, $\beta(\Sigma c d^{k-1}) = - \Sigma d^k$ and $\beta(-
\Sigma d^k) = 0$.  Hence the isomorphism is $\sA$-linear.
The calculation of the composite follows by noting that
$\beta^e P^r(1) = 0$ in $H^{*,*}$ unless $(e,r) = (0,0)$.
\end{proof}

\section{The large motivic Singer construction}
\label{sec:large-Singer}

Our next aim, following~\cite{AGM85}*{\S5}, is to construct the large
Singer construction $R_\mu(M)$ as an extension of $R_S(M)$, with
$R_\mu(H^{*,*}) \cong \Sigma H^{*,*}(B\mu_\ell)_{\loc}$.  We first
note that $H^{*,*}(BS_\ell)_{\loc} \subset H^{*,*}(B\mu_\ell)_{\loc}$
is a pair of graded Frobenius algebras.  These duality structures
provide a conceptual origin for the explicit formulas that appear
in~\cite{AGM85}*{Lem.~5.1}.

\begin{definition} \label{def:residues}
Let the \emph{residue homomorphisms}
\begin{align*}
\res \: \Sigma H^{*,*}(B\mu_\ell)_{\loc} &\longto H^{*,*} \\
\res \: \Sigma H^{*,*}(BS_\ell)_{\loc} &\longto H^{*,*}
\end{align*}
be the left $H^{*,*}$-linear Frobenius forms defined
for $i \in \{0,1\}$ and $k \in \bZ$ by
$$
\res(\Sigma u^i v^k) = \res(\Sigma c^i d^k) = \begin{cases}
1 & \text{for $(i,k) = (1,-1)$,} \\
0 & \text{otherwise.}
\end{cases}
$$
The associated Frobenius pairings
\begin{align*}
\Sigma H^{*,*}(B\mu_\ell)_{\loc}
	\otimes_{H^{*,*}} H^{*,*}(B\mu_\ell)_{\loc}
	&\longto H^{*,*} \\
\Sigma H^{*,*}(BS_\ell)_{\loc}
	\otimes_{H^{*,*}} H^{*,*}(BS_\ell)_{\loc}
	&\longto H^{*,*}
\end{align*}
map $\Sigma x \otimes y$ to $\res(\Sigma x y)$, and the adjoint
$H^{*,*}$-linear homomorphisms
\begin{align*}
\Sigma H^{*,*}(B\mu_\ell)_{\loc}
	&\overset{\cong}\longto (H^{-*,-*}(B\mu_\ell)_{\loc})^\vee \\
\Sigma H^{*,*}(BS_\ell)_{\loc}
	&\overset{\cong}\longto (H^{-*,-*}(BS_\ell)_{\loc})^\vee
\end{align*}
are the isomorphisms given by
\begin{align*}
\Sigma v^k &\longmapsto (u v^{-k-1})^\vee \\
\Sigma u v^{k-1} &\longmapsto (v^{-k})^\vee + \rho \cdot (u v^{-k})^\vee
\end{align*}
and
\begin{align*}
\Sigma d^k &\longmapsto (c d^{-k-1})^\vee \\
\Sigma c d^{k-1} &\longmapsto (d^{-k})^\vee + \rho \cdot (c d^{-k})^\vee
\end{align*}
for $k \in \bZ$.
\end{definition}

\begin{lemma}
The Frobenius forms, the associated Frobenius pairings, and the adjoint
isomorphisms, are all left $\sA$-linear.
\end{lemma}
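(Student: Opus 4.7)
My strategy is to reduce the three claims to $\sA$-linearity of the residue homomorphisms $\res$, and then dispatch the two residue cases by different arguments. The Frobenius pairings factor as multiplication in the algebra (which is $\sA$-linear via the Cartan formula, encoding the coaction of $\sA_{*,*}$ on tensor products) followed by $\res$. Given $\sA$-linearity of $\res$, the pairings therefore inherit $\sA$-linearity, and the adjoint isomorphisms are $\sA$-linear as a reformulation of the pairings being $\sA$-linear in the second variable.

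For $\res \colon \Sigma H^{*,*}(BS_\ell)_{\loc} \to H^{*,*}$ I would identify it as the composite of the $\sA$-module isomorphism $\Sigma H^{*,*}(BS_\ell)_{\loc} \cong R_S(H^{*,*})$ from Theorem~\ref{thm:RSigmaH-SigmaHBSigmalloc} with the small evaluation map $\epsilon$. The closing formula of that theorem sends $\Sigma cd^{-1}$ to $1$ and the remaining basis monomials to $0$, matching Definition~\ref{def:residues} exactly. Both $\epsilon$ (by construction) and the isomorphism (by the theorem) are $\sA$-linear, so $\res$ is as well.

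For $\res \colon \Sigma H^{*,*}(B\mu_\ell)_{\loc} \to H^{*,*}$ I do not see a similarly conceptual reduction available at this point in the paper, so I plan to verify $\sA$-linearity by direct computation on the basis $\{\Sigma u^i v^k : i \in \{0,1\},\, k \in \bZ\}$. Since $\res$ is $H^{*,*}$-linear by definition and $\sA$ is generated as a $\bZ/\ell$-algebra by $H^{*,*}$, $\beta$, and the operations $P^{\ell^n}$ (Lemma~\ref{lem:alggen-for-An}), it suffices to check $\res(\theta \cdot \Sigma u^i v^k) = \theta \cdot \res(\Sigma u^i v^k)$ for $\theta = \beta^e P^r$ with $(e,r) \ne (0,0)$. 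Each such $\theta$ annihilates $1 \in H^{*,*}$, so the right-hand side is zero. Using the formulas in Notation~\ref{not:H-of-Bmu-Bsigma}, $\theta \cdot \Sigma u^i v^k$ is a scalar multiple of a single basis element $\Sigma u^{i'} v^{k'}$; when $i' = 0$ its residue is automatically zero. The only remaining case is $P^r(\Sigma uv^k)$ with $k = -1 - (\ell-1)r$ and $r \ge 1$, where the scalar is $\binom{-1-(\ell-1)r}{r} = (-1)^r \binom{\ell r}{r}$. I anticipate the main technical point to be verifying that $\binom{\ell r}{r} \equiv 0 \pmod{\ell}$ for $r \ge 1$, which follows from Lucas's theorem since the base-$\ell$ digits of $\ell r$ are those of $r$ shifted one position up, forcing a factor $\binom{0}{a_j}$ with $a_j > 0$ at the first nonzero digit of~$r$. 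The odd Steenrod operations $\beta P^r$ (and hence $Sq^{2r+1}$ for $\ell=2$) require no separate treatment, since $\beta P^r(\Sigma uv^k) = \binom{k}{r} \Sigma v^{(\ell-1)r + 1 + k}$ contains no $u$-factor and thus has vanishing residue.
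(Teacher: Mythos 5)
Your proposal is correct and follows essentially the same approach as the paper. The core computation — reducing everything to the $\sA$-linearity of $\res$, and for $B\mu_\ell$ observing that the only dangerous term $P^r(\Sigma u v^{-1-(\ell-1)r})$ has coefficient $\binom{-1-(\ell-1)r}{r}=(-1)^r\binom{\ell r}{r}\equiv 0\pmod\ell$ — is exactly what the paper does, and your Lucas-theorem verification of the vanishing is a fine way to justify it. The one small variation is in the $BS_\ell$ case: you derive it from the small-Singer identification in Theorem~\ref{thm:RSigmaH-SigmaHBSigmalloc} together with $\sA$-linearity of the evaluation~$\epsilon$, whereas the paper primarily derives it by restricting the $B\mu_\ell$ case along the algebra inclusion $H^{*,*}(BS_\ell)_{\loc}\subset H^{*,*}(B\mu_\ell)_{\loc}$ (and mentions your route as an alternative). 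A minor quibble: for the ``generated as a $\bZ/\ell$-algebra by $H^{*,*}$, $\beta$, $P^{\ell^n}$'' fact you should cite Notation~\ref{not:H-of-Bmu-Bsigma}'s preceding paragraph (or rather the remark in the paragraph introducing $\sA$ in Section~\ref{sec:dual-bimodules}), not Lemma~\ref{lem:alggen-for-An}, which concerns $A(n)$ rather than $\sA$; but the statement you need is indeed in the paper.
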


\begin{proof}
The residue homomorphism in the case of $B\mu_\ell$ is $\sA$-linear,
because for $r>0$ we have $P^r(uv^k) = 0$ whenever $(\ell-1)r+k =
-1$, since
$$
\binom{(\ell-1)(-r)-1}{r} = (-1)^r \binom{\ell r}{r} \equiv 0
	\mod \ell \,.
$$
The case of $BS_\ell$ follows from this, or from the second part of
Theorem~\ref{thm:RSigmaH-SigmaHBSigmalloc}.  The $\sA$-linearity of the
remaining homomorphisms follows formally.
\end{proof}

Recall the cotensor product~$\cotensor$ of comodules,
e.g.~from~\cite{EM66}*{\S2}.

\begin{definition}
Let
$$
R^S(H_{*,*}) = \lim_n \, (B(n)_{*,*} \cotensor_{A(n-1)_{*,*}} H_{*,*})
	\cong B(0)_{*,*}
$$
be the (achieved) limit of the right $A(n-1)_{*,*}$-comodule primitives
in~$B(n)_{*,*}$.  It is a left $A(n)_{*,*}$-comodule algebra for
each~$n\ge0$, and these coactions combine to a completed left
$\sA_{*,*}$-comodule algebra structure.  We write
$$
R^S(H_{*,*}) = H^{*,*}[\tilde\tau, \tilde\xi^{\pm1}] /
	(\tilde\tau^2 = \tau \tilde\xi + \rho \tilde\tau \tilde\xi) \,,
$$
with $\tilde\tau$ and $\tilde\xi$ mapping to $\tau_0$ and $\xi_1$
in $B(0)_{*,*}$, respectively.  Note that $R_S(H^{*,*}) \cong
R^S(H_{*,*})^\vee$, with $\beta^e P^r$ dual to $\tilde\tau^e \tilde\xi^r$
in the monomial basis.
\end{definition}

\begin{lemma} \label{lem:AGM85Lem5.1}
The composite left $\sA$-module isomorphism
$$
R_S(H^{*,*}) \overset{\cong}\longto \Sigma H^{*,*}(BS_\ell)_{\loc}
	\overset{\cong}\longto (H^{-*,-*}(BS_\ell)_{\loc})^\vee
$$
is the dual of the $H^{*,*}$-algebra isomorphism
$$
\Phi \: H^{-*,-*}(BS_\ell)_{\loc} \overset{\cong}\longto R^S(H_{*,*})
$$
given by
$$
c \longmapsto - \tilde\tau \tilde\xi^{-1} + \rho \cdot 1
\qquad\text{and}\qquad
d \longmapsto \tilde\xi^{-1} \,.
$$
\end{lemma}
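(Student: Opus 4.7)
The plan is to verify that $\Phi$ is a well-defined $H^{*,*}$-algebra isomorphism, then to compute its dual $\Phi^\vee$ on the monomial basis of $R_S(H^{*,*}) \cong R^S(H_{*,*})^\vee$ and match the result with the composite of the isomorphism from Theorem~\ref{thm:RSigmaH-SigmaHBSigmalloc} with the Frobenius adjoint of Definition~\ref{def:residues}. The argument simplifies substantially for $\ell$ odd, where $\tau = \rho = 0$ and $\tilde\tau^2 = 0$ automatically; the delicate case is $\ell=2$.

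First, I would check that the assignment $c \mapsto -\tilde\tau\tilde\xi^{-1} + \rho$, $d \mapsto \tilde\xi^{-1}$ respects the defining relation $c^2 = \tau d + \rho c$ of $H^{-*,-*}(BS_\ell)_{\loc}$. Expanding $(-\tilde\tau\tilde\xi^{-1} + \rho)^2$ and substituting $\tilde\tau^2 = \tau\tilde\xi + \rho\tilde\tau\tilde\xi$ from the presentation of $R^S(H_{*,*})$, together with $-1 = 1$ in characteristic~$2$, gives that both sides equal $\tau\tilde\xi^{-1} + \rho\tilde\tau\tilde\xi^{-1} + \rho^2$. Since $\Phi(d) = \tilde\xi^{-1}$ is already invertible, $\Phi$ extends uniquely from $H^{*,*}[c,d]/(c^2 - \tau d - \rho c)$ to the localization. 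Using the induced formulas
$$
\Phi(d^j) = \tilde\xi^{-j}, \qquad \Phi(cd^j) = -\tilde\tau\tilde\xi^{-j-1} + \rho\tilde\xi^{-j},
$$
one sees that $\Phi$ is an isomorphism, since the change of basis against $\{\tilde\tau^e\tilde\xi^r\}$ is triangular with unit $\pm 1$ diagonal entries against the $H^{*,*}$-basis $\{c^i d^j \mid i\in\{0,1\}, j \in \bZ\}$.

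For the dualization step, under the identification $R_S(H^{*,*}) \cong R^S(H_{*,*})^\vee$ with $\beta^e P^r$ dual to $\tilde\tau^e\tilde\xi^r$, the dual $\Phi^\vee$ sends each basis element of $R_S(H^{*,*})$ to the functional on $H^{-*,-*}(BS_\ell)_{\loc}$ that extracts the matching coefficient from $\Phi(-)$. Reading off the formulas above, the only nonzero values of $\Phi^\vee(P^k)$ on the basis $\{c^i d^j\}$ occur at $d^{-k}$ (giving $1$) and $cd^{-k}$ (giving $\rho$), while $\Phi^\vee(\beta P^k)$ is nonzero only on $cd^{-k-1}$ (giving $-1$). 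Hence $\Phi^\vee(P^k) = (d^{-k})^\vee + \rho(cd^{-k})^\vee$ and $\Phi^\vee(\beta P^k) = -(cd^{-k-1})^\vee$. These are precisely the images of $P^k \mapsto \Sigma cd^{k-1}$ and $\beta P^k \mapsto -\Sigma d^k$ under the Frobenius adjoint, so the two maps agree on the generating basis and therefore everywhere.

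The main obstacle is the algebraic verification in the first step, where the $\tau$- and $\rho$-corrections for $\ell = 2$ must be handled with care; once that identity is in place the remainder reduces to bookkeeping with explicit bases, and no further check of compatibility with the $\sA$-action is required, since both $\Phi^\vee$ and the composite have already been identified as left $\sA$-linear homomorphisms.
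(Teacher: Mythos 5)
Your proof is correct and follows essentially the same route as the paper's: write $\Phi$ explicitly on the monomial basis, dualize, and read off that the resulting functional agrees with the composite of the isomorphism from Theorem~\ref{thm:RSigmaH-SigmaHBSigmalloc} with the Frobenius adjoint from Definition~\ref{def:residues}. The only organizational difference is that you verify the relation $c^2 = \tau d + \rho c$ is preserved up front (a check the paper simply asserts with ``This is indeed an algebra isomorphism''), whereas the paper works from the composite to the dual map and reads off $\Phi$; your extra explicitness on the $\ell=2$ relation is a welcome clarification, and your closing remark about $\sA$-linearity, while slightly awkwardly phrased, is harmless since once the $H^{*,*}$-linear identification is made the $\sA$-linearity transfers from the composite.
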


\begin{proof}
The composite isomorphism maps $P^k$ to $(d^{-k})^\vee + \rho \cdot
(cd^{-k})^\vee$ and maps $\beta P^k$ to $- (cd^{-k-1})^\vee$, hence is
dual to the $H^{*,*}$-linear homomorphism mapping $d^{-k}$ to $(P^k)^\vee
= \tilde\xi^k$ and mapping $cd^{-k}$ to $- (\beta P^{k-1})^\vee +  \rho
\cdot (P^k)^\vee = - \tilde\tau \tilde\xi^{k-1} + \rho \cdot \tilde\xi^k$.
This is indeed an algebra isomorphism.
\end{proof}

For a left $\sA_{*,*}$-comodule $M_{*,*}$, the
$A(n)_{*,*}$-$A(n-1)_{*,*}$-bicomodule algebra product~$\phi$ on
$B(n)_{*,*}$ induces a pairing
\begin{multline*}
(B(n)_{*,*} \cotensor_{A(n-1)_{*,*}} H_{*,*}) \otimes_{H_{*,*}}
(B(n)_{*,*} \cotensor_{A(n-1)_{*,*}} M_{*,*}) \\
\longto
(B(n)_{*,*} \cotensor_{A(n-1)_{*,*}} M_{*,*})
\end{multline*}
of left $A(n)_{*,*}$-comodules for each $n\ge0$, making
$$
R^S(M_{*,*}) = \lim_n \, (B(n)_{*,*} \cotensor_{A(n-1)_{*,*}} M_{*,*})
$$
an $R^S(H_{*,*})$-module in completed left $\sA_{*,*}$-comodules.
Viewing $R^S(M_{*,*})$ as an $H^{-*,-*}(BS_\ell)_{\loc}$-module
via the algebra isomorphism~$\Phi$, we can form the induced
$H^{-*,-*}(B\mu_\ell)_{\loc}$-module
$$
R^\mu(M_{*,*}) = H^{-*,-*}(B\mu_\ell)_{\loc}
	\underset{H^{-*,-*}(BS_\ell)_{\loc}}{\otimes} R^S(M_{*,*})
\,.
$$
As a left $A(n)_{*,*}$-comodule, it is isomorphic to a finite direct sum
$$
R^\mu(M_{*,*}) \cong H^{*,*}\{1, v^{\ell^n}, \dots, v^{\ell^n(\ell-2)}\}
	\otimes_{H^{*,*}} R^S(M_{*,*}) \,,
$$
where each power of $v^{\ell^n}$ is $A(n)_{*,*}$-comodule primitive.

Dually, for a left $\sA$-module~$M$ the completed $A(n)$-$A(n-1)$-bimodule
coproduct
$$
B(n) = B(n)_{*,*}^\vee \overset{\phi^\vee}\longto (B(n)_{*,*} \otimes_{H_{*,*}} B(n)_{*,*})^\vee = B(n) \hatotimes_{H^{*,*}} B(n)
$$
induces a ``copairing''
$$
B(n) \otimes_{A(n-1)} M \longto (B(n) \otimes_{A(n-1)} H^{*,*})
	\hatotimes_{H^{*,*}} (B(n) \otimes_{A(n-1)} M)
$$
of left $A(n)$-modules for each~$n\ge0$, making the small Singer
construction
$$
R_S(M) = \colim_n \, (B(n) \otimes_{A(n-1)} M)
$$
a completed $R_S(H^{*,*})$-comodule in left $\sA$-modules.
Here $R_S(H^{*,*})$ has the completed $H^{*,*}$-coalgebra
structure dual to the $H_{*,*}$-algebra structure on $R^S(H_{*,*})$
that appears in Lemma~\ref{lem:AGM85Lem5.1}.  It corresponds via the
isomorphism in Theorem~\ref{thm:RSigmaH-SigmaHBSigmalloc} to a completed
$H^{*,*}$-coalgebra structure on $\Sigma H^{*,*}(BS_\ell)_{\loc}$.
Moreover, the algebra inclusion $H^{-*,-*}(BS_\ell)_{\loc} \subset
H^{-*,-*}(B\mu_\ell)_{\loc}$ in left $\sA$-modules corresponds under
duality and the Frobenius isomorphisms from Definition~\ref{def:residues}
to a completed $H^{*,*}$-coalgebra epimorphism
$$
\pi \: \Sigma H^{*,*}(B\mu_\ell)_{\loc}
	\onto \Sigma H^{*,*}(BS_\ell)_{\loc}
$$
in left $\sA$-modules, given by
$$
\Sigma v^{(\ell-1)k} \longmapsto (-1)^k \Sigma d^k
\qquad\text{and}\qquad
\Sigma u v^{(\ell-1)k-1} \longmapsto (-1)^k \Sigma c d^{k-1} \,,
$$
while the remaining $H^{*,*}$-module generators $\Sigma u^i v^k$ with
$i \in \{0,1\}$ and $k \in \bZ$ map to zero.
This discussion motivates the following definition.

\begin{definition}
Let $M$ be any left $\sA$-module.

(a) Let the \emph{large motivic Singer construction}
$$
R_\mu(M) = \Sigma H^{*,*}(B\mu_\ell)_{\loc}
	\underset{\Sigma H^{*,*}(BS_\ell)_{\loc}}{\cotensor} R_S(M)
$$
be the left $\sA$-module coinduced from~$R_S(M)$ along the
completed $H^{*,*}$-coalgebra epimorphism~$\pi$.
As a left $A(n)$-module it is isomorphic to the finite direct sum
$$
R_\mu(M) \cong H^{*,*}\{1, v^{\ell^n}, \dots, v^{\ell^n(\ell-2)}\}
	\otimes_{H^{*,*}} R_S(M)
$$
where $A(n)$ acts trivially, i.e., via $\eta_{L,n}^\vee \: A(n) \to
H^{*,*}$, on each power of~$v^{\ell^n}$.

(b) Let the \emph{large evaluation homomorphism}
$$
\epsilon \: R_\mu(M) \longto M
$$
be the composite $\epsilon (\pi \cotensor 1) \: R_\mu(M) \to R_S(M) \to M$.
\end{definition}

\begin{corollary} \label{cor:RmuH-SigmaHBmulloc}
There is a left $\sA$-module isomorphism
$$
R_\mu(H^{*,*}) \cong \Sigma H^{*,*}(B\mu_\ell)_{\loc} \,.
$$
The composite $\Sigma H^{*,*}(B\mu_\ell)_{\loc} \cong R_\mu(H^{*,*})
\overset{\epsilon}\longto H^{*,*}$ equals the residue homomorphism
for $B\mu_\ell$.
\end{corollary}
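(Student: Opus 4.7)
The plan is to unfold the definition of $R_\mu(H^{*,*})$, substitute the identification $R_S(H^{*,*}) \cong \Sigma H^{*,*}(BS_\ell)_{\loc}$ from Theorem~\ref{thm:RSigmaH-SigmaHBSigmalloc}, and recognize the resulting cotensor product as the counit identity for the completed $H^{*,*}$-coalgebra $\Sigma H^{*,*}(BS_\ell)_{\loc}$. Explicitly, the substitution yields
$$
R_\mu(H^{*,*}) \cong \Sigma H^{*,*}(B\mu_\ell)_{\loc}
  \cotensor_{\Sigma H^{*,*}(BS_\ell)_{\loc}} \Sigma H^{*,*}(BS_\ell)_{\loc},
$$
and since the right-hand factor is the coalgebra coacting on itself via its own coproduct, this cotensor collapses to $\Sigma H^{*,*}(B\mu_\ell)_{\loc}$ by the coaction-counit axiom. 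This produces the required left $\sA$-module isomorphism. A consistency check with the $A(n)$-module decomposition from the definition of~$R_\mu$ is available, since $H^{*,*}(B\mu_\ell)_{\loc}$ is free of rank $\ell-1$ over $H^{*,*}(BS_\ell)_{\loc}$ on $\{1,v,\dots,v^{\ell-2}\}$, but it is not needed for the proof.

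For the second assertion, $\epsilon \: R_\mu(H^{*,*}) \to H^{*,*}$ is by definition the composite of $\pi \cotensor 1$ with the small evaluation. Under the isomorphism just obtained and that of Theorem~\ref{thm:RSigmaH-SigmaHBSigmalloc}, this composite reads
$$
\Sigma H^{*,*}(B\mu_\ell)_{\loc} \overset{\pi}\longto
\Sigma H^{*,*}(BS_\ell)_{\loc} \overset{\res}\longto H^{*,*},
$$
where $\pi$ is the explicit $H^{*,*}$-coalgebra epimorphism given in the paragraph preceding the definition of $R_\mu$, and $\res$ is the Frobenius residue for $BS_\ell$ from Definition~\ref{def:residues}. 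A direct check on the basis $\{\Sigma u^i v^k\}$ finishes: only the class $\Sigma uv^{-1}$ survives the composite, mapping via $\pi$ to $\Sigma cd^{-1}$ and then via $\res$ to~$1$; every other basis element either maps to zero under $\pi$ or to some $\Sigma c^{i'} d^{k'}$ with $(i',k') \ne (1,-1)$, on which $\res$ vanishes. This matches the formula for $\res_{B\mu_\ell}$ in Definition~\ref{def:residues}.

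The one delicate point is the cotensor collapse in the first step, which hinges on identifying the two completed coactions in play: the right $\Sigma H^{*,*}(BS_\ell)_{\loc}$-coaction on $\Sigma H^{*,*}(B\mu_\ell)_{\loc}$ is the completed dual of the algebra inclusion $H^{-*,-*}(BS_\ell)_{\loc} \subset H^{-*,-*}(B\mu_\ell)_{\loc}$, while on $\Sigma H^{*,*}(BS_\ell)_{\loc}$ the coaction used is the coproduct. Once this identification is spelled out, the coalgebra counit axiom applies immediately and the remaining verifications are elementary.
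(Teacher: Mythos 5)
Your proposal is correct and follows the same route the paper intends: the paper's proof is simply ``This follows directly from Theorem~\ref{thm:RSigmaH-SigmaHBSigmalloc},'' and your argument is a faithful expansion of it, unwinding the definition $R_\mu(H^{*,*}) = \Sigma H^{*,*}(B\mu_\ell)_{\loc} \cotensor_{\Sigma H^{*,*}(BS_\ell)_{\loc}} R_S(H^{*,*})$, collapsing the cotensor via the counit axiom once $R_S(H^{*,*}) \cong \Sigma H^{*,*}(BS_\ell)_{\loc}$ with its coproduct coaction is substituted, and then tracking the basis through $\pi$ followed by the $BS_\ell$-residue. The identifications you flag as the delicate points (the coproduct coaction on $R_S(H^{*,*})$ dualizing the algebra product on $R^S(H_{*,*})$, and $\pi$ arising from the algebra inclusion $H^{-*,-*}(BS_\ell)_{\loc} \subset H^{-*,-*}(B\mu_\ell)_{\loc}$) are exactly the ones the paper establishes in the paragraphs preceding the definition of $R_\mu$.
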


\begin{proof}
This follows directly from Theorem~\ref{thm:RSigmaH-SigmaHBSigmalloc}.
\end{proof}

\begin{lemma}
As a left $A(0)$-module, the large motivic Singer construction
is given by the tensor product
$$
R_\mu(M) \cong H^{*,*}\{ \Sigma u^i v^k \mid i \in \{0,1\}, k \in \bZ\}
	\otimes_{H^{*,*}} M
$$
with the $A(0)$-action from $\Sigma H^{*,*}(B\mu_\ell)_{\loc}$.
Each element of $R_\mu(M)$ is thus a finite sum of terms $\Sigma
u^i v^k \otimes m$, with $i \in \{0,1\}$, $k \in \bZ$ and $m \in M$,
where $\beta(\Sigma u v^k \otimes m) = - \Sigma v^{k+1} \otimes m$
and $\beta(\Sigma v^k \otimes m) = 0$.
\end{lemma}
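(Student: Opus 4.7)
The lemma is essentially an assembly of prior results for the special case $n=0$, so the plan is to carry it out in three bookkeeping steps. First I would specialize the $A(n)$-module description of $R_\mu(M)$ built into its definition to the case $n=0$, obtaining an $A(0)$-module isomorphism $R_\mu(M) \cong H^{*,*}\{1,v,v^2,\dots,v^{\ell-2}\} \otimes_{H^{*,*}} R_S(M)$, with $A(0)$ acting on each power $v^s$ through the augmentation (so $\beta(v^s)=0$, consistent with Notation~\ref{not:H-of-Bmu-Bsigma}). Substituting the analogous $A(0)$-module description $R_S(M) \cong B(0) \otimes_{H^{*,*}} M$ from the immediately preceding lemma, in which $A(0)$ acts through $B(0)$, then yields an $A(0)$-module isomorphism $R_\mu(M) \cong \bigl(H^{*,*}\{1,v,\dots,v^{\ell-2}\} \otimes_{H^{*,*}} B(0)\bigr) \otimes_{H^{*,*}} M$ in which the $A(0)$-action is concentrated on the middle $B(0)$ factor.

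Second, I would identify this middle factor with $\Sigma H^{*,*}(B\mu_\ell)_{\loc}$ as a left $\sA$-module. Theorem~\ref{thm:RSigmaH-SigmaHBSigmalloc} gives $B(0) \cong \Sigma H^{*,*}(BS_\ell)_{\loc}$ as $\sA$-modules; since $c=-uv^{\ell-2}$ and $d=-v^{\ell-1}$, a direct comparison of the $H^{*,*}$-bases $\{u^i v^k\}$ and $\{c^i d^k\}$, using Euclidean division by $\ell-1$ on the exponent of~$v$, shows that $H^{*,*}(B\mu_\ell)_{\loc}$ is free of rank $\ell-1$ over $H^{*,*}(BS_\ell)_{\loc}$ on the basis $\{1,v,\dots,v^{\ell-2}\}$. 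Tensoring, the $H^{*,*}$-basis for the middle factor becomes exactly $\{\Sigma u^i v^k \mid i\in\{0,1\},\, k\in\bZ\}$, and the case $M=H^{*,*}$ recovers Corollary~\ref{cor:RmuH-SigmaHBmulloc}.

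Third, I would read off the $\beta$-action. Since $A(0)$ acts only on the $\Sigma H^{*,*}(B\mu_\ell)_{\loc}$ tensor factor, it suffices to use the $\sA$-action on this factor from Notation~\ref{not:H-of-Bmu-Bsigma}: $\beta(uv^k)=\beta(u)v^k=v^{k+1}$ because $\beta(v)=0$, and $\beta(v^k)=0$. The degree-$(1,0)$ suspension introduces the Koszul sign $\beta\Sigma=-\Sigma\beta$, already in play in the proof of Theorem~\ref{thm:RSigmaH-SigmaHBSigmalloc} where $\beta(\Sigma cd^{k-1})=-\Sigma d^k$, yielding $\beta(\Sigma uv^k \otimes m)=-\Sigma v^{k+1}\otimes m$ and $\beta(\Sigma v^k \otimes m)=0$.

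The only delicate point is bookkeeping: tracking which tensor factor carries the $A(0)$-action at each step, and applying the sign $\beta\Sigma=-\Sigma\beta$ consistently. No essentially new argument is needed beyond what has already been established.
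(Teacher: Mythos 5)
Your proposal is correct and is essentially the unpacking that the paper's one-word proof ("Clear.") leaves to the reader: specialize the defining $A(n)$-module decomposition of $R_\mu(M)$ to $n=0$, substitute the $A(0)$-module description of $R_S(M)$ from the preceding lemma, identify the combined coefficient module $H^{*,*}\{1,\dots,v^{\ell-2}\}\otimes_{H^{*,*}}B(0)$ with $\Sigma H^{*,*}(B\mu_\ell)_{\loc}$ via Theorem~\ref{thm:RSigmaH-SigmaHBSigmalloc} and Euclidean division by $\ell-1$ on the $v$-exponent, and read off the Bockstein with the sign $\beta\Sigma=-\Sigma\beta$. One small imprecision: in your second step you speak of identifying the middle factor with $\Sigma H^{*,*}(B\mu_\ell)_{\loc}$ "as a left $\sA$-module," but all that is needed (and all that the tensor-product bookkeeping delivers directly) is an identification of $H^{*,*}$-modules compatible with the $A(0)$-action; the full $\sA$-module statement is Corollary~\ref{cor:RmuH-SigmaHBmulloc} and is a consequence, not an ingredient.
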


\begin{proof}
Clear.
\end{proof}

The following formulas generalize the classical one of
Singer~\cite{Sin81}*{(3.2)} for $\ell=2$, and of Lun{\o}e--Nielsen and
the second author~\cite{LNR12}*{Def.~3.1} for $\ell$ odd.  The latter two
formulas were surely known to the authors of~\cite{AGM85}.

\begin{proposition}
For $\ell=2$ and $r\ge0$ the action of $Sq^{2r}$ on $R_\mu(M) \cong
R_S(M)$ is given by
\begin{multline*}
Sq^{2r}(\Sigma uv^k \otimes m) = \sum_{j=0}^{[r/2]}
    \binom{k-j}{r-2j}
	\Sigma u v^{r+k-j} \otimes Sq^{2j}(m) \\
+ \sum_{j=0}^{[(r-1)/2]}
    \binom{k-j}{r-2j-1}
	\tau \cdot \Sigma v^{r+k-j} \otimes Sq^{2j+1}(m)
\end{multline*}
and
\begin{multline*}
Sq^{2r}(\Sigma v^k \otimes m) = \sum_{j=0}^{[r/2]}
    \binom{k-j}{r-2j}
	\Sigma v^{r+k-j} \otimes Sq^{2j}(m) \\
+ \sum_{j=0}^{[(r-1)/2]}
    \binom{k-j-1}{r-2j-1}
	\Sigma (u + \rho) v^{r+k-j-1} \otimes Sq^{2j+1}(m) \,.
\end{multline*}
Here $\Sigma (u + \rho) v^{r+k-j-1} = \Sigma u v^{r+k-j-1}
+ \rho \cdot \Sigma v^{r+k-j-1}$.

For $\ell$ odd and $r\ge0$ the action of $P^r$ on $R_\mu(M)$ is given by
$$
P^r(\Sigma u v^{k-1} \otimes m) = \sum_{j=0}^{[r/\ell]}
    \binom{k - (\ell-1)j-1}{r-\ell j}
	\Sigma u v^{k+(\ell-1)(r-j)-1} \otimes P^j(m)
$$
and
\begin{align*}
P^r(\Sigma v^k \otimes m) &= \sum_{j=0}^{[r/\ell]}
    \binom{k-(\ell-1)j}{r-\ell j}
	\Sigma v^{k+(\ell-1)(r-j)} \otimes P^j(m) \\
    &\qquad + \sum_{j=0}^{[(r-1)/\ell]}
    \binom{k-(\ell-1)j-1}{r-\ell j-1}
	\Sigma u v^{k+(\ell-1)(r-j)-1} \otimes \beta P^j(m) \,.
\end{align*}
\end{proposition}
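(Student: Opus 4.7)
The plan is to reduce to finite $A(n)$-modules for large $n$ and then invoke Proposition~\ref{prop:Sqa-Pa-action-on-RSigmaM}, converting from the $R_S(M)$-coordinates $(\beta^e P^k \otimes m)$ to the $B\mu_\ell$-coordinates $(\Sigma u^i v^k \otimes m)$ via Lemma~\ref{lem:AGM85Lem5.1}.

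For $\ell=2$ one has $R_\mu(M) \cong R_S(M)$ and the statement is a direct transcription of Proposition~\ref{prop:Sqa-Pa-action-on-RSigmaM} under the identifications $Sq^{2k} \leftrightarrow \Sigma u v^{k-1}$ and $Sq^{2k+1} \leftrightarrow -\Sigma v^k$, together with the mod~$2$ congruences $\binom{2(k-j)+1}{2(r-2j)} \equiv \binom{k-j}{r-2j}$ and $\binom{2(k-j)}{2(r-2j-1)} \equiv \binom{k-j}{r-2j-1}$ furnished by Lucas's theorem.

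For $\ell$ odd, choose $n$ with $r < \ell^n$, so that $P^r \in A(n)$ while all $P^{j}, \beta P^{j}$ with $j \le [r/\ell]$ lie in $A(n-1)$. In the $A(n)$-module decomposition
$$
R_\mu(M) \cong \bigoplus_{s=0}^{\ell-2} H^{*,*}\{v^{\ell^n s}\} \otimes_{H^{*,*}} R_S(M),
$$
each $v^{\ell^n s}$ is $A(n)$-primitive (by Lucas, $\binom{\ell^n s}{r''} \equiv 0 \pmod{\ell}$ for $0 < r'' < \ell^n$), so the Cartan formula implies that $P^r$ acts only on the $R_S(M)$-factor. Given $k \in \bZ$, the congruence $\ell^n \equiv 1 \pmod{\ell-1}$ determines a unique $s \in \{0,\dots,\ell-2\}$ and $\kappa \in \bZ$ with $k = \ell^n s + \kappa(\ell-1)$. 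Using $c = -uv^{\ell-2}$ and $d = -v^{\ell-1}$, Lemma~\ref{lem:AGM85Lem5.1} specializes to $P^\kappa \leftrightarrow (-1)^\kappa \Sigma u v^{\kappa(\ell-1)-1}$ and $\beta P^\kappa \leftrightarrow (-1)^{\kappa+1} \Sigma v^{\kappa(\ell-1)}$, whence $\Sigma u v^{k-1} \otimes m$ and $\Sigma v^k \otimes m$ become $(-1)^\kappa v^{\ell^n s} \otimes P^\kappa \otimes m$ and $(-1)^{\kappa+1} v^{\ell^n s} \otimes \beta P^\kappa \otimes m$ under the decomposition. Apply Proposition~\ref{prop:Sqa-Pa-action-on-RSigmaM} to compute $P^r(P^\kappa \otimes m)$ and $P^r(\beta P^\kappa \otimes m)$ in $R_S(M)$, multiply by $v^{\ell^n s}$, and translate each summand back. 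The sign exponents $\kappa + (r+j) + (r+\kappa-j)$ and $\kappa + (r+j-1) + (r+\kappa-j)$ are both even, so all signs collapse, and the binomial coefficients are identified via
$$
\binom{(\ell-1)(\kappa - j)}{r - \ell j} \equiv \binom{k - (\ell-1) j}{r - \ell j} \pmod{\ell},
$$
which holds by Lucas because $k - (\ell-1) j = \ell^n s + (\ell-1)(\kappa - j)$ and $r - \ell j < \ell^n$ (and likewise with the upper index decreased by~$1$).

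The main obstacle is the sign bookkeeping across the Frobenius translation; once that is managed, the Cartan formula and Proposition~\ref{prop:Sqa-Pa-action-on-RSigmaM} combined with Lucas's theorem reduce everything to a mechanical computation. No new conceptual input is required beyond what is developed in Sections~\ref{sec:small-Singer}--\ref{sec:large-Singer}.
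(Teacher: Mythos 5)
Your proposal is correct and follows essentially the same route as the paper's proof: the $\ell=2$ case is a transcription of Proposition~\ref{prop:Sqa-Pa-action-on-RSigmaM} through the identification of Theorem~\ref{thm:RSigmaH-SigmaHBSigmalloc} (with the summation split by parity of~$j$), and the $\ell$ odd case reduces to the $(c,d)$- or $\beta^e P^\kappa$-coordinates on $R_S(M)$ and extends from multiples of $\ell-1$ to all $k \in \bZ$ using $A(n)$-primitivity of $v^{\ell^n}$, $\gcd(\ell^n,\ell-1)=1$, and $\ell^n$-periodicity of the mod~$\ell$ binomial coefficients. You simply spell out the Lucas congruences and the sign exponents that the paper leaves implicit in the phrases ``can be collected as shown'' and ``all binomial coefficients in sight are $\ell^n$-periodic.''
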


\begin{proof}
For $\ell=2$, the formulas are obtained from
Proposition~\ref{prop:Sqa-Pa-action-on-RSigmaM} by replacing
$Sq^{2k}$ and $Sq^{2k+1}$ by $\Sigma cd^{k-1} = \Sigma
uv^{k-1}$ and $-\Sigma d^k = \Sigma v^k$, respectively, as in
Theorem~\ref{thm:RSigmaH-SigmaHBSigmalloc}.  The summations over $0
\le j \le [a/2]$ split into two cases, according to the parity of~$j$,
and the resulting terms can be collected as shown.

For $\ell$ odd, we first rewrite $R_S(M)$ as
$$
\Sigma H^{*,*}(BS_\ell)_{\loc} \otimes_{H^{*,*}} M
	= H^{*,*} \{ \Sigma c^i d^k \mid i \in \{0,1\}, k \in \bZ \}
	\otimes_{H^{*,*}} M \,,
$$
replacing $P^k$ and $\beta P^k$ by $\Sigma cd^{k-1}$ and $-\Sigma d^k$,
respectively.  For $r\ge0$ the action of $P^r$ on $R_S(M)$ is then given by
$$
P^r(\Sigma (cd^{-1}) (-d)^b \otimes m) = \sum_{j=0}^{[r/\ell]}
    \binom{(\ell-1)(b-j)-1}{r-\ell j} \Sigma (cd^{-1}) (-d)^{r+b-j} \otimes P^j(m) \\
$$
and
\begin{align*}
P^r(\Sigma (-d)^b \otimes m) &= \sum_{j=0}^{[r/\ell]}
    \binom{(\ell-1)(b-j)}{r-\ell j} \Sigma (-d)^{r+b-j} \otimes P^j(m) \\
    &\qquad + \sum_{j=0}^{[(r-1)/\ell]}
    \binom{(\ell-1)(b-j)-1}{r-\ell j-1} \Sigma (cd^{-1}) (-d)^{r+b-j} \otimes \beta P^j(m)
\end{align*}
for $b \in \bZ$.
Substituting $cd^{-1} = uv^{-1}$, $-d = v^{\ell-1}$ and $k = (\ell-1)b$
we obtain the claimed formulas, in the cases where $k$ is a multiple of
$\ell-1$.  The general cases follow, since for $n$ so large that $0 < r
< \ell^n$ the action of $P^r$ on $R_\mu(M)$ commutes with multiplication
by $v^{\ell^n}$, and $\ell^n$ is relatively prime to $\ell-1$.
All mod~$\ell$ binomial coefficients in sight are $\ell^n$-periodic as
functions of~$k \in \bZ$.
\end{proof}

\section{The evaluations are $\Ext$-equivalences}
\label{sec:evaluations}

We can now adapt \cite{AGM85}*{Lem.~2.2} to the motivic setting.  We
sidestep their use of $\bF_p \otimes_{\sA} (-)$ and $\Tor$-equivalences,
since $H^{*,*}$ is not naturally a right $\sA$-module, and pass directly
to $\Hom_{\sA}(-, H^{*,*})$ and $\Ext$-equivalences.

\begin{lemma} \label{lem:R+-Afree-flat}
Let $M$ be a free left $\sA$-module.
The Singer constructions $R_S(M)$ and $R_\mu(M)$ are free as left
$A(n)$-modules, for each~$n$, and flat as left $\sA$-modules.
\end{lemma}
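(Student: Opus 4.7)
The plan is to treat the three claims separately, handling the $A(n)$-freeness first and then deducing $\sA$-flatness by a filtered colimit argument.

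For the $A(n)$-freeness of $R_S(M)$, I would use the built-in identification of the Singer construction, namely that the canonical map $B(n) \otimes_{A(n-1)} M \to R_S(M)$ is an isomorphism of left $A(n)$-modules by construction. Since $M$ is free over $\sA$ and $\sA$ is free as a left $A(n-1)$-module (Lemma~\ref{lem:left-An-basis-A}, applied at index $n-1$), restriction of scalars shows that $M$ is free as a left $A(n-1)$-module, say $M \cong \bigoplus_J \Sigma^{(p_j,q_j)} A(n-1)$. Therefore
$$
R_S(M) \;\cong\; B(n) \otimes_{A(n-1)} M \;\cong\; \bigoplus_J \Sigma^{(p_j,q_j)} B(n)
$$
as left $A(n)$-modules. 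By Proposition~\ref{prop:left-An-mod-basis-Cn-Bn}(b), $B(n)$ is free as a left $A(n)$-module, so the displayed direct sum is $A(n)$-free. The case of $R_\mu(M)$ is then immediate: by the definition of $R_\mu$, we have an $A(n)$-module isomorphism $R_\mu(M) \cong H^{*,*}\{1, v^{\ell^n}, \dots, v^{\ell^n(\ell-2)}\} \otimes_{H^{*,*}} R_S(M)$, so $R_\mu(M)$ is a finite direct sum of shifts of $R_S(M)$, hence also $A(n)$-free.

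For the flatness claim over $\sA$, my key input is the exhaustion $\sA = \bigcup_n A(n) = \colim_n A(n)$ provided by Lemma~\ref{lem:left-H-basis-An-cohom}. The main observation is that for any right $\sA$-module $N$ and left $\sA$-module $P$, since tensor products commute with colimits of rings in the appropriate way, one has
$$
N \otimes_{\sA} P \;\cong\; \colim_n \bigl(N \otimes_{A(n)} P\bigr),
$$
with transition maps induced by the inclusions $A(n) \subset A(n+1)$ (i.e., by the stronger balancing condition). Given a short exact sequence $0 \to N' \to N \to N'' \to 0$ of right $\sA$-modules, the first two parts provide $A(n)$-flatness of $P = R_S(M)$ or $P = R_\mu(M)$, so each sequence $0 \to N' \otimes_{A(n)} P \to N \otimes_{A(n)} P \to N'' \otimes_{A(n)} P \to 0$ is short exact. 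Filtered colimits in the category of abelian groups are exact, so passing to $\colim_n$ yields short exactness of $0 \to N' \otimes_{\sA} P \to N \otimes_{\sA} P \to N'' \otimes_{\sA} P \to 0$, establishing $\sA$-flatness.

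The one step that I expect to require a little care is the identification $N \otimes_{\sA} P \cong \colim_n (N \otimes_{A(n)} P)$; the rest is essentially formal unwinding of the explicit structure already assembled in the previous sections. This identification is a standard commutation of colimits (it follows from writing both sides as coequalizers and interchanging the two colimits), but it is the only place where the argument uses the global structure of $\sA$ rather than the module-level decompositions provided by Proposition~\ref{prop:left-An-mod-basis-Cn-Bn} and Lemma~\ref{lem:left-An-basis-A}.
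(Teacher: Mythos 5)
Your proof is correct and follows essentially the same route as the paper: the identification $R_S(M) \cong B(n) \otimes_{A(n-1)} M$ together with Lemma~\ref{lem:left-An-basis-A} and Proposition~\ref{prop:left-An-mod-basis-Cn-Bn}(b) gives $A(n)$-freeness, and $\sA$-flatness follows by passing to the filtered colimit over $n$. The only cosmetic difference is that you argue exactness of $- \otimes_{\sA} R_S(M)$ directly, whereas the paper phrases the same filtered-colimit argument as the vanishing of $\Tor^{\sA}_s(K, R_S(M)) \cong \colim_n \Tor^{A(n)}_s(K, R_S(M))$ for $s \ge 1$.
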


\begin{proof}
Since $M$ is left $\sA$-free, it is left $A(n-1)$-free by
Lemma~\ref{lem:left-An-basis-A}.  Hence the left $A(n)$-module
$R_S(M) = B(n) \otimes_{A(n-1)} M$ is a direct sum of (suitably
suspended) copies of $B(n)$, each of which is left $A(n)$-free by
Proposition~\ref{prop:left-An-mod-basis-Cn-Bn}(b).  Therefore $R_S(M)$
is left $A(n)$-free for each $n$, so that
$$
\Tor^{\sA}_s(K, R_S(M))
	\cong \colim_n \Tor^{A(n)}_s(K, R_S(M)) = 0
$$
for each right $\sA$-module $K$ and $s\ge1$.  Equivalently, $R_S(M)$
is left $\sA$-flat.

Moreover, $R_\mu(M)$ is a direct sum, as a left $A(n)$-module, of
copies of $R_S(M)$.  Hence it is also left $A(n)$-free for each $n$,
and therefore left $\sA$-flat, by the same argument as before.
\end{proof}

\begin{proposition} \label{prop:RSigma-RC-Afree}
Let $M$ be a free left $\sA$-module.
The evaluation homomorphisms induce isomorphisms
\begin{align*}
\Hom(\epsilon, \id) \: \Hom_{\sA}(M, H^{*,*})
	&\overset{\cong}\longto \Hom_{\sA}(R_S(M), H^{*,*}) \\
\Hom(\epsilon, \id) \: \Hom_{\sA}(M, H^{*,*})
	&\overset{\cong}\longto \Hom_{\sA}(R_\mu(M), H^{*,*}) \,.
\end{align*}
\end{proposition}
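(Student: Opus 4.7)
We treat the $R_S$ statement first; the $R_\mu$ assertion will be deduced from it. Because $R_S$ preserves colimits and suspensions (clear from its colimit definition), while $\Hom_{\sA}(-, H^{*,*})$ converts direct sums to products, the naturality of $\epsilon$ and the freeness of $M$ reduce the proposition to the single case $M = \sA$, where we must show
\[
  \epsilon^*\colon H^{*,*} \;=\; \Hom_{\sA}(\sA, H^{*,*}) \;\longto\; \Hom_{\sA}(R_S(\sA), H^{*,*})
\]
is an isomorphism. Writing $R_S(\sA) = \colim_n X_n$ with $X_n = B(n) \otimes_{A(n-1)} \sA$, and using that the $\sA$-structure is assembled from the $A(n)$-structures via $\sA = \bigcup_n A(n)$, we obtain $\Hom_{\sA}(R_S(\sA), H^{*,*}) = \lim_n \Hom_{A(n)}(X_n, H^{*,*})$.

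Combining Lemma~\ref{lem:left-An-basis-A} ($\sA$ is left $A(n-1)$-free on the basis indexed by $X(n-1)$) with Proposition~\ref{prop:left-An-mod-basis-Cn-Bn}(b) ($B(n)$ is left $A(n)$-free on $\{P^r : r \in \ell^n \bZ\}$), we identify $X_n$ as an $A(n)$-free module whose basis is indexed by $X(n-1) \times \ell^n \bZ$ with bidegree shifts, and consequently
\[
  \Hom_{A(n)}(X_n, H^{*,*}) \;\cong\; \prod_{(E,R,r)\in X(n-1)\times\ell^n\bZ} \Sigma^{-\|\rho(E,R)\|-\|P^r\|} H^{*,*}.
\]
The key claim is that the inverse limit of this system collapses, in every fixed cohomological bidegree, to the single coordinate $(E,R,r) = (0,0,0)$. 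Two facts combine to force this: the support bound $0 \le p \le \min\{q+d, 2q\}$ on $H^{*,*}$ guarantees that in each bidegree only finitely many coordinates contribute non-trivially; and the indexing sets shrink asymptotically, with $\bigcap_n X(n-1) = \{1\}$ and $\bigcap_n \ell^n \bZ = \{0\}$, so each non-identity coordinate is truncated away by the transition maps $\Hom_{A(n+1)}(X_{n+1}, H^{*,*}) \hookrightarrow \Hom_{A(n)}(X_n, H^{*,*})$. Tracing $\epsilon$ on the generator $1 \otimes 1$ identifies the surviving $H^{*,*}$ with the image of $\epsilon^*$.

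For the $R_\mu$ case, the $A(n)$-module decomposition $R_\mu(M) \cong H^{*,*}\{1, v^{\ell^n}, \ldots, v^{\ell^n(\ell-2)}\} \otimes_{H^{*,*}} R_S(M)$ of Definition~\ref{def:Bn-An-An-1-bicomod}, with $A(n)$ acting trivially on the $v$-powers, exhibits $R_\mu(M)$ as a finite direct sum of shifted copies of $R_S(M)$; the large evaluation factors as projection onto the $i=0$ summand followed by the small evaluation, and the same collapsing argument (now additionally eliminating the non-trivial $v$-powers in the limit) reduces the assertion to the $R_S$ statement. \emph{The main obstacle} is the collapsing step: one must verify that the transition maps between $A(n+1)$- and $A(n)$-equivariant Homs annihilate every non-identity coordinate in the inverse limit. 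This is the $\Hom$-theoretic counterpart, adapted to the bigraded Hopf-algebroid context, of the change-of-rings argument Adams--Gunawardena--Miller~\cite{AGM85}*{\S2} used for the classical $\Tor$-equivalence.
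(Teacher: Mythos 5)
Your proof is correct and takes essentially the same route as the paper's: reduce to $M = \sA$, rewrite $\Hom_\sA(R_S(\sA), H^{*,*})$ as $\lim_n \Hom_{A(n)}(-, H^{*,*})$, expand each stage as a product of shifted copies of $H^{*,*}$ via the free $A(n)$-module bases, and collapse the limit in each bidegree using the vanishing region of $H^{*,*}$ to isolate the $\epsilon$-component. The only difference is cosmetic: the paper works with the cofinal model $R_S(\sA) \cong \colim_n B(n)$, so $\Hom_{A(n)}(B(n), H^{*,*}) \cong \prod_{\ell^n \mid r} H^{*,*}\{\xi_1^r\}$ is singly indexed, whereas your choice $B(n) \otimes_{A(n-1)} \sA$ carries the extra monomial index from Lemma~\ref{lem:left-An-basis-A} — a harmless redundancy, since that index also shrinks to the identity in the limit.
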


\begin{proof}
It suffices to consider the case $M = \sA = \colim_n A(n-1)$.  Then
\begin{align*}
\Hom_{A(n)}(R_S(A(n-1)), H^{*,*})
	&= \Hom_{A(n)}(B(n) \otimes_{A(n-1)} A(n-1), H^{*,*}) \\
	&= \Hom_{A(n)}(B(n), H^{*,*}) \\
	&\cong \Hom_{H^{*,*}}(H^{*,*}\{ P^r \mid
		\text{$r \in \bZ$ with $\ell^n \mid r$}\}, H^{*,*}) \\
	&\cong \prod_{\ell^n \mid r} H^{*,*}\{\xi_1^r\}
\end{align*}
by Proposition~\ref{prop:left-An-mod-basis-Cn-Bn}(b), where we identify
$\xi_1^r$ with the dual of the left $A(n)$-module generator~$P^r \in
B(n)$.  The bidegrees of the classes $\xi_1^r$ with $\ell^n \mid r$
are integer multiples of $((2\ell-2)\ell^n, (\ell-1)\ell^n)$.  In any
fixed bidegree, only the factor generated by $\xi_1^0$ can make a nonzero
contribution to this product when $n$ is sufficiently large.  Hence
\begin{align*}
\Hom_{\sA}(R_S(\sA), H^{*,*})
	&\cong \lim_n \Hom_{A(n)}(R_S(A(n-1)), H^{*,*}) \\
	&\cong \lim_n \prod_{\ell^n \mid r} H^{*,*}\{\xi_1^r\}
		\cong H^{*,*} \{\xi_1^0\} \,.
\end{align*}
Since $\epsilon$ maps $P^0 \otimes 1$ to $P^0(1) = 1$, it follows that
$$
H^{*,*} \cong \Hom_{\sA}(\sA, H^{*,*})
	\overset{\Hom(\epsilon,1)}\longto
	\Hom_{\sA}(R_S(\sA), H^{*,*}) \cong H^{*,*} \{\xi_1^0\}
$$
is an isomorphism.

Similarly,
\begin{multline*}
\Hom_{A(n)}(R_\mu(A(n-1)), H^{*,*}) \\
\cong H^{*,*}\{1, v^{\ell^n}, \dots, v^{\ell^n(\ell-2)}\}
	\otimes_{H^{*,*}}
	\prod_{\ell^n \mid r} H^{*,*}\{\xi_1^r\} \,.
\end{multline*}
The bidegrees of the classes $v^{\ell^n j} \otimes \xi_1^r$ with $0 \le
j \le \ell-2$ and $\ell^n \mid r$ are integer multiples of $(2 \ell^n,
\ell^n)$.  Again, in any fixed bidegree, only the factor generated by
$v^0 \otimes \xi_1^0$ can make a nonzero contribution for $n$ sufficiently
large.  Hence
\begin{align*}
\Hom_{\sA}(R_\mu(\sA), H^{*,*})
	&\cong \lim_n \Hom_{A(n)}(R_\mu(A(n-1)), H^{*,*}) \\
	&\cong H^{*,*} \{v^0 \otimes \xi_1^0\} \,.
\end{align*}
Since $\epsilon(\pi \cotensor 1)$ maps $v^0 \otimes P^0$ to
$P^0(1) = 1$, it follows that
$$
H^{*,*} \cong \Hom_{\sA}(\sA, H^{*,*})
        \overset{\Hom(\epsilon,1)}\longto
        \Hom_{\sA}(R_\mu(\sA), H^{*,*}) \cong H^{*,*} \{v^0 \otimes \xi_1^0\}
$$
is an isomorphism.
\end{proof}

The $\Ext$-groups for modules over~$\sA$ or $A(n)$ are trigraded.  In the
case of an $\sA$-module~$M$ we write $\Ext_{\sA}^{s,t,u}(M, H^{*,*})$
for the group in tridegree~$(s,t,u)$, where $s$ is the cohomological
degree and $(t, u)$ is the internal bidegree.

\begin{definition}
An $\sA$-module homomorphism $\theta \: L \to M$ will be said to be an
\emph{$\Ext$-equivalence} if the induced homomorphism
$$
\theta^* \: \Ext_{\sA}^{*,*,*}(M, H^{*,*})
	\longto \Ext_{\sA}^{*,*,*}(L, H^{*,*})
$$
is an isomorphism.
\end{definition}

We can now generalize part of~\cite{AGM85}*{Prop.~1.2, Thm.~1.3}.

\begin{theorem} \label{thm:epsilon-Ext-eqs}
Let $M$ be any left $\sA$-module.  The (small and large) evaluation
homomorphisms
$$
\epsilon \: R_S(M) \longto M
\qquad\text{and}\qquad
\epsilon \: R_\mu(M) \longto M
$$
are $\Ext$-equivalences.
\end{theorem}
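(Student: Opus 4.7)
The strategy is to reduce to the case of $M$ free via a resolution and apply Proposition~\ref{prop:RSigma-RC-Afree}. Since $H^{*,*}$ is not a right $\sA$-module, we cannot use the $\Tor$-formulation of~\cite{AGM85}, so we work directly with $\Ext$ via an injective resolution of the target.

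Fix a free resolution $P_\bullet \to M$ of $M$ as a left $\sA$-module and an injective resolution $H^{*,*} \to I^\bullet$ in left $\sA$-modules. Since $R_S$ is an exact endofunctor, $R_S(P_\bullet) \to R_S(M)$ is exact. Form the double complex $\Hom_\sA(R_S(P_p), I^q)$ and consider its two spectral sequences. One collapses using exactness of $\Hom_\sA(-, I^q)$ (as $I^q$ is $\sA$-injective) together with the exactness of $R_S(P_\bullet) \to R_S(M)$, and converges to $\Ext_\sA^*(R_S(M), H^{*,*})$. The other has $E_2^{p,q} = \Ext_\sA^q(R_S(P_p), H^{*,*})$; its bottom row at $q=0$ is identified via Proposition~\ref{prop:RSigma-RC-Afree} with $\Hom_\sA(P_p, H^{*,*})$, whose $p$-cohomology is $\Ext_\sA^*(M, H^{*,*})$. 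The evaluation $\epsilon$ realizes the edge homomorphism comparing them.

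The main obstacle is to show this second spectral sequence collapses to its bottom row, i.e., that $\Ext_\sA^q(R_S(F), H^{*,*}) = 0$ for $q > 0$ whenever $F$ is free as an $\sA$-module. Since $R_S$ preserves direct sums and colimits, it suffices to handle $F = \sA = \colim_n A(n-1)$, in which case $R_S(\sA) = \colim_n B(n)$ with each $B(n)$ free as a left $A(n)$-module by Proposition~\ref{prop:left-An-mod-basis-Cn-Bn}(b). Hence $\Ext_{A(n)}^q(B(n), H^{*,*}) = 0$ for $q > 0$, and the technical heart is transferring this $A(n)$-vanishing to $\sA$-vanishing. For this I would exploit the bidegree-wise finiteness of Lemma~\ref{lem:An-bidegs}, so that in each tridegree the relevant inverse system over $n$ is eventually constant, making the associated $\limone$-term vanish and reducing $\Ext_\sA^q(R_S(\sA), H^{*,*})$ to $\lim_n \Ext_{A(n)}^q(B(n), H^{*,*}) = 0$ via a Milnor-type short exact sequence. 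The argument for $R_\mu$ is parallel, since as an $A(n)$-module $R_\mu(F)$ is a finite $H^{*,*}$-direct sum of shifts of $R_S(F)$, so the same vanishing and collapse apply.
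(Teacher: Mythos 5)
Your proposal is correct and reaches the theorem by a somewhat different organizing scheme than the paper. Both arguments rest on the same ingredients: the free resolution $F_\bullet \to M$, the fact (Lemma~\ref{lem:R+-Afree-flat}) that $R_S(F_\bullet)$ is an $\sA$-flat resolution of $R_S(M)$ that is $A(n)$-free for each~$n$, the isomorphism of Proposition~\ref{prop:RSigma-RC-Afree}, and a Margolis-type $\lim$-$\limone$ sequence over $\sA = \colim_n A(n)$. The difference is in how the flat resolution $R_S(F_\bullet)$ is traded for a projective one. You isolate the clean auxiliary fact that $\Ext^q_\sA(R_S(F), H^{*,*}) = 0$ for $q>0$ and $F$ free (so that $R_S$ sends free $\sA$-modules to $\Hom_\sA(-,H^{*,*})$-acyclic ones) and feed this into a first-quadrant double-complex spectral sequence. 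The paper instead chooses a chain map $\zeta\colon E_\bullet \to R_S(F_\bullet)$ from a free $\sA$-resolution of $R_S(M)$, observes that $\zeta$ is an $A(n)$-chain homotopy equivalence for each~$n$ since both sides are projective $A(n)$-resolutions, and passes to $\lim_n$ and $\limone_n$. The essential payoff of the paper's route is that it only needs $\zeta^*$ to induce isomorphisms of the $\lim$- and $\limone$-terms separately; it never has to decide whether the $\limone$-term vanishes.

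Your route, by contrast, does need $\limone_n \Hom_{A(n)}(R_S(\sA), H^{*,*}) = 0$ in order to get the $q=1$ vanishing out of the Margolis sequence, and this is where the sketch is thinner than it needs to be. Note a small imprecision first: the Margolis sequence for the fixed $\sA$-module $R_S(\sA)$ involves $\Ext^q_{A(n)}(R_S(\sA), H^{*,*})$, not $\Ext^q_{A(n)}(B(n), H^{*,*})$; both vanish for $q>0$, since $R_S(\sA)|_{A(n)} \cong B(n)\otimes_{A(n-1)}\sA$ is a direct sum of shifted copies of $B(n)$ and hence $A(n)$-free, but the two inverse systems are not the same and only the former enters the short exact sequence. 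To kill the $\limone$ one then argues, using the degree bound of Lemma~\ref{lem:An-bidegs} applied to the $A(n)$-basis $\{P^r\otimes\rho(E',R')\}$ of $R_S(\sA)$ (with $\ell^n\mid r$ and $(E',R')$ as in Definition~\ref{def:Xn}), that the transition maps $\Hom_{A(n+1)}(R_S(\sA),H^{*,*}) \hookrightarrow \Hom_{A(n)}(R_S(\sA),H^{*,*})$ eventually become isomorphisms in each bidegree, making the tower Mittag-Leffler even when $H^{*,*}$ is not finite in each bidegree. This works, but it is a genuine analysis, not a one-line citation. Finally, as you note only briefly, one should also confirm that the resulting $E_2$-isomorphism of the double complex really is $\Hom(\epsilon,\id)$; the paper handles this by exhibiting the explicit factorization $\epsilon^* = \zeta^*\epsilon'$. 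Once those points are filled in your argument is complete, and it has the conceptual advantage of making explicit that $R_S(F_\bullet)$ is a $\Hom_\sA(-,H^{*,*})$-acyclic resolution, which the paper leaves implicit.
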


\begin{proof}
Let
$$
\dots \longto F_1 \longto F_0 \longto M \to 0
$$
be a free $\sA$-module resolution of~$M$.  Then
$$
\dots \longto R_S(F_1) \longto R_S(F_0) \longto R_S(M) \to 0
$$
is a flat $\sA$-module resolution, and a free $A(n)$-module
resolution for each~$n$, by Lemma~\ref{lem:R+-Afree-flat}.
By Proposition~\ref{prop:RSigma-RC-Afree} the evaluation homomorphism
induces an isomorphism
$$
\Hom(\epsilon,1) \: \Hom_{\sA}(F_s, H^{*,*})
	\overset{\cong}\longto \Hom_{\sA}(R_S(F_s), H^{*,*})
$$
for each $s\ge0$.
Passing to cohomology, it also induces isomorphisms
$$
\epsilon' \: \Ext^s_{\sA}(M, H^{*,*})
	\overset{\cong}\longto H^s(\Hom_{\sA}(R_S(F_*), H^{*,*}))
$$
for all $s\ge0$.  Let
$$
\dots \longto E_1 \longto E_0 \longto R_S(M) \to 0
$$
be a free $\sA$-module resolution of~$R_S(M)$, and choose an $\sA$-module
chain map $\zeta \: E_* \to R_S(F_*)$ over $R_S(M)$.  There is then an
induced map~$\zeta^*$ of $\lim$-$\limone$ short exact sequences, from
\begin{multline*}
0 \to \limone_n H^{s-1}(\Hom_{A(n)}(R_S(F_*), H^{*,*}))
	\longto H^s(\Hom_{\sA}(R_S(F_*), H^{*,*})) \\
	\longto \lim_n H^s(\Hom_{A(n)}(R_S(F_*), H^{*,*})) \to 0
\end{multline*}
to
\begin{multline*}
0 \to \limone_n \Ext^{s-1}_{A(n)}(R_S(M), H^{*,*})
	\longto \Ext^s_{\sA}(R_S(M), H^{*,*}) \\
	\longto \lim_n \Ext^s_{A(n)}(R_S(M), H^{*,*}) \to 0 \,,
\end{multline*}
cf.~\cite{Mar83}*{Prop.~11.9, Prop.~13.4}.  When viewed as an
$A(n)$-module chain map, $\zeta$ becomes a chain homotopy equivalence,
hence induces isomorphisms
$$
\zeta^* \: H^s(\Hom_{A(n)}(R_S(F_*), H^{*,*}))
	\overset{\cong}\longto \Ext^s_{A(n)}(R_S(M), H^{*,*})
$$
for all~$n$ and~$s$.  Applying $\lim_n$ and $\limone_n$, we deduce
that
$$
\zeta^* \: H^s(\Hom_{\sA}(R_S(F_*), H^{*,*}))
	\overset{\cong}\longto \Ext^s_{\sA}(R_S(M), H^{*,*})
$$
is an isomorphism.  Hence the composite $\epsilon^* = \zeta^* \epsilon'$
is also an isomorphism, as claimed.

The proof for $R_\mu$ in place of $R_S$ is identical.
\end{proof}

\begin{corollary} \label{cor:res-Ext-eqs}
The residue homomorphisms
$$
\res \: \Sigma H^{*,*}(BS_\ell)_{\loc} \longto H^{*,*}
\qquad\text{and}\qquad
\res \: \Sigma H^{*,*}(B\mu_\ell)_{\loc} \longto H^{*,*}
$$
are $\Ext$-equivalences.
\end{corollary}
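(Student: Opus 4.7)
The plan is to obtain this as an immediate specialization of Theorem~\ref{thm:epsilon-Ext-eqs} by taking $M = H^{*,*}$ and using the explicit identifications of $R_S(H^{*,*})$ and $R_\mu(H^{*,*})$ already established.

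Concretely, I would proceed as follows. First, apply Theorem~\ref{thm:epsilon-Ext-eqs} to the particular left $\sA$-module $M = H^{*,*}$. This produces $\Ext$-equivalences
$$
\epsilon \: R_S(H^{*,*}) \longto H^{*,*}
\qquad\text{and}\qquad
\epsilon \: R_\mu(H^{*,*}) \longto H^{*,*} \,.
$$
Next, invoke the $\sA$-module isomorphism $R_S(H^{*,*}) \cong \Sigma H^{*,*}(BS_\ell)_{\loc}$ from Theorem~\ref{thm:RSigmaH-SigmaHBSigmalloc} and the $\sA$-module isomorphism $R_\mu(H^{*,*}) \cong \Sigma H^{*,*}(B\mu_\ell)_{\loc}$ from Corollary~\ref{cor:RmuH-SigmaHBmulloc}. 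Because $\Ext$ is a functorial invariant of $\sA$-modules, composing with these isomorphisms turns each small and large evaluation $\epsilon$ into an $\Ext$-equivalence out of $\Sigma H^{*,*}(BS_\ell)_{\loc}$ and $\Sigma H^{*,*}(B\mu_\ell)_{\loc}$, respectively.

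Finally, identify the resulting composites with the residue homomorphisms of Definition~\ref{def:residues}. In the symmetric group case this is precisely the second statement of Theorem~\ref{thm:RSigmaH-SigmaHBSigmalloc}: the composite sends $\Sigma cd^{-1}$ to $1$ and all other basis elements $\Sigma c^i d^k$ with $(i,k) \ne (1,-1)$ to~$0$, which is the defining formula for $\res$. In the $\mu_\ell$ case, Corollary~\ref{cor:RmuH-SigmaHBmulloc} asserts directly that the composite equals $\res$. Hence both residue homomorphisms are $\Ext$-equivalences.

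There is no substantive obstacle here; the work has already been done in Theorem~\ref{thm:epsilon-Ext-eqs}, Theorem~\ref{thm:RSigmaH-SigmaHBSigmalloc} and Corollary~\ref{cor:RmuH-SigmaHBmulloc}. The only point requiring care is the bookkeeping: verifying that the $\sA$-module isomorphism identifying $R_S(H^{*,*})$ with $\Sigma H^{*,*}(BS_\ell)_{\loc}$ intertwines $\epsilon$ with $\res$ on the nose, and similarly in the $\mu_\ell$ case, so that the $\Ext$-equivalence property transfers directly to~$\res$ rather than to some other map.
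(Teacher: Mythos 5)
Your proof is correct and matches the paper's argument exactly: both take $M = H^{*,*}$ in Theorem~\ref{thm:epsilon-Ext-eqs} and then transport the $\Ext$-equivalence across the identifications of Theorem~\ref{thm:RSigmaH-SigmaHBSigmalloc} and Corollary~\ref{cor:RmuH-SigmaHBmulloc}, which by construction intertwine $\epsilon$ with $\res$.
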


\begin{proof}
In view of Theorem~\ref{thm:RSigmaH-SigmaHBSigmalloc} and
Corollary~\ref{cor:RmuH-SigmaHBmulloc}, this is the case $M = H^{*,*}$
of Theorem~\ref{thm:epsilon-Ext-eqs}.
\end{proof}

\section{Generalized Eilenberg--MacLane spectra}
\label{sec:generalized-EM}

Since $H$ is cellular, the monomial basis $\{\tau^E \xi^R\}_{(E,R)}$
for $\sA_{*,*}$ as a right (or left) $H_{*,*}$-module determines an
equivalence $H \wedge H \simeq \bigvee_{(E,R)} \Sigma^{\|E,R\|} H$
of right (or left) $H$-module spectra.  Here $(E, R)$ ranges over the
sequences in Lemma~\ref{lem:left-H-basis-A}, and $\|E,R\| = \| \tau^E
\xi^R \|$.  It follows that the natural homomorphism
$$
\sA_{*,*} \otimes_{H_{*,*}} H_{*,*}(X)
	\overset{\cong}\longto \pi_{*,*}((H \wedge H) \wedge_H (H \wedge X))
	= \pi_{*,*}(H \wedge H \wedge X)
$$
is an isomorphism for any motivic spectrum~$X$, and that $1 \wedge \eta
\wedge 1 \: H \wedge X = H \wedge \bS \wedge X \to H \wedge H \wedge X$
induces a natural left $\sA_{*,*}$-coaction on $H_{*,*}(X)$.

If $M \simeq H \wedge X$ for some motivic spectrum~$X$, then the fork
$$
\xymatrix{
\pi_{*,*} M \ar[r]^-{\eta} & \pi_{*,*}(H \wedge M)
	\ar@<1ex>[r]^-{\eta\wedge1} \ar@<-1ex>[r]_-{1\wedge\eta}
	& \pi_{*,*}(H \wedge H \wedge M)
}
$$
is split by $\mu \: \pi_{*,*}(H \wedge M) \cong \pi_{*,*}(H \wedge
H \wedge X) \to \pi_{*,*}(H \wedge X) \cong \pi_{*,*} M$ and $\mu \:
\pi_{*,*}(H \wedge H \wedge M) \to \pi_{*,*}(H \wedge M)$, hence exhibits
$\pi_{*,*} M$ as a split equalizer~\cite{ML98}*{\S VI.6}.  Under the
identifications $\pi_{*,*}(H \wedge M) = H_{*,*}(M)$ and $\pi_{*,*}(H \wedge
H \wedge M) \cong \sA_{*,*} \otimes_{H_{*,*}} H_{*,*}(M)$, this provides
an isomorphism
\begin{equation} \label{eq:pi-comodHom}
\pi_{*,*} M \cong H_{*,*} \cotensor_{\sA_{*,*}} H_{*,*}(M)
	\cong \Hom_{\sA_{*,*}}(H_{*,*}, H_{*,*}(M))
\end{equation}
of $\pi_{*,*} M$ with the left $\sA_{*,*}$-comodule primitives
in $H_{*,*}(M)$.

\begin{definition} \label{def:motivic-GEM}
By a \emph{motivic GEM} (short for motivic generalized Eilenberg--MacLane
spectrum) we shall mean a left $H$-module spectrum
$$
M \simeq \bigvee_{\alpha \in J} \Sigma^{p_\alpha, q_\alpha} H
$$
that is equivalent to a wedge sum of bigraded suspensions of $H$.
\end{definition}

These are precisely the $H$-cellular module spectra~$M$, in the sense
of \cite{DI05}*{\S7.9}, with the property that $\pi_{*,*}(M)$ is free
as a left~$H_{*,*}$-module.  This generalizes the split Tate objects of
\cite{Voe10}*{\S2.4}, in that we allow arbitrary bigraded suspensions.

If $M$ is a motivic GEM, we can write $M \simeq H \wedge X$ with $X =
\bigvee_{\alpha \in J} S^{p_\alpha, q_\alpha}$.  Then
$$
H_{*,*}(X) \cong \bigoplus_{\alpha \in J}
	\Sigma^{p_\alpha, q_\alpha} H_{*,*}
$$
as left $\sA_{*,*}$-comodules, and the natural homomorphism
$$
\pi_{*,*} F(X, H) = \pi_{*,*} F_H(H \wedge X, H)
	\overset{\cong}\longto
	\Hom_{H_{*,*}}(H_{*,*}(X), H_{*,*})
	= H_{*,*}(X)^\vee
$$
is an isomorphism, so that
$$
H^{*,*}(X) \cong \prod_{\alpha \in J}
	\Sigma^{p_\alpha, q_\alpha} H^{*,*}
$$
as left $\sA$-modules.

\begin{definition} \label{def:bifinite-type}
With these notations we say that $H \wedge X$ (or $H_{*,*}(X)$, or
$H^{*,*}(X)$) has \emph{bifinite type} if for each bidegree $(p,q)$
there are only finitely many $\alpha \in J$ for which
$$
p_\alpha \le p
\qquad\text{or}\qquad
p_\alpha - q_\alpha \le p-q \,.
$$
\end{definition}

This condition is more restrictive than the notions of motivically
finite type from~\cite{DI10}*{Def.~2.11, Def.~2.12}
and of finite type from~\cite{HKO11}*{\S2}.  It ensures that
both inclusions
\begin{align*}
\bigoplus_{\alpha \in J} \Sigma^{p_\alpha, q_\alpha} H_{*,*}
&\overset{\cong}\longto
\prod_{\alpha \in J} \Sigma^{p_\alpha, q_\alpha} H_{*,*} \\
\bigoplus_{\alpha \in J} \Sigma^{p_\alpha, q_\alpha} H^{*,*}
&\overset{\cong}\longto
\prod_{\alpha \in J} \Sigma^{p_\alpha, q_\alpha} H^{*,*}
\end{align*}
are isomorphisms, so that the canonical homomorphism
$$
H_{*,*}(X) \overset{\cong}\longto \Hom_{H^{*,*}}(H^{*,*}(X), H^{*,*})
	= H^{*,*}(X)^\vee
$$
is an isomorphism.
Moreover, if $H \wedge X$ has bifinite type,
then so does $H \wedge H \wedge X \simeq \bigvee_{(E,R)} \Sigma^{\|E,R\|}
H \wedge X$.  Hence $f \mapsto f^\vee$ defines an
isomorphism
\begin{equation} \label{eq:comodHom-modHom}
\Hom_{\sA_{*,*}}(H_{*,*}, H_{*,*}(X))
	\overset{\cong}\longto \Hom_{\sA}(H^{*,*}(X), H^{*,*})
\end{equation}
from the $\sA_{*,*}$-comodule homomorphisms $H_{*,*} \to H_{*,*}(X)$ to
the $\sA$-module homomorphisms $H^{*,*}(X) \to H^{*,*}$.

\section{A delayed limit Adams spectral sequence}
\label{sec:delayed-Adams}

Let
\begin{equation} \label{eq:Y-f-tower}
\dots \to Y(m+1) \overset{f_{m+1}}\longto Y(m)
	\overset{f_m}\longto Y(m-1) \to \dots
\end{equation}
be any tower of motivic spectra.  Its homotopy limit
$Y = \holim_m Y(m)$ sits in a homotopy cofiber sequence
$$
\Sigma^{-1} \prod_m Y(m)
	\overset{i}\longto Y
	\overset{j}\longto \prod_m Y(m)
	\overset{k}\longto \prod_m Y(m) \,,
$$
cf.~\cite{Mil62}*{Lem.~2}, where $k$ is the difference between the
identity map and the product of the maps~$f_m$.  Let
$$
\xymatrix{
\dots \ar[r] & \bS_2 \ar[rr]^-{\alpha}
	&& \bS_1 \ar[rr]^-{\alpha} \ar[dl]^-{\beta}
	&& \bS_0 \ar[dl]^-{\beta} \\
&& H \wedge \bS_1 \ar@{-->}[ul]^-{\gamma}
	&& H \wedge \bS_0 \ar@{-->}[ul]^-{\gamma}
}
$$
be the canonical mod~$\ell$ Adams resolution of the motivic sphere
spectrum~$\bS = \bS_0$, inductively defined by the homotopy cofiber
sequences
$$
\bS_{s+1} \overset{\alpha}\longto \bS_s
	\overset{\beta}\longto H \wedge \bS_s
	\overset{\gamma}\longto \Sigma \bS_{s+1}
$$
where $\beta = \eta \wedge 1$, cf.~\cite{Ada74a}*{p.~318}.  (Dashed arrows
indicate morphisms of degree~$-1$.)  Form the smash products
\begin{align*}
Y_s(m) &= \bS_s \wedge Y(m) \\
K_s(m) &= H \wedge \bS_s \wedge Y(m)
\end{align*}
so as to obtain a tower of canonical Adams resolutions
\begin{equation} \label{eq:tower-Adams-res}
\xymatrix@-1.5pc{
& \vdots \ar[d] && \vdots \ar[d] && \vdots \ar[d] \\
\dots \ar[r] & Y_2(m{+}1) \ar[rr] \ar[dd] && Y_2(m) \ar[rr] \ar[dd] && Y_2(m{-}1) \ar[dd] \ar[r] & \dots \\
& \dots \ar[r] & K_1(m{+}1) \ar[rr]|-\hole \ar@{-->}[ul] && K_1(m) \ar[rr]|-\hole \ar@{-->}[ul] && K_1(m{-}1) \ar[r] \ar@{-->}[ul] & \dots \\
\dots \ar[r] & Y_1(m{+}1) \ar[rr] \ar[ur] \ar[dd] && Y_1(m) \ar[rr] \ar[ur] \ar[dd] && Y_1(m{-}1) \ar[ur] \ar[dd] \ar[r] & \dots \\
& \dots \ar[r] & K_0(m{+}1) \ar[rr]|-\hole \ar@{-->}[ul] && K_0(m) \ar[rr]|-\hole \ar@{-->}[ul] && K_0(m{-}1) \ar[r] \ar@{-->}[ul] & \dots \\
\dots \ar[r] & Y_0(m{+}1) \ar[rr] \ar[ur] && Y_0(m) \ar[rr] \ar[ur] && Y_0(m{-}1) \ar[ur] \ar[r] & \dots \,.
}
\end{equation}
Let
\begin{align*}
Y_s &= \holim_m Y_s(m) \\
K_s &= \holim_m K_s(m)
\end{align*}
be the homotopy limits of the terms in these Adams resolutions.
These fit in a commutative diagram
\begin{equation} \label{eq:ijalphabeta-bitower}
\xymatrix@-1pc{
\vdots \ar[d] && \vdots \ar[d] && \vdots \ar[d] \\
\Sigma^{-1} \prod_m Y_2(m) \ar[rr]^-{i} \ar[dd]_-{\alpha} && Y_2 \ar[rr]^-{j} \ar[dd]_(0.3){\alpha} && \prod_m Y_2(m) \ar[dd]_(0.3){\alpha} \\
 & \Sigma^{-1} \prod_m K_1(m) \ar[rr]^(0.5){i}|-\hole \ar@{-->}[ul]_-{\gamma} && K_1 \ar[rr]^(0.25){j}|-\hole \ar@{-->}[ul]_-{\gamma} && \prod_m K_1(m) \ar@{-->}[ul]_-{\gamma} \\
\Sigma^{-1} \prod_m Y_1(m) \ar[ur]_-{\beta} \ar[rr]^-{i} \ar[dd]_-{\alpha} && Y_1 \ar[ur]_-{\beta} \ar[rr]^-{j} \ar[dd]_(0.3){\alpha} && \prod_m Y_1(m) \ar[ur]_-{\beta} \ar[dd]_(0.3){\alpha} \\
 & \Sigma^{-1} \prod_m K_0(m) \ar[rr]^(0.5){i}|-\hole \ar@{-->}[ul]_-{\gamma} && K_0 \ar[rr]^(0.25){j}|-\hole \ar@{-->}[ul]_-{\gamma} && \prod_m K_0(m) \ar@{-->}[ul]_-{\gamma} \\
\Sigma^{-1} \prod_m Y_0(m) \ar[ur]_-{\beta} \ar[rr]^-{i} && Y_0 \ar[ur]_-{\beta} \ar[rr]^-{j} && \prod_m Y_0(m) \ar[ur]_-{\beta}
}
\end{equation}
with horizontal homotopy cofiber sequences extending to
\begin{align*}
\Sigma^{-1} \prod_m Y_s(m) &\overset{i}\longto Y_s
	\overset{j}\longto \prod_m Y_s(m)
	\overset{k}\longto \prod_m Y_s(m) \\
\Sigma^{-1} \prod_m K_s(m) &\overset{i}\longto K_s
	\overset{j}\longto \prod_m K_s(m)
	\overset{k}\longto \prod_m K_s(m) \,.
\end{align*}
The subdiagram
\begin{equation} \label{eq:limitAdams}
\xymatrix{
\dots \ar[r] & Y_2 \ar[rr]^-{\alpha}
        && Y_1 \ar[rr]^-{\alpha} \ar[dl]^-{\beta}
        && Y_0 \ar[dl]^-{\beta} \\
&& K_1 \ar@{-->}[ul]^-{\gamma} && K_0 \ar@{-->}[ul]^-{\gamma}
}
\end{equation}
is not generally an Adams resolution.  Nonetheless, one may consider its
associated homotopy spectral sequence, with $E_1$-term
$$
{}^{\lim} E_1^{s,*,*} = \pi_{*,*}(K_s)
$$
and abutment $\pi_{*,*}(Y_0) = \pi_{*,*}(Y)$.  Under
finiteness hypotheses which ensure that all limits in sight are exact,
the $E_2$-term of this \emph{limit Adams spectral sequence} was described
in~\cite{CMP87}*{Prop.~7.1} and~\cite{LNR12}*{Prop.~2.2}.  In our motivic
context, these finiteness hypotheses are only realistic if $H^{*,*}$
is finite in each bidegree, which excludes some very interesting base
schemes~$S$, such as $\Spec$ of a global field.
(For example, Euclid knew that $H^{1,1}(\Spec \bQ) = \bQ^\times /
(\bQ^\times)^\ell \cong \bZ/(2,\ell) \oplus \bigoplus_{\text{$p$ prime}}
\bZ/\ell$ is infinitely generated.)

To avoid this restrictive hypothesis, we shall instead show
that there is a modified Adams spectral sequence, in the style
of~\cite{Mil72}*{Lem.~5.3.1} and~\cite{Bru82}, with the same abutment
as before, whose $E_2$-term is recognizable under more flexible
finiteness conditions.  This kind of modification is referred to
in~\cite{BR21}*{\S12.6} as a delayed Adams spectral sequence, to
distinguish it from another kind (the hastened one) of modified Adams
spectral sequence in current usage, cf.~\cite{BHHM08}*{\S3}.

To construct the delayed limit Adams spectral sequence we may assume
that the maps $i$ and $\alpha$ in~\eqref{eq:ijalphabeta-bitower} are
all cofibrations, let $W_0 = Y_0$, and form the pushouts
$$
W_s  = Y_s \, \cup \, \Sigma^{-1} \prod_m Y_{s-1}(m)
$$
along $\Sigma^{-1} \prod_m Y_s(m)$, for all $s\ge1$.  There are
then homotopy cofiber sequences
$$
W_1 \longto W_0 \overset{\beta j}\longto L_0 = \prod_m K_0(m)
	\longto \Sigma W_1
$$
and
$$
W_{s+1} \longto W_s
	\overset{\beta j \cup \beta}\longto
	L_s = \prod_m K_s(m) \, \vee \, \Sigma^{-1} \prod_m K_{s-1}(m)
	\longto \Sigma W_{s+1}
$$
for all $s\ge1$, defining the spectra~$L_s$.  This produces a delayed
resolution
\begin{equation} \label{eq:delayedAdamsres}
\xymatrix{
\dots \ar[r] & W_2 \ar[rr]^-{\alpha}
        && W_1 \ar[rr]^-{\alpha} \ar[dl]^-{\beta}
        && W_0 \ar[dl]^-{\beta} \\
&& L_1 \ar@{-->}[ul]^-{\gamma} && L_0 \ar@{-->}[ul]^-{\gamma}
}
\end{equation}
of $W_0 = Y_0 \simeq Y$.  The inclusions $Y_s \subset W_s$ induce a map
of diagrams from~\eqref{eq:limitAdams} to~\eqref{eq:delayedAdamsres}.

\begin{definition} \label{def:del-lim-adams-sp-seq}
The \emph{delayed limit Adams spectral sequence} of the
tower~\eqref{eq:Y-f-tower} is the homotopy spectral sequence associated
to the resolution~\eqref{eq:delayedAdamsres}, with $E_1$-term
$$
{}^{\del} E_1^{s,*,*} = \pi_{*,*}(L_s)
$$
and $d_1$-differential induced by the composite
$$
\beta \gamma \: L_s \longto \Sigma W_{s+1} \longto \Sigma L_{s+1} \,.
$$
\end{definition}

We now make the assumption that each $H \wedge Y(m)$ is a motivic GEM of
bifinite type.  For example, this is the case if each $Y(m)$ is cellular
with $H_{*,*}(Y(m))$ free of bifinite type as a left $H_{*,*}$-module.
It follows by induction on~$s$ that each $K_s(m)$ is a motivic GEM
of bifinite type.  Hence the isomorphisms~\eqref{eq:pi-comodHom}
and~\eqref{eq:comodHom-modHom} identify the $(E_1, d_1)$-term
$$
\dots \longfrom \pi_{*,*} K_2(m)
	\longfrom \pi_{*,*} K_1(m)
	\longfrom \pi_{*,*} K_0(m)
	\leftarrow 0
$$
of the Adams spectral sequence for $Y(m)$ with $\Hom_{\sA}(-, H^{*,*})$
applied to the free $\sA$-module resolution

$$
\dots \longto H^{*,*}(K_2(m))
	\overset{\partial}\longto H^{*,*}(K_1(m))
	\overset{\partial}\longto H^{*,*}(K_0(m)) \to 0
$$
of $H^{*,*}(Y(m))$.  In view of~\eqref{eq:tower-Adams-res}, these
resolutions are compatible for varying~$m$.  Passing to colimits over~$m$,
we obtain a flat $\sA$-module resolution
$$
\dots \longto H_c^{*,*}(K_2)
	\overset{\partial}\longto H_c^{*,*}(K_1)
	\overset{\partial}\longto H_c^{*,*}(K_0) \to 0
$$
of $H_c^{*,*}(Y)$, where we write
\begin{align*}
H_c^{*,*}(Y) &= \colim_m H^{*,*}(Y(m)) \\
H_c^{*,*}(K_s) &= \colim_m H^{*,*}(K_s(m))
\end{align*}
for the ``continuous'' cohomology groups of the towers $\{Y(m)\}_m$
and $\{K_s(m)\}_m$, respectively.  The $\sA$-module $H_c^{*,*}(K_s)$
might not be free, but remains flat, since such modules are preserved
under filtered colimits.  These colimits can also be written as cokernels,
as in the following diagram with exact rows and columns.
$$
\xymatrix{
& \vdots \ar[d] & \vdots \ar[d] & \vdots \ar[d] \\
0 \ar[r] & \bigoplus_m H^{*,*}(K_1(m)) \ar[r]^-{k^*} \ar[d]^-{\partial}
	& \bigoplus_m H^{*,*}(K_1(m)) \ar[r]^-{j^*} \ar[d]^-{\partial}
	& H_c^{*,*}(K_1) \ar[r] \ar[d]^-{\partial} & 0 \\
0 \ar[r] & \bigoplus_m H^{*,*}(K_0(m)) \ar[r]^-{k^*} \ar[d]^-{\epsilon}
	& \bigoplus_m H^{*,*}(K_0(m)) \ar[r]^-{j^*} \ar[d]^-{\epsilon}
	& H_c^{*,*}(K_0) \ar[r] \ar[d]^-{\epsilon} & 0 \\
0 \ar[r] & \bigoplus_m H^{*,*}(Y(m)) \ar[r]^-{k^*} \ar[d]
	& \bigoplus_m H^{*,*}(Y(m)) \ar[r]^-{j^*} \ar[d]
	& H_c^{*,*}(Y) \ar[r] \ar[d] & 0 \\
& 0 & 0 & 0
}
$$
Omitting the bottom row and the right hand column, we have a
bicomplex of free $\sA$-modules, whose total complex $(F_*, \partial)$
is a free resolution of $H_c^{*,*}(Y)$.  Here
$$
F_0 = \bigoplus_m H^{*,*}(K_0(m))
$$
and
$$
F_s = \bigoplus_m H^{*,*}(K_s(m))
	\, \oplus \, \bigoplus_m H^{*,*}(\Sigma^{-1} K_{s-1}(m))
$$
for $s\ge1$.
Hence we can recognize
\begin{align*}
{}^{\del} E_1^{0,*,*} &=\pi_{*,*} L_0 = \prod_m \pi_{*,*} K_0(m)
	\cong \prod_m \Hom_{\sA}(H^{*,*}(K_0(m)), H^{*,*}) \\
&\cong \Hom_{\sA}(\bigoplus_m H^{*,*}(K_0(m)), H^{*,*})
	= \Hom_{\sA}(F_0, H^{*,*})
\end{align*}
and
\begin{align*}
{}^{\del} E_1^{s,*,*} &= \pi_{*,*} L_s \cong \prod_m \pi_{*,*} K_s(m)
	\oplus \prod_m \pi_{*,*} \Sigma^{-1} K_{s-1}(m) \\
&\cong \prod_m \Hom_{\sA}(H^{*,*}(K_s(m)), H^{*,*})
	\oplus \prod_m \Hom_{\sA}(H^{*,*}(\Sigma^{-1} K_{s-1}(m)), H^{*,*}) \\
&\cong \Hom_{\sA}(\bigoplus_m H^{*,*}(K_s(m))
	\, \oplus \, \bigoplus_m H^{*,*}(\Sigma^{-1} K_{s-1}(m)), H^{*,*}) \\
&= \Hom_{\sA}(F_s, H^{*,*})
\end{align*}
for $s\ge1$.  Moreover, $d_1 \: {}^{\del} E_1^{s,*,*} \to {}^{\del}
E_1^{s+1,*,*}$ is induced by the boundary operator $\partial \: F_{s+1}
\to F_s$ in the total complex, as can be verified by tracing through the
definitions.  Since $(F_*, \partial)$ is a free $\sA$-module resolution
of~$H_c^{*,*}(Y)$, we obtain the desired isomorphism
$$
{}^{\del} E_2^{s,*,*} \cong \Ext_{\sA}^{s,*,*}(H_c^{*,*}(Y), H^{*,*})
$$
for each $s\ge0$.

\begin{proposition} \label{prop:dellimE2}
Let $\dots \to Y(m+1) \to Y(m) \to \dots$ be a tower of
motivic spectra, with each $H \wedge Y(m)$ a motivic GEM
of bifinite type.  Let $Y = \holim_m Y(m)$ and
$H_c^{*,*}(Y) = \colim_m H^{*,*}(Y(m))$.
The delayed limit Adams spectral sequence
$$
{}^{\del} E_1^{s,*,*} \Longrightarrow \pi_{*,*}(Y)
$$
has $E_2$-term
$$
{}^{\del} E_2^{s,*,*} = \Ext_{\sA}^{s,*,*}(H_c^{*,*}(Y), H^{*,*}) \,,
$$
with $\Ext$ calculated in the category of $\sA$-modules.
\end{proposition}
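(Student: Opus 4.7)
The plan is to follow the outline already laid out in the discussion preceding the proposition, filling in the verifications that the colimit assembles into a free resolution and that the delayed construction sees it correctly. First I would record that the bifinite type hypothesis propagates through the Adams resolution: since $H \wedge Y(m)$ is a motivic GEM of bifinite type, so is each $H \wedge \bS_s \wedge Y(m) = K_s(m)$, by induction on~$s$, using that $H \wedge H \simeq \bigvee_{(E,R)} \Sigma^{\|E,R\|} H$ with the indexing set satisfying the bifiniteness bound of Definition~\ref{def:bifinite-type} and that smashing preserves such bounds. Then the pair of isomorphisms~\eqref{eq:pi-comodHom} and~\eqref{eq:comodHom-modHom} lets me identify the usual $(E_1, d_1)$-term of the Adams spectral sequence for $Y(m)$ with $\Hom_{\sA}(-, H^{*,*})$ applied to the free $\sA$-module resolution $H^{*,*}(K_*(m)) \to H^{*,*}(Y(m))$; naturality in~$m$ makes this compatible with the tower.

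Next I would pass to the colimit over~$m$: filtered colimits of free $\sA$-modules are flat, so $H_c^{*,*}(K_*) \to H_c^{*,*}(Y)$ is a flat $\sA$-module resolution. To obtain an honest free resolution, I would replace the colimit by the cokernel presentation $0 \to \bigoplus_m H^{*,*}(-) \xrightarrow{k^*} \bigoplus_m H^{*,*}(-) \to H_c^{*,*}(-) \to 0$ shown in the excerpt, producing a bicomplex of free $\sA$-modules whose totalization $(F_*, \partial)$ is a free resolution of $H_c^{*,*}(Y)$ with
\[
F_0 = \bigoplus_m H^{*,*}(K_0(m)),
\qquad
F_s = \bigoplus_m H^{*,*}(K_s(m)) \oplus \bigoplus_m H^{*,*}(\Sigma^{-1} K_{s-1}(m))
\]
for $s\ge 1$. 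The point of the delay in the resolution~\eqref{eq:delayedAdamsres} is precisely that the spectra $L_s = \prod_m K_s(m) \vee \Sigma^{-1} \prod_m K_{s-1}(m)$ mirror these two summands.

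Using bifinite type once more, I would interchange $\Hom_{\sA}$ with the direct sum over~$m$: finiteness of the number of wedge summands in each bidegree implies $\prod_m \pi_{*,*} K_s(m) \cong \Hom_{\sA}(\bigoplus_m H^{*,*}(K_s(m)), H^{*,*})$, and likewise for the shifted term. This identifies ${}^{\del} E_1^{s,*,*} \cong \Hom_{\sA}(F_s, H^{*,*})$ as graded abelian groups.

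The main obstacle, and the step I would be most careful about, is verifying that the $d_1$-differential of the delayed limit spectral sequence agrees with the differential of the total complex $\partial \: F_{s+1} \to F_s$ after applying $\Hom_{\sA}(-, H^{*,*})$. This requires tracing through the construction of $W_{s+1} \to W_s \to L_s$: the map $\beta j \cup \beta \: W_s \to L_s$ splits into the part detecting $Y_s$ (contributing the Adams differential on $\bigoplus_m H^{*,*}(K_s(m))$ in the vertical direction of the bicomplex) and the part detecting the pushout from $\Sigma^{-1} \prod_m Y_{s-1}(m)$ (contributing the $k^* = \id - f^*$ term horizontally). Chasing the diagram~\eqref{eq:ijalphabeta-bitower} and comparing with the pushout defining $W_s$ shows that $d_1 = \beta\gamma$, after the identifications above, is exactly the total boundary $\partial$. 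Granted this, taking cohomology yields
\[
{}^{\del} E_2^{s,*,*} \cong H^s \Hom_{\sA}(F_*, H^{*,*}) = \Ext_{\sA}^{s,*,*}(H_c^{*,*}(Y), H^{*,*}),
\]
as required, where the flatness (rather than freeness) of the colimit resolution is irrelevant once we use the free total complex $(F_*, \partial)$.
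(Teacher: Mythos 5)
Your proposal reproduces the argument of the paper: it propagates bifinite type through the Adams resolution, identifies $(E_1,d_1)$ with $\Hom_{\sA}(-,H^{*,*})$ of a free resolution via~\eqref{eq:pi-comodHom} and~\eqref{eq:comodHom-modHom}, forms the bicomplex from the $\lim$--$\colim$ presentation by cokernels of $k^*$, and matches $L_s$ with the totalization $F_s$ before verifying $d_1=\partial^\vee$. The paper's proof is exactly this discussion (preceding the proposition), so your route is essentially the same, including the observation that tracing $\beta\gamma$ through the pushout defining $W_s$ is the main point that needs care.
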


\begin{proof}
This summarizes the discussion so far in this section.
\end{proof}

Let $Y_\infty(m) = \holim_s Y_s(m)$.  The Adams spectral sequence
for~$Y(m)$ is conditionally convergent~\cite{Boa99}*{Def.~5.10}
to $\pi_{*,*} Y(m)$ if and only if $\pi_{*,*} Y_\infty(m) = 0$.
This fails in many interesting examples, such as when $\pi_{*,*} Y(m)$
contains torsion of order prime to~$\ell$, but often becomes true after
$(\ell,\eta)$-adic completion in the following sense.

\begin{definition} \label{def:ell-eta-completion}
Let $\ell \in \pi_{0,0}(\bS)$ be $\ell$ times the class of the identity
map $\bS \to \bS$, and let $\eta \in \pi_{1,1}(\bS)$ be the class of the
Hopf fibration $S^{3,2} \simeq \bA^2 - \{0\} \to \bP^1 \simeq S^{2,1}$.
For any motivic spectrum~$X$ let
\begin{align*}
X^\wedge_\ell &= \holim_n X/\ell^n \\
X^\wedge_{\ell,\eta} &= \holim_n \, (X^\wedge_\ell)/\eta^n
	\simeq \holim_n X/(\ell^n, \eta^n)
\end{align*}
be the \emph{$\ell$- and $(\ell,\eta)$-adic completions} of $X$,
respectively, as in~\cite{HKO11}*{p.~574} and~\cite{Man21}*{Def.~3.2.9}.
There are canonical completion maps $X \to X^\wedge_\ell \to
X^\wedge_{\ell,\eta}$.
\end{definition}

We note that $X^\wedge_\ell \simeq X^\wedge_{\ell,\eta}$ when
$\eta$ acts nilpotently, e.g., for $\ell$ odd.

\begin{lemma} \label{lem:HMod-elletacomplete}
For each $H$-module spectrum~$M$ the completion map $M \to
M^\wedge_{\ell,\eta}$ is a $\pi_{*,*}$-isomorphism.
\end{lemma}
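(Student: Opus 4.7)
The plan is to show in turn that the completion maps $M \to M^\wedge_\ell$ and $M^\wedge_\ell \to M^\wedge_{\ell,\eta}$ are both $\pi_{*,*}$-isomorphisms. Both rely on a single observation: for any $H$-module spectrum~$N$, the elements $\ell \in \pi_{0,0}(\bS)$ and $\eta \in \pi_{1,1}(\bS)$ act as zero on $\pi_{*,*}(N)$. Indeed, $\pi_{*,*}(N)$ is an $H_{*,*}$-module, and these actions factor through $H_{*,*} = \pi_{*,*}(H)$, in which $\ell$ maps to zero in $H_{0,0} = \bZ/\ell$ by construction, and $\eta$ maps to $H_{1,1} = H^{-1,-1}(S)$, which vanishes by the degree bound $H^{p,q} = 0$ for $p < 0$ recalled in Section~\ref{sec:motivic-Steenrod}.

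Next, I would compute $\pi_{*,*}(M^\wedge_\ell)$ by analyzing the tower $\{M/\ell^n\}_n$.  Since $\ell^n = 0$ on $\pi_{*,*}(M)$, the long exact sequence of the cofiber sequence $M \overset{\ell^n}\longto M \to M/\ell^n$ collapses to short exact sequences
$$
0 \to \pi_{t,u}(M) \longto \pi_{t,u}(M/\ell^n) \longto \pi_{t-1,u}(M) \to 0 \,.
$$
The tower map $M/\ell^{n+1} \to M/\ell^n$ is induced by the evident commutative square with vertical maps $(\ell, \id)$ on $(M, M)$, so on these short exact sequences it restricts to the identity on the sub $\pi_{t,u}(M)$ and to $\ell = 0$ on the quotient $\pi_{t-1,u}(M)$.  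Thus the sub-tower is constant with identity transitions while the quotient-tower is constant with zero transitions; both have vanishing $\limone$, with limits $\pi_{t,u}(M)$ and~$0$, respectively.  Passing to the long exact $\lim$-$\limone$ sequence of the tower of short exact sequences, and then to the Milnor sequence for $M^\wedge_\ell = \holim_n M/\ell^n$, yields $\pi_{t,u}(M^\wedge_\ell) \cong \pi_{t,u}(M)$, realized by the completion map.

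Finally, each $M/\ell^n$ is an $H$-module as the cofiber of an $H$-module map, and $H$-modules are closed under homotopy limits, so $M^\wedge_\ell$ is itself an $H$-module.  Hence the first paragraph applies to it, showing that $\eta$ acts as zero on $\pi_{*,*}(M^\wedge_\ell)$, and the argument of the second paragraph applies verbatim with $M$ replaced by $M^\wedge_\ell$ and $\ell$ replaced by~$\eta$, so $M^\wedge_\ell \to M^\wedge_{\ell,\eta}$ is a $\pi_{*,*}$-isomorphism.  Composing yields the claim.  The only real obstacle is to identify the tower map correctly on $\pi_{*,*}(M/\ell^n)$, in particular to see that it vanishes on the quotient summand; this is what forces both $\limone$ terms to be zero, after which the rest is formal bookkeeping with the Milnor sequence.
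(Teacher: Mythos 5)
Your proposal is correct and follows essentially the same route as the paper's proof: observe that $\ell$ and $\eta$ act trivially on $\pi_{*,*}$ of any $H$-module because their actions factor through $H_{*,*}$ where they vanish, split the long exact sequence of $M \to M \to M/\ell^n$ into short exact sequences, identify the tower maps as the identity on the subobject and zero on the quotient, and conclude via the $\lim$-$\limone$ argument and the Milnor sequence, then repeat for $\eta$ on $M^\wedge_\ell$. You are a bit more explicit than the paper in justifying that $\ell$ and $\eta$ vanish in $H_{*,*}$ (citing the degree bound for $\eta \mapsto H^{-1,-1} = 0$), and in pointing out that $M^\wedge_\ell$ is again an $H$-module so the argument may be iterated; both are worthwhile clarifications, but the underlying argument is the same.
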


\begin{proof}
Multiplication by $\ell$ and by $\eta$ act trivially on $H_{*,*}$,
hence also on $\pi_{*,*} M$.  The tower of short exact sequences
$$
\xymatrix@R-0.5pc{
& \vdots \ar[d]_{\id} & \vdots \ar[d] & \vdots \ar[d]_{0} \\
0 \ar[r] & \pi_{*,*}(M) \ar[r] \ar[d]_-{\id}
	& \pi_{*,*}(M/\ell^{n+1}) \ar[r] \ar[d]
	& \pi_{*,*}(\Sigma M) \ar[r] \ar[d]_-{0} & 0 \\
0 \ar[r] & \pi_{*,*}(M) \ar[r] \ar[d]_-{\id}
	& \pi_{*,*}(M/\ell^n) \ar[r] \ar[d]
	& \pi_{*,*}(\Sigma M) \ar[r] \ar[d]_-{0} & 0 \\
& \vdots & \vdots & \vdots
}
$$
and the $\lim$-$\limone$ sequence imply that $M \to M^\wedge_\ell$ is
a $\pi_{*,*}$-isomorphism.  Likewise, the tower of short exact sequences
$$
\xymatrix@R-0.5pc{
& \vdots \ar[d]_{\id} & \vdots \ar[d] & \vdots \ar[d]_{0} \\
0 \ar[r] & \pi_{*,*}(M^\wedge_\ell) \ar[r] \ar[d]_-{\id}
	& \pi_{*,*}(M^\wedge_\ell/\eta^{n+1}) \ar[r] \ar[d]
	& \pi_{*,*}(\Sigma^{n+2,n+1} M^\wedge_\ell) \ar[r] \ar[d]_-{0} & 0 \\
0 \ar[r] & \pi_{*,*}(M^\wedge_\ell) \ar[r] \ar[d]_-{\id}
	& \pi_{*,*}(M^\wedge_\ell/\eta^n) \ar[r] \ar[d]
	& \pi_{*,*}(\Sigma^{n+1,n} M^\wedge_\ell) \ar[r] \ar[d]_-{0} & 0 \\
& \vdots & \vdots & \vdots
}
$$
and the $\lim$-$\limone$ sequence imply that $M^\wedge_\ell \to
M^\wedge_{\ell,\eta}$ is a $\pi_{*,*}$-isomorphism.
\end{proof}

Applying $(\ell,\eta)$-adic completion to the tower
of Adams resolutions~\eqref{eq:tower-Adams-res} yields
another diagram of the same shape.
At its lower edge, the tower of spectra
$$
\dots \to Y(m+1)^\wedge_{\ell,\eta}
	\longto Y(m)^\wedge_{\ell,\eta}
	\longto Y(m-1)^\wedge_{\ell,\eta} \to \dots
$$
has homotopy limit~$Y^\wedge_{\ell,\eta}$.  For each~$m$ the Adams
resolution of~$Y(m)$ maps to the diagram
$$
\xymatrix{
\dots \ar[r] & Y_2(m)^\wedge_{\ell,\eta} \ar[rr]^-{\alpha}
        && Y_1(m)^\wedge_{\ell,\eta} \ar[rr]^-{\alpha} \ar[dl]^-{\beta}
        && Y_0(m)^\wedge_{\ell,\eta} \ar[dl]^-{\beta} \\
&& K_1(m)^\wedge_{\ell,\eta} \ar@{-->}[ul]^-{\gamma}
	&& K_0(m)^\wedge_{\ell,\eta} \ar@{-->}[ul]^-{\gamma}
}
$$
of homotopy cofiber sequences, and by Lemma~\ref{lem:HMod-elletacomplete}
the induced map of homotopy spectral sequences is an isomorphism from
the $E_1$-term and onward.  The new spectral sequence has abutment
$\pi_{*,*}(Y(m)^\wedge_{\ell,\eta})$, and is conditionally convergent
to this target if and only if $\pi_{*,*}(Y_\infty(m)^\wedge_{\ell,\eta})
= 0$.

We now make the additional assumption, for each~$m$, that the
Adams spectral sequence for~$Y(m)$ converges conditionally to
$\pi_{*,*}(Y(m)^\wedge_{\ell,\eta})$.  The following theorem was
proved by Hu--Kriz--Ormsby~\cite{HKO11}*{Thm.~1} in the case of a
cellular spectrum~$X$ of finite type over~$\Spec k$ for~$k$ a field
of characteristic~$0$.  It was generalized to bounded below spectra~$X$
over~$S$, in our generality, by Mantovani~\cite{Man21}.

The homotopy $t$-structure on~$SH(S)$ is defined as
in~\cite{Hoy15}*{\S2.1}, and a motivic spectrum is bounded below
if it lies in $SH(S)_{\ge-m}$ for some finite~$m$.  (When $S =
\Spec k$ for a field~$k$, Morel's stable $\bA^1$-connectivity
theorem~\cite{Mor05}*{Thm.~3} shows that $X$ lies in $SH(S)_{\ge-m}$
if and only if the homotopy sheaves $\underline{\pi}_{t,u}(X)$ vanish
whenever $t-u < -m$.)

\begin{theorem} \label{thm:mantovani}
Suppose that~$X$ in $SH(S)$ is bounded below in the homotopy
$t$-structure.  Then the mod~$\ell$ Adams spectral sequence for~$X$
is conditionally convergent to $\pi_{*,*}(X^\wedge_{\ell,\eta})$.
\end{theorem}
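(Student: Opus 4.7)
The plan is to reduce Theorem~\ref{thm:mantovani} to the identification of the $H$-nilpotent (Bousfield--Adams) completion with the $(\ell,\eta)$-adic completion for bounded below motivic spectra, and then invoke the corresponding analysis carried out by Mantovani in~\cite{Man21}, which itself generalizes Hu--Kriz--Ormsby~\cite{HKO11} beyond the characteristic zero field case.

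By~\cite{Boa99}*{Def.~5.10}, conditional convergence of the mod~$\ell$ Adams spectral sequence for~$X$ to $\pi_{*,*}(X^\wedge_{\ell,\eta})$ amounts to the vanishing $\holim_s (X_s)^\wedge_{\ell,\eta} \simeq *$, where $X_s = \bS_s \wedge X$ is the $s$-th stage of the canonical Adams tower. The cofibers of the maps $X_s \to X$ form the $H$-nilpotent tower, whose homotopy limit $X^\wedge_H$ is built from iterated extensions of motivic GEMs, each of which is automatically $(\ell,\eta)$-local by Lemma~\ref{lem:HMod-elletacomplete}. The desired vanishing is therefore equivalent to the statement that the canonical comparison map $X^\wedge_{\ell,\eta} \to X^\wedge_H$ is an equivalence in $SH(S)$.

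First I would reduce to the case $X = \bS$ by smashing, exploiting that $- \wedge X$ commutes with the relevant homotopy limits when $X$ is bounded below. For $X = \bS$, the crucial claim is that $\bS^\wedge_{\ell,\eta}$ is itself $H$-nilpotent complete. Following~\cite{Man21}, the strategy is to descend along the Postnikov (or effective slice) tower of $\bS^\wedge_{\ell,\eta}$ in the homotopy $t$-structure of $SH(S)$, using that each Postnikov layer is a motivic Eilenberg--MacLane spectrum and thus automatically $H$-complete, and that the tower converges on any bounded below object.

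The main obstacle is precisely this last step: ensuring convergence of the Postnikov tower of $\bS^\wedge_{\ell,\eta}$ over the general base schemes~$S$ allowed in the theorem. Over a characteristic zero field one can lean on Morel's computation of $\underline{\pi}_{0,*}(\bS)$ in terms of Milnor--Witt $K$-theory and his $\pi_1$-connectivity theorem, as deployed in~\cite{HKO11}; for general essentially smooth schemes over a field or Dedekind domain this requires the delicate $\rho$- and $\tau$-Bockstein arguments that form the technical core of Mantovani's work in~\cite{Man21}, and it is there that I would expect to spend most of the effort were one to reprove the theorem in the present context rather than cite it.
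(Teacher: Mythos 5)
Your approach agrees with the paper's: both treat Theorem~\ref{thm:mantovani} as a direct citation to Mantovani's work \cite{Man21}, framed around identifying the $H$-nilpotent completion with the $(\ell,\eta)$-adic completion on bounded below objects. The paper's proof simply invokes \cite{Man21}*{Thm.~1.0.2, Thm.~1.0.4} with $E = H$ and checks that Mantovani's hypotheses on $H$ hold via \cite{Hoy15}*{Thm.~3.8, Thm.~7.12} and \cite{Spi18}*{Thm.~10.3}; your internal sketch of Mantovani's argument is additional context, and since you ultimately defer to \cite{Man21}, the imprecision in your ``reduce to $X = \bS$ by smashing'' step (smash products do not in general commute with the homotopy limits involved, and this is not quite how Mantovani proceeds) does not amount to a gap.
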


\begin{proof}
As reviewed in~\cite{Man21}*{\S5}, this is an application
of~\cite{Man21}*{Thm.~1.0.2, Thm.~1.0.4} in the case $E = H$, which
satisfies Mantovani's hypotheses because of~\cite{Hoy15}*{Thm.~3.8,
Thm.~7.12} and~\cite{Spi18}*{Thm.~10.3}.
\end{proof}

\begin{proposition} \label{prop:del-lim-cond-conv}
Let $Y = \holim_m Y(m)$ be the homotopy limit of a tower of
motivic spectra.  Suppose, for each~$m$, that the mod~$\ell$
Adams spectral sequence for $Y(m)$ converges conditionally to
$\pi_{*,*}(Y(m)^\wedge_{\ell,\eta})$.  Then the limit and delayed limit
Adams spectral sequences
\begin{align*}
{}^{\lim} E_1^{s,*,*} = \pi_{*,*}(K_s)
	&\Longrightarrow \pi_{*,*}(Y^\wedge_{\ell,\eta}) \\
{}^{\del} E_1^{s,*,*} = \pi_{*,*}(L_s)
	&\Longrightarrow \pi_{*,*}(Y^\wedge_{\ell,\eta})
\end{align*}
are both conditionally convergent to the bigraded homotopy groups of
the $(\ell,\eta)$-adic completion of~$Y$.
\end{proposition}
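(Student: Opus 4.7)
The plan is to invoke Boardman's criterion \cite{Boa99}*{Def.~5.10}: the spectral sequence associated to a tower $\dots \to X^s \to \dots \to X^0$ converges conditionally to $\pi_{*,*}(X^0)$ precisely when $\pi_{*,*}(\holim_s X^s) = 0$. Since the cofibers $K_s$ and $L_s$ are $H$-modules, Lemma~\ref{lem:HMod-elletacomplete} shows that the $E_1$-terms are preserved by levelwise $(\ell,\eta)$-completion of the towers $\{Y_s\}$ and $\{W_s\}$, while the abutments become $\pi_{*,*}(Y^\wedge_{\ell,\eta})$ because $W_0 = Y_0 = Y$. It therefore suffices to verify that both $\pi_{*,*}(\holim_s (Y_s)^\wedge_{\ell,\eta})$ and $\pi_{*,*}(\holim_s (W_s)^\wedge_{\ell,\eta})$ vanish; the standing hypothesis supplies $\pi_{*,*}(\holim_s Y_s(m)^\wedge_{\ell,\eta}) = 0$ for every~$m$.

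For the limit case, $(\ell,\eta)$-completion is itself a homotopy limit of smashings with the finite cell spectra $\bS/(\ell^n,\eta^n)$ and as such commutes with arbitrary homotopy limits. A Fubini rearrangement then gives
$$\holim_s (Y_s)^\wedge_{\ell,\eta} \simeq \holim_m \holim_s Y_s(m)^\wedge_{\ell,\eta},$$
and the Milnor $\lim$-$\limone$ sequence for the outer $\holim_m$ combined with the hypothesis delivers the desired vanishing.

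For the delayed case, the pushout square defining $W_s$ (for $s \ge 1$) provides a natural cofiber sequence
$$Y_s \longto W_s \longto \Sigma^{-1} \prod_m K_{s-1}(m),$$
since the cofiber of $Y_s \to W_s$ agrees with that of the parallel map $\Sigma^{-1}\prod_m \alpha$. After applying $\holim_s$ and $(\ell,\eta)$-completion, the first term vanishes on $\pi_{*,*}$ by the limit case, and the third rewrites as $\Sigma^{-1}\prod_m \holim_s K_{s-1}(m)$ (swapping completion past the $H$-modules $K_{s-1}(m)$ and commuting $\prod_m$ with $\holim_s$). Viewing $\{K_s(m)\}_s$ as the cofiber in the category of towers of the tower map $\alpha \: \{Y_{s+1}(m)\}_s \to \{Y_s(m)\}_s$, cofinality shows that $\holim_s \alpha$ is the identity on $\holim_s Y_s(m)$, whence $\holim_s K_s(m) \simeq *$ for each~$m$. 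The third term therefore has vanishing $\pi_{*,*}$, and the middle term must do the same. The main care throughout lies in the several commutations of homotopy limits with one another and with $(\ell,\eta)$-completion, which reduce to limits commuting with limits and to the dualizability of the finite cell spectra $\bS/(\ell^n,\eta^n)$.
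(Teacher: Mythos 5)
Your proof is correct, but it handles the delayed case by a different (and somewhat more laborious) mechanism than the paper's. For the limit case you argue exactly as in the paper: $(\ell,\eta)$-completion commutes with homotopy limits, Fubini gives $\holim_s \holim_m = \holim_m \holim_s$, and the $\lim$-$\limone$ sequence over~$m$ finishes it. For the delayed case, however, the paper simply observes that the inclusions $W_{s+1} \subset Y_s \subset W_s \subset Y_{s-1}$ interleave the two towers, so $\holim_s W_s \simeq \holim_s Y_s$ on the nose, and the limit case covers both at once. You instead extract from the pushout square defining $W_s$ the horizontal cofiber sequence $Y_s \to W_s \to \Sigma^{-1}\prod_m K_{s-1}(m)$, pass to $\holim_s$, and kill the third term by recognizing $\{K_s(m)\}_s$ as the cofiber of the shift map $\alpha \: \{Y_{s+1}(m)\}_s \to \{Y_s(m)\}_s$ whose homotopy limit is (up to the canonical identification) the identity. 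That shift argument is itself a cofinality observation, so in substance your route and the paper's both reduce to one telescoping/cofinality fact; the difference is that the paper applies it directly to the interleaved towers $W_\bullet$ and $Y_\bullet$, whereas you apply it to the tower $\{K_s(m)\}_s$ and then feed the answer through a cofiber sequence and a five-lemma-type comparison. Your version requires keeping track of which maps of towers induce which maps of cofibers and which commutations of $\holim$, $\prod$, and completion are legitimate, all of which you do handle correctly (the dualizability of the Moore spectra $\bS/(\ell^n,\eta^n)$ is indeed what lets completion pass through arbitrary homotopy limits). The interleaving argument buys a one-line proof that never needs the explicit identification of $\mathrm{cofib}(Y_s \to W_s)$; your cofiber computation, on the other hand, makes explicit the relationship between $W_\infty$ and the product towers, which some readers may find more transparent.
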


\begin{proof}
Let $Y_\infty = \holim_s Y_s$ and $W_\infty = \holim_s W_s$.
The inclusions
$$
W_{s+1} \subset Y_s \subset W_s \subset Y_{s-1}
$$
imply that $Y_\infty \simeq W_\infty$.  Granting that
$\pi_{*,*}(Y_\infty(m)^\wedge_{\ell,\eta}) = 0$ for
all~$m$, the short exact $\lim$-$\limone$ sequence
shows that $\pi_{*,*}((Y_\infty)^\wedge_{\ell,\eta}) \cong
\pi_{*,*}((W_\infty)^\wedge_{\ell,\eta}) = 0$, so that both the limit
Adams spectral sequence and the delayed limit Adams spectral sequence
are conditionally convergent to $\pi_{*,*}(Y^\wedge_{\ell,\eta})$.
\end{proof}

Let $g \: X \to Y = \holim_m Y(m)$ be a map of motivic spectra.
The resulting compatible maps from the canonical Adams resolution of~$X$
to the canonical Adams resolutions of the~$Y(m)$ induce a
map from the former to the diagram~\eqref{eq:limitAdams}, which
can be naturally continued to map to the delayed limit Adams
resolution~\eqref{eq:delayedAdamsres}.  Applying $(\ell,\eta)$-adic
completion, and passing to the associated homotopy spectral
sequences, we obtain morphisms of spectral sequences
$$
g \: E_1^{*,*,*}(X) \longto {}^{\lim} E_1^{*,*,*}(Y)
	\longto {}^{\del} E_1^{*,*,*}(Y)
$$
with abutment
$$
g \: \pi_{*,*}(X^\wedge_{\ell,\eta}) \longto \pi_{*,*}(Y^\wedge_{\ell,\eta})
	\overset{=}\longto \pi_{*,*}(Y^\wedge_{\ell,\eta}) \,.
$$
We can now appeal to a special case of Boardman's comparison
theorem~\cite{Boa99}*{Thm.~7.2} for conditionally convergent spectral
sequences.  This version of the comparison theorem is particularly
convenient, in view of the failure of strong convergence for the motivic
Adams spectral sequence for the sphere spectrum over a number field,
demonstrated by Kylling--Wilson in~\cite{KW19}*{Cor.~7.8}.

\begin{proposition} \label{prop:comparison}
Let $g \: X \to Y = \holim_m Y(m)$, with $H \wedge X$ and each
$H \wedge Y(m)$ a motivic GEM of bifinite type.  Suppose that
the mod~$\ell$ Adams spectral sequences for $X$ and the $Y(m)$
are conditionally convergent to $\pi_{*,*}(X^\wedge_{\ell,\eta})$
and $\pi_{*,*}(Y(m)^\wedge_{\ell,\eta})$, respectively.  If 
the $\sA$-module homomorphism
$$
g^* \: H_c^{*,*}(Y) \longto H^{*,*}(X)
$$
is an $\Ext$-isomorphism, so that
\begin{multline*}
g \: E_2^{s,*,*}(X) = \Ext_{\sA}^{s,*,*}(H^{*,*}(X), H^{*,*}) \\
	\overset{\cong}\longto
	{}^{\del} E_2^{s,*,*}(Y) = \Ext_{\sA}^{s,*,*}(H_c^{*,*}(Y), H^{*,*})
\end{multline*}
is an isomorphism, then $g$ induces an isomorphism
$$
g \: \pi_{*,*}(X^\wedge_{\ell,\eta})
	\overset{\cong}\longto \pi_{*,*}(Y^\wedge_{\ell,\eta}) \,.
$$
\end{proposition}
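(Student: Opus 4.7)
The plan is to view $g$ as inducing a morphism from the mod~$\ell$ Adams spectral sequence for~$X$ to the delayed limit Adams spectral sequence for~$\{Y(m)\}_m$, and then to apply Boardman's comparison theorem~\cite{Boa99}*{Thm.~7.2} for conditionally convergent spectral sequences. The statement already packages the $E_2$-isomorphism together with the identifications of the two $E_2$-pages (coming from the bifinite type hypothesis on $H \wedge X$ via~\eqref{eq:pi-comodHom} and~\eqref{eq:comodHom-modHom}, and from Proposition~\ref{prop:dellimE2} for the delayed side), so the core task is to build the comparison morphism and then check that Boardman's hypotheses are met.

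First I would construct the comparison morphism. For each $m$, the composite $X \to Y \to Y(m)$ lifts through the canonical Adams resolutions; by naturality these lifts are compatible as $m$ varies, giving a morphism from the Adams tower of $X$ into the tower~\eqref{eq:tower-Adams-res}. The functorial pushout construction producing the spectra $W_s$ and $L_s$ then assembles this into a morphism of resolutions from that of $X$ to the delayed resolution~\eqref{eq:delayedAdamsres} of~$Y$. Applying $(\ell,\eta)$-adic completion and passing to homotopy spectral sequences yields the comparison morphism covering the abutment map $g \: \pi_{*,*}(X^\wedge_{\ell,\eta}) \to \pi_{*,*}(Y^\wedge_{\ell,\eta})$. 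Unwinding the construction, on $E_2$-pages this morphism is precisely the $\Ext$-map $(g^*)^*$, which is an isomorphism by hypothesis; it is therefore an isomorphism on every $E_r$ for $r \ge 2$, and in particular on the $E_\infty$-term as well as on the derived $RE_\infty$-term.

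The final step is to invoke Boardman's comparison theorem. Both spectral sequences are conditionally convergent to their stated targets---the source by hypothesis and the target by Proposition~\ref{prop:del-lim-cond-conv}---so Boardman's criterion yields the desired isomorphism of abutments. The principal technical obstacle lies precisely here: one must verify Boardman's abstract hypotheses in our bigraded, $(\ell,\eta)$-completed motivic setting without appealing to strong convergence, which in general fails over arithmetic bases~\cite{KW19}*{Cor.~7.8}. This is the motivation for introducing the delayed limit construction and for the careful preparation of conditional convergence in Proposition~\ref{prop:del-lim-cond-conv}, which together ensure that Boardman's comparison theorem applies to give an honest isomorphism of abutments rather than merely of some associated graded.
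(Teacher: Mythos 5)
Your proof is correct and follows essentially the same route as the paper: construct the comparison morphism from the canonical Adams resolution of $X$ through the limit tower to the delayed resolution of $Y$, identify the $E_2$-terms, note that an $E_2$-isomorphism propagates to $E_\infty$ and $RE_\infty$, and invoke Boardman's Theorem~7.2(i) together with conditional convergence on both sides (the hypothesis on $X$ and Proposition~\ref{prop:del-lim-cond-conv} for $Y$). The paper records this in two pieces—the construction of the comparison morphism appears in the paragraph preceding the proposition, and the proof itself handles the $E_2$-identification and the appeal to Boardman—but the content is the same as yours.
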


\begin{proof}
The identification of the ($E_1$- and) $E_2$-term for $X$ follows as
usual from~\eqref{eq:pi-comodHom} and~\eqref{eq:comodHom-modHom},
and the delayed limit $E_2$-term for~$Y$ is given by
Proposition~\ref{prop:dellimE2}.  We now apply \cite{Boa99}*{Thm.~7.2(i)}.
If~$g$ induces an isomorphism of $E_2$-terms, then it certainly also
induces isomorphisms of $E_\infty$- and $RE_\infty$-terms.  Hence $g$
induces the stated isomorphism of (filtered) abutments.
\end{proof}

\section{A tower of Thom spectra}
\label{sec:tower-Thom}

Our next aim is to construct a diagram
$$
\xymatrix{
& & L_{-\infty}^\infty \ar[d] \\
Th(-(m+1)\gamma_n^*) \ar[r]^-{i} \ar[d]_-{j}
	& Th(-(m+1)\gamma_{n+1}^*) \ar[r] \ar[d]_-{j}
	& L_{-2m-2}^\infty \ar[d]_-{j} \\
Th(-m\gamma_n^*) \ar[r]^-{i}
	& Th(-m\gamma_{n+1}^*) \ar[r]
	& L_{-2m}^\infty
}
$$
of motivic Thom spectra, with $H_c^{*,*}(L_{-\infty}^\infty)$ realizing
$H^{*,*}(B\mu_\ell)_{\loc}$.  The left hand square will commute up to
a generalized sign, and replacing $i$ by $i^4$ will give a strictly
commuting diagram in the stable homotopy category~$SH(S)$.

For an algebraic vector bundle $\alpha \downarrow X$ over a smooth scheme~$X$
(over our base scheme~$S$) we let $E(\alpha)$ denote its total space,
let $E_0(\alpha) = E(\alpha) \setminus z(X)$ be the complement of its zero
section, and let the Thom space $Th(\alpha) = E(\alpha) / E_0(\alpha)$ be the
motivic quotient space, formed as in~\cite{Voe03a}*{\S4}.

For $n\ge1$ let $\mu_\ell$ act diagonally on $\bA^n$, let $\bA^n_0 =
\bA^n \setminus \{0\}$, and let $L^{2n-1} = \bA^n_0/\mu_\ell$ denote
the $n$-th algebraic lens space, which is smooth and quasi-projective.
Write $[x_1, \dots, x_n]$ in $L^{2n-1}$ for the image of $(x_1,
\dots, x_n)$ in $\bA^n_0$.  The inclusion $\mu_\ell \subset \bG_m$
induces a projection $L^{2n-1} \to \bP^{n-1}$.  Let $\gamma_n =
\gamma^1_n \downarrow L^{2n-1}$ be the pullback of the tautological
line bundle~$\sO(-1)$ over~$\bP^{n-1}$, with total space given by the
balanced product
$$
E(\gamma_n) = \bA^n_0 \times_{\mu_\ell} \bA^1
$$
and bundle projection mapping $[x_1, \dots, x_n; y]$
to $[x_1, \dots, x_n]$.
Let $\epsilon_n = \epsilon^n_n \downarrow L^{2n-1}$
be the trivial rank~$n$ bundle over~$L^{2n-1}$, with total space
$E(\epsilon_n) = L^{2n-1} \times \bA^n$
and bundle projection to the first factor.  There is a canonical
embedding $\gamma_n \to \epsilon_n$, given in coordinates by
$$
[x_1, \dots, x_n; y]
	\longmapsto ([x_1, \dots, x_n], x_1 y, \dots, x_n y) \,.
$$
Let $\zeta_n = \zeta^{n-1}_n \downarrow L^{2n-1}$ be its cokernel, so that
there is a short exact sequence
$$
0 \to \gamma_n \longto \epsilon_n \longto \zeta_n \to 0
$$
of algebraic vector bundles over~$L^{2n-1}$.  Let $\gamma_n^*$,
$\epsilon_n^*$ and $\zeta_n^*$ denote the dual bundles, fitting in
a short exact sequence
\begin{equation} \label{eq:SES-of-dual-bundles}
0 \to \zeta_n^* \overset{j}\longto \epsilon_n^*
	\longto \gamma_n^* \to 0 \,.
\end{equation}
Here the total space of $\gamma_n^*$ is given by the orbit space
$E(\gamma_n^*) = (\bA^n_0 \times \bA^1)/\mu_\ell$, where $\mu_\ell$
acts diagonally.

More generally, the total space of the $k$-fold direct
sum $k \gamma_n^* = \gamma_n^* \oplus \dots \oplus \gamma_n^*$,
where $k\ge0$, is
$$
E(k \gamma_n^*) = (\bA^n_0 \times \bA^k)/\mu_\ell \,,
$$
which comes with a canonical map to $\bA^{n+k}_0/\mu_\ell = L^{2n+2k-1}$.
By~\cite{MV99}*{Lem.~3.1.6}, the inclusion
$$
Th(k\gamma_n^*) = \frac{E(k\gamma_n^*)}{E_0(k\gamma_n^*)} =
	\frac{(\bA^n_0 \times \bA^k)/\mu_\ell}
		{(\bA^n_0 \times \bA^k_0)/\mu_\ell}
	\longto \frac{(\bA^n \times \bA^k)_0/\mu_\ell}
		{(\bA^n \times \bA^k_0)/\mu_\ell}
$$
is an equivalence of Nisnevich sheaves, and by~\cite{MV99}*{Ex.~3.2.2}
the projection $(\bA^n \times \bA^k_0)/\mu_\ell \to \bA^k_0/\mu_\ell
= L^{2k-1}$ is an $\bA^1$-homotopy equivalence, so there is a homotopy
cofiber sequence
\begin{equation} \label{eq:stuntedlens}
L^{2k-1} \overset{i^n}\longto L^{2n+2k-1} \longto Th(k\gamma_n^*)
\end{equation}
of motivic spaces.  Following
James~\cite{Jam59}*{p.~117}, Atiyah~\cite{Ati61}*{Prop.~4.3}
and~Kambe--Matsunaga--Toda~\cite{KMT66}*{Thm.~1} we may therefore write
$Th(k\gamma_n^*) = L_{2k}^{2n+2k-1}$ for the Thom space of $k\gamma_n^*$
over $L^{2n-1}$, and refer to it as a motivic stunted lens space.

Following Mahowald and Adams~\cite{Ada74b}*{p.~4}, we are, however, more
interested in the cases where $k = -m$ is negative, corresponding to
Thom spectra $Th(-m \gamma_n^*) = L_{-2m}^{2n-2m-1}$ of virtual bundles
$-m\gamma_n^*$.  In view of~\eqref{eq:SES-of-dual-bundles}, $-m\gamma_n^*
\equiv m\zeta_n^* - m\epsilon_n^*$ as virtual bundles over~$L^{2n-1}$,
where $m \epsilon_n^*$ is trivial of rank~$mn$, which leads to the
following definition.

\begin{definition}
For $m\ge0$ let
$$
L_{-2m}^{2n-2m-1} = Th(-m\gamma_n^*) = \Sigma^{-2mn,-mn} Th(m\zeta_n^*)
$$
denote a (finite, motivic) stunted lens spectrum.
\end{definition} 

Consider the inclusion $i \: L^{2n-1} \to L^{2n+1}$.  There are natural
isomorphisms
$i^*(\gamma_{n+1}) \cong \gamma_n$,
$i^*(\epsilon_{n+1}) \cong \epsilon_n \oplus \epsilon^1_n$ and
$i^*(\zeta_{n+1}) \cong \zeta_n \oplus \epsilon^1_n$,
where $\epsilon^1_n$ is trivial of rank~$1$.
Dually, $i^*(\gamma_{n+1}^*) \cong \gamma_n^*$,
$i^*(\epsilon^*_{n+1}) \cong \epsilon^*_n \oplus (\epsilon^1_n)^*$ and
$i^*(\zeta^*_{n+1}) \cong \zeta^*_n \oplus (\epsilon^1_n)^*$.

Consider also the inclusion $j \: \zeta_n^* \to \epsilon_n^*$
of bundles over~$L^{2n-1}$.

\begin{definition} \label{def:i-j-maps-for-lens-spectra}
Let
$$
i \: L_{-2m}^{2n-2m-1} = Th(-m\gamma_n^*)
	\longto Th(-m\gamma_{n+1}^*) = L_{-2m}^{2n-2m+1}
$$
and
$$
j \: L_{-2m-2}^{2n-2m-3} = Th(-(m+1)\gamma_n^*)
	\longto Th(-m\gamma_n^*) = L_{-2m}^{2n-2n-1}
$$
be the maps obtained by applying
$\Sigma^{-2m(n+1),-m(n+1)}$
and
$\Sigma^{-2(m+1)n,-(m+1)n}$ to
$$
\Sigma^{2m,m} Th(m\zeta_n^*)
	\overset{\sh}\cong Th(m(\zeta_n^* \oplus (\epsilon^1_n)^*))
	\overset{m i}\longto Th(m\zeta_{n+1}^*)
$$
and
$$
Th((m+1)\zeta_n^*) \overset{m \id \oplus j}\longto
	Th(m\zeta_n^* \oplus \epsilon_n^*) = \Sigma^{2n,n} Th(m\zeta_n^*)
\,,
$$
respectively.  Here $\sh$ denotes the isomorphism of spectra induced
by the shuffle $m\zeta_n^* \oplus m(\epsilon^1_n)^* \cong
m(\zeta_n^* \oplus (\epsilon^1_n)^*)$.
\end{definition}

\begin{definition}
Let $-\epsilon \in \pi_{0,0}(\bS)$ be the class of the symmetry
isomorphism $\gamma \: S^{2,1} \wedge S^{2,1} \cong S^{2,1} \wedge
S^{2,1}$.  It satisfies $(-\epsilon)^2 = 1$, since $\gamma^2 = \id$.
\end{definition}

\begin{lemma} \label{lem:i-j-rectangle}
The rectangle
$$
\xymatrix{
L_{-2m-2}^{2n-2m-3} = Th(-(m+1)\gamma_n^*) \ar[r]^-{i} \ar[d]_-{j}
	& Th(-(m+1)\gamma_{n+1}^*) = L_{-2m-2}^{2n-2m-1} \ar[dd]^-{j} \\
L_{-2m}^{2n-2m-1} = Th(-m\gamma_n^*) \ar[d]_-{(-\epsilon)^{mn}}^-{\simeq} \\
L_{-2m}^{2n-2m-1} = Th(-m\gamma_n^*) \ar[r]^-{i}
	& Th(-m\gamma_{n+1}^*) = L_{-2m}^{2n-2m+1}
}
$$
commutes up to homotopy.
\end{lemma}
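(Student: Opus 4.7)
My approach is to desuspend and reduce to a comparison of Thom-spectrum maps arising from bundle morphisms over the inclusion $i\colon L^{2n-1} \hookrightarrow L^{2n+1}$, and then to identify the discrepancy between the two paths as a well-understood shuffle of direct summands.  Applying $\Sigma^{2(m+1)(n+1),(m+1)(n+1)}$ to the entire rectangle, both composites become maps
$$
Th\bigl((m+1)\zeta_n^* \oplus (m+1)(\epsilon_n^1)^*\bigr) \longto Th\bigl(m\zeta_{n+1}^* \oplus \epsilon_{n+1}^*\bigr)
$$
built from the shuffle $sh$, the $i$-pushforward on Thom spectra, and the bundle inclusions $j_n$ or $j_{n+1}$.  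The key compatibility input is that pulling back along $i$ gives canonical isomorphisms $i^*\zeta_{n+1}^* \cong \zeta_n^* \oplus (\epsilon_n^1)^*$ and $i^*\epsilon_{n+1}^* \cong \epsilon_n^* \oplus (\epsilon_n^1)^*$, together with $i^*j_{n+1} = j_n \oplus \id$; this ensures that applying $j$ before or after the $i$-pushforward yields the same underlying bundle morphism, so that what remains is a pure sign comparison between two shuffle permutations of the summands.

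The top-row composite performs a single shuffle on all $(m+1)$ pairs, interleaving $(m+1)\zeta_n^* \oplus (m+1)(\epsilon_n^1)^*$ into $(m+1) i^*\zeta_{n+1}^*$, then pushes forward and applies $j_{n+1}$ to the last copy of $\zeta_{n+1}^*$.  The left-column composite first replaces one $\zeta_n^*$ by the rank-$n$ summand $\epsilon_n^*$ via $j_n$, then moves this $\epsilon_n^*$ past $m$ of the rank-one summands $(\epsilon_n^1)^*$ in order to prepare the groupings $m\,i^*\zeta_{n+1}^* \oplus i^*\epsilon_{n+1}^*$, and only afterwards interleaves the first $m$ pairs before pushing forward.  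Under the motivic symmetry rule that the swap of Thom spectra of ranks $p,q$ contributes a factor of $(-\epsilon)^{pq}$, the transpositions specific to the second path are exactly those moving $\epsilon_n^*$ past $m$ copies of $(\epsilon_n^1)^*$, which together contribute the advertised factor $(-\epsilon)^{n\cdot m} = (-\epsilon)^{mn}$.

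The main obstacle is the consistent bookkeeping of the auxiliary signs arising from the identifications $\Sigma^{2k,k}\,Th(V) \cong Th(V \oplus \epsilon^k)$ (with trivial summands inserted on the right), from the specific ordering chosen in the definition of $sh$, and from the two different factorizations of the common suspension $\Sigma^{-2(m+1)(n+1),-(m+1)(n+1)}$ on the two sides of the diagram.  The delicate part is to verify that the remaining shuffles (the $(m+1)$-pair interleaving on the top row versus the $m$-pair interleaving on the left column) contribute signs that cancel pairwise, so that the only surviving discrepancy is the announced factor $(-\epsilon)^{mn}$.
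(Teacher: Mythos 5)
Your proposal is correct and follows essentially the same route as the paper's proof: suspend by $\Sigma^{2(m+1)(n+1),(m+1)(n+1)}$ to replace the virtual bundles by actual ones, compare the two composites as maps $Th\bigl((m+1)\zeta_n^*\oplus(m+1)(\epsilon_n^1)^*\bigr)\to Th\bigl(m\zeta_{n+1}^*\oplus\epsilon_{n+1}^*\bigr)$ using the compatibility of $j$ with restriction along $i$, and isolate the discrepancy as the symmetry $\epsilon_n^*\oplus m(\epsilon_n^1)^*\cong m(\epsilon_n^1)^*\oplus\epsilon_n^*$, which contributes $(-\epsilon)^{mn}$. The paper packages the ``remaining shuffles cancel'' step you flag as the delicate point into two strictly commuting diagrams, but the underlying analysis is the same.
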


\begin{proof}
The diagrams
$$
\xymatrix{
Th((m+1)(\zeta_n^* \oplus (\epsilon^1_n)^*))
	\ar[r]^-{(m+1)i} \ar[d]_-{m \id \oplus j \oplus \id}
	& Th((m+1)\zeta_{n+1}^*) \ar[d]^-{m \id \oplus j} \\
Th(m(\zeta_n^* \oplus (\epsilon^1_n)^*) \oplus \epsilon_n^*
	\oplus (\epsilon^1_n)^*) \ar[r]^-{m i \oplus i}
	& Th(m\zeta_{n+1}^* \oplus \epsilon_{n+1}^*)
}
$$
and
$$
\xymatrix{
Th((m+1) \zeta_n^* \oplus (m+1)(\epsilon^1_n)^*)
	\ar[r]^-{\sh} \ar[d]_-{m \id \oplus j \oplus \id}
    & Th((m+1) (\zeta_n^* \oplus (\epsilon^1_n)^*))
	\ar[dd]^-{m \id \oplus j \oplus \id} \\
Th(m \zeta_n^* \oplus \epsilon_n^* \oplus (m+1)(\epsilon^1_n)^*)
	\ar[d]_-{\id \oplus \chi_{m,n} \oplus \id}^-{\cong} & \\
Th(m \zeta_n^* \oplus m (\epsilon^1_n)^*
    \oplus \epsilon_n^* \oplus (\epsilon^1_n)^*)
	\ar[r]^-{\sh \oplus \id}
    & Th(m (\zeta_n^* \oplus (\epsilon^1_n)^*) \oplus
	\epsilon_n^* \oplus (\epsilon^1_n)^*)
}
$$
commute strictly, where $\chi_{m,n}$ is induced by the symmetry isomorphism
$$
\epsilon_n^* \oplus m(\epsilon^1_n)^*
	\cong m(\epsilon^1_n)^* \oplus \epsilon_n^* \,,
$$
hence is homotopic to multiplication by $(-\epsilon)^{mn}$.  Applying
$$
\Sigma^{-2(m+1)(n+1), -(m+1)(n+1)}
$$
yields the stated homotopy commutative rectangle.
\end{proof}

\begin{corollary} \label{cor:i4-j-square}
The square
$$
\xymatrix{
L_{-2m-2}^{2n-2m-3} = Th(-(m+1)\gamma_n^*) \ar[r]^-{i^4} \ar[d]_-{j}
	& Th(-(m+1)\gamma_{n+4}^*) = L_{-2m-2}^{2n-2m+5} \ar[d]^-{j} \\
L_{-2m}^{2n-2m-1} = Th(-m\gamma_n^*) \ar[r]^-{i^4}
	& Th(-m\gamma_{n+4}^*) = L_{-2m}^{2n-2m+7}
}
$$
commutes up to homotopy.
\end{corollary}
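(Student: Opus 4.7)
The plan is to iterate Lemma~\ref{lem:i-j-rectangle} four times and observe that the accumulated signs cancel. First I would decompose each horizontal $i^4$ as the composition of four copies of~$i$ along the chain $L^{2n-1} \subset L^{2n+1} \subset L^{2n+3} \subset L^{2n+5} \subset L^{2n+7}$. Stacking four copies of the rectangle from Lemma~\ref{lem:i-j-rectangle} side by side, each with a single $j$ running vertically and one of the four $i$'s on top, exhibits the desired large square as a horizontal pasting of four small rectangles.

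The $k$-th small rectangle (for $k = 1, 2, 3, 4$) has source index $n + k - 1$, so by Lemma~\ref{lem:i-j-rectangle} it commutes up to the sign $(-\epsilon)^{m(n+k-1)}$. Composing the four homotopies gives that $j \circ i^4 \simeq i^4 \circ j$ up to the total factor
\begin{equation*}
(-\epsilon)^{m\bigl(n + (n+1) + (n+2) + (n+3)\bigr)} = (-\epsilon)^{m(4n+6)} = (-\epsilon)^{2m(2n+3)}.
\end{equation*}
Since $(-\epsilon)^2 = 1$, this total factor is the identity, so the $i^4$--$j$ square commutes up to homotopy.

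The main technical point is to justify that the four homotopies from Lemma~\ref{lem:i-j-rectangle} paste cleanly: each sign $(-\epsilon)^{mk}$ is a class in $\pi_{0,0}(\bS)$ acting on a lens spectrum, and hence commutes past all subsequent instances of~$i$ and~$j$ that are inserted on either side. Once this is observed, the argument reduces to the arithmetic observation that $4n + 6$ is even, which forces the aggregated sign to be trivial. I do not anticipate any further obstacle; the key insight is precisely that the exponent $2$ in $(-\epsilon)^2 = 1$ matches the factor of~$2$ coming from averaging $n$, $n+1$, $n+2$, $n+3$ in pairs.
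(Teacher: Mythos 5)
Your proposal is correct and follows exactly the paper's own argument: compose four instances of Lemma~\ref{lem:i-j-rectangle}, note the accumulated sign is $(-\epsilon)^{m(n+(n+1)+(n+2)+(n+3))} = (-\epsilon)^{2m(2n+3)}$, and invoke $(-\epsilon)^2 = 1$. The paper states this in one line, relying implicitly on the naturality of the $(-\epsilon)^{mk}$ self-maps to paste the homotopies, which you spell out slightly more carefully.
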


\begin{proof}
This follows from Lemma~\ref{lem:i-j-rectangle}, since $mn + m(n+1) +
m(n+2) + m(n+3)$ is always even.
\end{proof}

\begin{definition} \label{def:infinite-lens-spectra}
Let the (infinite, motivic) stunted lens spectrum
$$
L_{-2m}^\infty = \hocolim_n L_{-2m}^{2n-2m-1}
	= \hocolim_n Th(-m\gamma_n^*)
$$
be the homotopy colimit of the maps
$$
\dots \to L_{-2m}^{2n-2m-1} \overset{i}\longto L_{-2m}^{2n-2m+1}
	\to \dots \,.
$$
For a fixed choice of commuting homotopies in
Corollary~\ref{cor:i4-j-square}, let $j \: L_{-2m-2}^\infty \longto
L_{-2m}^\infty$ be the induced map.  Let
$$
L_{-\infty}^\infty = \holim_m L_{-2m}^\infty
$$
be the homotopy limit of the resulting tower
$$
\dots \to L_{-2m-2}^\infty \overset{j}\longto L_{-2m}^\infty
	\to \dots \,.
$$
\end{definition}

Recall Notation~\ref{not:H-of-Bmu-Bsigma}.

\begin{lemma} \label{lem:cohomology-lens}
$$
H^{*,*}(L^{2n-1}) = H^{*,*}[u,v]/(u^2 = \tau v + \rho u, v^n) \,,
$$
where $v$ is the mod~$\ell$ Euler class of $\gamma_n^* \downarrow
L^{2n-1}$ and $\beta(u) = v$.
\end{lemma}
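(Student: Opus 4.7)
The plan is to derive this from the known computation $H^{*,*}(B\mu_\ell) = H^{*,*}[u,v]/(u^2 - \tau v - \rho u)$ cited in Notation~\ref{not:H-of-Bmu-Bsigma}, by truncation via a Gysin-style cofiber sequence.

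First, I would establish a cofiber sequence of motivic spaces
$$L^{2n-1} \longto B\mu_\ell \longto Th(n\gamma_\infty),$$
where $\gamma_\infty$ denotes the canonical line bundle on $B\mu_\ell$ obtained as the colimit of the~$\gamma_m$.  Using a Morel--Voevodsky model $E\mu_\ell$ for a $\mu_\ell$-free, $\bA^1$-contractible ind-scheme (for instance $\colim_m \bA^m_0$ with $\mu_\ell$ acting diagonally), apply the Borel construction $E\mu_\ell \times_{\mu_\ell} (-)$ to the pointed cofiber sequence $\bA^n_0 \hookrightarrow \bA^n \longto S^{2n,n}$ with the standard diagonal $\mu_\ell$-action.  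Freeness on $\bA^n_0$ gives $E\mu_\ell \times_{\mu_\ell} \bA^n_0 \simeq L^{2n-1}$; $\bA^1$-contractibility of $\bA^n$ gives $E\mu_\ell \times_{\mu_\ell} \bA^n \simeq B\mu_\ell$; and the cofiber is by construction the Thom space of the associated rank-$n$ bundle $n\gamma_\infty \downarrow B\mu_\ell$.

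Next, apply mod-$\ell$ motivic cohomology.  Voevodsky's Thom isomorphism gives $H^{*,*}(Th(n\gamma_\infty)) \cong \Sigma^{2n,n} H^{*,*}(B\mu_\ell)$ and identifies the map to $H^{*,*}(B\mu_\ell)$ with multiplication by the mod-$\ell$ Euler class $e(n\gamma_\infty) = e(\gamma_\infty)^n = (-v)^n$, using that $e(\gamma_\infty) = -e(\gamma_\infty^*) = -v$ for line bundles.  Since $H^{*,*}(B\mu_\ell) = H^{*,*}[u,v]/(u^2 - \tau v - \rho u)$ is free as an $H^{*,*}[v]$-module on $\{1,u\}$, multiplication by $v^n$ is injective, so the long exact cohomology sequence collapses to
$$0 \to H^{*,*}(B\mu_\ell) \overset{\cdot (-v)^n}\longto H^{*,*}(B\mu_\ell) \longto H^{*,*}(L^{2n-1}) \to 0,$$
yielding $H^{*,*}(L^{2n-1}) \cong H^{*,*}(B\mu_\ell)/(v^n) = H^{*,*}[u,v]/(u^2 - \tau v - \rho u, v^n)$.

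Finally, naturality of Euler classes along the inclusion $L^{2n-1} \hookrightarrow B\mu_\ell$, which pulls $\gamma_\infty^*$ back to $\gamma_n^*$, identifies the class $v$ in $H^{2,1}(L^{2n-1})$ with the mod-$\ell$ Euler class of $\gamma_n^*$, and naturality of the Bockstein gives $\beta(u) = v$.  The main obstacle is the first step: verifying, in the motivic setting, that the Borel construction identifications are valid and that the cofiber $E\mu_\ell \times_{\mu_\ell} (\bA^n/\bA^n_0)$ is indeed the Thom space $Th(n\gamma_\infty)$.  Everything after that is a direct cohomological calculation over the known $H^{*,*}(B\mu_\ell)$.
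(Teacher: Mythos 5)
Your proposal is correct, but it takes a genuinely different route from the paper's.  The paper's proof is a one-line reference: rerun the argument of Voevodsky's~\cite{Voe03a}*{Thm.~6.10}, replacing the $\bG_m$-torsor $B\mu_\ell \to \bP^\infty$ by $L^{2n-1} \to \bP^{n-1}$ and using the known $H^{*,*}(\bP^{n-1}) = H^{*,*}[v]/(v^n)$.  That is, the paper builds $H^{*,*}(L^{2n-1})$ ``from below,'' via the Gysin sequence of the circle bundle of $\sO(-\ell)$ over $\bP^{n-1}$, where the crucial vanishing $e(\sO(-\ell)) = \ell v \equiv 0 \bmod \ell$ splits the long exact sequence.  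You instead work ``from above,'' treating $H^{*,*}(B\mu_\ell)$ as known and truncating by the cofiber sequence $L^{2n-1} \to B\mu_\ell \to Th(n\gamma_\infty^*)$ together with injectivity of multiplication by $v^n$.  Both arguments are sound.

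Two observations.  First, the obstacle you flag -- validating the Borel-construction identifications motivically -- can be sidestepped entirely: the cofiber sequence you need is already in the paper.  Equation~\eqref{eq:stuntedlens} gives the cofiber sequence $L^{2k-1} \to L^{2n+2k-1} \to Th(k\gamma_n^*)$ via~\cite{MV99}, and passing to the homotopy colimit over~$n$ (with $k$ fixed and then renamed) produces exactly $L^{2n-1} \to B\mu_\ell \to Th(n\gamma_\infty^*)$ with no appeal to equivariant homotopy theory.  Second, a small sign slip: the associated $\bA^n$-bundle in the Borel construction $E\mu_\ell \times_{\mu_\ell} \bA^n$ (with $\mu_\ell$ acting diagonally) is, in the paper's conventions, the bundle $n\gamma_\infty^*$ rather than $n\gamma_\infty$, since the paper defines $E(\gamma_n^*)$ as the diagonal quotient $(\bA^n_0 \times \bA^1)/\mu_\ell$ and reserves the balanced-product convention for~$\gamma_n$.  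This removes the $(-1)^n$ and makes the Euler class exactly $v^n$, though of course $(v^n) = ((-v)^n)$ as ideals so the conclusion is unaffected.
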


\begin{proof}
This follows by the same argument as for~\cite{Voe03a}*{Thm.~6.10},
working with $L^{2n-1} \to \bP^{n-1}$ in place of $B\mu_\ell \to
\bP^\infty$.
\end{proof}

\begin{definition}
Let $U_{m\zeta_n^*} \in H^{2m(n-1),m(n-1)}(Th(m\zeta_n^*))$ be
the mod~$\ell$ Thom class of $m\zeta_n^* \downarrow L^{2n-1}$.
Let
$$
U_{-m\gamma_n^*} \in H^{-2m,-m}(L_{-2m}^{2n-2m-1})
	= H^{-2m,-m}(Th(-m\gamma_n^*))
$$
be its image under the (de-)suspension isomorphism.
We write $x \mapsto x \cdot U_{m\zeta_n^*}$ and
$x \mapsto x \cdot U_{-m\gamma_n^*}$ for the Thom isomorphisms
\begin{align*}
H^{*,*}(L^{2n-1}) &\cong H^{*+2m(n-1),*+m(n-1)}(Th(m\zeta_n^*)) \\
H^{*,*}(L^{2n-1}) &\cong H^{*-2m,*-m}(Th(-m\gamma_n^*)) \,,
\end{align*}
cf.~\cite{Voe03a}*{Prop.~4.3}.
\end{definition}

\begin{lemma}
The homomorphisms
\begin{align*}
i^* \: H^{*,*}(Th(-m\gamma_{n+1}^*))
	&\longto H^{*,*}(Th(-m\gamma_n^*)) \\
j^* \: H^{*,*}(Th(-m\gamma_n^*))
	&\longto H^{*,*}(Th(-(m+1)\gamma_n^*))
\end{align*}
are given by
\begin{align*}
i^*(x \cdot U_{-m\gamma_{n+1}^*}) &= i^*(x) \cdot U_{-m\gamma_n^*} \\
j^*(x \cdot U_{-m\gamma_n^*}) &= x v \cdot U_{-(m+1)\gamma_n^*} \,.
\end{align*}
\end{lemma}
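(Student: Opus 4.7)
The plan is to reduce both formulas to two standard properties of Thom classes in motivic cohomology: bundle naturality (for $f \: X' \to X$ and $\alpha \downarrow X$, the induced map $Th(f^*\alpha) \to Th(\alpha)$ pulls $U_\alpha$ back to $U_{f^*\alpha}$) and the Euler class formula (for a short exact sequence $0 \to A \to B \to C \to 0$ of bundles over~$X$, the associated map $Th(A) \to Th(B)$ sends $U_B$ to $U_A \cdot e(C)$). Both are consequences of the fact that mod~$\ell$ motivic cohomology is an oriented cohomology theory in the sense of~\cite{Voe03a}*{\S4}, together with multiplicativity of Thom classes for direct sums.

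For the first formula, recall the identification $i^*\zeta_{n+1}^* \cong \zeta_n^* \oplus (\epsilon_n^1)^*$ noted before Definition~\ref{def:i-j-maps-for-lens-spectra}, and the definition of~$i$ as $\Sigma^{-2m(n+1), -m(n+1)}$ applied to $\sh$ followed by $mi$. By bundle naturality applied to the base inclusion $L^{2n-1} \hookrightarrow L^{2n+1}$, the map $mi$ pulls $U_{m\zeta_{n+1}^*}$ back to $U_{m(\zeta_n^* \oplus (\epsilon_n^1)^*)}$, which by multiplicativity is the Thom class of $m\zeta_n^*$ times that of the trivial summand; the latter is absorbed by the shuffle $\sh$ and the outer desuspension, yielding $i^*(U_{-m\gamma_{n+1}^*}) = U_{-m\gamma_n^*}$. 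Naturality of the Thom isomorphism in the base map then gives $i^*(x \cdot U_{-m\gamma_{n+1}^*}) = i^*(x) \cdot U_{-m\gamma_n^*}$ for all~$x$.

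For the second formula, $j$ is the $\Sigma^{-2(m+1)n,-(m+1)n}$-shift of the map of Thom spaces induced by $m\id \oplus j \: (m+1)\zeta_n^* \hookrightarrow m\zeta_n^* \oplus \epsilon_n^*$, where the inner~$j$ comes from the short exact sequence $0 \to \zeta_n^* \to \epsilon_n^* \to \gamma_n^* \to 0$. By multiplicativity, $U_{m\zeta_n^* \oplus \epsilon_n^*} = U_{m\zeta_n^*} \cdot U_{\epsilon_n^*}$. Applying the Euler class formula to the inner~$j$ and using $e(\gamma_n^*) = v$ from Lemma~\ref{lem:cohomology-lens}, one gets $j^*(U_{\epsilon_n^*}) = U_{\zeta_n^*} \cdot v$, hence $(m\id \oplus j)^*(U_{m\zeta_n^* \oplus \epsilon_n^*}) = U_{(m+1)\zeta_n^*} \cdot v$. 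Checking bidegrees, after the outer shift this reads $j^*(U_{-m\gamma_n^*}) = v \cdot U_{-(m+1)\gamma_n^*}$, and naturality of the Thom isomorphism over the common base $L^{2n-1}$ supplies the general formula with~$x$.

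The main technical obstacle is verifying the Euler class formula in the motivic setting, since the short exact sequence $0 \to \zeta_n^* \to \epsilon_n^* \to \gamma_n^* \to 0$ need not split as algebraic vector bundles in the Zariski topology. However, mod~$\ell$ motivic cohomology is a Nisnevich sheaf and the sequence splits Nisnevich-locally, so the formula reduces to the direct-sum case, where it is immediate from multiplicativity of Thom classes together with the fact that the Euler class is the zero-section pullback of the Thom class.
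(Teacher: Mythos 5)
Your treatment of the first formula is essentially the same as the paper's (bundle naturality for $i^*(m\zeta_{n+1}^*) \cong m(\zeta_n^* \oplus (\epsilon^1_n)^*)$, then absorb the trivial summand by suspension), so no issues there.

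For the second formula you correctly identify the real obstacle: the short exact sequence $0 \to \zeta_n^* \to \epsilon_n^* \to \gamma_n^* \to 0$ need not split algebraically, so the ``Euler class formula'' $j^*(U_B) = U_A \cdot e(C)$ is not a formal consequence of multiplicativity. But the fix you propose does not work. Saying that motivic cohomology is a Nisnevich sheaf (more precisely, satisfies Nisnevich descent) does \emph{not} allow you to verify an equation of cohomology classes by checking it on a cover: the restriction map $H^{2c,c}(X) \to \prod_i H^{2c,c}(U_i)$ along a Nisnevich (or Zariski) cover is not injective in general, since $H^{2c,c}$ is a hypercohomology group and the descent spectral sequence has higher terms. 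Concretely, writing $j^*(U_{m\zeta_n^* \oplus \epsilon_n^*}) = U_{(m+1)\zeta_n^*} \cdot f$ with $f \in H^{2,1}(L^{2n-1})$, your argument only shows $f|_{U_i} = e(\gamma_n^*)|_{U_i}$ on an affine cover, which does not pin down $f$.

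The paper instead invokes the Jouanolou trick (\cite{Voe03a}*{Lem.~4.7}): since $L^{2n-1}$ is smooth quasi-projective, there is an affine smooth scheme $X'$ and an affine vector-bundle torsor $p \: X' \to L^{2n-1}$, which is an $\bA^1$-weak equivalence and hence induces an \emph{isomorphism} on all motivic cohomology groups and Thom spectra. Over the affine $X'$ the pulled-back short exact sequence does split, so the formula holds there, and one transports it back along the isomorphism $p^*$. That is the missing step in your argument; you would need to replace the Nisnevich-local splitting by this Jouanolou reduction (or by some other argument that produces a genuine isomorphism, not merely a cover).
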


\begin{proof}
The Thom class of $m\zeta_{n+1}^*$ maps under~$i^*$ to the Thom
class of $i^*(m\zeta_{n+1}^*) = m(\zeta_n^* \oplus (\epsilon^1_n)^*)$,
which corresponds under the suspension isomorphism to the Thom class
of $m\zeta_n^*$.  This proves the first formula, where $x \in
H^{*,*}(L^{2n+1})$.

The Thom class of $m\zeta_n^*$ corresponds under the suspension isomorphism
to the Thom class of $m\zeta_n^* \oplus \epsilon_n^*$.  By the Jouanolou
trick~\cite{Voe03a}*{Lem.~4.7} it maps under~$j^*$ to the Euler class~$v$
of~$\gamma_n^*$ times the Thom class of $(m+1)\zeta_n^*$.  This proves
the second formula, where $x \in H^{*,*}(L^{2n-1})$.
\end{proof}

\begin{proposition} \label{prop:HcLinfty-vs-HBmuloc}
The structure maps $L_{-2m}^{2n-2m-1} \to L_{-2m}^\infty$ and
$L_{-\infty}^\infty \to L_{-2m}^\infty$ induce $\sA$-module isomorphisms
$$
H^{*,*}(L_{-2m}^\infty) \cong H^{*,*}(B\mu_\ell) \{ U_{-m\gamma^*} \}
	\cong H^{*,*}(B\mu_\ell) \{ v^{-m} \}
$$
and
$$
H_c^{*,*}(L_{-\infty}^\infty) = \colim_m H^{*,*}(L_{-2m}^\infty)
	\cong H^{*,*}(B\mu_\ell)_{\loc} \,.
$$
\end{proposition}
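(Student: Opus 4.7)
The plan is to reduce the computation to the motivic cohomology of the finite lens spaces $L^{2n-1}$ via the Thom isomorphism, pass to the Milnor limit over~$n$, and then take the colimit over~$m$ to obtain the desired localization.

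First I would apply the Milnor short exact sequence to $L_{-2m}^\infty = \hocolim_n L_{-2m}^{2n-2m-1}$,
$$
0 \to \limone_n H^{*-1,*}(L_{-2m}^{2n-2m-1})
        \longto H^{*,*}(L_{-2m}^\infty)
        \longto \lim_n H^{*,*}(L_{-2m}^{2n-2m-1}) \to 0 \,.
$$
The Thom isomorphism identifies each $H^{*,*}(L_{-2m}^{2n-2m-1})$ with $H^{*-2m,*-m}(L^{2n-1}) \cdot U_{-m\gamma_n^*}$, and the formula $i^*(x \cdot U_{-m\gamma_{n+1}^*}) = i^*(x) \cdot U_{-m\gamma_n^*}$ shows that the bonding maps in the tower correspond under the Thom isomorphism to the restriction $H^{*,*}(L^{2n+1}) \to H^{*,*}(L^{2n-1})$ on the base. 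By Lemma~\ref{lem:cohomology-lens} this restriction is surjective (it kills only $v^n$), so the $\limone$-term vanishes and
$$
H^{*,*}(L_{-2m}^\infty) \cong H^{*,*}(B\mu_\ell) \cdot U_{-m\gamma^*} \,.
$$

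Second, re-labelling $U_{-m\gamma^*}$ as $v^{-m}$ is consistent with the formula $j^*(x \cdot U_{-m\gamma_n^*}) = xv \cdot U_{-(m+1)\gamma_n^*}$: in the limit, $j^*$ becomes multiplication by $v$, which sends $x \cdot v^{-m}$ to $xv \cdot v^{-m-1}$ and is therefore the identity inclusion $H^{*,*}(B\mu_\ell) \cdot v^{-m} \hookrightarrow H^{*,*}(B\mu_\ell) \cdot v^{-m-1}$ when both are viewed as submodules of $H^{*,*}(B\mu_\ell)[1/v]$.  Passing to the colimit over~$m$ yields
$$
H_c^{*,*}(L_{-\infty}^\infty)
        = \colim_m H^{*,*}(B\mu_\ell) \cdot v^{-m}
        = H^{*,*}(B\mu_\ell)[1/v]
        = H^{*,*}(B\mu_\ell)_{\loc} \,,
$$
matching Definition~\ref{def:H-Bmu-BSigma-loc}.

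Third, for the $\sA$-module structure, I would invoke the remark immediately following Notation~\ref{not:H-of-Bmu-Bsigma}: multiplication by $v^{\ell^k}$ is left $A(k)$-linear on $H^{*,*}(B\mu_\ell)$, so the localization $H^{*,*}(B\mu_\ell)_{\loc}$ admits a unique $\sA$-module structure extending the one on $H^{*,*}(B\mu_\ell)$.  At each finite stage, the Cartan formula together with the motivic Wu formula for Steenrod operations on Thom classes determines the $\sA$-action on $H^{*,*}(L_{-2m}^{2n-2m-1})$, and these are compatible with the maps $i^*$ and $j^*$; in the case $m=0$, where $L_0^\infty \simeq (B\mu_\ell)_+$, they restrict to the standard $\sA$-action on $H^{*,*}(B\mu_\ell)$. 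By uniqueness of the extension, the two $\sA$-structures must coincide, which gives the claimed isomorphisms. The main obstacle is this last verification: making rigorous that the Steenrod operations on $U_{-m\gamma^*}$ behave exactly as on the formal symbol $v^{-m}$ in the localization. The cleanest route is via the uniqueness argument above, since it reduces the check to the non-negative part where the Thom isomorphism is manifestly $\sA$-linear after a degree shift.
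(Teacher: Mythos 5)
Your first two steps agree with the paper's proof: the $\limone$-vanishing from surjectivity of the $i^*$ maps, the Thom-class identification, and passage to the colimit via the $j^*$ formula are all essentially what the paper does. The gap is in the $\sA$-module step.

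Your ``uniqueness of extension'' argument does not actually close the loop. The uniqueness you cite for $H^{*,*}(B\mu_\ell)_{\loc}$ is uniqueness \emph{among} $\sA$-module structures that (i) restrict to the standard one on $H^{*,*}(B\mu_\ell)$ and (ii) make multiplication by $v^{\ell^k}$ left $A(k)$-linear. To apply it, you must verify that the topologically-induced $\sA$-structure on $H_c^{*,*}(L_{-\infty}^\infty)$ satisfies~(ii) as well as~(i). Condition~(i) follows from the case $m=0$, as you note. But~(ii) cannot be deduced from (i) plus the $\sA$-linearity of the $j^*$ maps: $j^*$ goes from $H^{*,*}(L_{-2m}^\infty)$ to $H^{*,*}(L_{-2m-2}^\infty)$, so pushing forward the known $m=0$ action along $(j^*)^m$ only determines the $\sA$-action on the \emph{image}, i.e., on the $H^{*,*}$-submodule $v^0 H^{*,*}(B\mu_\ell) \subset v^{-m}H^{*,*}(B\mu_\ell)$, not on all of $H^{*,*}(L_{-2m}^\infty)$. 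Verifying~(ii) is exactly equivalent to computing the Steenrod action on $U_{-m\gamma^*}$, which is the thing you flagged as the ``main obstacle,'' so the uniqueness step is a reformulation of the problem rather than a resolution of it. Moreover, the final sentence, ``the Thom isomorphism is manifestly $\sA$-linear after a degree shift,'' is false for nontrivial bundles such as $m\gamma_n^*$; the Thom class has nontrivial Steenrod operations, and there is no general motivic Wu formula you can simply invoke here. The paper instead uses the short exact sequence in cohomology coming from the cofiber sequence $L^{2m-1} \to L^{2n+2m-1} \to Th(m\gamma_n^*)$, exploiting that this Thom space embeds $\sA$-linearly in a larger lens space, then upgrades to $U_{m\zeta_n^*}$ by the Jouanolou trick and Cartan formula, and finally to $U_{-m\gamma_n^*}$ by stability. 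You need to supply an argument of this kind to identify the action on the Thom classes; the abstract uniqueness claim does not substitute for it.
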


\begin{proof}
Since each $i^* \: H^{*,*}(Th(-m\gamma_{n+1}^*)) \to
H^{*,*}(Th(-m\gamma_n^*))$ is surjective, the $\lim$-$\limone$ sequence
gives an isomorphism
$$
H^{*,*}(L_{-2m}^\infty) \cong \lim_n H^{*,*}(Th(-m\gamma_n^*))
	\cong \lim_n H^{*,*}(L^{2n-1}) \{U_{-m\gamma_n^*}\} \,.
$$
Letting $U_{-m\gamma^*}$ correspond to the compatible
sequence $(U_{-m\gamma_n^*})_n$ gives the first isomorphism.
The second isomorphism sends~$U_{-m\gamma^*}$ to $v^{-m}$.
The induced homomorphism
$j^* \: H^{*,*}(B\mu_\ell) \{ U_{-m\gamma^*} \}
\to H^{*,*}(B\mu_\ell) \{ U_{-(m+1)\gamma^*} \}$
maps $U_{-m\gamma^*}$ to $v \cdot U_{-(m+1)\gamma^*}$, hence
corresponds to the homomorphism
$$
H^{*,*}(B\mu_\ell) \{ v^{-m} \}
	\longto H^{*,*}(B\mu_\ell) \{ v^{-m-1} \}
$$
sending $v^{-m}$ to $v \cdot v^{-m-1}$.  It follows that
$\colim_m H^{*,*}(L_{-2m}^\infty)$, i.e., the continuous cohomology
$H_c^{*,*}(L_{-\infty}^\infty)$, is isomorphic to the localization
$H^{*,*}(B\mu_\ell) [1/v] = H^{*,*}(B\mu_\ell)_{\loc}$.

It remains to justify that these isomorphisms are compatible with the
Steenrod operations.  The short exact sequence
$$
0 \to H^{*,*}(Th(m\gamma_n^*)) \longto H^{*,*}(L^{2n+2m-1})
	\overset{i^{n*}}\longto H^{*,*}(L^{2m-1}) \to 0
$$
induced from~\eqref{eq:stuntedlens} shows that the Steenrod action on
$U_{m\gamma_n^*}$ matches that on~$v^m$ in $H^{*,*}(L^{2n-1}) \{v^m\}$.
By another application of the Jouanolou trick, and the Cartan formula,
it follows that the Steenrod action on $U_{m\zeta_n^*}$ is compatible
with that on $\Sigma^{2mn,mn} v^{-m}$.  Hence, by stability, the
Steenrod action on $U_{-m\gamma_n^*}$ matches that on $v^{-m}$ in
$H^{*,*}(L^{2n-1}) \{v^{-m}\}$.  Passing to the limit over~$n$ and the
colimit over~$m$ completes the argument.
\end{proof}

The next two lemmas confirm the assumptions required (for recognition of
the $E_2$-term and conditional convergence) of the delayed limit Adams
spectral sequence for $\Sigma L_{-\infty}^\infty$.

\begin{lemma} \label{lem:Thom-cellular}
The spectra $L_{-2m}^{2n-2m-1}$ are cellular of finite type.
The spectra $L_{-2m}^\infty$ are cellular and bounded below.
\end{lemma}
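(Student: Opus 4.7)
My plan is to reduce cellularity of $L_{-2m}^{2n-2m-1}$ to that of the Thom space $Th(m\zeta_n^*)$ via the desuspension $L_{-2m}^{2n-2m-1} = \Sigma^{-2mn,-mn} Th(m\zeta_n^*)$, then establish cellularity of the base lens spaces $L^{2n-1}$ and of their Thom spaces in turn, and finally pass to the homotopy colimit to obtain the statement for $L_{-2m}^\infty$.

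First I would prove that $L^{2n-1}$ is cellular of finite type by induction on $n$. For the base case, the $\ell$-th power map identifies $L^1 = \bA^1_0/\mu_\ell \cong \bG_m$, so $\Sigma^\infty L^1_+ \simeq S^{0,0} \vee S^{1,1}$ is cellular with two cells. For the inductive step, specialize the cofiber sequence~\eqref{eq:stuntedlens} to $n=1$:
$$
L^{2k-1} \longto L^{2k+1} \longto Th(k\gamma_1^*).
$$
Since $\bG_m \simeq L^1$ has trivial Picard group, $\gamma_1^*$ is a trivial line bundle, whence $Th(k\gamma_1^*) \simeq \Sigma^{2k,k} L^1_+$ is cellular with two cells. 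Combined with the inductive hypothesis that $L^{2k-1}$ is cellular of finite type, this cofiber sequence shows that $L^{2k+1}$ is cellular, with $2(k+1)$ cells in total (in bidegrees $(2j,j)$ and $(2j+1,j)$ for $0 \le j \le k$, matching the $H^{*,*}$-basis of Lemma~\ref{lem:cohomology-lens}).

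Next I would extend cellularity from $L^{2n-1}$ to its Thom spaces. Using Jouanolou's device to find an affine bundle over $L^{2n-1}$ whose total space is affine, any vector bundle over $L^{2n-1}$ pulls back to a trivial bundle; Nisnevich descent from a suitable cover on which $m\zeta_n^*$ trivializes then produces a cellular structure on $Th(m\zeta_n^*)$. Its cells are obtained from the $2n$ cells of $L^{2n-1}$ by shifting with the Thom class bidegree $(2m(n-1), m(n-1))$, so after desuspending by $(2mn, mn)$ the spectrum $L_{-2m}^{2n-2m-1}$ is cellular of finite type, with $2n$ cells in bidegrees $(2j-2m, j-m)$ and $(2j+1-2m, j-m)$ for $0 \le j \le n-1$. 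For the second statement, $L_{-2m}^\infty = \hocolim_n L_{-2m}^{2n-2m-1}$ is a homotopy colimit of cellular spectra, hence cellular; inspection of the cell bidegrees above, which persist under the maps~$i$, shows that every cell of $L_{-2m}^\infty$ satisfies $p - q \ge -m$, so with respect to the homotopy $t$-structure of~\cite{Hoy15}*{\S2.1} the spectrum lies in $SH(S)_{\ge -m}$ and is in particular bounded below.

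The main obstacle is the transition from cellularity of the smooth scheme $L^{2n-1}$ to cellularity of the Thom space $Th(m\zeta_n^*)$, since vector bundles over cellular motivic spaces need not be stably trivial. This step requires either Jouanolou's affine-bundle trick combined with Nisnevich descent, or an explicit trivialization of $\zeta_n^*$ on the cells of $L^{2n-1}$; both are standard but technical. Once that is in hand, the rest of the proof is bookkeeping with Thom-class bidegrees and homotopy-colimit connectivity.
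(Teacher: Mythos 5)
Your proposal has a genuine gap at the step you yourself flag as ``the main obstacle'': passing from cellularity of $L^{2n-1}$ to cellularity of $Th(m\zeta_n^*)$. The reasoning you sketch --- that Jouanolou's device produces an affine scheme over $L^{2n-1}$ on which ``any vector bundle pulls back to a trivial bundle'' --- fails: vector bundles over affine schemes are arbitrary finitely generated projective modules, not necessarily free, so the Jouanolou replacement does not trivialize $\zeta_n^*$, and nothing in your proposal actually produces the ``suitable cover'' you invoke afterward.

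What the paper supplies, and what your argument is missing, is the explicit Zariski cover of $L^{2n-1}$ by the affines $U_i = (\bA^{i-1} \times \bA^1_0 \times \bA^{n-i})/\mu_\ell$, $1 \le i \le n$. Every nonempty finite intersection is isomorphic to a product of the form $\bA^{n-r} \times \bG_m^r$, hence stably cellular, so the cover is completely stably cellular in the sense of~\cite{DI05}*{Def.~3.7}; and on each $U_i$ the coordinate $x_i$ is invertible, which trivializes $\gamma_n$ and (via the splitting of~\eqref{eq:SES-of-dual-bundles} over the affine $U_i$) also $m\zeta_n^*$. Cellularity of $Th(m\zeta_n^*)$ then follows directly from~\cite{DI05}*{Cor.~3.10}; this subsumes your separate induction for $L^{2n-1}_+$, which is correct in itself but no longer needed. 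Two minor slips elsewhere: the appeal to $\operatorname{Pic}(\bG_m)=0$ to trivialize $\gamma_1^*$ is delicate over a general base $S$ and is better replaced by an explicit trivialization (e.g.\ $[x;y] \mapsto (x^\ell, x^{-1}y)$); and the second family of cells of $L_{-2m}^{2n-2m-1}$ lies in bidegree $(2j+1-2m,\,j+1-m)$, not $(2j+1-2m,\,j-m)$. Neither affects the uniform bound $p_\alpha - q_\alpha \ge -m$, which is what boundedness below requires.
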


\begin{proof}
The Zariski cover of $L^{2n-1}$ by the affines $(\bA^{i-1} \times
\bA^1_0 \times \bA^{n-i})/\mu_\ell$, with $1 \le i \le n$, is
completely stably cellular in the sense of~\cite{DI05}*{Def.~3.7}.
It trivializes~$\gamma_n$, hence also~$m \zeta_n^*$.  It follows as
in~\cite{DI05}*{Cor.~3.10} that $L_{-2m}^{2n-2m-1} = \Sigma^{-2mn,-mn}
Th(m \zeta_n^*)$ is cellular.  Inspection of the argument shows that
it admits a cell structure with finitely many cells, all in bidegrees
$(p_\alpha, q_\alpha)$ satisfying $p_\alpha - q_\alpha \ge -m$.

Since this bound is uniform, it follows by passage to the homotopy
colimit that $L_{-2m}^\infty$ is also cellular, with cells in the same
range of bidegrees.  Hence $L_{-2m}^\infty$ lies in $SH(S)_{\ge-m}$
of the homotopy $t$-structure.
\end{proof}

Recall Definitions~\ref{def:motivic-GEM} and~\ref{def:bifinite-type}.

\begin{lemma} \label{lem:lens-GEMs-bifinite}
The $H$-module spectra $H \wedge L_{-2m}^{2n-2m-1}$ and $H \wedge
L_{-2m}^\infty$ are motivic GEMs of bifinite type.
\end{lemma}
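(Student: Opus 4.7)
The plan is to exploit the cellularity from Lemma~\ref{lem:Thom-cellular} together with the explicit cohomology computations of Lemma~\ref{lem:cohomology-lens} and Proposition~\ref{prop:HcLinfty-vs-HBmuloc} to recognize these spectra as wedges of bigraded suspensions of~$H$, after which the bifinite type condition becomes a bookkeeping exercise on the bidegrees of the generators.

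For the finite spectrum $L_{-2m}^{2n-2m-1}$, Lemma~\ref{lem:cohomology-lens} and the Thom isomorphism identify $H^{*,*}(L_{-2m}^{2n-2m-1})$ as the free $H^{*,*}$-module on the finite basis $\{u^i v^k \cdot U_{-m\gamma_n^*} \mid i \in \{0,1\},\ 0 \le k \le n-1\}$.  Dualization produces a corresponding finite free $H_{*,*}$-module basis of $H_{*,*}(L_{-2m}^{2n-2m-1})$.  Each such homology basis element is realized by an $H$-module map $\Sigma^{p_\alpha, q_\alpha} H \to H \wedge L_{-2m}^{2n-2m-1}$; the wedge of these is a $\pi_{*,*}$-isomorphism and hence an equivalence of $H$-cellular spectra.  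Bifinite type is trivial since the indexing set is finite.

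For $L_{-2m}^\infty$, Proposition~\ref{prop:HcLinfty-vs-HBmuloc} identifies $H^{*,*}(L_{-2m}^\infty) \cong H^{*,*}(B\mu_\ell)\{v^{-m}\}$ as the free $H^{*,*}$-module on $\{u^i v^k \mid i \in \{0,1\},\ k \ge -m\}$ in cohomological bidegrees $(p_\alpha, q_\alpha) = (i + 2k,\ i + k)$.  Since $i = 2q_\alpha - p_\alpha$ and $k = p_\alpha - q_\alpha$ are recovered from the bidegree, at most one generator occupies any given bidegree, so the cohomology is simultaneously a direct sum and a direct product bidegreewise.  Dualization therefore yields a free $H_{*,*}$-module basis of $H_{*,*}(L_{-2m}^\infty)$ in the same bidegrees, compatibly with the presentation $L_{-2m}^\infty = \hocolim_n L_{-2m}^{2n-2m-1}$; wedging the realizing $H$-module maps yields the desired equivalence $\bigvee_\alpha \Sigma^{p_\alpha, q_\alpha} H \simeq H \wedge L_{-2m}^\infty$.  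For bifinite type, the condition $p_\alpha \le p$ reads $i + 2k \le p$ and $p_\alpha - q_\alpha \le p - q$ reads $k \le p - q$; each bounds $k$ from above, so combined with $k \ge -m$ and $i \in \{0,1\}$, only finitely many pairs $(i, k)$ lie in the union of the two conditions.  The only mildly subtle point is the realization step passing from a cohomological free module structure to an $H$-module wedge decomposition, which is routine given the distinct-bidegrees observation and cellularity, so I expect no essential obstacle.
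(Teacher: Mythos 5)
Your proof is correct and follows essentially the same outline as the paper's: invoke cellularity from Lemma~\ref{lem:Thom-cellular}, read off the free module structure with one generator in each bidegree $(i+2k,i+k)$ for $i\in\{0,1\}$ and $k\ge-m$, and check bifinite type by noting both conditions bound $k$ from above while $k\ge-m$ bounds it from below. The one small difference is that the paper works directly with the homology version of Lemma~\ref{lem:cohomology-lens} and the homology Thom isomorphism, whereas you pass through cohomology and then dualize; your dualization is legitimate here precisely because of your distinct-bidegrees observation, but going directly through $H_{*,*}$ as the paper does avoids having to justify that step at all.
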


\begin{proof}
These spectra are $H$-cellular by Lemma~\ref{lem:Thom-cellular}.
The homology version of Lemma~\ref{lem:cohomology-lens} shows that
$H_{*,*}(L^{2n-1})$ is finitely generated and free over $H_{*,*}$
on generators in bidegrees $(i+2k,i+k)$ for $i \in \{0,1\}$ and $0
\le k < n$.  It then follows from the Thom isomorphism in motivic
homology that $H_{*,*}(L_{-2m}^{2n-2m-1})$ is finitely generated and
free on similar generators for $i \in \{0,1\}$ and $-m \le k < n-m$,
and that $H_{*,*}(L_{-2m}^\infty)$ is free on one generator in each
bidegree~$(i+2k,i+k)$ for $i \in \{0,1\}$ and $k \ge -m$.  In particular,
$H_{*,*}(L_{-2m}^\infty)$ is of bifinite type.
\end{proof}

Finally, we construct maps
\begin{equation} \label{eq:PT-maps}
\xymatrix{
\bS \ar[r]^-{c} & \Sigma^{2,1} \bP_{-\infty}^{-1} \ar[r]^-{d} \ar[d]
	& \Sigma L_{-\infty}^{-1} \ar[d] \\
	& \Sigma^{2,1} \bP_{-\infty}^\infty
	& \Sigma L_{-\infty}^\infty \,,
}
\end{equation}
whose composite induces the (large) residue homomorphism from
Definition~\ref{def:residues} in motivic cohomology.

To define $\bP_{-\infty}^\infty$, let the (finite, motivic) stunted
projective spectrum $\bP_{-m}^{n-m-1} = Th(-m \gamma_n^* \downarrow
\bP^{n-1})$ be the Thom spectrum of the negative of $m \gamma_n^* =
\sO(1)^m$ over $\bP^{n-1}$.  We have maps $i \: \bP_{-m}^{n-m-1} \to
\bP_{-m}^{n-m}$ and $j \: \bP_{-m-1}^{n-m-2} \to \bP_{-m}^{n-m-1}$ as in
Definition~\ref{def:i-j-maps-for-lens-spectra}, and let $\bP_{-m}^\infty
= \hocolim_n \bP_{-m}^{n-m-1}$ and $\bP_{-\infty}^\infty = \holim_m
\bP_{-m}^\infty$ as in Definition~\ref{def:infinite-lens-spectra}.
We obtain $H_c^{*,*}(\bP_{-\infty}^\infty) \cong H^{*,*}[v^{\pm1}]$,
by the same arguments as for lens spectra.

\begin{proposition} \label{prop:map-c}
There is a map $c \: \bS \to \Sigma^{2,1} \bP_{-\infty}^{-1}$ inducing
$$
\Sigma^{2,1} v^{-1} \longmapsto 1
$$
in cohomology.  The $H^{*,*}$-module generators $\Sigma^{2,1} v^k$
for $k \le -2$ map to zero.
\end{proposition}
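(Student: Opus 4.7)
My plan is to construct $c$ as the Spanier--Whitehead dual of the collapse map $\bP^\infty_+ \to \bS$, realized through compatible motivic Atiyah duality equivalences at each finite level and obtained by passage to the homotopy limit.

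First, I invoke motivic Atiyah duality for smooth projective varieties, giving a natural equivalence $D(\bP^{m-1}_+) \simeq Th(-T_{\bP^{m-1}})$ in $SH(S)$. The Euler sequence $0 \to \epsilon \to m\gamma_m^* \to T_{\bP^{m-1}} \to 0$ identifies $-T_{\bP^{m-1}}$ with the virtual bundle $\epsilon - m\gamma_m^*$, hence
$$
D(\bP^{m-1}_+) \simeq Th(\epsilon - m\gamma_m^*) = \Sigma^{2,1} Th(-m\gamma_m^*) = \Sigma^{2,1} \bP_{-m}^{-1} \,.
$$

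Second, define $c_m \: \bS \to \Sigma^{2,1} \bP_{-m}^{-1}$ to be the Spanier--Whitehead dual of the collapse map $\pi_m \: \bP^{m-1}_+ \to \bS$ onto the disjoint basepoint. The closed inclusions $\iota_m \: \bP^{m-1}_+ \to \bP^m_+$ satisfy $\pi_{m+1} \circ \iota_m = \pi_m$, and naturality of Atiyah duality identifies the dual of $\iota_m$ (up to the generalized sign $-\epsilon$ already present in Lemma~\ref{lem:i-j-rectangle}) with the $j$-type transition map $\Sigma^{2,1} \bP_{-m-1}^{-1} \to \Sigma^{2,1} \bP_{-m}^{-1}$ in the tower defining $\bP_{-\infty}^{-1}$. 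Passing to homotopy limits yields $c \: \bS \to \holim_m \Sigma^{2,1} \bP_{-m}^{-1} \simeq \Sigma^{2,1} \bP_{-\infty}^{-1}$.

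For the cohomology assertion, $c_m^*$ is the $H^{*,*}$-dual of the pushforward $(\pi_m)_* \: H_{*,*}(\bP^{m-1}_+) \to H_{*,*}$, which sends the basepoint generator in $H_{0,0}(\bP^{m-1}_+)$ to $1$ and the remaining $H^{*,*}$-module generators (dual to $v, v^2, \dots, v^{m-1}$) to zero. Under the Atiyah duality equivalence, the $H^{*,*}$-basis $\Sigma^{2,1} v^{-1}, \Sigma^{2,1} v^{-2}, \dots, \Sigma^{2,1} v^{-m}$ of $H^{*,*}(\Sigma^{2,1} \bP_{-m}^{-1})$ is precisely dual to the $H^{*,*}$-basis $1, v, \dots, v^{m-1}$ of $H_{*,*}(\bP^{m-1}_+)$. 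Therefore $c_m^*(\Sigma^{2,1} v^{-1}) = 1$ and $c_m^*(\Sigma^{2,1} v^{-k}) = 0$ for $2 \le k \le m$; the stated formulas follow by passage to the limit.

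The main obstacle is the identification in the second step: that the Spanier--Whitehead duals of the inclusions $\iota_m$ match, under motivic Atiyah duality, the specific $j$-type transition maps used in the definition of the tower $\{\bP_{-m}^{-1}\}_m$. Naturality of Atiyah duality delivers this only up to homotopy, and the sign ambiguity seen in Lemma~\ref{lem:i-j-rectangle} suggests that one may need to replace $\iota_m$ by $\iota_m^4$ (as in Corollary~\ref{cor:i4-j-square}) in order to produce strictly commutative squares compatible with passage to the homotopy limit. Carrying out this matching carefully, including the trivializations of normal bundles and Thom class conventions, is the technical heart of the argument.
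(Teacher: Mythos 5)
Your proposal takes essentially the same route as the paper: both realize $\Sigma^{2,1}\bP_{-m}^{-1}$ as the Spanier--Whitehead dual $D(\bP^{m-1}_+)$ via motivic Atiyah duality and the Euler sequence, obtain $c$ from the functional duals of the compatible collapse maps $\bP^{m-1}_+ \to S^0$, and pass to the homotopy limit. The only cosmetic differences are that the paper leads with the Pontryagin--Thom collapse $c_n \colon \bS \to Th(\nu \downarrow \bP^{n-1})$ and disposes of $c_n^*(\Sigma^{2,1}v^k)$ for $k \le -2$ by a quick bidegree argument rather than by a dual-basis identification; also, your worry about $(-\epsilon)$-signs and $\iota_m^4$ is not actually needed here, since the tower $\{\Sigma^{2,1}\bP_{-m}^{-1}\}_m$ arises as the dual of the strictly compatible inclusions $\bP^{m-1}_+ \hookrightarrow \bP^m_+$ rather than from the shuffle-dependent $j$-maps of Lemma~\ref{lem:i-j-rectangle}.
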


\begin{proof}
We use that $\bP^{n-1}$ is a smooth projective variety, with
stable normal bundle
$$
\nu = \epsilon^1_n - n \gamma_n^* = \sO - \sO(1)^n \,.
$$
By the construction leading to algebraic Atiyah duality,
see~\cite{Voe03b}*{Prop.~2.7}, \cite{Hu05}*{Cl.~2}
and~\cite{Hoy17}*{\S5.3}, there is a Pontryagin--Thom collapse map
$$
c_n \: \bS \longto Th(\nu \downarrow \bP^{n-1})
	= \Sigma^{2,1} \bP_{-n}^{-1}
$$
inducing the homomorphism $\Sigma^{2,1} v^{-1} \mapsto 1$ in cohomology.
The generators $\Sigma^{2,1} v^k$ for $-n \le k \le -2$ map to zero
for bidegree reasons.  When combined with the Thom diagonal and an
adjunction, this leads to the Atiyah duality equivalence $Th(\nu
\downarrow \bP^{n-1}) \simeq D(\bP^{n-1}_+) = F(\bP^{n-1}_+, \bS)$,
under which~$c_n$ is functionally dual to the collapse map $\bP^{n-1}_+
\to S^0$.  In particular, these maps are compatible up to homotopy for
varying~$n$, and combine to define a map~$c \: \bS \to \Sigma^{2,1}
\bP_{-\infty}^{-1} \simeq D(\bP^\infty_+)$, as required.
\end{proof}

Since $L^{2n-1}$ is only quasi-projective, we need a different
method to obtain the second map of~\eqref{eq:PT-maps}.

\begin{proposition} \label{prop:map-d}
There is a map $d \: \Sigma^{2,1} \bP_{-\infty}^{-1} \to \Sigma
L_{-\infty}^{-1}$ inducing
$$
\Sigma u v^{-1} \longmapsto \Sigma^{2,1} v^{-1}
$$
in cohomology, modulo $H^{*,*}$-multiples of $\Sigma^{2,1} v^k$
for $k \le -2$.
\end{proposition}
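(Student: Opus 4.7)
The plan is to construct $d$ by exploiting the principal $\bG_m$-bundle structure $L^{2n-1} = E_0(\sO(-\ell)\downarrow \bP^{n-1})$ that arises because the $\ell$-th power map on the fibers of $\bA^n_0 \to \bP^{n-1}$ identifies $\bA^n_0/\mu_\ell$ with the complement of the zero section of $\gamma_n^{\otimes\ell} = \sO(-\ell)$. By~\cite{MV99}*{Lem.~3.1.6}, this presentation gives a homotopy cofiber sequence
$$
L^{2n-1}_+ \longto \bP^{n-1}_+ \longto Th(\sO(-\ell)\downarrow\bP^{n-1}).
$$
I first tensor this sequence with the pullback of the virtual bundle $-m\gamma_n^*$, obtaining
$$
L_{-2m}^{2n-2m-1} \longto \bP_{-m}^{n-m-1} \longto Th(-m\gamma_n^*+\sO(-\ell)\downarrow\bP^{n-1}),
$$
and rotate to produce a natural boundary map $\delta_{m,n}$ from the third term to $\Sigma L_{-2m}^{2n-2m-1}$.

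The third term is not literally $\Sigma^{2,1}\bP_{-m}^{n-m-1}$, but it has the same $\sA$-module cohomology, because every Steenrod operation on the Thom class of $\sO(-\ell)$ is controlled by the first Chern class $\ell v$, which vanishes mod $\ell$. The comparison will be made by constructing an explicit map $\Sigma^{2,1}\bP_{-m}^{n-m-1} = Th(-m\gamma_n^* + 1) \to Th(-m\gamma_n^* + \sO(-\ell))$ from the inclusion of line bundles $\sO(-\ell)\hookrightarrow \sO$ (realizing $\sO(-\ell)$ as the ideal sheaf of a hyperplane divisor of multiplicity $\ell$) tensored with $-m\gamma_n^*$, and composing with $\delta_{m,n}$ to produce $\tilde\delta_{m,n}\:\Sigma^{2,1}\bP_{-m}^{n-m-1}\to\Sigma L_{-2m}^{2n-2m-1}$. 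These maps are compatible (up to the generalized signs of Lemma~\ref{lem:i-j-rectangle}) with the structure maps $i$ and $j$ of Definition~\ref{def:i-j-maps-for-lens-spectra}, so passing to $\hocolim_n$ and $\holim_m$ and restricting to the $-1$ upper-index parts yields the desired $d\:\Sigma^{2,1}\bP_{-\infty}^{-1}\to\Sigma L_{-\infty}^{-1}$.

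The cohomology formula is then a direct computation. The untwisted boundary $\delta_n^*$ sends $\Sigma uv^k \mapsto v^k T$ and $\Sigma v^k \mapsto 0$ with $T\in H^{2,1}(Th(\sO(-\ell)))$ the Thom class, as follows from the exactness of the Gysin long exact sequence, the injectivity of $\pi^*\:H^{*,*}(\bP^{n-1})\to H^{*,*}(L^{2n-1})$, and the mod-$\ell$ vanishing of the Euler class. After twisting by $-m\gamma_n^*$, passing through the comparison $T\leftrightarrow\Sigma^{2,1}$, and taking limits, this gives $d^*(\Sigma uv^{-1})=\Sigma^{2,1}v^{-1}$, modulo the $H^{*,*}$-multiples of $\Sigma^{2,1}v^k$ for $k\le -2$ that arise because the comparison map of the second paragraph is determined only up to contributions in lower $v$-degrees. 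The principal obstacle is precisely the second paragraph: verifying that the comparison map built from $\sO(-\ell)\hookrightarrow\sO$ has the claimed leading-term cohomological behavior and is compatible with the limit processes; everything else is formal bookkeeping with the explicit Gysin boundary.
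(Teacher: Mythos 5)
You and the paper start from the same observation---that $L^{2n-1}$ is the complement of the zero section of $\sO(-\ell)\downarrow\bP^{n-1}$---and both feed this into the general cofiber sequence
$$
Th(p^*\beta\downarrow E_0(\alpha))\longto Th(\beta\downarrow X)\longto Th(\alpha\oplus\beta\downarrow X)\,.
$$
The paper, however, takes $\alpha=\sO(-\ell)$ and \emph{chooses $\beta$ so that $\alpha\oplus\beta=\nu$}, the stable normal bundle of $\bP^{n-1}$. Then $Th(\alpha\oplus\beta)=Th(\nu)\simeq\Sigma^{2,1}\bP_{-n}^{-1}$ on the nose, and since $p^*\sO(-\ell)$ is trivial over $L^{2n-1}$, the left-hand term is $Th(-n\gamma_n^*)=L_{-2n}^{-1}$. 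The rotated boundary is therefore exactly $d_n\:\Sigma^{2,1}\bP_{-n}^{-1}\to\Sigma L_{-2n}^{-1}$, with no comparison to be made.

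Your version keeps $\alpha=\sO(-\ell)$ and $\beta=-m\sO(1)$ decoupled, so the third term is $Th(\sO(-\ell)-m\sO(1))$, and you now need a map from $\Sigma^{2,1}\bP_{-m}^{n-m-1}=Th(\sO-m\sO(1))$ into it. This is a genuine gap, and the mechanism you propose---the sheaf inclusion $\sO(-\ell)\hookrightarrow\sO$ given by a degree-$\ell$ divisor---cannot produce it. That inclusion is not a vector bundle monomorphism: it vanishes on the fibers over the divisor, so on total spaces it does not carry the complement of the zero section into the complement of the zero section and hence does not descend to a map of Thom spaces. Even if it did, the direction it would naturally go is $Th(\sO(-\ell))\to Th(\sO)$, the opposite of what you need. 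The observation that $c_1(\sO(-\ell))\equiv 0$ mod $\ell$ yields an abstract isomorphism of $\sA$-modules between the two Thom cohomologies, but the proposition requires an actual map of motivic spectra, which a Chern class coincidence does not supply. The fix is precisely the paper's choice of $\beta$, which makes the unwanted $\sO(-\ell)$ summand disappear upon pullback to $L^{2n-1}$ instead of having to be compared away. The remainder of your argument---the Gysin boundary computation and compatibility with the $i$, $j$ structure maps and the limit processes---would then go through essentially as in the paper.
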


\begin{proof}
For algebraic vector bundles $\alpha, \beta \downarrow X$ over the same
smooth scheme~$X$, where $\beta$ may be virtual, there is a homotopy
cofiber sequence
$$
Th(p^* \beta \downarrow E_0(\alpha))
	\longto Th(\beta \downarrow X)
	\longto Th(\alpha \oplus \beta \downarrow X)
$$
of motivic spectra.  Here $p \: E_0(\alpha) \to X$ denotes the
projection, and $p^* \beta$ is the pullback of $\beta$ along~$p$.
We apply this with $X = \bP^{n-1}$, $\alpha = \gamma_n^{\otimes \ell}
= \sO(-\ell)$ and $\alpha \oplus \beta = \nu$, the stable normal
bundle of~$\bP^{n-1}$.  We identify $E_0(\gamma_n^{\otimes \ell})
\cong L^{2n-1}$, as in~\cite{Voe03a}*{Lem.~6.3}.  Moreover, $p^*
\gamma_n^{\otimes \ell} \cong \epsilon^1_n$ over $L^{2n-1}$, i.e.,
$\sO(-\ell)$ and $\sO$ pull back to the same bundle, which implies
that $p^*\beta \cong -n \gamma_n^*$ as a stable bundle
over $L^{2n-1}$.  This leads to the homotopy cofiber sequence
$$
L_{-2n}^{-1}
	\longto Th(\beta \downarrow \bP^{n-1})
	\longto \Sigma^{2,1} \bP_{-n}^{-1}
	\overset{d_n}\longto \Sigma L_{-2n}^{-1} \,.
$$
The long exact sequence in cohomology shows that the connecting map $d_n$
induces a homomorphism mapping $\Sigma u v^{-1}$ to $\Sigma^{2,1} v^{-1}$,
modulo $H^{*,*}$-multiples of $\Sigma^{2,1} v^k$ for $-n \le k \le -2$.
Again, these maps are compatible up to homotopy for varying~$n$, and
combine to define the required map~$d$.
\end{proof}

\begin{proposition} \label{prop:map-e}
There is a map $e \: \bS \to \Sigma L_{-\infty}^\infty$ of motivic
spectra, inducing the residue homomorphism
$$
e^* = \res \: \Sigma H^{*,*}(B\mu_\ell)_{\loc}
	\longto H^{*,*}
$$
in cohomology.
\end{proposition}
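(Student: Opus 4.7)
The natural candidate is the composite
\[
e \: \bS \overset{c}\longto \Sigma^{2,1} \bP_{-\infty}^{-1} \overset{d}\longto \Sigma L_{-\infty}^{-1} \longto \Sigma L_{-\infty}^\infty
\]
obtained by chasing the top-right path of diagram~\eqref{eq:PT-maps}, with $c$ and~$d$ supplied by Propositions~\ref{prop:map-c} and~\ref{prop:map-d} and the last arrow the canonical inclusion into the homotopy limit. The plan is to verify $e^* = \res$ by evaluating $e^*$ on each $H^{*,*}$-module generator $\Sigma u^i v^k$ of $\Sigma H^{*,*}(B\mu_\ell)_{\loc}$, where $i \in \{0,1\}$ and $k \in \bZ$, and matching the result with $\res$ of Definition~\ref{def:residues}.

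For $\Sigma u^i v^k$ with $i + 2k \ge 0$, I would use the cofiber sequences $L_{-2m}^{-1} \to L_{-2m}^\infty \to \mathrm{cof}_m$ at each finite stage, whose cofibers are built from cells concentrated in topological degrees $\ge 0$; passing to the homotopy limit in~$m$ shows that the pullback $H^{*,*}(\Sigma L_{-\infty}^\infty) \to H^{*,*}(\Sigma L_{-\infty}^{-1})$ sends every such $\Sigma u^i v^k$ to zero, so $e^*(\Sigma u^i v^k) = 0 = \res(\Sigma u^i v^k)$. For $\Sigma u^i v^k$ with $i + 2k \le -1$, the same pullback sends $\Sigma u^i v^k$ to the namesake generator in $H^{*,*}(\Sigma L_{-\infty}^{-1})$. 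Expanding
\[
d^*(\Sigma u^i v^k) = \sum_{j \le -1} a_{i,k,j} \, \Sigma^{2,1} v^j, \qquad a_{i,k,j} \in H^{i+2k-2j-1,\, i+k-j-1},
\]
and applying $c^*$, which by Proposition~\ref{prop:map-c} sends $\Sigma^{2,1} v^{-1} \mapsto 1$ and $\Sigma^{2,1} v^j \mapsto 0$ for $j \le -2$, yields $e^*(\Sigma u^i v^k) = a_{i,k,-1} \in H^{i + 2k + 1,\, i + k}$. Since $H^{p,q}$ vanishes for $p < 0$ and here $p = i + 2k + 1 \le 0$, with equality forcing $(i,k) = (1,-1)$, the coefficient $a_{i,k,-1}$ vanishes except in that single case, where Proposition~\ref{prop:map-d} supplies $a_{1,-1,-1} = 1 \in H^{0,0}$, giving $e^*(\Sigma u v^{-1}) = 1 = \res(\Sigma u v^{-1})$.

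The main hurdle will be the first case, namely showing that the inclusion $\Sigma L_{-\infty}^{-1} \to \Sigma L_{-\infty}^\infty$ really does annihilate every $\Sigma u^i v^k$ with $i + 2k \ge 0$ in cohomology. This requires identifying the cofiber of $L_{-2m}^{-1} \to L_{-2m}^\infty$ with a Thom spectrum over a quotient of lens spaces whose cells lie in topological degrees $\ge 0$, and then verifying compatibility of the resulting long exact sequences with the $\hocolim$ in~$n$ and $\holim$ in~$m$ used to build $L_{-\infty}^{-1}$ and $L_{-\infty}^\infty$. For base schemes with $d = 0$ a crude bidegree count already suffices, but for higher $d$ this cofiber analysis is essential; once it is in place, the remaining arguments are pure bookkeeping, with the constraint $H^{p,q} = 0$ for $p < 0$ carrying almost all of the weight.
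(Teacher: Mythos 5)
Your construction of $e$ as $\iota \circ d \circ c$, with $\iota$ the inclusion $\Sigma L_{-\infty}^{-1} \to \Sigma L_{-\infty}^\infty$, matches the paper's. Where you differ is in verifying $e^* = \res$: you set out to compute $e^*$ on every $H^{*,*}$-module generator $\Sigma u^i v^k$, and you correctly identify the bottleneck, namely that for $i + 2k \ge 0$ (and $d > 0$) neither $\iota^*(\Sigma u^i v^k)$ nor $H^{i+2k+1,\,i+k}$ can be forced to vanish by a bidegree count alone, so the cofiber analysis you sketch but do not carry out would genuinely be needed. The paper avoids this difficulty entirely by invoking Corollary~\ref{cor:res-Ext-eqs} in cohomological degree~$0$: because $\res$ is an $\Ext$-equivalence, it induces an isomorphism $\Hom_{\sA}(H^{*,*}, H^{*,*}) \overset{\cong}\longto \Hom_{\sA}(\Sigma H^{*,*}(B\mu_\ell)_{\loc}, H^{*,*})$, so any left $\sA$-module homomorphism out of $\Sigma H^{*,*}(B\mu_\ell)_{\loc}$ is uniquely determined by its value on the single element $\Sigma u v^{-1}$. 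Your bookkeeping in the $i + 2k \le -1$ case is correct and, specialized to $(i,k) = (1,-1)$, gives $e^*(\Sigma u v^{-1}) = 1 = \res(\Sigma u v^{-1})$; this one computation is all the paper requires. The moral is that the $\Ext$-machinery of Section~\ref{sec:evaluations}, which you had available, short-circuits exactly the hard case your route leaves open. One small notational caution: since both $\Sigma L_{-\infty}^\infty$ and $\Sigma L_{-\infty}^{-1}$ are homotopy limits, the pullbacks your argument manipulates live between continuous cohomology groups $H_c^{*,*}$, identified as in Proposition~\ref{prop:HcLinfty-vs-HBmuloc}, not between ordinary $H^{*,*}$ of the limit spectra, and your argument should be phrased accordingly.
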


\begin{proof}
We take $e$ to be $dc$ followed by the inclusion $\Sigma L_{-\infty}^{-1}
\to \Sigma L_{-\infty}^\infty$ (of the homotopy fiber of $\Sigma
L_{-\infty}^\infty \to \Sigma L_0^\infty$).  To check that $e$ induces
the residue homomorphism, we use Corollary~\ref{cor:res-Ext-eqs}
in cohomological degree~$0$, giving an isomorphism
$$
\res \: \Hom_{\sA}(H^{*,*}, H^{*,*})
	\overset{\cong}\longto
	\Hom_{\sA}(\Sigma H^{*,*}(B\mu_\ell)_{\loc}, H^{*,*}) \,.
$$
In other words, any $\sA$-module homomorphism
$\Sigma H^{*,*}(B\mu_\ell)_{\loc} \to H^{*,*}$ is
characterized by its value on $\Sigma u v^{-1}$.
Since $e^*$ and $\res$ agree on this element, they are equal.
\end{proof}

\section{The motivic Lin and Gunawardena theorems}
\label{sec:motivic-lin-gunawardena}

We can now prove a motivic refinement of the classical theorems of
Lin~\cite{Lin80} (for $\ell=2$) and Gunawardena~\cite{Gun81} (for $\ell$
an odd prime).

Recall that $\mu_\ell$ denotes the algebraic group of $\ell$-th roots of
unity, $L^{2n-1} = (\bA^n \setminus \{0\})/\mu_\ell$ is an algebraic lens
space, $\gamma_n^* \downarrow L^{2n-1}$ is the dual of the tautological
line bundle, $L_{-2m}^{2n-2m-1} = Th(-m \gamma_n^*)$ is a stunted
lens spectrum, and $L_{-2m}^\infty = \hocolim_n L_{-2m}^{2n-2m-1}$ and
$L_{-\infty}^\infty = \holim_m L_{-2m}^\infty$ are infinite lens spectra.
The continuous mod~$\ell$ cohomology $H_c^{*,*}(L_{-\infty}^\infty) =
\colim_m H^{*,*}(L_{-2m}^\infty)$ is isomorphic to the localization
$H^{*,*}(B\mu_\ell)_{\loc} = H^{*,*}[u, v^{\pm1}]/(u^2 = \tau v +
\rho u)$.  The map $e \: \bS \to \Sigma L_{-\infty}^\infty$ induces the
$\Ext$-equivalence $\res \: \Sigma H^{*,*}(B\mu_\ell)_{\loc} \to H^{*,*}$.

\begin{theorem} \label{thm:mot-lin-gun}
Let $S$ be a Noetherian scheme of finite dimension~$d$, essentially
smooth over a field or Dedekind domain, and let $\ell$ be a prime that
is invertible on~$S$.  The $(\ell, \eta)$-completed map
$$
e^\wedge_{\ell,\eta} \: \bS^\wedge_{\ell,\eta}
	\longto (\Sigma L_{-\infty}^\infty)^\wedge_{\ell,\eta}
$$
is a $\pi_{*,*}$-isomorphism.  If $S = \Spec k$ for $k$ a field, then
$e^\wedge_{\ell,\eta}$ is a motivic equivalence.
\end{theorem}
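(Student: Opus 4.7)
The plan is to apply the comparison theorem for the delayed limit Adams spectral sequence (Proposition~\ref{prop:comparison}) to the map $e \: \bS \to \Sigma L_{-\infty}^\infty = \holim_m \Sigma L_{-2m}^\infty$. I would take $X = \bS$ and $Y(m) = \Sigma L_{-2m}^\infty$, with $g = e$, and verify the three hypotheses in turn: (i) $H \wedge X$ and each $H \wedge Y(m)$ is a motivic GEM of bifinite type; (ii) the mod~$\ell$ Adams spectral sequences for~$X$ and each~$Y(m)$ converge conditionally to $\pi_{*,*}$ of their $(\ell,\eta)$-adic completions; (iii) the induced $\sA$-module map $e^* \: H_c^{*,*}(\Sigma L_{-\infty}^\infty) \to H^{*,*}(\bS) = H^{*,*}$ is an $\Ext$-equivalence.

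Hypothesis (i) is immediate for~$\bS$ and follows for the Thom spectra from Lemma~\ref{lem:lens-GEMs-bifinite}. For (ii), the sphere spectrum~$\bS$ and each $\Sigma L_{-2m}^\infty$ are cellular and bounded below in the homotopy $t$-structure (Lemma~\ref{lem:Thom-cellular}), so Mantovani's theorem (Theorem~\ref{thm:mantovani}) supplies conditional convergence. For (iii), Proposition~\ref{prop:HcLinfty-vs-HBmuloc} identifies $H_c^{*,*}(\Sigma L_{-\infty}^\infty) \cong \Sigma H^{*,*}(B\mu_\ell)_{\loc}$ as an $\sA$-module, and Proposition~\ref{prop:map-e} identifies $e^*$ under this isomorphism with the residue homomorphism $\res$. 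By Corollary~\ref{cor:res-Ext-eqs}, $\res$ is an $\Ext$-equivalence. Applying Proposition~\ref{prop:comparison} then yields the $\pi_{*,*}$-isomorphism
$$
e^\wedge_{\ell,\eta} \: \bS^\wedge_{\ell,\eta}
    \overset{\cong}\longto (\Sigma L_{-\infty}^\infty)^\wedge_{\ell,\eta} \,,
$$
which is the first assertion of the theorem.

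For the refinement when $S = \Spec k$ with $k$ a field, the plan is to upgrade the $\pi_{*,*}$-isomorphism to a motivic equivalence by a cellularity argument. Both source and target are built from cellular spectra: $\bS$ is trivially cellular, while Lemma~\ref{lem:Thom-cellular} shows that each $L_{-2m}^\infty$ is cellular, and homotopy limits of cellular spectra together with $(\ell,\eta)$-adic completion preserve cellularity in~$SH(\Spec k)$. Over a field, a $\pi_{*,*}$-isomorphism between cellular motivic spectra is a motivic equivalence, by the cellular Whitehead theorem of Dugger--Isaksen~\cite{DI05}*{Cor.~7.2}. Applying this to $e^\wedge_{\ell,\eta}$ gives the second assertion.

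The main obstacle, apart from bookkeeping, is ensuring condition~(ii) holds uniformly for the entire tower $\{\Sigma L_{-2m}^\infty\}_m$: the connectivity bound supplied by Lemma~\ref{lem:Thom-cellular} depends on~$m$, but Mantovani's theorem only requires each individual $Y(m)$ to be bounded below (not uniformly), so this causes no trouble. A more substantive point is to justify that the cellularity claims survive $(\ell,\eta)$-adic completion at the level of the homotopy limit, which is where the restriction to a base field enters; for a general base~$S$ one does not know that $(-)^\wedge_{\ell,\eta}$ preserves cellularity, which is why only a $\pi_{*,*}$-isomorphism is claimed in that generality.
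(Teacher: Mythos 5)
Your proof of the first assertion (the $\pi_{*,*}$-isomorphism) is correct and follows exactly the same route as the paper: apply Proposition~\ref{prop:comparison} with $X = \bS$, $Y(m) = \Sigma L_{-2m}^\infty$, $g = e$, verifying bifinite type via Lemma~\ref{lem:lens-GEMs-bifinite}, conditional convergence via Lemma~\ref{lem:Thom-cellular} and Theorem~\ref{thm:mantovani}, and the $\Ext$-equivalence via Propositions~\ref{prop:HcLinfty-vs-HBmuloc}, \ref{prop:map-e} and Corollary~\ref{cor:res-Ext-eqs}.

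For the second assertion you propose a cellular Whitehead argument, and this is where there is a genuine gap. You assert that ``homotopy limits of cellular spectra together with $(\ell,\eta)$-adic completion preserve cellularity in $SH(\Spec k)$,'' but this is not true in general. The class of cellular motivic spectra is by definition the smallest localizing subcategory containing the spheres, i.e.\ it is closed under homotopy \emph{colimits}, not homotopy limits. Both $L_{-\infty}^\infty = \holim_m L_{-2m}^\infty$ and the $(\ell,\eta)$-adic completion $(-)^\wedge_{\ell,\eta} = \holim_n (-)/(\ell^n,\eta^n)$ are built via homotopy limits over towers, and there is no general principle ensuring the result is cellular. (Note that the cells of $L_{-2m}^\infty$ go to bidegrees arbitrarily low in weight as $m \to \infty$, so even heuristic finiteness arguments that sometimes let one commute a holim past a cellularization break down here.) Without cellularity of $\bS^\wedge_{\ell,\eta}$ and $(\Sigma L_{-\infty}^\infty)^\wedge_{\ell,\eta}$, the Dugger--Isaksen cellular Whitehead theorem does not apply. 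The paper instead argues via Morel's connectivity and unramifiedness results: for $S = \Spec k$ the homotopy sheaves of the cofiber $Ce^\wedge_{\ell,\eta}$ are pure, hence unramified in Morel's sense, so they vanish as soon as they vanish over the generic points $\Spec k(U)$ of smooth irreducible $U$. The first part of the theorem gives this vanishing, and the conclusion follows. This Morel-style argument does not require any cellularity hypotheses on the homotopy limits, which is precisely why it works here.
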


\begin{proof}
We apply Proposition~\ref{prop:comparison} with $X = \bS$, $Y = \Sigma
L_{-\infty}^\infty$, $Y(m) = \Sigma L_{-2m}^\infty$ and $g = e$.
$$
\bS \overset{e}\longto \Sigma L_{-\infty}^\infty \to \dots \to
	\Sigma L_{-2m}^\infty \to \dots \,.
$$
The $H$-modules $H \wedge \bS$ and $H \wedge \Sigma L_{-2m}^\infty$ are
motivic GEMs of bifinite type by Lemma~\ref{lem:lens-GEMs-bifinite}.
Moreover, $\bS$ and each $\Sigma L_{-2m}^\infty$ is
bounded below in the homotopy $t$-structure on $SH(S)$ by
Lemma~\ref{lem:Thom-cellular}.  By Theorem~\ref{thm:mantovani} the
Adams spectral sequences for~$\bS$ and the $\Sigma L_{-2m}^\infty$
are conditionally convergent to the $(\ell,\eta)$-adic completions.
By Proposition~\ref{prop:del-lim-cond-conv} the delayed limit Adams
spectral sequence for $\Sigma L_{-\infty}^\infty$ is also conditionally
convergent to the $(\ell,\eta)$-adic completion.  The $\sA$-module
homomorphism
$$
e^* \: H_c^{*,*}(\Sigma L_{-\infty}^\infty)
	\longto H^{*,*}(\bS)
$$
agrees with
$$
\res \: \Sigma H^{*,*}(B\mu_\ell)_{\loc} \longto H^{*,*}
$$
via the isomorphism of Proposition~\ref{prop:HcLinfty-vs-HBmuloc},
by Proposition~\ref{prop:map-e}.  Finally, $\res$ is an
$\Ext$-equivalence by Corollary~\ref{cor:res-Ext-eqs}.
Hence the induced map of spectral sequences
\begin{multline*}
e \: E_2^{s,*,*}(\bS)
	= \Ext_{\sA}^{s,*,*}(H^{*,*}, H^{*,*}) \\
\overset{\cong}\longto
	{}^{\del} E_2^{s,*,*}(\Sigma L_{-\infty}^\infty)
	= \Ext_{\sA}^{s,*,*}(\Sigma H^{*,*}(B\mu_\ell)_{\loc}, H^{*,*})
\end{multline*}
is an isomorphism, from the $E_2$-term and onward, and the
map of abutments
$$
e \: \pi_{*,*}(\bS^\wedge_{\ell,\eta})
	\overset{\cong}\longto
\pi_{*,*}((\Sigma L_{-\infty}^\infty)^\wedge_{\ell,\eta})
$$
is an isomorphism of (filtered) bigraded abelian groups.

Let $Ce^\wedge_{\ell,\eta}$ denote the homotopy cofiber
of~$e^\wedge_{\ell,\eta}$.  If $S = \Spec k$ for a field~$k$, then the
homotopy sheaves $\underline{\pi}_{*,*}(Ce^\wedge_{\ell,\eta})$ are
pure in the sense of~\cite{Mor05}*{Def.~6.4.9}, hence unramified in
the sense of~\cite{Mor12}*{Def.~2.1}, by~\cite{Mor05}*{Lem.~6.4.11}
and~\cite{Mor12}*{Thm.~1.9}.  For any (irreducible) smooth
$k$-scheme~$U$ with function field~$k(U)$, the vanishing of
$\pi_{*,*}(Ce^\wedge_{\eta,\ell})$ over $\Spec k(U)$ implies
the vanishing of $\pi_{*,*}(Ce^\wedge_{\eta,\ell})$ over~$U$,
so that $\underline{\pi}_{*,*}(Ce^\wedge_{\eta,\ell}) = 0$ and
$e^\wedge_{\eta,\ell}$ is a motivic equivalence.  This application of
Morel's theorems also appears in~\cite{Hem22}*{Prop.~4}.
\end{proof}

\begin{remark}
As an alternative to our fairly explicit construction of~$e \: \bS \to
\Sigma L_{-\infty}^\infty$ in Proposition~\ref{prop:map-e}, one might
appeal to the weight~$0$ part of the delayed limit Adams spectral sequence
$$
{}^{\del} E_2^{s,t,0}
= \Ext_{\sA}^{s,t,0}(\Sigma H^{*,*}(B\mu_\ell)_{\loc}, H^{*,*})
	\Longrightarrow_s
	\pi_{t-s,0}((\Sigma L_{-\infty}^\infty)^\wedge_{\ell,\eta})
$$
for $\Sigma L_{-\infty}^\infty$ to show the existence of a homotopy class
$e' \in \pi_{0,0}((\Sigma L_{-\infty}^\infty)^\wedge_{\ell,\eta})$
detected in Adams filtration $s=0$ by $\res \: \Sigma
H^{*,*}(B\mu_\ell)_{\loc} \to H^{*,*}$ in
$$
{}^{\del} E_\infty^{0,0,0} \subset {}^{\del} E_2^{0,0,0}
	= \Hom_{\sA}(\Sigma H^{*,*}(B\mu_\ell)_{\loc}, H^{*,*}) \,.
$$
By Corollary~\ref{cor:res-Ext-eqs}
$$
{}^{\del} E_2^{s,t,0} \cong
	\Ext_{\sA}^{s,t,0}(H^{*,*}, H^{*,*}) \,,
$$
and $\res$ corresponds to $\id \: H^{*,*} \to H^{*,*}$ in
$\Hom_{\sA}(H^{*,*}, H^{*,*})$.

If $d < \ell-1$, which is always the case for $S = \Spec k$, the canonical
(or normalized cobar) $\sA$-module resolution of $H^{*,*}$ shows that
${}^{\del} E_2^{s,t,0} \cong \Ext_{\sA}^{s,t,0}(H^{*,*}, H^{*,*}) = 0$
whenever $t-s = -1$.  Hence $d_r^{0,0,0} = 0$ for all~$r\ge2$, so that
${}^{\del} E_\infty^{0,0,0} = {}^{\del} E_2^{0,0,0}$ and $\res$ is an
infinite cycle.

To ensure that $\res$ detects a homotopy class, we also need
strong convergence in its bidegree.  By~\cite{Boa99}*{Thm.~7.3}, see
also~\cite{HR19}*{Thm.~3.9}, it suffices to know that $RE_\infty^{s,t,0}
= 0$ whenever $t-s = +1$.  By~\cite{KW19}*{Cor.~6.1} this condition
is satisfied for $S = \Spec k$, subject to the additional hypothesis
for $\ell=2$ that $k$ has finite virtual cohomological dimension.
The class $e'$ is then represented by a map $e' \: \bS \to (\Sigma
L_{-\infty}^\infty)^\wedge_{\ell,\eta}$, which can be used in place
of~$e$.
\end{remark}

\begin{bibdiv}
\begin{biblist}

\bib{Ada69}{article}{
   author={Adams, J. F.},
   title={Lectures on generalised cohomology},
   conference={
      title={Category Theory, Homology Theory and their Applications, III
      (Battelle Institute Conference, Seattle, Wash., 1968, Vol. Three)},
   },
   book={
      publisher={Springer, Berlin},
   },
   date={1969},
   pages={1--138},
   review={\MR{0251716}},
}

\bib{Ada74a}{book}{
   author={Adams, J. F.},
   title={Stable homotopy and generalised homology},
   series={Chicago Lectures in Mathematics},
   publisher={University of Chicago Press, Chicago, Ill.-London},
   date={1974},
   pages={x+373},
   review={\MR{0402720}},
}

\bib{Ada74b}{article}{
   author={Adams, J. F.},
   title={Operations of the $n$th kind in $K$-theory, and what we don't know
   about $RP^{\infty }$},
   conference={
      title={New developments in topology},
      address={Proc. Sympos. Algebraic Topology, Oxford},
      date={1972},
   },
   book={
      series={London Math. Soc. Lecture Note Ser., No. 11},
      publisher={Cambridge Univ. Press, London},
   },
   date={1974},
   pages={1--9},
   review={\MR{0339178}},
}

\bib{AGM85}{article}{
   author={Adams, J. F.},
   author={Gunawardena, J. H.},
   author={Miller, H.},
   title={The Segal conjecture for elementary abelian $p$-groups},
   journal={Topology},
   volume={24},
   date={1985},
   number={4},
   pages={435--460},
   issn={0040-9383},
   review={\MR{816524}},
   doi={10.1016/0040-9383(85)90014-X},
}

\bib{Ati61}{article}{
   author={Atiyah, M. F.},
   title={Thom complexes},
   journal={Proc. London Math. Soc. (3)},
   volume={11},
   date={1961},
   pages={291--310},
   issn={0024-6115},
   review={\MR{131880}},
   doi={10.1112/plms/s3-11.1.291},
}

\bib{BHHM08}{article}{
   author={Behrens, M.},
   author={Hill, M.},
   author={Hopkins, M. J.},
   author={Mahowald, M.},
   title={On the existence of a $v^{32}_2$-self map on $M(1,4)$ at the prime
   2},
   journal={Homology Homotopy Appl.},
   volume={10},
   date={2008},
   number={3},
   pages={45--84},
   issn={1532-0073},
   review={\MR{2475617}},
}

\bib{BI22}{article}{
   author={Belmont, Eva},
   author={Isaksen, Daniel C.},
   title={$\bR$-motivic stable stems},
   journal={J. Topol.},
   volume={15},
   date={2022},
   number={4},
   pages={1755--1793},
   issn={1753-8416},
   review={\MR{4461846}},
   doi={10.1112/topo.12256},
}

\bib{Boa82}{article}{
   author={Boardman, J. M.},
   title={The eightfold way to BP-operations or $E\sb\ast E$ and all that},
   conference={
      title={Current trends in algebraic topology, Part 1},
      address={London, Ont.},
      date={1981},
   },
   book={
      series={CMS Conf. Proc.},
      volume={2},
      publisher={Amer. Math. Soc., Providence, R.I.},
   },
   date={1982},
   pages={187--226},
   review={\MR{686116}},
}

\bib{Boa99}{article}{
   author={Boardman, J. M.},
   title={Conditionally convergent spectral sequences},
   conference={
      title={Homotopy invariant algebraic structures},
      address={Baltimore, MD},
      date={1998},
   },
   book={
      series={Contemp. Math.},
      volume={239},
      publisher={Amer. Math. Soc., Providence, RI},
   },
   date={1999},
   pages={49--84},
   review={\MR{1718076}},
   doi={10.1090/conm/239/03597},
}

\bib{Bru82}{article}{
   author={Bruner, Robert R.},
   title={Two generalizations of the Adams spectral sequence},
   conference={
      title={Current trends in algebraic topology, Part 1},
      address={London, Ont.},
      date={1981},
   },
   book={
      series={CMS Conf. Proc.},
      volume={2},
      publisher={Amer. Math. Soc., Providence, R.I.},
   },
   date={1982},
   pages={275--287},
   review={\MR{686121}},
}

\bib{BR21}{book}{
   author={Bruner, Robert R.},
   author={Rognes, John},
   title={The Adams spectral sequence for topological modular forms},
   series={Mathematical Surveys and Monographs},
   volume={253},
   publisher={American Mathematical Society, Providence, RI},
   date={2021},
   pages={xix+690},
   isbn={978-1-4704-5674-0},
   review={\MR{4284897}},
   doi={10.1090/surv/253},
}

\bib{Car84}{article}{
   author={Carlsson, Gunnar},
   title={Equivariant stable homotopy and Segal's Burnside ring conjecture},
   journal={Ann. of Math. (2)},
   volume={120},
   date={1984},
   number={2},
   pages={189--224},
   issn={0003-486X},
   review={\MR{763905}},
   doi={10.2307/2006940},
}

\bib{CMP87}{article}{
   author={Caruso, J.},
   author={May, J. P.},
   author={Priddy, S. B.},
   title={The Segal conjecture for elementary abelian $p$-groups. II.
   $p$-adic completion in equivariant cohomology},
   journal={Topology},
   volume={26},
   date={1987},
   number={4},
   pages={413--433},
   issn={0040-9383},
   review={\MR{919728}},
   doi={10.1016/0040-9383(87)90040-1},
}

\bib{DI05}{article}{
   author={Dugger, Daniel},
   author={Isaksen, Daniel C.},
   title={Motivic cell structures},
   journal={Algebr. Geom. Topol.},
   volume={5},
   date={2005},
   pages={615--652},
   issn={1472-2747},
   review={\MR{2153114}},
   doi={10.2140/agt.2005.5.615},
}

\bib{DI10}{article}{
   author={Dugger, Daniel},
   author={Isaksen, Daniel C.},
   title={The motivic Adams spectral sequence},
   journal={Geom. Topol.},
   volume={14},
   date={2010},
   number={2},
   pages={967--1014},
   issn={1465-3060},
   review={\MR{2629898}},
   doi={10.2140/gt.2010.14.967},
}

\bib{DI17}{article}{
   author={Dugger, Daniel},
   author={Isaksen, Daniel C.},
   title={Low-dimensional Milnor-Witt stems over $\mathbb{R}$},
   journal={Ann. K-Theory},
   volume={2},
   date={2017},
   number={2},
   pages={175--210},
   issn={2379-1683},
   review={\MR{3590344}},
   doi={10.2140/akt.2017.2.175},
}

\bib{EM66}{article}{
   author={Eilenberg, Samuel},
   author={Moore, John C.},
   title={Homology and fibrations. I. Coalgebras, cotensor product and its
   derived functors},
   journal={Comment. Math. Helv.},
   volume={40},
   date={1966},
   pages={199--236},
   issn={0010-2571},
   review={\MR{203730}},
   doi={10.1007/BF02564371},
}

\bib{Gei04}{article}{
   author={Geisser, Thomas},
   title={Motivic cohomology over Dedekind rings},
   journal={Math. Z.},
   volume={248},
   date={2004},
   number={4},
   pages={773--794},
   issn={0025-5874},
   review={\MR{2103541}},
   doi={10.1007/s00209-004-0680-x},
}

\bib{Gre13}{article}{
   author={Gregersen, Thomas},
   title={A Singer construction in motivic homotopy theory},
   date={2013},
   eprint={http://urn.nb.no/URN:NBN:no-38690},
   note={PhD thesis, University of Oslo},
}
   
\bib{GI15}{article}{
   author={Guillou, Bertrand J.},
   author={Isaksen, Daniel C.},
   title={The $\eta$-local motivic sphere},
   journal={J. Pure Appl. Algebra},
   volume={219},
   date={2015},
   number={10},
   pages={4728--4756},
   issn={0022-4049},
   review={\MR{3346515}},
   doi={10.1016/j.jpaa.2015.03.004},
}

\bib{Gun81}{article}{
   author={Gunawardena, J. H.},
   title={Cohomotopy of some classifying spaces},
   date={1981},
   note={Cambridge thesis},
}

\bib{HR19}{article}{
   author={Helle, Gard Olav},
   author={Rognes, John},
   title={Boardman's whole-plane obstruction group for Cartan-Eilenberg
   systems},
   journal={Doc. Math.},
   volume={24},
   date={2019},
   pages={1855--1878},
   issn={1431-0635},
   review={\MR{4369361}},
}

\bib{Hem22}{article}{
   author={Hemminger, David},
   title={Detecting motivic equivalences with motivic homology},
   journal={Proc. Amer. Math. Soc. Ser. B},
   volume={9},
   date={2022},
   pages={180--185},
   review={\MR{4410405}},
   doi={10.1090/bproc/82},
}

\bib{Hil11}{article}{
   author={Hill, Michael A.},
   title={Ext and the motivic Steenrod algebra over $\bR$},
   journal={J. Pure Appl. Algebra},
   volume={215},
   date={2011},
   number={5},
   pages={715--727},
   issn={0022-4049},
   review={\MR{2747214}},
   doi={10.1016/j.jpaa.2010.06.017},
}

\bib{Hoy15}{article}{
   author={Hoyois, Marc},
   title={From algebraic cobordism to motivic cohomology},
   journal={J. Reine Angew. Math.},
   volume={702},
   date={2015},
   pages={173--226},
   issn={0075-4102},
   review={\MR{3341470}},
}

\bib{Hoy17}{article}{
   author={Hoyois, Marc},
   title={The six operations in equivariant motivic homotopy theory},
   journal={Adv. Math.},
   volume={305},
   date={2017},
   pages={197--279},
   issn={0001-8708},
   review={\MR{3570135}},
   doi={10.1016/j.aim.2016.09.031},
}

\bib{HKO17}{article}{
   author={Hoyois, Marc},
   author={Kelly, Shane},
   author={\O stv\ae r, Paul Arne},
   title={The motivic Steenrod algebra in positive characteristic},
   journal={J. Eur. Math. Soc. (JEMS)},
   volume={19},
   date={2017},
   number={12},
   pages={3813--3849},
   issn={1435-9855},
   review={\MR{3730515}},
}

\bib{Hu05}{article}{
   author={Hu, Po},
   title={On the Picard group of the stable $\bA^1$-homotopy category},
   journal={Topology},
   volume={44},
   date={2005},
   number={3},
   pages={609--640},
   issn={0040-9383},
   review={\MR{2122218}},
   doi={10.1016/j.top.2004.12.001},
}

\bib{HKO11}{article}{
   author={Hu, P.},
   author={Kriz, I.},
   author={Ormsby, K.},
   title={Convergence of the motivic Adams spectral sequence},
   journal={J. K-Theory},
   volume={7},
   date={2011},
   number={3},
   pages={573--596},
   issn={1865-2433},
   review={\MR{2811716}},
   doi={10.1017/is011003012jkt150},
}

\bib{Isa19}{article}{
   author={Isaksen, Daniel C.},
   title={Stable stems},
   journal={Mem. Amer. Math. Soc.},
   volume={262},
   date={2019},
   number={1269},
   pages={viii+159},
   issn={0065-9266},
   isbn={978-1-4704-3788-6},
   isbn={978-1-4704-5511-8},
   review={\MR{4046815}},
   doi={10.1090/memo/1269},
}

\bib{IWX20}{article}{
   author={Isaksen, Daniel C.},
   author={Wang, Guozhen},
   author={Xu, Zhouli},
   title={Stable homotopy groups of spheres},
   journal={Proc. Natl. Acad. Sci. USA},
   volume={117},
   date={2020},
   number={40},
   pages={24757--24763},
   issn={0027-8424},
   review={\MR{4250190}},
   doi={10.1073/pnas.2012335117},
}

\bib{Jam59}{article}{
   author={James, I. M.},
   title={Spaces associated with Stiefel manifolds},
   journal={Proc. London Math. Soc. (3)},
   volume={9},
   date={1959},
   pages={115--140},
   issn={0024-6115},
   review={\MR{102810}},
   doi={10.1112/plms/s3-9.1.115},
}

\bib{Jar00}{article}{
   author={Jardine, J. F.},
   title={Motivic symmetric spectra},
   journal={Doc. Math.},
   volume={5},
   date={2000},
   pages={445--552},
   issn={1431-0635},
   review={\MR{1787949}},
}

\bib{KMT66}{article}{
   author={Kambe, Tsunekazu},
   author={Matsunaga, Hiromichi},
   author={Toda, Hirosi},
   title={A note on stunted lens space},
   journal={J. Math. Kyoto Univ.},
   volume={5},
   date={1966},
   pages={143--149},
   issn={0023-608X},
   review={\MR{190929}},
   doi={10.1215/kjm/1250524531},
}

\bib{KW19}{article}{
   author={Kylling, Jonas Irgens},
   author={Wilson, Glen Matthew},
   title={Strong convergence in the motivic Adams spectral sequence},
   eprint={https://arxiv.org/abs/1901.03399},
   date={2019},
   pages={29},
}

\bib{LS82}{article}{
   author={Li, Hu Hsiung},
   author={Singer, William M.},
   title={Resolutions of modules over the Steenrod algebra and the classical
   theory of invariants},
   journal={Math. Z.},
   volume={181},
   date={1982},
   number={2},
   pages={269--286},
   issn={0025-5874},
   review={\MR{674277}},
   doi={10.1007/BF01215024},
}

\bib{Lin80}{article}{
   author={Lin, Wen Hsiung},
   title={On conjectures of Mahowald, Segal and Sullivan},
   journal={Math. Proc. Cambridge Philos. Soc.},
   volume={87},
   date={1980},
   number={3},
   pages={449--458},
   issn={0305-0041},
   review={\MR{556925}},
   doi={10.1017/S0305004100056887},
}

\bib{LDMA80}{article}{
   author={Lin, W. H.},
   author={Davis, D. M.},
   author={Mahowald, M. E.},
   author={Adams, J. F.},
   title={Calculation of Lin's Ext groups},
   journal={Math. Proc. Cambridge Philos. Soc.},
   volume={87},
   date={1980},
   number={3},
   pages={459--469},
   issn={0305-0041},
   review={\MR{569195}},
   doi={10.1017/S0305004100056899},
}

\bib{LNR12}{article}{
   author={Lun\o e-Nielsen, Sverre},
   author={Rognes, John},
   title={The topological Singer construction},
   journal={Doc. Math.},
   volume={17},
   date={2012},
   pages={861--909},
   issn={1431-0635},
   review={\MR{3007679}},
}

\bib{ML98}{book}{
   author={Mac Lane, Saunders},
   title={Categories for the working mathematician},
   series={Graduate Texts in Mathematics},
   volume={5},
   edition={2},
   publisher={Springer-Verlag, New York},
   date={1998},
   pages={xii+314},
   isbn={0-387-98403-8},
   review={\MR{1712872}},
}

\bib{Mah85}{article}{
   author={Mahowald, Mark},
   title={Lin's theorem and the $EHP$ sequence},
   conference={
      title={Conference on algebraic topology in honor of Peter Hilton},
      address={Saint John's, Nfld.},
      date={1983},
   },
   book={
      series={Contemp. Math.},
      volume={37},
      publisher={Amer. Math. Soc., Providence, RI},
   },
   date={1985},
   pages={115--119},
   review={\MR{789799}},
   doi={10.1090/conm/037/789799},
}

\bib{MR93}{article}{
   author={Mahowald, Mark E.},
   author={Ravenel, Douglas C.},
   title={The root invariant in homotopy theory},
   journal={Topology},
   volume={32},
   date={1993},
   number={4},
   pages={865--898},
   issn={0040-9383},
   review={\MR{1241877}},
   doi={10.1016/0040-9383(93)90055-Z},
}


\bib{Man21}{article}{
   author={Mantovani, Lorenzo},
   title={Localizations and completions of stable $\infty$-categories},
   eprint={https://arxiv.org/abs/2105.02285},
   date={2021},
   pages={41},
}

\bib{Mar83}{book}{
   author={Margolis, H. R.},
   title={Spectra and the Steenrod algebra},
   series={North-Holland Mathematical Library},
   volume={29},
   note={Modules over the Steenrod algebra and the stable homotopy
   category},
   publisher={North-Holland Publishing Co., Amsterdam},
   date={1983},
   pages={xix+489},
   isbn={0-444-86516-0},
   review={\MR{738973}},
}

\bib{MM82}{article}{
   author={May, J. P.},
   author={McClure, J. E.},
   title={A reduction of the Segal conjecture},
   conference={
      title={Current trends in algebraic topology, Part 2},
      address={London, Ont.},
      date={1981},
   },
   book={
      series={CMS Conf. Proc.},
      volume={2},
      publisher={Amer. Math. Soc., Providence, R.I.},
   },
   date={1982},
   pages={209--222},
   review={\MR{686147}},
}

\bib{Mil72}{article}{
   author={Milgram, R. James},
   title={Group representations and the Adams spectral sequence},
   journal={Pacific J. Math.},
   volume={41},
   date={1972},
   pages={157--182},
   issn={0030-8730},
   review={\MR{304463}},
}

\bib{MR77}{article}{
   author={Miller, Haynes R.},
   author={Ravenel, Douglas C.},
   title={Morava stabilizer algebras and the localization of Novikov's
   $E_{2}$-term},
   journal={Duke Math. J.},
   volume={44},
   date={1977},
   number={2},
   pages={433--447},
   issn={0012-7094},
   review={\MR{0458410}},
}

\bib{Mil58}{article}{
   author={Milnor, John},
   title={The Steenrod algebra and its dual},
   journal={Ann. of Math. (2)},
   volume={67},
   date={1958},
   pages={150--171},
   issn={0003-486X},
   review={\MR{99653}},
   doi={10.2307/1969932},
}

\bib{Mil62}{article}{
   author={Milnor, J.},
   title={On axiomatic homology theory},
   journal={Pacific J. Math.},
   volume={12},
   date={1962},
   pages={337--341},
   issn={0030-8730},
   review={\MR{159327}},
}

\bib{MM65}{article}{
   author={Milnor, John W.},
   author={Moore, John C.},
   title={On the structure of Hopf algebras},
   journal={Ann. of Math. (2)},
   volume={81},
   date={1965},
   pages={211--264},
   issn={0003-486X},
   review={\MR{0174052}},
   doi={10.2307/1970615},
}

\bib{Mor05}{article}{
   author={Morel, Fabien},
   title={The stable ${\bA}^1$-connectivity theorems},
   journal={$K$-Theory},
   volume={35},
   date={2005},
   number={1-2},
   pages={1--68},
   issn={0920-3036},
   review={\MR{2240215}},
   doi={10.1007/s10977-005-1562-7},
}

\bib{Mor12}{book}{
   author={Morel, Fabien},
   title={$\Bbb A^1$-algebraic topology over a field},
   series={Lecture Notes in Mathematics},
   volume={2052},
   publisher={Springer, Heidelberg},
   date={2012},
   pages={x+259},
   isbn={978-3-642-29513-3},
   review={\MR{2934577}},
   doi={10.1007/978-3-642-29514-0},
}

\bib{MV99}{article}{
   author={Morel, Fabien},
   author={Voevodsky, Vladimir},
   title={${\bf A}^1$-homotopy theory of schemes},
   journal={Inst. Hautes \'{E}tudes Sci. Publ. Math.},
   number={90},
   date={1999},
   pages={45--143 (2001)},
   issn={0073-8301},
   review={\MR{1813224}},
}

\bib{Qui19}{article}{
   author={Quigley, J. D.},
   title={The motivic Mahowald invariant},
   journal={Algebr. Geom. Topol.},
   volume={19},
   date={2019},
   number={5},
   pages={2485--2534},
   issn={1472-2747},
   review={\MR{4023321}},
   doi={10.2140/agt.2019.19.2485},
}

\bib{Qui21a}{article}{
   author={Quigley, J. D.},
   title={Real motivic and $C_2$-equivariant Mahowald invariants},
   journal={J. Topol.},
   volume={14},
   date={2021},
   number={2},
   pages={369--418},
   issn={1753-8416},
   review={\MR{4235014}},
   doi={10.1112/topo.12185},
}

\bib{Qui21b}{article}{
   author={Quigley, J. D.},
   title={Motivic Mahowald invariants over general base fields},
   journal={Doc. Math.},
   volume={26},
   date={2021},
   pages={561--582},
   issn={1431-0635},
   review={\MR{4282796}},
}

\bib{Rav86}{book}{
   author={Ravenel, Douglas C.},
   title={Complex cobordism and stable homotopy groups of spheres},
   series={Pure and Applied Mathematics},
   volume={121},
   publisher={Academic Press, Inc., Orlando, FL},
   date={1986},
   pages={xx+413},
   isbn={0-12-583430-6},
   isbn={0-12-583431-4},
   review={\MR{860042}},
}

\bib{Rio12}{article}{
   author={Riou, Jo{\"e}l},
   title={Op{\'e}rations de Steenrod motiviques},
   date={2012},
   eprint={https://arxiv.org/abs/1207.3121},
}

\bib{Sin80}{article}{
   author={Singer, William M.},
   title={On the localization of modules over the Steenrod algebra},
   journal={J. Pure Appl. Algebra},
   volume={16},
   date={1980},
   number={1},
   pages={75--84},
   issn={0022-4049},
   review={\MR{549705}},
   doi={10.1016/0022-4049(80)90043-2},
}

\bib{Sin81}{article}{
   author={Singer, William M.},
   title={A new chain complex for the homology of the Steenrod algebra},
   journal={Math. Proc. Cambridge Philos. Soc.},
   volume={90},
   date={1981},
   number={2},
   pages={279--292},
   issn={0305-0041},
   review={\MR{620738}},
   doi={10.1017/S0305004100058746},
}

\bib{Spi18}{article}{
   author={Spitzweck, Markus},
   title={A commutative $\Bbb P^1$-spectrum representing motivic cohomology
   over Dedekind domains},
   journal={M\'{e}m. Soc. Math. Fr. (N.S.)},
   number={157},
   date={2018},
   pages={110},
   issn={0249-633X},
   isbn={978-2-85629-890-9},
   review={\MR{3865569}},
   doi={10.24033/msmf.465},
}

\bib{Ste62}{book}{
   author={Steenrod, N. E.},
   title={Cohomology operations},
   series={Lectures by N. E. Steenrod written and revised by D. B. A.
   Epstein. Annals of Mathematics Studies, No. 50},
   publisher={Princeton University Press, Princeton, N.J.},
   date={1962},
   pages={vii+139},
   review={\MR{0145525}},
}

\bib{Voe98}{article}{
   author={Voevodsky, Vladimir},
   title={$\mathbf{A}^1$-homotopy theory},
   booktitle={Proceedings of the International Congress of Mathematicians,
   Vol. I (Berlin, 1998)},
   journal={Doc. Math.},
   date={1998},
   number={Extra Vol. I},
   pages={579--604},
   issn={1431-0635},
   review={\MR{1648048}},
}

\bib{Voe03a}{article}{
   author={Voevodsky, Vladimir},
   title={Reduced power operations in motivic cohomology},
   journal={Publ. Math. Inst. Hautes \'Etudes Sci.},
   number={98},
   date={2003},
   pages={1--57},
   issn={0073-8301},
   review={\MR{2031198}},
}

\bib{Voe03b}{article}{
   author={Voevodsky, Vladimir},
   title={Motivic cohomology with ${\bf Z}/2$-coefficients},
   journal={Publ. Math. Inst. Hautes \'Etudes Sci.},
   number={98},
   date={2003},
   pages={59--104},
   issn={0073-8301},
   review={\MR{2031199}},
}

\bib{Voe10}{article}{
   author={Voevodsky, Vladimir},
   title={Motivic Eilenberg--MacLane spaces},
   journal={Publ. Math. Inst. Hautes \'Etudes Sci.},
   number={112},
   date={2010},
   pages={1--99},
   issn={0073-8301},
   review={\MR{2737977}},
}


\end{biblist}
\end{bibdiv}

\end{document}